\newtheorem{theorem}{Theorem}
\newtheorem{lemma}[theorem]{Lemma}
\newtheorem{corollary}[theorem]{Corollary}
\newtheorem{proposition}[theorem]{Proposition}
\newtheorem{remark}[theorem]{Remark}
\newtheorem{definition}[theorem]{Definition}
\newtheorem{theoremletter}{Theorem}
\newtheorem{propositionletter}{Proposition}
\newtheorem{lemmaletter}{Lemma}
\newtheoremstyle{tttheorem}
{}                
{}                
{\slshape}        
{}                
{\bfseries}       
{'}               
{ }               
{}                
\theoremstyle{tttheorem}
\newtheorem{theoremtio}{Theorem}
\newcommand{\capt}{{\mathrm{cap}}}
\newcommand{\ud}{\mathrm{d}}
\newcommand{\loc}{\mathrm{loc}}
\DeclareMathOperator{\sgn}{sgn}
\title[Qualitative properties for solutions to fourth order critical systems]{Qualitative properties for solutions to conformally invariant fourth order critical systems} \thanks{Research supported in part by Fulbright Commission in Brazil grant G-1-00001, CNPq grant 305726/2017-0, and the Coordena\c c\~ao de Aperfei\c coamento de Pessoal de N\'ivel Superior - Brasil (CAPES) grant 88882.440505/2019-01}
\author[J.H. Andrade]{Jo\~{a}o Henrique\ Andrade}
\author[J.M. do \'O]{Jo\~ao Marcos do \'O*}
\address[J.H. Andrade]{Department of Mathematics, 
	Federal University of Para\'{\i}ba
	\newline\indent 
	58051-900, Jo\~ao Pessoa-PB, Brazil}
\email{\href{mailto:andradejh@mat.ufpb.br}{andradejh@mat.ufpb.br}}
\address[J.M. do \'O]{Department of Mathematics,
	Federal University of Para\'{\i}ba
	\newline\indent 
	58051-900, Jo\~ao Pessoa-PB, Brazil}
\email{\href{mailto:jmbo@pq.cnpq.br}{jmbo@pq.cnpq.br}}
\thanks{* Corresponding author.}
\subjclass[2000]{35J60, 35B09, 35J30, 35B40}
\keywords{Bi-Laplacian, Strongly coupled system, Critical exponent, Liouville-type theorem, Emden--Fowler solutions}
\begin{document}
	
	\begin{abstract}
		We study qualitative properties for nonnegative solutions to a conformally invariant coupled system of fourth order equations involving critical exponents.  For solutions defined in the punctured space, there exist essentially two cases to analyze.  If the origin is a removable singularity, we prove that non-singular solutions are rotationally invariant and weakly positive. More precisely, they are the product of a fourth order spherical solution by a unit vector with nonnegative coordinates.  If the origin is a non-removable singularity, we show that the solutions are radially symmetric and strongly positive. Furthermore, using a Pohozaev-type invariant, we prove the non-existence of semi-singular solutions, that is, all components equally blow-up in the neighborhood of origin. Namely, they are classified as multiples of the Emden--Fowler solution. Our results are natural generalizations of the famous classification due to [L. A. Caffarelli, B. Gidas and J. Spruck, Comm. Pure Appl. Math. (1989)] on the classical singular Yamabe equation.
	\end{abstract}
	
	\maketitle
	
	
	\begin{center}
	\footnotesize
	\tableofcontents
	\end{center}
	
	\section{Description of the results}
	
	We study qualitative properties for nonnegative {\it $p$-map} solutions $\mathcal{U}=(u_1,\dots,u_p): \mathbb{R}^{n}\setminus\{0\}\rightarrow \mathbb{R}^p$ to the following fourth order system in the {\it punctured space},
	\begin{equation}\label{oursystem}\tag{$\mathcal{S}_p$}
		\Delta^{2} u_{i}=c(n)|\mathcal{U}|^{2^{**}-2}u_{i} \quad {\rm in} \quad \mathbb{R}^{n}\setminus\{0\},
	\end{equation}
	where  $n\geqslant5$, 
	$\Delta^{2}$ is the bi-Laplacian and $|\mathcal{U}|$ is the Euclidean norm, that is, $|\mathcal{U}|=(\sum_{i=1}^{p} u_i^2)^{1/2}$.
	System \eqref{oursystem} is strongly coupled by the {\it Gross--Pitaevskii nonlinearity}  $f_i(\mathcal{U})=c(n)|\mathcal{U}|^{2^{**}-2}u_i$ with associated potential $F(\mathcal{U})=(f_1(\mathcal{U}),\dots,f_p(\mathcal{U}))$, where $s\in(1,2^{**}-1)$ with $2^{**}=2n/(n-4)$ the {\it critical Sobolev exponent}, and $c(n)=[n(n-4)(n^2-4)]/{16}$ a normalizing constant.
	
	By a {\it $($classical$)$ solution} to System \eqref{oursystem}, we mean a $p$-map $\mathcal{U}$ such that each component $u_i \in C^{4,\zeta}(\mathbb{R}^{n}\setminus\{0\})$, for some $\zeta\in(0,1)$, and it satisfies \eqref{oursystem} in the classical sense.
	A solution may develop an isolated singularity when $x=0$, that is, some components may have a non-removable singularity at the origin. 
	More accurately, a solution to \eqref{oursystem} is said to be {\it singular}, if there exists $i\in I:=\{1,\dots,p\}$ such that the origin is a {\it non-removable singularity} for $u_{i}$. Otherwise, if the origin is a {\it removable singularity} for all components, this solution is called {\it non-singular}, and it can be extended continuously to the whole domain. 
	
	Let us notice that when $p=1$, \eqref{oursystem} becomes the following fourth order equation,
	\begin{equation}\label{scalarsystem}\tag{$\mathcal{S}_1$}
		\Delta^{2}u=c(n)u^{2^{**}-1} \quad {\rm in} \quad \mathbb{R}^n\setminus\{0\}.
	\end{equation}
	In this sense, the Gross--Pitaevskii nonlinearity is the more natural coupling term such that \eqref{oursystem} generalizes \eqref{scalarsystem}. Our objective is to present classification results for both non-singular and singular solutions to our conformally invariant system \eqref{oursystem}. 
	
	Our first main result is motivated by the fundamental classification theorem due to C. S. Lin \cite[Theorem~1.3]{MR1611691} (see also X. Xu \cite[Theorem~1.1]{MR1769247}) for positive solutions to \eqref{scalarsystem} with a removable singularity at the origin, which can be stated as follows
	\begin{theoremletter}\label{theoremA}
		Let $u$ be a nonnegative non-singular solution to \eqref{scalarsystem}. Then, there exist $x_0\in\mathbb{R}^n$ and $\mu>0$ such that $u$ is radially symmetric about $x_0$ and 
		\begin{equation}\label{sphericalfunctions}
			u_{x_0,\mu}(x)=\left(\frac{2\mu}{1+\mu^{2}|x-x_0|^{2}}\right)^{\frac{n-4}{2}}. 
		\end{equation}
		Let us call $u_{x_0,\mu}$ a fourth order spherical solution.
	\end{theoremletter}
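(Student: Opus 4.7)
The plan is to follow C.~S.~Lin's strategy of reducing the fourth order critical equation to a cooperative second order system amenable to the method of moving spheres (or planes). Since the origin is by hypothesis a removable singularity, standard elliptic regularity for the bi-Laplacian extends $u$ to a nonnegative classical solution in $C^{4,\zeta}(\mathbb{R}^n)$. Setting $v:=-\Delta u$, a strong maximum principle applied coordinatewise, once we know $v\geqslant 0$, forces either $u\equiv 0$ or $u>0$ on all of $\mathbb{R}^n$; we may assume the positive case, otherwise the statement is trivial.

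The crucial step is to establish the superharmonicity $v=-\Delta u\geqslant 0$ on $\mathbb{R}^n$. One efficient route is to derive the integral representation
\begin{equation*}
u(x)=c(n)\int_{\mathbb{R}^n}\frac{u(y)^{2^{**}-1}}{|x-y|^{n-4}}\,\ud y,
\end{equation*}
which follows from the $L^{2^{**}}$-integrability of $u$, in turn obtained via Pohozaev-type identities combined with the Kelvin transform of $u$. Once this representation holds, superharmonicity is automatic since the Riesz potential of a nonnegative function is superharmonic. Alternatively, working with the spherical average $\bar u(r)=\fint_{\partial B_r}u\,\ud\sigma$, one can argue by contradiction that a point where $-\Delta u<0$ would force a growth rate at infinity incompatible with the critical equation.

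With $v\geqslant 0$ secured, \eqref{scalarsystem} decouples into the cooperative second order system
\begin{equation*}
-\Delta u=v,\qquad -\Delta v=c(n)u^{2^{**}-1},
\end{equation*}
to which the conformal (Kelvin) invariance at the critical exponent allows the application of the moving spheres method, either on the system itself or directly on the integral equation. For each prospective center $x_0\in\mathbb{R}^n$, sliding the Kelvin reflection radius from a starting value produces a critical parameter $\mu(x_0)>0$ at which $u$ coincides with its Kelvin reflection about $x_0$. A now standard analysis of the map $x_0\mapsto\mu(x_0)$ selects a distinguished center, and the resulting Kelvin identity forces the closed form \eqref{sphericalfunctions} by an explicit algebraic manipulation. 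The main obstacle is the superharmonicity step: the absence of a fourth order maximum principle directly applicable to $\Delta^2$ means that one has to control the decay of $u$ at infinity with some care in order to make either the integral representation or the asymptotic contradiction argument go through.
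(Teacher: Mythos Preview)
Theorem~A is not proved in this paper: it is quoted as a background result due to C.~S.~Lin (with the moving-spheres reproof by X.~Xu also cited), and the paper's own contribution is the system generalization in Theorem~1. So there is no ``paper's own proof'' of Theorem~A to compare against directly.

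That said, your sketch is essentially the strategy the paper adopts for the system version, and it matches the Lin--Xu arguments in the literature. In particular, the paper establishes superharmonicity (Proposition~\ref{superharmonicity}) via exactly the spherical-average contradiction you mention as the alternative route, not via the integral representation; it then runs the moving-spheres method (Lemmas~\ref{lemmaA}--\ref{lemmaC}) using Propositions~\ref{li-zhang1} and~\ref{li-zhang2} of Li--Zhang to conclude. Your proposal is a correct outline; the only adjustment is that the paper (and Xu) obtain $-\Delta u\geqslant 0$ by the iterated averaging blow-up argument rather than by first deriving the Riesz-potential identity.
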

	This $(n+1)$-parameter family of solutions can also be regarded as maximizers for the Sobolev embedding theorem $\mathcal{D}^{2,2}(\mathbb{R}^n)\hookrightarrow L^{2^{**}}(\mathbb{R}^n)$, that is,
	\begin{equation*}
		\|u_{x_0,\mu}\|_{L^{2^{**}}(\mathbb{R}^n)}= S(n)\|u_{x_0,\mu}\|_{\mathcal{D}^{2,2}(\mathbb{R}^n)} \quad \mbox{with} \quad S(n)=\left(c(n)\omega_n^{{4}/{n}}\right)^{-1/2},
	\end{equation*}
	where $\omega_n$ is the volume of the unit ball in the $n$-dimensional Euclidean space.
	The existence of extremal functions for the last identity was obtained by P.-L. Lions \cite[Section~V.3]{MR778970}. 
	Besides, these optimizers were found in a more general setting by E. Lieb \cite[Theorem~3.1]{MR717827}, using an equivalent dual formulation.
	Let us also mention the related result in D. E. Edmunds et al. \cite[Theorem~2.1]{MR1076074}. 
	
	Our second main result yields a classification theorem for nonnegative singular solutions to  \eqref{oursystem}. 
	On this subject, we should mention that when the origin is a non-removable singularity, C. S. Lin \cite[Theorem~1.4]{MR1611691} obtained radial symmetry for solutions to \eqref{scalarsystem} using the asymptotic moving planes technique. Recently, Z. Guo, Huang, Wang and J. Wei. \cite[Theorem~1.3]{MR4094467} proved the existence of periodic solutions applying a mountain pass theorem and conjectured that all solutions should be periodic.
	Later on, R. L. Frank and T. K\"onig \cite[Theorem~2]{MR3869387}  answered this conjecture, obtaining more accurate results concerning the classification for global singular solutions to \eqref{scalarsystem}. More precisely, they used the Emden--Fowler change coordinates (see Section~\ref{sec:cylindricaltransform}) to transform \eqref{scalarsystem} into the following fourth order Cauchy problem,
	\begin{equation}\label{fowler4order}
		\begin{cases}
			v^{(4)}-K_2v^{(2)}+K_0v=c(n)v^{2^{**}-1} \quad \mbox{in} \quad \mathbb{R},\\
			v(0)=a,\ v^{(1)}(0)=0,\ v^{(2)}(0)=b(a),\ v^{(3)}(0)=0,
		\end{cases}
	\end{equation}    
	where $K_2,K_0$ are constants depending on the dimension (see \eqref{coefficients}).
	In this work, positive periodic solutions $v_{a,T}$ to \eqref{fowler4order} are proved to exist using a topological shooting method based on the parameter $b(a)$. 
	One needs to be restricted to the situation $a\in (0,a_0]$, where $a_0=[n(n-4)/(n^2-4)]^{(n-4)/8}$ and $T\in (0,T_{a}]$ is the period with $T_{a}\in\mathbb{R}$ is the fundamental period of $v_a$.
	
	\begin{theoremletter}\label{theoremB}
		Let $u$ be a positive singular solution to \eqref{scalarsystem}. Then, $u$ is radially symmetric about the origin.
		Moreover, there exist $a \in (0,a_0]$ and $T\in (0,T_{a}]$ such that
		\begin{equation}\label{emden-folwersolution}
			u_{a,T}(x)=|x|^{\frac{4-n}{2}}v_{a}(\ln|x|+T),
		\end{equation}
		where $v_{a}$ is the unique $T$-periodic bounded solution to \eqref{fowler4order} and $T_a\in\mathbb{R}$ its fundamental period. Let us call both $u_{a,T}$ and $v_{a,T}$ Emden--Fowler $($or Delaunay-type$)$ solutions.
	\end{theoremletter}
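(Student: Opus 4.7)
The plan is to attack Theorem B in two stages: first, establish that any singular solution must be radially symmetric about the origin; second, convert the radial equation into an autonomous one-dimensional ODE via the Emden--Fowler change of variables and classify its bounded positive orbits.

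For the radial symmetry, my approach is the asymptotic moving planes method, in the spirit of Lin. Since $\Delta^{2}$ is not directly amenable to a maximum principle, I would decouple \eqref{scalarsystem} as the cooperative system $-\Delta u = w$, $-\Delta w = c(n)u^{2^{**}-1}$ and first verify the positivity of $w$. The preparatory step is to derive two-sided asymptotic estimates of the form $u(x)\sim |x|^{(4-n)/2}$ as $x\to 0$ and, after the Kelvin inversion $\tilde{u}(x)=|x|^{4-n}u(x/|x|^{2})$, the analogous behavior at infinity. With such control, one can reflect across hyperplanes not passing through the origin, start the reflection from far away where the Kelvin transform dominates, and slide the plane inward until it reaches $0$. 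A fourth order narrow-domain comparison (or, alternatively, a Hardy--Rellich type integral inequality applied to the difference of $u$ and its reflection) forces symmetry across each such hyperplane, and since the direction is arbitrary, $u$ must be radial about the origin. The main difficulty in this stage is preserving the fourth order comparison principle in the presence of the isolated singularity, which is handled by removing small balls around $0$ and passing to the limit.

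Once $u(x)=u(r)$ is radial, the substitution $u(x)=r^{(4-n)/2}v(\ln r)$ from Section~\ref{sec:cylindricaltransform} converts the radial form of \eqref{scalarsystem} into the autonomous fourth order ODE appearing in \eqref{fowler4order}. Because this ODE admits a conserved Pohozaev-type Hamiltonian, every bounded positive global trajectory lies on a compact level set and, by a LaSalle-type invariance argument, must be periodic. To pin down the two-parameter family explicitly, I would follow the topological shooting of Frank and K\"onig: prescribing $v^{(1)}(0)=v^{(3)}(0)=0$ and $v(0)=a$, one tunes the free parameter $v^{(2)}(0)=b(a)$ so that the resulting trajectory stays bounded and positive. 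Monotone dependence in the shooting parameter $b$, combined with the conserved energy, then forces $a\in(0,a_{0}]$ and identifies the fundamental period $T_{a}$; translating the independent variable by an arbitrary phase $T\in(0,T_{a}]$ produces the full family, and undoing the Emden--Fowler transformation yields precisely \eqref{emden-folwersolution}. I expect the radial symmetry step to be the main obstacle, as the moving planes argument for a fourth order operator with an isolated singularity is considerably more delicate than its second order counterpart.
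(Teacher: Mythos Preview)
Theorem~B is presented in the paper as a background result attributed to Lin (radial symmetry, \cite[Theorem~1.4]{MR1611691}) and Frank--K\"onig (the periodic classification, \cite[Theorem~2]{MR3869387}); the paper does not supply its own proof of this statement, so there is no internal argument to compare against directly. Your outline is essentially the route taken in those cited works, and it also parallels the machinery the paper builds in Section~\ref{section4} for the vector-valued generalization: asymptotic moving planes after a Kelvin transform (Proposition~\ref{symmetry}), the cylindrical change of variables (Section~\ref{sec:cylindricaltransform}), and the Hamiltonian/Pohozaev analysis of Lemmas~\ref{asymptotics}--\ref{signal}.

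Two remarks on the details. First, you list the two-sided estimate $u(x)\sim |x|^{(4-n)/2}$ near the origin as a \emph{preparatory} step for moving planes, but neither Lin's argument nor the paper's Proposition~\ref{symmetry} requires the lower bound a priori. What is actually used is the Kelvin transform about a regular point $z\neq 0$, the harmonic asymptotic expansion of $-\Delta\tilde u$ at infinity (Lemma~\ref{harmasymp}), and a maximum principle valid on domains with a capacity-zero singular set (Proposition~\ref{mp}); the sharp two-sided bound is a \emph{consequence} of the subsequent ODE analysis, not an input to the reflection. Second, invoking a LaSalle-type invariance principle to force periodicity is heuristically right but not how Frank--K\"onig close the argument: they combine the conserved energy with a direct monotonicity/shooting analysis in the parameter $b(a)$, and the paper's Lemmas~\ref{asymptotics}--\ref{fbounded} follow the same pattern. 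Apart from these ordering and mechanism issues, your plan is sound.
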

	
	Let us remark that differently from Theorem~\ref{theoremA} where the solution can be classified by $(n+1)$-parameter family, in Theorem~\ref{theoremB} we have a two-parameter family of solutions. 
	However, we should mention that it is possible to construct a $n$-parameter family of deformations for \eqref{emden-folwersolution}, which are called the {\it deformed Emden--Fowler solutions} \cite[page 241]{MR1666838}, which are constructed by composing three conformal transformations. 
	In this sense, the {\it necksize} $a\in (0,a_0]$ of a singular solution to \eqref{scalarsystem} plays a similar role as the parameter $\mu>0$ for the non-singular solutions to \eqref{scalarsystem}. 
	
	In the light of Theorems~\ref{theoremA} and \ref{theoremB}, we present our main results.
	
	\begin{theorem}[Liouville--type]\label{theorem1}
		Let $\mathcal{U}$ be a nonnegative non-singular solution to \eqref{oursystem}. Then, there exists $\Lambda \in\mathbb{S}^{p-1}_{+}=\{ x \in \mathbb{S}^{p-1} : x_i \geqslant 0 \}$ and a fourth order spherical solution given by \eqref{sphericalfunctions} such that
		\begin{equation*}
			\mathcal{U}=\Lambda u_{x_0,\mu}.
		\end{equation*}
	\end{theorem}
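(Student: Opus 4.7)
The plan is to reduce the vectorial system \eqref{oursystem} to its scalar counterpart \eqref{scalarsystem} and then invoke Theorem~\ref{theoremA}. More precisely, I aim to produce a factorization $\mathcal{U}(x)=\Lambda\,v(x)$ with $\Lambda\in\mathbb{S}^{p-1}_{+}$ fixed and $v\geqslant 0$ a scalar function; inserting this ansatz into \eqref{oursystem} immediately gives $\Delta^{2}v=c(n)\,v^{2^{**}-1}$, so that Theorem~\ref{theoremA} identifies $v=u_{x_{0},\mu}$ and completes the proof.

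I would begin by using removability of the origin to extend each $u_i$ to a classical $C^{4,\zeta}$ nonnegative solution on the whole of $\mathbb{R}^{n}$, and then establish the auxiliary superharmonicity $-\Delta u_i\geqslant 0$. This is the fourth order analogue of the observation used by Xu \cite{MR1769247} for \eqref{scalarsystem}, derived from $\Delta^{2}u_i = V u_i$ with nonnegative potential $V = c(n)|\mathcal{U}|^{2^{**}-2}$. The central step is then a vector-valued method of moving spheres. Conformal invariance of \eqref{oursystem} ensures that the Kelvin transform $\mathcal{U}^{x_{0},\lambda}$ solves the same system, and the Gross--Pitaevskii coupling $|\mathcal{U}|^{2^{**}-2}$ rescales componentwise in a way that is compatible with the componentwise comparison of $u_{i}$ and $u_{i}^{x_{0},\lambda}$. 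Combining this comparison with the superharmonicity above and the maximum principle on the exterior of the sphere, I would run the moving spheres procedure simultaneously on all $u_i$ with a common center $x_0\in\mathbb{R}^{n}$ and stopping radius $\mu>0$.

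Radial symmetry and monotonicity about $x_0$ then reduce matters to a system of fourth order linear radial ODEs all sharing the single coefficient $V(r)=c(n)|\mathcal{U}|^{2^{**}-2}(r)$, with the $u_i$ smooth at $r=0$ and decaying at infinity. A uniqueness argument for the positive decaying cone of this linear problem (i.e., its one-dimensionality) yields $u_i=\lambda_i v$ for constants $\lambda_i\geqslant 0$, and setting $\Lambda=(\lambda_1,\dots,\lambda_p)/|(\lambda_1,\dots,\lambda_p)|$ places $\Lambda$ in $\mathbb{S}^{p-1}_{+}$ as required. The main obstacle I anticipate is the simultaneous moving sphere step: the scalar argument hinges on comparing $u$ against $u^{x_0,\lambda}$ through a pure power nonlinearity, whereas here the potential is coupled across all components and the critical stopping radius and center must be established uniformly. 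The Gross--Pitaevskii structure is exactly what makes this uniformity available, but extracting it requires delicate bookkeeping of the componentwise comparison inequalities, and one must also rule out components that vanish identically in order to identify the factor $\Lambda$ unambiguously in the nonnegative octant.
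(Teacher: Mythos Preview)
Your opening moves coincide with the paper's: extension to $\mathbb{R}^n$, the superharmonicity $-\Delta u_i\geqslant 0$ (the paper's Proposition~\ref{superharmonicity}), weak positivity, and a vectorial moving-spheres argument exploiting conformal invariance. Where you diverge is the endgame. You expect moving spheres to deliver radial symmetry of each $u_i$ about a common center, then reduce to a radial linear ODE system and invoke one-dimensionality of its positive decaying cone. The paper does not pass through componentwise radial symmetry at all. Its moving-spheres argument (Lemmas~\ref{lemmaA}--\ref{lemmaC}) only extracts the identity $|\mathcal{U}_{x_0,\mu^*(x_0)}|\equiv|\mathcal{U}|$ for the \emph{norm}, and then the Li--Zhang lemma (Proposition~\ref{li-zhang1}) classifies $|\mathcal{U}|$ as a bubble. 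That bubble form is used solely to compute $\|\mathcal{U}\|_{\mathcal{D}^{2,2}}<\infty$, i.e.\ to certify that the classical solution is also a \emph{weak} solution. Proportionality $u_i=\Lambda_i v$ is then obtained by an entirely separate argument (Theorem~\ref{hebey-druet}\textcolor{blue}{'}) on the quotients $q_{ij}=u_i/u_j$: one shows $\Delta q_{ij}\geqslant 0$ and then $\Delta q_{ij}\equiv 0$ on balls via maximum principles, and pins down the constant limit through the sharp asymptotics $|x|^{n-4}u_i(x)\to c_i$ as $|x|\to\infty$. This quotient analysis needs neither radial symmetry of the components nor any ODE uniqueness; it runs directly at the PDE level for weak solutions.

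Your proposed ODE-uniqueness step is the soft spot. Even granting componentwise radial symmetry (which can in fact be squeezed out of the paper's Lemma~\ref{lemmaB}: once $(u_i)_{\mu^*}\leqslant u_i$ for all $i$ and $\sum_i (u_i)_{\mu^*}^2=\sum_i u_i^2$, equality is forced componentwise, so Proposition~\ref{li-zhang1} applies to each $u_i$ with the \emph{same} $\mu^*(z)$), the claim that the smooth-at-origin, decaying-at-infinity solution cone of the fourth order linear radial problem $\Delta_{\rm rad}^2 u=V(r)u$ is one-dimensional is a nondegeneracy statement you assert but do not prove. Smoothness at $r=0$ leaves a two-dimensional family, decay at infinity another two-dimensional family, and showing their intersection is exactly one-dimensional is not automatic. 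If you try to fill this in, the natural object to study is $u_i/u_j$, and you are led back to precisely the quotient argument the paper carries out---only the paper does it without first reducing to the radial setting, which is what allows it to close the loop via the weak-solution formulation rather than via ODE theory.
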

	
	As an application, we show that the non-singular solutions classified above are the extremal maps for a higher order Sobolev-type inequality. Moreover, the best constant associated with this embedding coincides with the one when $p=1$ \cite{MR2221095,MR2852264}. 
	
	\begin{theorem}[Classification]\label{theorem2}
		Let $\mathcal{U}$ be a nonnegative singular solution to \eqref{oursystem}. Then, there exists $\Lambda^*\in\mathbb{S}^{p-1}_{+,*}=\{ x \in \mathbb{S}^{p-1} : x_i > 0 \}$ and an Emden--Fowler solution given by \eqref{emden-folwersolution} such that 
		\begin{equation*}
			\mathcal{U}=\Lambda^* u_{a,T}.
		\end{equation*}
	\end{theorem}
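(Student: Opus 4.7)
The plan is to reduce the classification of singular solutions to the coupled system \eqref{oursystem} to the scalar result already provided by Theorem~\ref{theoremB}. The reduction requires four ingredients: asymptotic radial symmetry of every component, passage to Emden--Fowler cylindrical coordinates, a Pohozaev-type invariant ruling out semi-singular configurations, and a proportionality argument identifying the whole vector $\mathcal{U}$ with a single scalar profile. The starting observation is that each $u_i$ solves the same linear equation $\Delta^{2} u_i = V(x) u_i$ with a common potential $V(x)=c(n)|\mathcal{U}(x)|^{2^{**}-2}$.

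First I would establish asymptotic radial symmetry of each $u_i$ about the origin. An asymptotic moving planes / moving spheres argument, adapted from Caffarelli--Gidas--Spruck and Lin to the fourth order system, using super-harmonicity of each singular component and a Kelvin transform at infinity, yields
\[
u_i(x) = \bar u_i(|x|)\bigl(1+o(1)\bigr) \quad \text{as } |x|\to 0,
\]
where $\bar u_i$ denotes the spherical average. Passing to the Emden--Fowler coordinates $v_i(t,\theta)=e^{-(n-4)t/2} u_i(e^{-t}\theta)$ converts \eqref{oursystem} into a cylindrical system on $\mathbb{R}\times\mathbb{S}^{n-1}$, and the asymptotic symmetry allows me to work with the radial profiles $\bar v_i(t)$, which satisfy the coupled ODE
\[
\bar v_i^{(4)} - K_2\bar v_i^{(2)} + K_0\bar v_i = c(n)|\bar{\mathcal{V}}|^{2^{**}-2}\bar v_i,\qquad i=1,\dots,p.
\]

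The key global quantity is the Hamiltonian-type invariant
\[
\mathcal{H}(t) = \sum_{i=1}^{p}\Bigl(\bar v_i^{(1)}\bar v_i^{(3)} - \tfrac{1}{2}(\bar v_i^{(2)})^2 - \tfrac{K_2}{2}(\bar v_i^{(1)})^2 + \tfrac{K_0}{2}\bar v_i^{2}\Bigr) - \tfrac{c(n)}{2^{**}}|\bar{\mathcal{V}}|^{2^{**}},
\]
conserved in $t$ along solutions; this is the Pohozaev-type invariant highlighted in the abstract. If some component $u_j$ were non-singular at the origin while another $u_i$ were singular, then $\bar v_j$ and all its derivatives would decay to zero as $t\to+\infty$, whereas $\bar v_i$ would remain bounded away from zero along a diverging sequence of times. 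Evaluating $\mathcal{H}$ on the two ends produces incompatible values and excludes such semi-singular configurations. This step already yields the strong positivity $\Lambda^*\in\mathbb{S}^{p-1}_{+,*}$ claimed in the statement.

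The heart of the argument is the proportionality $\bar v_i(t)=\lambda_i v(t)$. Since all $\bar v_i$ solve the \emph{same} linear fourth order equation with common coefficient $W(t)=c(n)|\bar{\mathcal{V}}(t)|^{2^{**}-2}$, I would analyse the Wronskian-type tensor
\[
\omega_{ij}(t) = \bar v_i\bar v_j^{(3)} - \bar v_j\bar v_i^{(3)} - \bar v_i^{(1)}\bar v_j^{(2)} + \bar v_j^{(1)}\bar v_i^{(2)},
\]
whose derivative vanishes thanks to the shared linear structure. Combined with the boundedness inherited from the previous step and the strict positivity of each $\bar v_i$, the constancy of $\omega_{ij}$ forces $\bar v_i/\bar v_j$ to be constant. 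Writing $\bar v_i=\lambda_i v$ with $\Lambda^*=(\lambda_1,\dots,\lambda_p)\in\mathbb{S}^{p-1}_{+,*}$, the scalar profile $v$ solves the Cauchy problem \eqref{fowler4order}, so Theorem~\ref{theoremB} supplies $a\in(0,a_0]$ and $T\in(0,T_a]$ with $v=v_a(\cdot+T)$, whence $\mathcal{U}=\Lambda^* u_{a,T}$. The main obstacle I expect lies precisely here: the Wronskian construction for fourth order, system-level proportionality is substantially subtler than its second order analogue, and must be coupled carefully with quantitative remainders in the asymptotic symmetry from step one. A possible alternative is to verify directly that $|\bar{\mathcal{V}}|$ itself solves \eqref{fowler4order} via a Cauchy--Schwarz saturation in $\mathcal{H}$, and then to recover proportionality from the rigidity of that saturation.
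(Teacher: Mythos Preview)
Your outline has the right architecture---symmetry, cylindrical coordinates, Pohozaev invariant, proportionality---but two concrete gaps would block the argument as written.

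\textbf{First, asymptotic symmetry is not enough.} You claim only $u_i(x)=\bar u_i(|x|)(1+o(1))$ as $|x|\to 0$, then pass to the ODE for the spherical averages $\bar v_i$. But averaging the PDE gives
\[
\bar v_i^{(4)}-K_2\bar v_i^{(2)}+K_0\bar v_i=c(n)\,\overline{|\mathcal{V}|^{2^{**}-2}v_i},
\]
and the right-hand side equals $c(n)|\bar{\mathcal{V}}|^{2^{**}-2}\bar v_i$ only if the solution is \emph{exactly} radial. The paper (Proposition~\ref{symmetry}) runs the full moving-planes argument with Kelvin transform about an arbitrary point $z\neq 0$ and obtains exact radial symmetry of every component, not merely an asymptotic statement; this is what makes the ODE system \eqref{vectfowlersystem} legitimate on all of $\mathbb{R}$.

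\textbf{Second, your Wronskian is not conserved.} For two solutions of $v^{(4)}-K_2v^{(2)}+Q(t)v=0$ one computes
\[
\frac{d}{dt}\bigl(v_iv_j^{(3)}-v_jv_i^{(3)}-v_i'v_j''+v_j'v_i''\bigr)=K_2\bigl(v_iv_j''-v_jv_i''\bigr),
\]
which is generically nonzero. The correct Lagrange bilinear concomitant carries an extra $-K_2(v_iv_j'-v_jv_i')$ term. Even with that correction, constancy of the concomitant does not by itself force $v_i/v_j$ constant; you would still need to evaluate it at distinguished times (e.g.\ critical points of $v_i$) together with the asymptotic information from Lemmas~\ref{asymptotics}--\ref{blow-up} to pin it to zero and then argue further. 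The paper sidesteps this by working directly with the quotient $q_{ij}=v_i/v_j$: equation \eqref{argentina} yields a fourth order linear homogeneous ODE in $q_{ij}^{(1)},\dots,q_{ij}^{(4)}$ with no zeroth-order term, and the initial data (coming from $v_i^{(1)}(0)=v_i^{(3)}(0)=0$ and Remark~\ref{equalityofinitiadata}) give $q_{ij}^{(1)}(0)=q_{ij}^{(2)}(0)=q_{ij}^{(3)}(0)=0$, so Picard--Lindel\"of forces $q_{ij}$ constant. Your Cauchy--Schwarz saturation alternative is closer in spirit to this and could be made to work, but the Wronskian route as stated does not close.
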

	Since singular solutions to the blow-up limit equation \eqref{oursystem} are the natural candidates for asymptotic models of the same system in the punctured ball, the last theorem is the first step in describing the local asymptotic behavior for positive singular solutions to 
	\begin{equation*}
		\Delta^{2} u_{i}=c(n)|\mathcal{U}|^{2^{**}-2}u_{i} \quad {\rm in} \quad B^{n}_{1}\setminus\{0\}.
	\end{equation*}
	This asymptotic analysis would be a version of the celebrated results due to L. A. Caffarelli, B. Gidas and J. Spruck \cite{MR982351} and N. Korevaar, R. Mazzeo, F. Pacard and R. Schoen \cite{MR1666838} for the context of fourth order strongly coupled systems. 
	When $p=1$, the subcritical cases of \eqref{scalarsystem} were addressed in \cite{MR1436822,MR4123335}.
	However, the problem of describing the local behavior for singular solutions to the critical equation \eqref{scalarsystem} near the isolated singularity remains unsolved; this question was posed by R. L. Frank and T. K\"{o}nig \cite[page 1103]{MR3869387}.
	
	\begin{remark}\label{existence} 
		The existence of non-singular $($singular$)$ solutions to \eqref{oursystem} follows directly from Theorem~\ref{theoremA} $($Theorem~\ref{theoremB}$)$. 
		In fact, for any $\Lambda\in\mathbb{S}^{p-1}_{+}$ $($$\Lambda^*\in\mathbb{S}^{p-1}_{+,*}$$)$, we observe that $\mathcal{U}=\Lambda u_{x_0,\mu}$ $($$\mathcal{U}=\Lambda^* u_{a,T}$$)$ is a non-singular $($singular$)$ solution to \eqref{oursystem}. 
		Roughly speaking, our results classify these solutions as the only possible expressions for nontrivial solutions to \eqref{oursystem}. 
	\end{remark}
	
	Now we will compare our results with their second order counterparts.
	One of the first results on the classification for solutions to second order critical equations dates back to the seminal work of L. A. Caffarelli et al. \cite{MR982351}. This challenging analysis for singular PDEs was motivated by the classical {\it Lane--Emden--Fowler equation} 
	\begin{equation}\label{lane-emden}
		-\Delta u=u^{s} \quad {\rm in} \quad \mathbb{R}^n\setminus\{0\},
	\end{equation}
	for $n\geqslant3$ and $s>1$, which models the distribution of mass density in spherical polytropic star in hydrostatic equilibrium \cite{MR0092663}. In addition, when $s=2^{*}-1$, where $2^{*}:=2n/(n-2)$ is the critical Sobolev exponent, \eqref{lane-emden} corresponds, up to a normalizing constant, to the conformally flat singular scalar curvature equation, a famous problem in differential geometry, which can be set as 
	\begin{equation}\label{secscalarsystem}
		-\Delta u =\frac{n(n-2)}{4}u^{2^{*}-1} \quad {\rm in} \quad \mathbb{R}^n\setminus\{0\}.
	\end{equation}
	It is well known that \eqref{secscalarsystem} is a particular case of the Yamabe problem on a non-compact complete Riemannian manifold $(M^n,g)$ with {\it simple structure at infinity}, that is, there exists $\widetilde{M}^n$ containing $M^n$ such that $M^n=\widetilde{M}\setminus \mathcal{Z}$, where $\mathcal{Z}$ is a closed subset called the {\it singular set} of $M^n$. Thus, this problem can be reduced to obtaining positive solution to the {\it singular Yamabe equation}
	\begin{equation}\label{yamabe}
		\begin{cases}
			-\Delta_g u+\frac{(n-2)}{4(n-1)}R_gu=\frac{n(n-2)}{4}u^{2^{*}-1} \quad {\rm on} \quad \widetilde{M}\setminus \mathcal{Z}\\
			\displaystyle\lim_{d_g(x,\mathcal{Z})\rightarrow0}u(x)=\infty,
		\end{cases}
	\end{equation}
	where $-\Delta_g$ is the Laplace--Beltrami operator and $R_g$ is the scalar curvature. 
	In this way, \eqref{secscalarsystem} is related to \eqref{yamabe} when $\widetilde{M}^n=(\mathbb{S}^{n-1},g_0)$ is the standard sphere with the round metric, and $\mathcal{Z}$ is a unique point.
	The geometric operator $L_g:=-\Delta_g+\frac{(n-2)}{4(n-1)}R_g$ on the left-hand side of \eqref{yamabe} is the so-called {\it conformal Laplacian}.
	
	The study of singular solutions to geometric equations like \eqref{yamabe} is related to the characterization of the size of the limit set of the image domain in the round sphere ($\mathbb{S}^{n},g_0)$ of the developing map for a locally conformally flat $n$-dimensional manifold \cite{MR929283,MR931204}. 
	Notice that conformal metrics $\bar{g}=u^{4/(n-2)}g_0$ with constant scalar curvature, are generated by using a positive solutions to \eqref{yamabe} as conformal factor. These conformal metrics are complete, whenever singular solutions to \eqref{yamabe} have a suitable blow-up rate close to the singular set. Then, for the geometrical point of view, to study the local behavior for singular solutions to \eqref{yamabe} near the singular set is essential to understand the asymptotic behavior of conformal metrics near the singular set.
	
	In \cite{MR982351}, using ODE methods, the whole set of solutions to \eqref{secscalarsystem} was classified. 
	More accurately, it was proved that if $u$ is a non-singular solution to \eqref{secscalarsystem}, then there exist $x_0\in\mathbb{R}^n$ and $\mu>0$ such that 
	\begin{equation}\label{secaubintalenti}
		u(x)=\left(\frac{2\mu}{1+\mu^2|x-x_0|^2}\right)^{\frac{n-2}{2}}.
	\end{equation}
	This classification result can be seen as a complement to the works of T. Aubin \cite{MR0448404} and G. Talenti \cite{MR0463908}. Moreover, they as well dealt with the case when the origin is a non-removable singularity, proving that if $u$ is a singular solution to \eqref{secscalarsystem}, there exist $a \in (0,[(n-2)/n]^{(n-2)/4}]$ and $T\in (0,T_a]$ such that
	\begin{equation}\label{secfowler}
		u(x)=|x|^{\frac{2-n}{2}}v_{a}(-\ln|x|+T),
	\end{equation}
	where $v_{a,T}$ is the unique $T$-periodic bounded solution to the following second order problem 
	\begin{equation}\label{secondorderfowler}
		\begin{cases}
			v^{(2)}-\displaystyle\frac{(n-2)^2}{4}v+\frac{n(n-2)}{4}v^{2^{*}-1}=0 \quad \mbox{in} \quad \mathbb{R},\\
			v(0)=a,\ v^{(1)}(0)=0,
		\end{cases}
	\end{equation}
	where $T_a\in\mathbb{R}$ is the fundamental period of $v_a$. In this situation, asymptotic properties for global solutions to \eqref{secondorderfowler} can be inferred using standard ODE methods, such as conservation of energy, phase-plane analysis and Floquet theory (or Bloch wave theory).
	
	Now let us consider nonnegative $p$-map solutions $\mathcal{U}=(u_1,\dots,u_p): \mathbb{R}^{n}\setminus\{0\}\rightarrow \mathbb{R}^p$ to the following critical second order Gross--Pitaevskii system extending \eqref{secscalarsystem},
	\begin{equation}\label{secondordersystem}
		-\Delta u_{i}=\frac{n(n-2)}{4}|\mathcal{U}|^{2^{*}-1}u_{i} \quad {\rm in} \quad \mathbb{R}^{n}\setminus\{0\}.
	\end{equation} 
	As in Remark~\ref{existence}, we observe that the existence of non-singular (singular) solutions to \eqref{secondordersystem} is a direct consequence of the existence results due to P.-L. Lions \cite{MR778970} (R. Fowler \cite{fowler}). Indeed, for every $\Lambda\in \mathbb{S}^{p-1}_+$ ($\Lambda^*\in\mathbb{S}^{p-1}_{+,*}$) unit vector with nonnegative (positive) coordinates and $u$ a non-singular (singular) solution to \eqref{secscalarsystem}, we have that $\mathcal{U}=\Lambda u$ is a non-singular (singular) solution to \eqref{secondordersystem}. Moreover, O. Druet, E. Hebey and J. V\'etois \cite[Proposition~1.1]{MR2558186} on System \eqref{secondordersystem} proved the Liouville-type theorem stated below. We also refer to \cite[Theorem~1]{MR2510000} for related results on integral systems with critical exponents.
	
	\begin{theoremletter}\label{theoremC}
		Let $\mathcal{U}$ be a nonnegative non-singular solution to \eqref{secondordersystem}. Then, $\mathcal{U}=\Lambda u$ for some $\Lambda\in\mathbb{S}^{p-1}_{+}$, where $u$ is given by \eqref{secaubintalenti}.
	\end{theoremletter}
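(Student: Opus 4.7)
The plan is to exploit the structural feature of the Gross--Pitaevskii coupling: each component equation in \eqref{secondordersystem} can be recast as a linear Schrödinger equation $-\Delta u_i = V u_i$ sharing a common nonnegative potential $V(x):=\frac{n(n-2)}{4}|\mathcal{U}(x)|^{2^{*}-2}$. Because of this common linearization, one expects the positive components to be pairwise proportional, reducing the system to the scalar case where Theorem~\ref{theoremC} follows from the classification \eqref{secaubintalenti}.

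First I would apply an elliptic bootstrap (Brezis--Kato, using $|\mathcal{U}|\in L^{2^*}(\mathbb{R}^n)$ so that $V\in L^{n/2}$) to deduce that each $u_i$ is bounded and smooth on all of $\mathbb{R}^n$, with sharp decay $u_i(x)\sim C_i|x|^{2-n}$ as $|x|\to\infty$ (via a Kelvin transform and fundamental-solution representation). Since $u_i\geqslant 0$ solves $-\Delta u_i=Vu_i$, the strong maximum principle yields the dichotomy: either $u_i\equiv 0$ or $u_i>0$ throughout $\mathbb{R}^n$. After relabeling, suppose $u_1,\dots,u_q$ are strictly positive and the remaining components vanish identically (the case $q=0$ being trivial).

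The heart of the argument is showing that for $i,j\in\{1,\dots,q\}$ the ratio $\phi_{ij}:=u_i/u_j$ is a positive constant. A direct computation from the common linear equation yields the divergence-form identity
\[
\dive\!\left(u_j^{2}\,\nabla\phi_{ij}\right)=0 \quad \text{in } \mathbb{R}^n,
\]
which is linear elliptic with no zeroth-order term. From the decay $u_i\sim C_i|x|^{2-n}$, the ratio $\phi_{ij}$ is smooth, positive, bounded, and admits the finite positive limit $C_i/C_j$ at infinity. If $\phi_{ij}$ were non-constant, its supremum or infimum would differ from this limit and hence be attained at some interior point, contradicting the strong maximum principle for the quotient equation.

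Once all ratios are proven constant, setting $u:=|\mathcal{U}|$ and $\Lambda:=\mathcal{U}/u$ produces a constant vector $\Lambda\in\mathbb{S}^{p-1}_{+}$ and $\mathcal{U}=\Lambda u$. Substituting back into \eqref{secondordersystem} shows that $u$ satisfies the scalar critical equation $-\Delta u=\frac{n(n-2)}{4}u^{2^{*}-1}$ in $\mathbb{R}^n$, and the Caffarelli--Gidas--Spruck classification \cite{MR982351} identifies $u$ with an Aubin--Talenti bubble \eqref{secaubintalenti}. The main obstacle is the quotient step: the weighted operator $L\phi=u_j^{-2}\,\dive(u_j^{2}\nabla\phi)$ is not uniformly elliptic at infinity because $u_j\to 0$, so a direct Moser-type Liouville theorem does not apply. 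The route above sidesteps this by first establishing the sharp pointwise decay of each $u_i$, which compactifies the asymptotics of $\phi_{ij}$ and restores the applicability of the strong maximum principle.
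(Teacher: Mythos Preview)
The paper does not give its own proof of Theorem~\ref{theoremC}; it is quoted as a background result from Druet--Hebey--V\'etois \cite[Proposition~1.1]{MR2558186}. Your outline is correct and is essentially the same quotient-function strategy used there and adapted in the present paper to the fourth-order analogue (Theorem~\ref{hebey-druet}\textcolor{blue}{'}): obtain the sharp decay $|x|^{n-2}u_i(x)\to C_i>0$ via the integral representation, and then force the ratios $u_i/u_j$ to be constant by a maximum-principle argument. Your divergence identity $\dive(u_j^{2}\nabla\phi_{ij})=0$ is the clean second-order substitute for the more elaborate bi-Laplacian manipulations in Claim~1 of the paper's Theorem~\ref{hebey-druet}\textcolor{blue}{'}; in both cases the crux is that the quotient satisfies an elliptic equation with no zeroth-order term, so the strong maximum principle applies once the behaviour at infinity is controlled. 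The only place where your sketch is compressed is the derivation of the exact asymptotic $u_i(x)\sim C_i|x|^{2-n}$: this requires first a sub-sharp decay (e.g.\ $u_i\leqslant C_\varepsilon|x|^{(2-n)(1-\varepsilon)}$, as in Step~2 of the paper's Claim~2) so that the Kelvin transform of the potential is locally bounded near the origin, after which the fundamental-solution representation yields the precise constant $C_i$.
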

	
	A natural question that arises is whether Theorem~\ref{theoremC} still holds in the singular case. Recently, R. Caju, J. M do \'O and A. Santos \cite[Theorem~1.2]{MR4002167} gave an affirmative answer for this.
	
	\begin{theoremletter}
		Let $\mathcal{U}$ be a nonnegative singular solution to \eqref{secondordersystem}. Then, $\mathcal{U}=\Lambda^* u$ for some $\Lambda^*\in\mathbb{S}^{p-1}_{+,*}$, where $u$ is given by \eqref{secfowler}.
	\end{theoremletter}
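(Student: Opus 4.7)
The strategy is to mirror, in the second order setting, the skeleton of the proof of Theorem~\ref{theorem2}: reduce the vectorial problem to the scalar singular Yamabe equation by forcing all components to be proportional, and then invoke the classification producing the expression \eqref{secfowler}. I would carry this out in three main steps: positivity of each component, simultaneous radial symmetry, and rigidity via a Wronskian-type conserved quantity.

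Rewriting each equation as $-\Delta u_i = V(x)\, u_i$ with $V(x) = \tfrac{n(n-2)}{4}|\mathcal{U}|^{2^{*}-2}\geqslant 0$, the strong maximum principle for $-\Delta$ forces each $u_i$ either to vanish identically or to be strictly positive on $\mathbb{R}^n \setminus \{0\}$; after discarding the trivial components I may assume $u_i > 0$ for every $i$. Because the nonlinearity is equivariant under Euclidean isometries acting diagonally on $\mathcal{U}$, I would then run the asymptotic moving planes scheme of Caffarelli--Gidas--Spruck simultaneously on all components with a single reflection parameter, concluding that each $u_i$ is radial about $0$ and radially decreasing.

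Writing $u_i(x) = |x|^{(2-n)/2} w_i(-\ln|x|)$, the system transforms into the autonomous Hamiltonian system
\begin{equation*}
w_i'' - \tfrac{(n-2)^2}{4}\, w_i + \tfrac{n(n-2)}{4}\, |W|^{2^{*}-2} w_i = 0, \qquad W = (w_1,\dots,w_p),
\end{equation*}
with positive bounded solutions. For $i\neq j$ I introduce the Wronskian-type quantity $J_{ij}(t) = w_i w_j' - w_i' w_j$; a direct computation using the equation yields $J_{ij}' \equiv 0$, so each $J_{ij}$ is a constant of motion. Combining $(w_i/w_j)'(t) = J_{ij}/w_j^2(t)$ with the Harnack lower bound $w_j \geqslant c > 0$ valid for positive bounded singular solutions, any nonzero value of $J_{ij}$ would force $w_i/w_j$ to be strictly monotone with derivative bounded away from zero, contradicting the boundedness of $w_i/w_j$. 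Hence $J_{ij} = 0$ for all $i,j$, the ratios $w_i/w_j$ are constant, and consequently $W(t) = \Lambda^* v(t)$ for some $\Lambda^* \in \mathbb{S}^{p-1}_{+,*}$; substituting back, $v$ solves the scalar Emden--Fowler ODE \eqref{secondorderfowler}, and the Caffarelli--Gidas--Spruck classification delivers \eqref{secfowler}.

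The main obstacle I expect is the simultaneous asymptotic moving planes step: one must uniformly control, across all components at once, the behavior of the $u_i$ both near the singularity (to set up the Kelvin transform and initiate the comparison) and near infinity (to slide the reflection hyperplane all the way across), while verifying that the coupled Gross--Pitaevskii nonlinearity preserves the sign of the reflected differences under a single sliding parameter. A secondary technical point is the uniform lower bound on each $w_i$ needed to close the proportionality argument, which relies on a Harnack-type inequality for $-\Delta + V$ with $V \in L^{n/2}_{\mathrm{loc}}$ applied with care in a neighborhood of the singular point.
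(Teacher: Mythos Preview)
The paper does not prove this statement; it is quoted as \cite[Theorem~1.2]{MR4002167} (Caju--do~\'O--Santos), so there is no in-paper proof to compare against. That said, your proposed route is essentially the one used in that reference and parallels the paper's own argument for the fourth order analogue (Theorem~\ref{theorem2}): weak positivity via the maximum principle, asymptotic moving planes to force radial symmetry, passage to the Emden--Fowler ODE system, and then a rigidity argument to make all components proportional. Your Wronskian device $J_{ij}=w_iw_j'-w_i'w_j$ is a genuine second order shortcut---it has no direct fourth order analogue, which is why the paper in Section~\ref{section4} has to go through the Pohozaev invariant and a fourth order Cauchy problem for the quotient instead.

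There is, however, a small logical slip in how you close the Wronskian step. From $(w_i/w_j)'=J_{ij}/w_j^2$, a \emph{lower} bound $w_j\geqslant c$ gives only $|(w_i/w_j)'|\leqslant |J_{ij}|/c^2$, which is the wrong direction; it does not force unboundedness. What you actually need is the \emph{upper} bound $w_j\leqslant C$, which yields $(w_i/w_j)'\geqslant J_{ij}/C^2$ for $J_{ij}>0$, so $w_i/w_j$ grows at least linearly as $t\to+\infty$ and decreases at least linearly as $t\to-\infty$, contradicting $w_i/w_j>0$ on all of $\mathbb{R}$. The upper bound $|W|\leqslant C$ is exactly the CGS estimate $|\mathcal U(x)|\leqslant C|x|^{(2-n)/2}$, available immediately after the moving planes step and \emph{before} any lower bound is known. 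This not only fixes the inequality but also dissolves the circularity you flagged at the end: you never need the Harnack lower bound on the components, and in fact the proportionality $w_i=\lambda_i w_j$ is what ultimately proves that every nontrivial component inherits the singular behaviour (strong positivity), rather than the other way around.
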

	
	Strongly coupled fourth order systems appear in several important branches of mathematical physic. For instance, in hydrodynamics, for modeling the behavior of deep-water and Rogue waves in the ocean \cite{lo-mei}. 
	Also, in the Hartree--Fock theory for Bose--Einstein double condensates \cite{MR2040621}.
	Moreover, in conformal geometry, \eqref{scalarsystem} is the limit equation of the conformally constant $Q$-curvature problem. 
	Hence, in the same way of the singular Yamabe problem, solutions to \eqref{scalarsystem} give rise to complete conformal metrics with a constant $Q$-curvature. 
	For more details on the $Q$-curvature problem and some applications, see, for instance, \cite{MR3618119}.
	Motivated by its applications in nonlinear analysis, minimal surface theory, and differential geometry, classification for singular solutions to PDEs has been a topic of intense study in recent years. 
	There exists a vast literature for problems this arising in conformal geometry. 
	For instance, in prescribing different types of curvature in differential geometry, such as the higher order $Q$-curvature, the fractional curvature, and the $\sigma_k$-curvature.
	
	The primary sources of difficulties in seeking qualitative properties for fourth order systems like \eqref{oursystem} are the lack of maximum principle and the failure of truncation methods provoked by the fourth order operator on the left-hand side of \eqref{oursystem}, the coupled setting caused by the Gross--Pitaevskii nonlinearity in the right-hand side of \eqref{oursystem}.
	In both theorems, we study the PDE or ODE satisfied by the quotient of any two strictly positive components.
	This requires significant technical manipulations with higher derivatives of quotients.
	In the non-singular case, we obtain a strong Liouville-type for solutions of the linear fourth order equation satisfied by the quotient of components.
	
	The proof of Theorems~\ref{theorem1} uses the moving sphere technique for each component. 
	The main difficulty is that due to the Gross--Pitaevskii nonlinearity, it may occur that the process does not hold for some components, which we prove is not the case.
	Our technique relies on recovering regularity and superharmonicity properties for each component solution, based on a comparison with the norm of the vectorial solution.
	Another way to prove this theorem is to prove that the classification holds for weak solutions and that classical solutions satisfy an estimate of the $L^2$-norm of its Laplacian.
	Theorem \ref{theorem2} is proved using the moving planes technique, which shows that all components solutions are rotationally invariant and radially monotonically decreasing.
	The first step in our argument uses the fact that any component solution cannot vanish unless it is identically zero.
	The second step is the analysis of the Pohozaev invariant, which provides a removable singularity classification theorem.
	In this case, we can prove that all components are strictly positive and blow-up at the origin with the same prescribed asymptotic rate.
	
	Here is a brief description of our plan for the remaining part of this paper.
	In Section~\ref{section2}, we summarize some basic definitions. 
	In Section~\ref{section3}, we  prove that solutions to \eqref{oursystem} are non-singular and weakly positive. 
	Also, we show that Theorem~\ref{theorem1} holds for weak solutions to \eqref{oursystem}. 
	Hence, we apply a moving spheres method to prove the classification for each component.
	Using the classification form the norm of a vectorial solution we
	show that classical solutions are weak solutions as well. 
	Hence, a direct integral method is used to given an alternative proof of the classification in Theorem~\ref{theorem1} for weak solutions.
	We also prove that solutions from Theorem~\ref{theorem1} are extremal functions for a Sobolev embedding theorem. 
	In Section~\ref{section4}, we obtain that singular solutions are as well classical. Thus, we employ an asymptotic moving planes method to show they are rotationally invariant about the origin. Therefore, on the singular case \eqref{oursystem} is equivalent to a fourth order ODE system in the real line. 
	In this direction, we use its Hamiltonian energy to define a suitable Pohozaev-type invariant. Finally, we perform a delicate ODE analysis to prove a removable-singularity classification for solutions to \eqref{oursystem} based on the Pohozaev invariant sign. 
	Then, as a direct consequence, we give the proof of Theorem~\ref{theorem2}.

Here is a brief description of our plan for the remaining part of this paper.
In Section~\ref{section2}, we summarize some basic definitions. 
In Section~\ref{section3}, we  prove that solutions to \eqref{oursystem} are non-singular and weakly positive. 
In addition, we show that Theorem~\ref{theorem1} holds for weak solutions to \eqref{oursystem}. 
Hence, we apply a moving spheres method to prove the classification for each component.
Using the classification form the norm of a vectorial solution we
show that classical solutions are weak solutions as well. 
Hence, a direct integral method is used to given an alternative proof of the classification in Theorem~\ref{theorem1} for weak solutions.
Besides, we also prove that solutions from Theorem~\ref{theorem1} are extremal functions for a Sobolev embedding theorem. 
In Section~\ref{section4}, we obtain that singular solutions are as well classical. Thus, we employ an asymptotic moving planes method to show they are rotationally invariant about the origin. Therefore, on the singular case \eqref{oursystem} is equivalent to a fourth order ODE system in the real line. 
In this direction, we use its Hamiltonian energy to define a suitable Pohozaev-type invariant. Finally, we perform a delicate ODE analysis to prove a removable-singularity classification for solutions to \eqref{oursystem} based on the sign of the Pohozaev invariant. 
Then, as a direct consequence, we give the proof of Theorem~\ref{theorem2}.

\section{Preliminaries}\label{section2}
We set some background definitions and tools that will be used later in this text. 
First, we define some standard concepts for elliptic systems.
Second, we introduce the vectorial fourth order versions of the Kelvin transform and the cylindrical transformation, which will be used in the sequel to run the sliding techniques.

\subsection{Basic definitions}
Let us introduce some basic definitions used in the remaining part of the text. 
Here and subsequently, we always deal with non-trivial nonnegative solutions $\mathcal{U}$ of \eqref{oursystem}, that is, $u_i \geqslant 0$ for all $i\in I$ and $|\mathcal{U}|\not\equiv0$, where we recall the notation $I=\{1,\dots,p\}$.
We split the index set $I$ into two parts $I_0=\{i\in I : u_i\equiv0\}$ and $I_{+}=\{i\in I : u_i>0\}$. 
Then, following standard notation for elliptic systems, we divide solutions to \eqref{oursystem} into two types.
\begin{definition}
	Let $\mathcal{U}$ be a nonnegative solution to \eqref{oursystem}. We call $\mathcal{U}$ {\it strongly positive} if $I_+=I$. On the other hand, when $I_0\neq\emptyset$, we say that $\mathcal{U}$ is {\it weakly positive}. 
\end{definition}

\begin{remark}
	For the proof of Theorems~\ref{theorem1} and \ref{theorem2}, it is crucial to show that solutions to \eqref{oursystem} are weakly positive. 
	We need to guarantee that nontrivial solutions to \eqref{oursystem} do not develop zeros in the domain. 
	Namely, our strategy is to prove that the so-called {\it quotient function} $q_{ij}=u_i/u_j$ is constant for all $i,j\in I_+$. First, for the quotient to be well defined, the denominator must be strictly positive. 
	Notice that contrary to the case $p=1$, nonnegative solutions to some inhomogeneous elliptic coupled systems are not necessarily weakly positive, and thus not strongly positive as well. 
\end{remark}

When $\liminf_{|x|\rightarrow0}|\mathcal{U}(x)|=\infty$, we call $\mathcal{U}$ a {\it singular solution}. 
In this case, some components might develop a non-removable singularity at the origin.  
We will divide singular solutions into two classes.
Namely, a solution to \eqref{oursystem}  is called {\it fully-singular}, if the origin is a {\it non-removable singularity} for all component solution $u_i$. Otherwise, we say that $\mathcal{U}$ is {\it semi-singular}. 
More precisely, we present the following definitions.

\begin{definition}
	For $\mathcal{U}$ a nonnegative singular solution to \eqref{oursystem}, let us define its blow-up set by $I_{\infty}=\{i\in I: \liminf_{|x| \rightarrow 0 }u_i(|x|)=\infty\}$.
\end{definition}

It is easy to observe that $\mathcal{U}$ being a singular solution to \eqref{oursystem} is equivalent to $I_{\infty}\neq \emptyset$. 
Hence, in terms of the blow-up set's cardinality, we divide singular solutions to \eqref{oursystem} as follows.

\begin{definition}\label{singset}
	Let $\mathcal{U}$ be a nonnegative singular solution to \eqref{oursystem}. 
	We say that $\mathcal{U}$ is {\it fully-singular} if $I_{\infty}=I$. Otherwise, if $ I_{\infty} \neq I $ we call $\mathcal{U}$ {\it semi-singular}.
\end{definition}

\begin{definition}
	Let $\Omega=\mathbb{R}^n$ $($$\Omega=\mathbb{R}^n\setminus\{0\}$ be the punctured space$)$ be the whole space, and $\mathcal{U}$ be a nonnegative non-singular $($singular$)$ solution to \eqref{oursystem}. We say that $\mathcal{U}$ is a {\it weak solution}, if it belongs to $\mathcal{D}^{2,2}(\Omega,\mathbb{R}^p)$ and satisfies \eqref{oursystem} in the weak sense, that is, for all nonnegative $\Phi\in  C^{\infty}_c(\Omega,\mathbb{R}^p)$, one has 
	\begin{equation}\label{weakformulattion}
		\displaystyle\int_{\mathbb{R}^n}\Delta u_i\Delta \phi_i \ud x=\displaystyle c(n)\int_{\mathbb{R}^n}|\mathcal{U}|^{2^{**}-2}u_{i}\phi_i \ud x.
	\end{equation}
	Here $\mathcal{D}^{2,2}(\Omega,\mathbb{R}^p)$ is the classical Beppo--Levi space, defined as the completion of the space of compactly supported smooth $p$-maps, denoted by $C^{\infty}_{c}(\Omega,\mathbb{R}^p)$, under the Dirichlet norm $\|\mathcal{U}\|_{\mathcal{D}^{2,2}(\Omega,\mathbb{R}^p)}^2=\sum_{i=1}^p\|\Delta u_i\|_{L^{2}(\Omega)}^2$.
\end{definition}

\begin{remark}\label{classical/weak}
	In what follows, we use classical regularity theory to prove that any weak non-singular $($singular$)$ solution to \eqref{oursystem} is also a classical non-singular $($singular$)$ solution.
	Since we are working on unbounded domains, it is not direct, though, to verify that classical solutions to \eqref{oursystem} are also weak. 
	In general, it is true that, by the Green identity, classical solutions $\mathcal{U}\in C^{4,\zeta}(\Omega,\mathbb{R}^p)$ also satisfy \eqref{weakformulattion}.  Nevertheless, to show that $\mathcal{U}\in\mathcal{D}^{2,2}(\Omega,\mathbb{R}^p)$ is an entire solution to \eqref{oursystem}, one needs to prove some suitable decay at both the origin and infinity.
\end{remark}

\subsection{Kelvin transform}\label{subsec:kelvintransfform}
We define some type of transform suitable to explore the symmetries of \eqref{oursystem}, which is called the {\it fourth order Kelvin transform} of a $p$-map. 
The Kelvin transform is a device to extend the concept of harmonic (superharmonic or subharmonic) functions by allowing the definition of a function which is harmonic (superharmonic or subharmonic) at infinity. 
This map is a key ingredient for developing a sliding method, namely the moving spheres or the moving planes techniques. 

For $\Omega=\mathbb{R}^n$ or $\Omega=\mathbb{R}^n\setminus\{0\}$, we define the Kelvin transform. 
To this end, for given $x_0\in\mathbb{R}^n$ and $\mu>0$, we need to establish the concept of {\it inversion about a sphere} $\partial B_{\mu}(x_0)$, which is a map $\mathcal{I}_{x_0,\mu}:\Omega\rightarrow\Omega_{x_0,\mu}$ given by $\mathcal{I}_{x_0,\mu}(x)=x_0+K_{x_0,\mu}(x)^2(x-x_0)$, where $K_{x_0,\mu}(x)=\mu/|x-x_0|$ and $\Omega_{x_0,\mu}:=\mathcal{I}_{x_0,\mu}(\Omega)$ is the domain of the Kelvin transform.
In particular, when $x_0=0$ and $\mu=1$, we denote it simply by $\mathcal{I}_{0,1}(x)=x^{*}$ and $K_{0,1}(x)=x|x|^{-2}$.

\begin{definition}
	For any $\mathcal{U}:\Omega\rightarrow\mathbb{R}^p$, let us consider the fourth order Kelvin transform about the sphere with center at $x_0\in\mathbb{R}^n$ and radius $\mu>0$ defined on $\mathcal{U}_{x_0,\mu}:\Omega_{x_0,\mu}\rightarrow\mathbb{R}^p$ by 
	\begin{equation*}
		\mathcal{U}_{x_0,\mu}(x)=K_{x_0,\mu}(x)^{n-4}\mathcal{U}\left(\mathcal{I}_{x_0,\mu}(x)\right).
	\end{equation*}
	In particular, when $p=1$ we set the notation $u_{x_0,\mu}$.
\end{definition}

Now we need to understand how \eqref{oursystem} behaves under the Kelvin transform's action.

\begin{proposition}\label{confinva}
	System \eqref{oursystem} is conformally invariant, in the sense that it is invariant under the action of Kelvin transform, {\it i.e.}, if $\mathcal{U}$ is a non-singular solution to \eqref{oursystem}, then  $\mathcal{U}_{x_0,\mu}$ is a solution to 
	\begin{equation}\label{revised}
		\Delta^{2}(u_{i})_{x_0,\mu}=c(n)|\mathcal{U}_{x_0,\mu}|^{2^{**}-2}(u_{i})_{x_0,\mu} \quad {\rm in} \quad \mathbb{R}^{n}\setminus\{x_0\},
	\end{equation}
	where $\mathcal{U}_{x_0,\mu}=((u_1)_{x_0,\mu},\dots,(u_p)_{x_0,\mu})$.
\end{proposition}

\begin{proof}
	For all $x\in\mathbb{R}^n\setminus\{x_0\}$, let us recall the formulas below
	\begin{equation}\label{laplaciankelvin}
		\Delta u_{x_0,\mu}(x)=K_{x_0,\mu}(x)^{n+2}\Delta u\left(\mathcal{I}_{x_0,\mu}(x)\right)=K_{x_0,\mu}(x)^{4}(\Delta u)_{x_0,\mu}(x)
	\end{equation}
	and
	\begin{equation}\label{biharmonickelvin}
		\Delta^2 u_{x_0,\mu}(x)=K_{x_0,\mu}(x)^{n+4}\Delta^{2}u\left(\mathcal{I}_{x_0,\mu}(x)\right)=K_{x_0,\mu}(x)^{8}(\Delta^2 u)_{x_0,\mu}(x).
	\end{equation}
	Next, expanding the right-hand side of \eqref{revised}, we observe
	\begin{equation}\label{honduras}
		|\mathcal{U}_{x_0,\mu}(x)|^{2^{**}-2}(u_{i})_{x_0,\mu}=K_{x_0,\mu}(x)^{n+4}|\mathcal{U}(x)|^{2^{**}-2}u_i(x).
	\end{equation}
	Therefore, the proof of the proposition follows by a combination of \eqref{biharmonickelvin} and \eqref{honduras}.
\end{proof}

\begin{remark}
	Proposition~\ref{confinva} is not a surprising conclusion since the Gross--Pitaevskii-type nonlinearity preserves the same {\it conformal invariance} enjoyed by the scalar case. Namely, in the case $p=1$, \eqref{scalarsystem} is invariant under the conformal euclidean group's action.
\end{remark}

\subsection{Cylindrical transformation}\label{sec:cylindricaltransform}
Let us introduce the so-called {\it cylindrical transformation} \cite{MR1794994}. 
Using this device, we convert singular solutions to \eqref{oursystem} in the punctured space into non-singular solutions in a cylinder. In fact, considering {\it spherical coordinates} denoted by $(r,\sigma)$, we can rewrite \eqref{oursystem} as the nonautonomous nonlinear system,
\begin{equation*}
	\Delta^2_{\rm sph}u_i=c(n)|\mathcal{U}|^{2^{**}-2}u_i \quad {\rm in} \quad {\mathcal{C}}_0.
\end{equation*}
Here ${\mathcal{C}}_0:=(0,\infty)\times\mathbb{S}^{n-1}$ is the cylinder and $\Delta^2_{\rm sph}$ is the bi-Laplacian in spherical coordinates given by
\begin{align}\label{Pdespherical}
	\Delta^2_{\rm sph}&=\partial_r^{(4)}+ \frac{2(n-1)}{r}\partial_r^{(3)}+\frac{(n-1)(n-3)}{r^2}\partial_r^{(2)}-\frac{(n-1)(n-3)}{r^3}\partial_r&\\\nonumber
	&+\frac{1}{r^4}\Delta_{\sigma}^2+\frac{2}{r^2}\partial^{(2)}_r\Delta_{\sigma}+\frac{2(n-3)}{r^3}\partial_r\Delta_{\sigma}-\frac{2(n-4)}{r^4}\Delta_{\sigma},&
\end{align}
where $\Delta_{\sigma}$ denotes the Laplace--Beltrami operator in $\mathbb{S}^{n-1}$.
Moreover, the vectorial Emden--Fowler change of variables (or logarithm coordinates) given by $\mathcal{V}(t,\theta)=r^{\gamma}\mathcal{U}(r,\sigma)$, where $r=|x|$, $t=-\ln r$, $\sigma=\theta=x/|x|$, and $\gamma=({n-4})/{2}$ is the {\it Fowler rescaling exponent}, sends the problem to the entire cylinder $\mathcal{C}_{\infty}=\mathbb{R}\times \mathbb{S}^{n-1}$. 

In the geometric setting, this change of variables corresponds to the conformal diffeomorphism between the cylinder $\mathcal{C}_{\infty}$ and the punctured space $\varphi:(\mathcal{C}_{\infty},g_{{\rm cyl}})\rightarrow(\mathbb{R}^n\setminus\{0\},\delta_0)$ defined by $\varphi(t,\sigma)=e^{-t}\sigma$. Here $g_{{\rm cyl}}=\ud t^2+\ud\sigma^2$ stands for the cylindrical metric with ${\ud\theta}=e^{-2t}(\ud t^2+\ud\sigma^2)$ its volume element obtained via the pullback $\varphi^{*}\delta_0$, where $\delta_0$ is the standard flat metric. Using this coordinate system, and performing a lengthy computation, we arrive at the following fourth order nonlinear PDE on the cylinder,
\begin{equation}\label{sphevectfowlersystem}
	\Delta^2_{\rm cyl}v_i=c(n)|\mathcal{V}|^{2^{**}-2}v_i \quad {\rm on} \quad {\mathcal{C}_{\infty}}.
\end{equation}
Here $\mathcal{V}=(v_1,\dots,v_p)$ and $\Delta^2_{\rm cyl}$ is the bi-Laplacian in cylindrical coordinates given by
\begin{equation*}
	\Delta^2_{\rm cyl}=\partial_t^{(4)}-K_2\partial_t^{(2)}+K_0+\Delta_{\theta}^{2}+2\partial_t^{(2)}\Delta_{\theta}-J_0\Delta_{\theta},
\end{equation*}
where $K_0,K_2,J_0$ are constants depending only in the dimension defined by
\begin{equation}\label{coefficients}
	K_0=\frac{n^2(n-4)^2}{16}, \quad K_2=\frac{n^2-4n+8}{2} \quad {\rm and} \quad J_0=\frac{n(n-4)}{4}.
\end{equation}
Along this lines let us consider the cylindrical transformation of a $p$-map as follows
\begin{equation*}
	\mathfrak{F}:C_c^{\infty}(\mathbb{R}^n\setminus\{0\},\mathbb{R}^p)\rightarrow C_c^{\infty}(\mathcal{C}_{\infty},\mathbb{R}^p)\quad \mbox{given by} \quad
	\mathfrak{F}(\mathcal{U})=r^{\gamma}\mathcal{U}(r,\sigma).
\end{equation*}

\begin{remark}
	The transformation $\mathfrak{F}$ is a continuous bijection with respect to the Sobolev norms $\|\cdot\|_{\mathcal{D}^{2,2}(\mathbb{R}^n\setminus\{0\},\mathbb{R}^p)} \ {\rm and} \ \|\cdot\|_{H^{2}(\mathcal{C}_{\infty},\mathbb{R}^p)}$, respectively. Furthermore, this transformation sends singular solutions to \eqref{oursystem} into solutions to \eqref{sphevectfowlersystem}, and, by density, $\mathfrak{F}:\mathcal{D}^{2,2}(\mathbb{R}^n\setminus\{0\},\mathbb{R}^p)\rightarrow H^{2}(\mathcal{C}_{\infty},\mathbb{R}^p)$.
\end{remark}

\begin{remark}
	Our choice for the symbol $\Delta^2_{\rm cyl}=\Delta^2_{\rm sph}\circ\mathfrak{F}^{-1}$ is an abuse of notation, since the cylindrical background metric is not flat, we should have $P_{\rm cyl}=\Delta_{\rm sph}^2\circ\mathfrak{F}^{-1}$ $($resp. $\widetilde{P}_{\rm cyl}=\Delta_{\rm sph}^2\circ\widetilde{\mathfrak{F}}^{-1}$$)$, where $P_{\rm cyl}$ stands for the Paneitz--Branson operator of this metric in the new logarithmic cylindrical coordinate system.
\end{remark}

\section{Liouville-type theorem for non-singular solutions}\label{section3}
This section is devoted to present the proof of Theorem~\ref{theorem1}. 
Using the regularity lifting theorem based on \cite{MR1338474}, we aim to obtain regularity results for solutions to \eqref{oursystem} with a removable singularity at the origin. 
Hence, employing an iteration argument from \cite{MR1769247}, we show that non-singular solutions to \eqref{oursystem} are weakly positive. 
Then, we perform a moving spheres technique from O. Druet et al. \cite{MR2558186} and Y. Li and L. Zhang \cite{MR2001065} to obtain that non-singular solutions to \eqref{oursystem} are rotationally invariant about some point. 
This argument provides as a by-product an estimate for the Sobolev norm of solutions to \eqref{oursystem}, yielding that classical solutions to \eqref{oursystem} are also weak (see Remark~\ref{classical/weak}). 
Adopting a variational technique from O. Druet and E. Hebey \cite{MR2603801}, we prove that the Liouville-type result holds for weak solutions to \eqref{oursystem}. 
Finally, as an application of our main result, we show that non-singular solutions to \eqref{oursystem} are indeed extremal maps for the Sobolev embedding of the space $\mathcal{D}^{2,2}(\mathbb{R}^n,\mathbb{R}^p)$ into $L^{2^{**}}(\mathbb{R}^n,\mathbb{R}^p)$.  

Since the origin is a removable singularity, System \eqref{oursystem} can be modeled in the entire space, in the sense that solutions can be smoothly extended to be defined in $\mathbb{R}^n$. In this situation, \eqref{oursystem} is reduced to 
\begin{equation}\label{regularsystem}
\Delta^{2}u_i=c(n)|\mathcal{U}|^{2^{**}-2}u_i \quad {\rm in} \quad \mathbb{R}^n.
\end{equation}
Subsequently, the idea is to provide some properties for solutions to \eqref{regularsystem} by writing this system as a nonlinear fourth order Schr\"{o}dinger equation
with potential $V: \mathbb{R}^n\rightarrow\mathbb{R}$ defined by
\begin{equation*}
V(x)=c(n)|\mathcal{U}(x)|^{2^{**}-2}.
\end{equation*} 

\subsection{Regularity}
We prove that weak solutions to \eqref{oursystem} are as well as classical solutions. 
We should mention that De Giorgi--Nash--Moser bootstrap techniques combined with the Br\'{e}zis-Kato method are standard strategies to produce regularity results for second order elliptic PDEs involving critical growth. 
Unfortunately, this tool does not work in our critical fourth order setting. 
More precisely, the nonlinearity on the right-hand side of \eqref{oursystem} has critical growth, so $|\mathcal{U}|^{2^{**}-2}u_i\in L^{{2n}/{(n+4)}}(\mathbb{R}^n)$. 
Notice that we cannot conclude, using the Sobolev embedding theorem, that $|\mathcal{U}|^{2^{**}-2}u_i$ belongs to $ L^{q}(\mathbb{R}^n)$ for some $q>{2n}/{(n+4)}$ and any $i\in I$. 
We can overcome this lack of integrability by applying the lifting method due to W. Chen and C. Li \cite{MR1338474}.

\begin{propositionletter}\label{reglift}
	Let $Z$ be a Hausdorff topological space, $\|\cdot\|_{X},\|\cdot\|_{Y}:Z\rightarrow[0,\infty]$ be extended norms in $Z$ and $X,Y$ be subspaces defined by $X=\{z \in Z: \|z\|_{X}<\infty\}$ and $Y=\{z \in Z: \|z\|_{Y}<\infty\}$. 
	Suppose that $T$ is a contraction map from $X$ into itself and from $Y$ into itself, and that for $u\in X$, there exists $\widetilde{u}\in X\cap Y$ such that $u=Tu+\widetilde{u}$. 
	Then, $u\in X\cap Y$.
\end{propositionletter}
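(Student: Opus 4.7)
The plan is to exploit the Banach fixed point theorem twice — once in $X$ and once in $Y$ — applied to the affine map $Sv := Tv + \widetilde{u}$, and then use the Hausdorff property of $Z$ to identify the two fixed points that are produced.

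First, I would run the Picard iteration starting from $\widetilde{u}$: set $u_0 = \widetilde{u}$ and $u_{k+1} = Tu_k + \widetilde{u}$. Since by hypothesis $\widetilde{u} \in X \cap Y$ and $T$ maps each of $X$ and $Y$ into itself, an immediate induction shows $u_k \in X \cap Y$ for every $k \geqslant 0$. A standard telescoping argument using the contraction constants of $T$ on $X$ and on $Y$ then shows that $(u_k)$ is Cauchy simultaneously in the $\|\cdot\|_X$-norm and in the $\|\cdot\|_Y$-norm.

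Next, by completeness associated with each contraction, there exist $u^{*} \in X$ and $v^{*} \in Y$ with $u_k \to u^{*}$ in $X$ and $u_k \to v^{*}$ in $Y$, and by passing to the limit in the Picard recursion both satisfy $u^{*} = Tu^{*} + \widetilde{u}$ and $v^{*} = Tv^{*} + \widetilde{u}$. Since $T$ is a contraction on $X$, the fixed-point equation $w = Tw + \widetilde{u}$ admits at most one solution in $X$; together with the hypothesis $u = Tu + \widetilde{u}$ and $u \in X$, this forces $u = u^{*}$. The only thing left is the identification $u^{*} = v^{*}$ inside $Z$, which then yields $u = v^{*} \in Y$ and therefore $u \in X \cap Y$, as claimed.

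The main — and only genuinely subtle — obstacle is this final identification, since a priori the limits $u^{*}$ and $v^{*}$ are produced in two different topologies. This is precisely where the Hausdorff hypothesis on the ambient space $Z$ enters: the $\|\cdot\|_X$- and $\|\cdot\|_Y$-topologies are implicitly finer than the topology of $Z$, so both convergences $u_k \to u^{*}$ and $u_k \to v^{*}$ take place in $Z$, and Hausdorffness forces the two limits to coincide. In the concrete applications of this proposition to System~\eqref{oursystem} this compatibility is transparent, because $Z$ will be a space of measurable functions and convergence in each of the two Lebesgue-type norms will yield a.e.\ convergence of a common subsequence, making the identification automatic.
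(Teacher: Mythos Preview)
The paper does not supply its own proof of this proposition: it is quoted verbatim from Chen and Li \cite{MR1338474} as a black-box tool (note the \texttt{propositionletter} environment, used throughout for cited results), and is then applied in the next proposition. So there is no ``paper's proof'' to compare against.

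Your argument is the standard one for this regularity-lifting lemma, and the overall strategy --- Picard iteration for the affine map $Sv=Tv+\widetilde{u}$ run simultaneously in $X$ and in $Y$, followed by identification of the two limits --- is correct. Two hypotheses that you use are not actually written in the statement as the paper quotes it: completeness of $(X,\|\cdot\|_X)$ and $(Y,\|\cdot\|_Y)$, which you need for the Cauchy sequences to converge; and compatibility of the two norm topologies with the ambient topology of $Z$, which you need for the Hausdorff identification $u^{*}=v^{*}$. You explicitly flag the second point and correctly note that in the concrete Lebesgue-space application both issues are trivial. These are in fact standing assumptions in the Chen--Li framework, so your proof is correct for the result as it is actually stated and used there; the paper's abbreviated restatement simply omits them.
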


In the next step, we apply Proposition~\ref{reglift} to show that it is possible to improve the Lebesgue class in which solutions to \eqref{regularsystem} lie. 
Here our strategy is to prove that they indeed belong to the Lebesgue space $L^{s}(\mathbb{R}^n,\mathbb{R}^p)$ for any $s>2^{**}$. 

\begin{proposition}\label{prop:bootstrap}
	Let $\mathcal{U}\in\mathcal{D}^{2,2}(\mathbb{R}^n,\mathbb{R}^p)$ be a nonnegative weak non-singular solution to \eqref{regularsystem}. 
	Then, $\mathcal{U}\in L^{s}(\mathbb{R}^n,\mathbb{R}^p)$ for all $s>2^{**}$.
\end{proposition}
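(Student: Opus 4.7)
The plan is to realize each component $u_i$ as a perturbation, by a better-integrable correction, of a fixed point of a contraction operator on $L^q$, and then invoke the Chen--Li lifting principle (Proposition~\ref{reglift}) with $X=L^{2^{**}}(\mathbb{R}^n)$ and $Y=L^q(\mathbb{R}^n)$ for suitably chosen $q>2^{**}$. The starting point is the Riesz-potential representation
\begin{equation*}
u_i(x) = \gamma_n \int_{\mathbb{R}^n} \frac{c(n)|\mathcal{U}(y)|^{2^{**}-2}u_i(y)}{|x-y|^{n-4}}\,\ud y, \qquad i\in I,
\end{equation*}
where $\gamma_n|x|^{4-n}$ is the fundamental solution of $\Delta^2$ on $\mathbb{R}^n$. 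The right-hand side is a well-defined element of $L^{2^{**}}$ by the Sobolev embedding $\mathcal{D}^{2,2}\hookrightarrow L^{2^{**}}$ combined with Hardy--Littlewood--Sobolev, and the difference between $u_i$ and this integral is a biharmonic function of class $L^{2^{**}}(\mathbb{R}^n)$, hence identically zero by Liouville. Setting $V(x):=c(n)|\mathcal{U}(x)|^{2^{**}-2}$, the criticality of the exponent yields $V\in L^{n/4}(\mathbb{R}^n)$ since $(2^{**}-2)\cdot n/4=2^{**}$.

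For a threshold $A>0$ to be fixed below, split $V=V^A+V_A$ with $V^A:=V\chi_{\{V>A\}}$ and $V_A:=V\chi_{\{V\leqslant A\}}$, and define
\begin{equation*}
(Tw)(x) := \gamma_n\int_{\mathbb{R}^n}\frac{V^A(y)w(y)}{|x-y|^{n-4}}\,\ud y, \qquad \tilde{u}_i(x) := \gamma_n\int_{\mathbb{R}^n}\frac{V_A(y)u_i(y)}{|x-y|^{n-4}}\,\ud y,
\end{equation*}
so that $u_i = Tu_i + \tilde{u}_i$. Combining H\"{o}lder with the $L^r\to L^q$ boundedness of the Riesz potential of order four (for $1<r<n/4$ and $1/q=1/r-4/n$) yields $\|Tw\|_{L^q}\leqslant C(n,q)\|V^A\|_{L^{n/4}}\|w\|_{L^q}$ for every $q>n/(n-4)$, with constant independent of $A$. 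Since $\|V^A\|_{L^{n/4}}\to 0$ as $A\to\infty$ by absolute continuity of the integral, $T$ can be made a contraction on both $X$ and $Y$ simultaneously by choosing $A$ sufficiently large.

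For the correction $\tilde{u}_i$, the bounds $V_A\in L^{n/4}(\mathbb{R}^n)$ and $\|V_A\|_{L^\infty}\leqslant A$ interpolate to place $V_A$ in every $L^p(\mathbb{R}^n)$ with $p\in[n/4,\infty]$; H\"{o}lder with $u_i\in L^{2^{**}}$ then gives $V_Au_i\in L^r$ for every $r\in[2n/(n+4),2^{**}]$. A second application of the Riesz-potential bound shows $\tilde{u}_i\in L^q$ whenever $q$ is associated via $1/q=1/r-4/n$ with some $r$ in this interval that also satisfies $r<n/4$; a direct calculation confirms that for every $n\geqslant 5$ the condition $2n/(n+4)<n/4$ holds, so this set of admissible $q$ contains values strictly larger than $2^{**}$ and yields some $q_1>2^{**}$ with $\tilde{u}_i\in X\cap Y$. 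Proposition~\ref{reglift} then promotes $u_i$ to $L^{2^{**}}\cap L^{q_1}$. Iterating the same procedure with the improved integrability in place of $L^{2^{**}}$ enlarges the admissible range of $r$ at each stage, and a standard bootstrap (each step decreases $1/q_k$ by $4/n$ as long as it remains positive) reaches any prescribed $s>2^{**}$ in finitely many passes.

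The main obstacle is the first step: extracting the integral identity from only $\mathcal{D}^{2,2}$-regularity requires ruling out a nontrivial biharmonic remainder at infinity, for which the Liouville theorem for biharmonic functions of finite $L^{2^{**}}$-energy, together with the a priori membership of the Riesz potential in $L^{2^{**}}$, is decisive. Every subsequent step reduces to careful Hardy--Littlewood--Sobolev bookkeeping combined with the fact that $\|V^A\|_{L^{n/4}}$ can be rendered arbitrarily small by taking $A$ large.
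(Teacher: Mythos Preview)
Your argument is correct and follows essentially the same route as the paper: split the potential $V=c(n)|\mathcal{U}|^{2^{**}-2}\in L^{n/4}$ into a large-level part and a bounded part, use Hardy--Littlewood--Sobolev to make the Riesz-potential operator associated with the large part a contraction on each $L^q$, show the correction term lies in the target space, and bootstrap via Proposition~\ref{reglift}. Your explicit justification of the identity $u_i=\gamma_n I_4(Vu_i)$ via the Liouville theorem for biharmonic functions in $L^{2^{**}}$ is a point the paper takes for granted, so your version is in fact slightly more careful there.
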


\begin{proof}
	Let us consider the spaces $Z=C^{\infty}_c(\mathbb{R}^n)$, $X=L^{{2n}/{(n-4)}}(\mathbb{R}^n)$ and $Y=L^{q}(\mathbb{R}^n)$ for $q>2n/(n-4)$. Let $\Gamma_2(x,y)=C(n)|x-y|^{4-n}$ be the fundamental solution to $\Delta^2$ in $\mathbb{R}^n$, where $C(n)=[(n-4)(n-2)\omega_{n-1}]^{-1}$. Thus, it is well-defined the inverse operator
	\begin{equation*}
		(Tu)(x)=\int_{\mathbb{R}^n}\Gamma_2(x,y)u(y)\ud y.
	\end{equation*}
	Hence, using the Hardy--Littlewood--Sobolev inequality (see \cite{MR717827}), we get that for any $q\in(1,n/4)$, there exists $C>0$ such that
	\begin{equation*}
		\|Tu\|_{L^{\frac{nq}{n-4q}}(\mathbb{R}^n)}=\|\Gamma_2\ast u\|_{L^{\frac{nq}{n-4q}}(\mathbb{R}^n)}\leqslant C\|u\|_{L^{q}(\mathbb{R}^n)}.
	\end{equation*}
	For $M>0$, let us define  $\widetilde{V}_M(x)=V(x)-V_M(x)$, where
	\begin{equation*}
		V_M(x)=
		\begin{cases}
			V(x), \ {\rm if} \ |V(x)|\geqslant M,\\
			0,\ {\rm otherwise}.
		\end{cases}
	\end{equation*}
	Applying the integral operator $T_Mu:=\Gamma_2\ast V_Mu$ on  \eqref{regularsystem}, we obtain that $u_i=T_Mu_i+\widetilde{T}_Mu_i$, where 
	\begin{equation*}
		(T_M u_i)(x)=\int_{\mathbb{R}^n}\Gamma_2(x,y)V_{M}(y)u_i(y)\ud y \quad \text{and} \quad \widetilde{T}_Mu_i(x)=\int_{\mathbb{R}^n}\Gamma_2(x,y)\widetilde{V}_M(y)u_i(y)\ud y.
	\end{equation*}
	
	\noindent{\bf Claim 1:} For $n/(n-4)<q<\infty$, there exists $M\gg1$ large such that $T_M:L^{q}(\mathbb{R}^n)\rightarrow L^{q}(\mathbb{R}^n)$ is a contraction.
	
	\noindent In fact, for any $q\in(n/(n-4),\infty)$, there exists $m\in (1,n/4)$ such that $q=nm/(n-4m)$. Then, by the H\"{o}lder inequality, for any $u\in L^{q}(\mathbb{R}^n)$, we get that there exists $C>0$ satisfying
	\begin{equation*}
		\|T_Mu\|_{L^{q}(\mathbb{R}^n)}\leqslant\|\Gamma_2\ast V_{M}u\|_{L^{q}(\mathbb{R}^n)}\leqslant C\|V_M\|_{L^{{n}/{4}}(\mathbb{R}^n)}\|u\|_{L^{q}(\mathbb{R}^n)}.
	\end{equation*}
	Since $V_{M}\in L^{n/4}(\mathbb{R}^n)$ it is possible to choose $M\gg1$ such that $\|V_M\|_{L^{{n}/{4}}(\mathbb{R}^n)}<{1}/{2C}$. Therefore, we arrive at $\|T_Mu\|_{L^{q}(\mathbb{R}^n)}\leqslant{1}/{2}\|u\|_{L^{q}(\mathbb{R}^n)}$, which yields $T_M$ is a contraction.
	
	\noindent{\bf Claim 2:} For any $n/(n-4)<q<\infty$, it follows that $\widetilde{T}_Mu_i\in L^{q}(\mathbb{R}^n)$.
	
	\noindent Indeed, for any $n/(n-4)<q<\infty$, choose $1<m<n/4$, satisfying $q=nm/(n-4m)$. Since $\widetilde{V}_M$ is bounded, we obtain
	\begin{equation*}
		\|\widetilde{T}_Mu_i\|_{L^{q}(\mathbb{R}^n)}=\|\Gamma_2\ast\widetilde{V}_Mu_i\|_{L^{q}(\mathbb{R}^n)}\leqslant C_1\|\widetilde{V}_Mu_i\|_{L^{m}(\mathbb{R}^n)}\leqslant C_2\|u_i\|_{L^{m}(\mathbb{R}^n)}.
	\end{equation*}
	However, using the Sobolev embedding theorem, we have that $u_i\in L^{m}(\mathbb{R}^n)$ when $m=2n/(n-4)$, which implies $q=2n/(n-8)$. Thus, we find that $u_i\in L^{q}(\mathbb{R}^n)$ when
	\begin{equation*}
		\begin{cases}
			1<q<\infty,& {\rm if} \ 5\leqslant n\leqslant 8\\
			1<q\leqslant\frac{2n}{n-8},& {\rm if} \ n\geqslant9.
		\end{cases}
	\end{equation*}
	Now we can repeat the argument for $m=2n/(n-8)$ to obtain that $u_i\in L^{q}(\mathbb{R}^n)$ for
	\begin{equation*}
		\begin{cases}
			1<q<\infty,& {\rm if} \ 5\leqslant n\leqslant 12\\
			1<q\leqslant\frac{2n}{n-12},& {\rm if} \ n\geqslant13.
		\end{cases}
	\end{equation*}
	Therefore proceeding inductively as in the last argument, the proof of the claim follows.
	
	Combining Claims 1 and 2, we can apply Proposition~\ref{reglift} to show that $u_i \in L^{q}(\mathbb{R}^n)$ for all $q>2^{**}$ and $i\in I$. In particular, the proof of the proposition is concluded.
\end{proof}
	
\begin{corollary}\label{regregularity}
	Let $\mathcal{U}\in\mathcal{D}^{2,2}(\mathbb{R}^n,\mathbb{R}^p)$ be a nonnegative weak non-singular solution to \eqref{oursystem}. 
	Then, $\mathcal{U}\in C^{4,\zeta}(\mathbb{R}^n,\mathbb{R}^p)$ is a classical non-singular solution to \eqref{oursystem}.
\end{corollary}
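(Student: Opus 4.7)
The plan is to bootstrap from the integrability result of the preceding proposition using standard interior elliptic regularity for the bi-Laplacian, applied componentwise. The essential input is that, since $\mathcal{U}\in L^{s}(\mathbb{R}^{n},\mathbb{R}^{p})$ for every $s>2^{**}$, the nonlinearity on the right-hand side of \eqref{regularsystem} satisfies
\[
\bigl|c(n)|\mathcal{U}|^{2^{**}-2}u_{i}\bigr|\leqslant c(n)|\mathcal{U}|^{2^{**}-1},
\]
and hence $|\mathcal{U}|^{2^{**}-2}u_{i}\in L^{q}_{\loc}(\mathbb{R}^{n})$ for every $q\in(1,\infty)$.

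Next, I would apply the interior $L^{q}$ Calder\'on--Zygmund estimates for $\Delta^{2}$ on each ball $B_{R}\subset\mathbb{R}^{n}$ to conclude that $u_{i}\in W^{4,q}_{\loc}(\mathbb{R}^{n})$ for all finite $q$. Choosing $q>n$ and invoking the Sobolev--Morrey embedding yields $u_{i}\in C^{3,\alpha}_{\loc}(\mathbb{R}^{n})$ for some $\alpha\in(0,1)$. In particular $\mathcal{U}$ is locally H\"older continuous and locally bounded, so the source term $|\mathcal{U}|^{2^{**}-2}u_{i}$ is itself locally H\"older continuous; interior Schauder estimates applied to $\Delta^{2}u_{i}=c(n)|\mathcal{U}|^{2^{**}-2}u_{i}$ then upgrade each $u_{i}$ to $C^{4,\zeta}_{\loc}(\mathbb{R}^{n})$ for some $\zeta\in(0,1)$. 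Since all estimates are interior and the procedure can be run over an exhausting family of balls, we conclude $\mathcal{U}\in C^{4,\zeta}(\mathbb{R}^{n},\mathbb{R}^{p})$, and with this regularity the weak identity \eqref{weakformulattion} forces $\mathcal{U}$ to solve \eqref{oursystem} pointwise in the classical sense.

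The one point that needs a little care is the last H\"older step: the exponent $2^{**}-2=8/(n-4)$ is not an integer in general, and for $n\geqslant13$ it is even strictly less than $1$, so merely knowing that $\mathcal{U}$ is continuous is insufficient to conclude H\"older continuity of $|\mathcal{U}|^{2^{**}-2}u_{i}$. This is exactly why I first push the Calder\'on--Zygmund bootstrap all the way to $W^{4,q}_{\loc}$ with $q>n$ before invoking Schauder: the resulting $C^{3,\alpha}$ (in particular $C^{0,\alpha}$) bound on $\mathcal{U}$ combined with the fact that $t\mapsto t^{(2^{**}-2)/2}$ is H\"older on bounded subsets of $[0,\infty)$ (since $2^{**}-2>0$) ensures the required H\"older regularity of the source term and closes the bootstrap.
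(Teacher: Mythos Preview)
Your proof is correct and follows essentially the same approach as the paper: use the integrability from the preceding proposition to place the right-hand side in every $L^{q}_{\loc}$, invoke elliptic $L^{q}$ regularity for $\Delta^{2}$ together with the Morrey embedding to obtain H\"older continuity, and finish with Schauder estimates. The paper compresses this into two sentences (Morrey then Schauder), leaving the Calder\'on--Zygmund step and the subtlety about H\"older continuity of $|\mathcal{U}|^{2^{**}-2}u_{i}$ when $2^{**}-2<1$ implicit; your version spells these out more carefully but there is no substantive difference in strategy.
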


\begin{proof}
	Using the Proposition~\ref{prop:bootstrap}, we can apply Morrey embedding theorem to get $u_i\in C^{0,\zeta}(\mathbb{R}^n)$ for some $\zeta\in(0,1)$. Finally using Schauder estimates, one concludes $u_i\in C^{4,\zeta}(\mathbb{R}^n)$, which provides that $\mathcal{U}\in C^{4,\zeta}(\mathbb{R}^n,\mathbb{R}^p)$.
\end{proof}

\begin{remark} In \cite[Proposition~3.1]{MR1809291} using a different approach, K. Uhlenbeck and J. Viaclovski proved regularity for solutions to a class of general geometric fourth order PDEs, which could also be used to prove that for some $q>2^{**}$ it holds $\mathcal{U}\in L^{q}(\mathbb{R}^n,\mathbb{R}^p)$. 
\end{remark}

\subsection{Superharmonicity}
We aim to obtain a strong maximum principle for nonnegative solutions to \eqref{oursystem}. In this direction, we prove that any component solution to \eqref{oursystem} is superharmonic. We are inspired in \cite[Theorem~2.1]{MR1769247}. The main difference in our approach is the appearance of the strong coupling term on the right-hand side of \eqref{regularsystem}. This coupled nonlinearity could imply the failure of the method for some components. However, we can overcome this issue thanks to an inequality involving the norm of the $p$-map solution. Before proving the superharmonicity result, we need to establish two technical lemmas, which proofs are merely calculus argument and can be found in \cite[Lemma~2.2 and Lemma~2.3]{MR1769247}, respectively.

\begin{lemmaletter}\label{lemmaxu1}
	Suppose that $l_0=2$ and $\{l_k\}_{k\in\mathbb{N}}$ given by the formula $l_{k+1}=sl_k+4$ for some $s>1$. Then, for all $k\in\mathbb{N}$,
	
	\noindent {{\rm (i)} Recursion formula:} $l_{k+1}=\frac{2s^{k+2}+2s^{k+1}-4}{s-1}$;\\
	\noindent {{\rm (ii)} Upper estimate:} $(n+sl_k)(2+sl_k)(n+2+sl_k)(4+sl_k)\leqslant(n+2+2s)^{4(s+1)}$.
\end{lemmaletter}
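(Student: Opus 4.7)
The statement splits into two independent algebraic facts about the linear recursion $l_0 = 2$, $l_{k+1} = sl_k + 4$. The plan is to dispose of each part by direct verification.

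For part (i), I would proceed by induction on $k$. The base case $k=0$ reduces to checking that the claimed formula produces $l_1 = 2s+4$; this holds since $(2s^{2} + 2s - 4)/(s-1) = 2(s+2)(s-1)/(s-1) = 2s+4$. For the inductive step, assuming $l_k = (2s^{k+1} + 2s^k - 4)/(s-1)$, one computes
\begin{equation*}
l_{k+1} = sl_k + 4 = \frac{s(2s^{k+1} + 2s^k - 4) + 4(s-1)}{s-1} = \frac{2s^{k+2} + 2s^{k+1} - 4}{s-1},
\end{equation*}
which closes the induction. This part is purely mechanical.

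For part (ii), the natural first step is to bound the product of the four positive factors on the left-hand side by the fourth power of their arithmetic mean via AM-GM:
\begin{equation*}
(n+sl_k)(2+sl_k)(n+2+sl_k)(4+sl_k) \leqslant \left(\frac{(n+sl_k) + (2+sl_k) + (n+2+sl_k) + (4+sl_k)}{4}\right)^{4} = \left(\frac{n+4+2sl_k}{2}\right)^{4}.
\end{equation*}
From this reduction I would substitute the closed form for $l_k$ from part (i) and compare the resulting expression with the target bound $(n+2+2s)^{4(s+1)}$, using elementary inequalities such as $(1+a)^{b} \leqslant e^{ab}$ or crude polynomial-versus-exponential comparisons in $s$ to absorb the dependence on the iteration index.

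The main obstacle here is algebraic bookkeeping rather than conceptual difficulty: several competing factors must be organised so that the estimate is robust enough to feed into the iterative integrability improvement driven by Proposition~\ref{reglift}. I would therefore pay particular attention to matching the exponents in $s$ on both sides and to keeping the base of the outer exponential uniformly controlled, so that part (ii) slots cleanly into the bootstrap that raises $\mathcal{U}$ through successive Lebesgue classes $L^{l_k}(\mathbb{R}^n)$.
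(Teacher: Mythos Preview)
The paper does not actually prove this lemma; it simply cites \cite[Lemma~2.2]{MR1769247}. Your argument for part (i) by induction is correct and complete.

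For part (ii) there is a genuine gap: you did not notice that the inequality as printed is false. The left-hand side depends on $k$ and tends to infinity (since $l_k$ grows like $s^{k+1}$), while the right-hand side $(n+2+2s)^{4(s+1)}$ is a fixed constant. The exponent $4(s+1)$ is a typo for $4(k+1)$; this is clear from how the lemma is invoked in the proof of Proposition~\ref{superharmonicity}, where one needs the four-factor product times $(n+2+2s)^{sb_k}$ to be bounded by $(n+2+2s)^{b_{k+1}}$ with $b_{k+1}=sb_k+4(k+1)$. Once the exponent is corrected, your AM-GM step (or, more simply, the crude bound that each factor is at most $n+2+sl_k$ since $n\geqslant5$) reduces the claim to $n+2+sl_k\leqslant(n+2+2s)^{k+1}$, which follows by a one-line induction using $l_{k+1}=sl_k+4$. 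Your talk of ``polynomial-versus-exponential comparisons in $s$'' and the inequality $(1+a)^b\leqslant e^{ab}$ (which points the wrong direction here) suggests you were trying to force the literal printed statement rather than repairing it. A smaller point: your final paragraph misplaces the lemma's role---the sequence $\{l_k\}$ tracks powers of $r$ in the iterated lower bound of Proposition~\ref{superharmonicity}, not Lebesgue exponents in the regularity-lifting argument of Proposition~\ref{reglift}.
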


\begin{lemmaletter}\label{lemmaxu2}
	Suppose that $b_0=0$ and define $\{b_k\}_{k\in\mathbb{N}}$ by $b_{k+1}=sb_k+4(k+1)$. Then, for all $k\in\mathbb{N}$, 
	\begin{equation*}
	b_{k+1}=4\left[\frac{s^{k+2}-(k+2)s+k+1}{s^2}\right].
	\end{equation*}
\end{lemmaletter}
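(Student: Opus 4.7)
The statement is a closed form for a first-order linear recurrence $b_{k+1} = s\, b_{k} + 4(k+1)$ with linearly-growing forcing term, so the natural approach is induction on $k$. The base case $k=0$ is immediate from the recursion: starting from $b_{0} = 0$ we get $b_{1} = 4$, and one checks that plugging $k=0$ into the claimed right-hand side yields the same value after an elementary simplification of $s^{2} - 2s + 1$.

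For the inductive step, I would assume that $b_{k}$ satisfies the stated formula and apply the recursion to write
\begin{equation*}
b_{k+1} = s\, b_{k} + 4(k+1).
\end{equation*}
Substituting the inductive hypothesis and clearing a common denominator reduces the claim to a polynomial identity in $s$. After expanding $s\cdot\bigl(s^{k+1} - (k+1)s + k\bigr)$ and adding the contribution of $4(k+1)$ multiplied by $(s-1)^{2} = s^{2} - 2s + 1$, the quadratic $s^{2}$ terms cancel, the $s$-linear terms collapse to $-(k+2)s$, and the constants combine to $k+1$, giving the claim with $k$ replaced by $k+1$.

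A completely routine alternative, avoiding induction, is to unwind the recurrence directly as
\begin{equation*}
b_{k+1} = 4\sum_{j=0}^{k}(k+1-j)\, s^{j},
\end{equation*}
split this into $(k+1)\sum_{j=0}^{k} s^{j} - \sum_{j=0}^{k} j\, s^{j}$, and evaluate both sums using the closed form of the geometric series together with the derivative identity $\sum_{j=0}^{k} j\, s^{j} = s\,\tfrac{d}{ds}\sum_{j=0}^{k} s^{j}$. Collecting the two pieces over a common denominator produces the same closed expression. The content of the lemma is purely algebraic and I foresee no real obstacle beyond careful polynomial bookkeeping of the $s^{k+1}$, $s$-linear, and constant coefficients; the main point is simply that the structure of the forcing $4(k+1)$ combines with the $s^{k+2}$ term coming from $s\cdot s^{k+1}$ to produce a formula whose numerator depends only linearly on $k$ apart from the leading power.
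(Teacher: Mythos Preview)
Your proof is correct, and it is essentially what any proof of this elementary recurrence would look like; the paper itself does not supply an argument but simply refers to \cite[Lemma~2.3]{MR1769247}, describing it as a ``merely calculus argument.'' Note that you have (correctly) read the denominator in the displayed formula as $(s-1)^{2}$ rather than the $s^{2}$ that appears in the paper's statement---this is a typo in the paper, as the base case $b_{1}=4$ already forces the denominator to be $(s-1)^{2}$, and your inductive step confirms it.
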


The superharmonicity result can be stated as follows.

\begin{proposition}\label{superharmonicity}
	Let $\mathcal{U}$ be a nonnegative non-singular solution to \eqref{oursystem}. Then, $-\Delta u_i\geqslant0$ in $\mathbb{R}^n$ for all $i\in I$.
\end{proposition}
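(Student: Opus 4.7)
The plan is to argue by contradiction, using the strong-coupling inequality to reduce the system to a scalar differential inequality on each component, and then applying the fourth order Xu-type iteration scheme to that inequality. Suppose, for contradiction, that there exist $i \in I$ and $x_0 \in \mathbb{R}^n$ with $-\Delta u_i(x_0) < 0$.

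\textbf{Decoupling step.} Since $|\mathcal{U}|^2 = \sum_{j} u_j^2 \geqslant u_i^2$ and $2^{**} - 2 > 0$, we have the pointwise bound $|\mathcal{U}|^{2^{**}-2} u_i \geqslant u_i^{2^{**}-1}$. Substituting into \eqref{regularsystem}, each component obeys the scalar fourth order inequality
\begin{equation*}
\Delta^2 u_i \geqslant c(n)\, u_i^{2^{**}-1} \qquad \text{in } \mathbb{R}^n.
\end{equation*}
This is the key observation that neutralizes the strong coupling: the analysis now mirrors that of the scalar critical equation, up to replacing an identity by an inequality, and it is exactly the inequality involving the norm of the $p$-map alluded to in the preamble.

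\textbf{Iteration step.} For a smooth cutoff $\eta$ supported in $B_{2R}(x_0)$ with $\eta \equiv 1$ on $B_R(x_0)$ and $|\nabla^k \eta| \leqslant CR^{-k}$, I would test the inequality against $u_i^{l_k}\eta^{b_k + 4}$, with the exponents $\{l_k\},\{b_k\}$ produced by the recursions in Lemmas~\ref{lemmaxu1} and~\ref{lemmaxu2} for a fixed $s>1$. Two integrations by parts, combined with Young's and H\"older's inequalities, yield a chain of reverse H\"older type estimates. The technical upper bound of Lemma~\ref{lemmaxu1}(ii) is precisely tuned so that the multiplicative constants remain controlled as $k\to\infty$, and the explicit recursion in Lemma~\ref{lemmaxu2} tracks the power of $R$. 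Passing to the limit yields either an $L^\infty$-type lower bound that is incompatible with the integrability $\mathcal{U} \in L^{s}(\mathbb{R}^n,\mathbb{R}^p)$ for every $s>2^{**}$ from Corollary~\ref{regregularity}, or, via a Kelvin-type reformulation around $x_0$, a decay rate forcing $-\Delta u_i(x)\to 0$ as $|x|\to\infty$. Combined with the superharmonicity of $-\Delta u_i$, which follows from $\Delta(-\Delta u_i) = -c(n)|\mathcal{U}|^{2^{**}-2}u_i \leqslant 0$, and the strong minimum principle, the assumption $-\Delta u_i(x_0)<0$ is then ruled out.

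\textbf{Main obstacle.} The delicate part is the bookkeeping of the Moser-type iteration. The sequence of constants arising from the integrations by parts must be absorbed using exactly the polynomial growth of Lemma~\ref{lemmaxu1}(ii); any slack here would produce a divergent product $\prod_{k} C_k^{1/l_k}$ as $k\to\infty$ and destroy the contradiction. A second, specifically coupled-system difficulty, is that after the decoupling step we work with a differential inequality rather than an equation, so one must take care that every integration by parts produces a term of the correct sign, which is why the test function is kept nonnegative and the cutoff appears with an even exponent. Once these sign issues and constants are handled, the argument proceeds componentwise and the conclusion $-\Delta u_i\geqslant 0$ on $\mathbb{R}^n$ follows for every $i\in I$.
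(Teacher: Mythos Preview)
Your decoupling step is correct and exactly matches the paper: the pointwise inequality $|\mathcal{U}|^{2^{**}-2}u_i\geqslant u_i^{2^{**}-1}$ is the device that reduces the coupled system to a scalar fourth order differential inequality on each component.

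The iteration step, however, is misconceived. Lemmas~\ref{lemmaxu1} and~\ref{lemmaxu2} are \emph{not} designed for a Moser-type integral scheme with cutoffs and test functions $u_i^{l_k}\eta^{b_k+4}$; they track exponents in a \emph{pointwise ODE bootstrap on spherical averages}, which is the method actually used (following Xu). Concretely, one writes the system as $-\Delta u_i=h_i$, $-\Delta h_i=c(n)|\mathcal{U}|^{2^{**}-2}u_i$, takes spherical averages $\overline{u}_i,\overline{h}_i$ about $x_0$, and uses Jensen together with your decoupling inequality to obtain the radial differential inequalities $\Delta\overline{u}_i+\overline{h}_i=0$ and $\Delta\overline{h}_i+c(n)\overline{u}_i^{\,2^{**}-1}\leqslant0$. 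The contradiction hypothesis enters as the seed $\overline{h}_i(0)=h_i(0)<0$, which after one integration forces $\overline{u}_i(r)\geqslant c_0 r^{2}$; substituting back and iterating yields $\overline{u}_i(r)\geqslant c_0^{s^{k}}r^{l_k}(n+2+2s)^{-b_k}$, and Lemmas~\ref{lemmaxu1}--\ref{lemmaxu2} are precisely the bookkeeping that makes this blow up at a fixed radius $r_0$. In your Moser sketch the hypothesis $-\Delta u_i(x_0)<0$ never enters the estimates, so no contradiction is produced.

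Your fallback argument (superharmonicity of $-\Delta u_i$ plus decay at infinity plus the minimum principle) is a plausible alternative in spirit, but as written it is circular: you invoke $\mathcal{U}\in L^{s}(\mathbb{R}^n,\mathbb{R}^p)$ from Corollary~\ref{regregularity}, which is proved for \emph{weak} solutions in $\mathcal{D}^{2,2}$, whereas Proposition~\ref{superharmonicity} must hold for classical solutions before one knows they lie in $\mathcal{D}^{2,2}$ (that inclusion is established only later, in Subsection~\ref{proofoftheorem1}, and the argument there uses the superharmonicity). The spherical-average iteration avoids this because it needs no global integrability, only the pointwise PDE and the local sign assumption.
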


\begin{proof} 
	Supposing by contradiction that the proposition does not hold, there exists $i\in I$ and $x_0\in\mathbb{R}^n$ satisfying $-\Delta u_{i}(x_0)<0$. Since the Laplacian is invariant under translations, we may suppose without loss of generality that $x_0=0$. Let us reformulate \eqref{oursystem} as the following system in the whole space
	\begin{align}\label{lane-emden-system}
	\begin{cases}
	-\Delta {u}_i&={h}_i\\
	-\Delta {h}_i&=c(n)|{\mathcal{U}}|^{2^{**}-2}{u}_i.
	\end{cases}
	\end{align}
	
		Let $B_r\subseteq\mathbb{R}^n$ be the ball of radius $r>0$, and $\omega_{n-1}$ be 
		the $(n-1)$-dimensional surface measure of the unit sphere, we consider 
		\begin{equation*}
		\overline{u}_i=\frac{1}{n\omega_{n-1}r^{n-1}}\displaystyle\int_{\partial{B_r}}u_i\ud \sigma_r \quad {\rm and} \quad \overline{h}_i=\frac{1}{n\omega_{n-1}r^{n-1}}\displaystyle\int_{\partial{B_r}}h_i\ud \sigma_r, 
		\end{equation*}
		the spherical averages of $u_i$ and $h_i$, respectively. Now taking the spherical average on the first line of \eqref{lane-emden-system}, and using that $\overline{\Delta u_i}=\Delta\overline{u}_i$, implies
		\begin{equation}\label{portugal}
		\Delta\overline{u}_i+\overline{h}_i=0.
		\end{equation}
		Furthermore, we rewrite the second equality of \eqref{lane-emden-system} to get $\Delta h_i+c(n)|\mathcal{U}|^{2^{**}-2}u_i=0$, from which, by taking again the spherical average in both sides, provides
		\begin{equation*}
		0=\frac{1}{n\omega_{n-1}r^{n-1}}\displaystyle\int_{\partial{B_r}}\left(\Delta h_i+c(n)|\mathcal{U}|^{2^{**}-2}u_i\right)\ud\sigma_r
		=\Delta\overline{h}_i+\frac{c(n)}{n\omega_{n-1}r^{n-1}}\displaystyle\int_{\partial{B_r}}|\mathcal{U}|^{{2^{**}-2}}{u}_i \ud \sigma_r.
		\end{equation*}
		Hence,
		\begin{equation}\label{greece}
		\Delta\overline{h}_i=-\frac{c(n)}{n\omega_{n-1}r^{n-1}}\displaystyle\int_{\partial{B_r}}|\mathcal{U}(x)|^{{2^{**}-2}}{u}_i(x) \ud \sigma_r,
		\end{equation}
		which, by using that $0\leqslant u_i(x)\leqslant|\mathcal{U}(x)|$ for any $x\in\mathbb{R}^n$, implies
		\begin{align}\label{revision}
		-\frac{c(n)}{n\omega_{n-1}r^{n-1}}\displaystyle\int_{\partial{B_r}}|\mathcal{U}(x)|^{{2^{**}-2}}{u}_i(x) \ud \sigma_r
		&\leqslant-\frac{c(n)}{n\omega_{n-1}r^{n-1}}\displaystyle\int_{\partial{B_r}}|u_i(x)|^{{2^{**}-1}}\ud \sigma_r&\\\nonumber
		&\leqslant-c(n)\left(\frac{1}{n\omega_{n-1}r^{n-1}}\displaystyle\int_{\partial{B_r}}|u_i(x)|\ud \sigma_r\right)^{{2^{**}-1}}&\\\nonumber
		&=-c(n)\overline{u}_i^{2^{**}-1},&
		\end{align} 
		where on the second inequality, we used the Jensen inequality for the convex function $t\mapsto t^{2^{**}-1}$. Finally, combining \eqref{greece} and \eqref{revision}, we get
		\begin{equation}\label{correctinequality}
		\Delta \overline{h}_i+c(n)\overline{u}_i^{2^{**}-1}\leqslant0.
		\end{equation}
	
	By the definition of spherical average, we have that $\overline{h}_i(0)=h_i(0)<0$. In addition, by \eqref{correctinequality}, we find
	\begin{equation}\label{spain}
	\Delta \overline{h}_i\leqslant0.
	\end{equation}
	Then, multiplying equation \eqref{spain} by $r^{n-1}$, and integrating, we arrive at
	\begin{equation*}
	r^{n-1}\frac{\ud}{\ud r}\overline{h}_i\leqslant0.
	\end{equation*}
	It clearly implies that $\overline{h}_i$ is monotonically decreasing for all $r>0$, we obtain
	\begin{equation}\label{colombia}
	\overline{h}_i(r)\leqslant h_i(0).
	\end{equation}
	Substituting \eqref{colombia} into \eqref{portugal}, and integrating, it follows
	\begin{equation}\label{belgium}
	\overline{u}_i(r)\geqslant-\frac{h_i(0)}{2n}r^2.
	\end{equation}
	Putting \eqref{belgium} in \eqref{correctinequality}, multiplying both side of inequality by $r^{n-1}$, and integrating, we have
	\begin{equation}\label{poland}
	\overline{h}_i(r)\leqslant-\frac{c_0^{s^2}r^{2s+2}}{(n+2+2s)(2s+4)},
	\end{equation}
	where $c_0=-{h_i(0)}/{2n}>0$ and $s=(n+4)/(n-4)$. Then, combining \eqref{poland} with \eqref{portugal}, and repeating the same procedure, it provides
	\begin{equation}\label{ukraine}
	\overline{u}_i(r)\geqslant\frac{c_0^{s^2}r^{2s+4}}{(n+2s)(s+2)(n+2+2s)(2s+4)}.
	\end{equation}
	Based on \eqref{ukraine} and thanks to Lemma~\ref{lemmaxu2}, we may assume that for some $k\in\mathbb{Z}$ and $l_k,b_k \in \mathbb{R}$, it holds
	\begin{equation}\label{netherlands}
	\overline{u}_i(r)\geqslant\frac{c_0^{s^k}r^{l_k}}{(n+2+2s)^{b_k}}.
	\end{equation}
	Again, we can use estimate \eqref{netherlands} combined with \eqref{portugal} and \eqref{correctinequality} to obtain
	\begin{equation*}
	\overline{h}_i(r)\leqslant-\frac{c_0^{s^{k+1}}r^{sl_k+2}}{(n+2+2s)^{sb_k}(n+sl_k)(sl_k+2)}
	\end{equation*}
	and
	\begin{equation}\label{israel}
	\overline{u}_i(r)\geqslant\frac{c_0^{s^{k+1}}r^{sl_k+4}}{(n+2+2s)^{pb_k}(n+sl_k)(sl_k+2)(n+2+sl_k)(sl_k+4)}.
	\end{equation}
	Setting $l_{k+1}=sl_k+4$, we have by (ii) of Lemma~\ref{lemmaxu1} that \eqref{israel} remains true for $k+1$ with $b_{k+1}=sb_k+4(k+1)$. In other words, it follows
	\begin{equation*}
	\overline{u}_i(r)\geqslant\frac{c_{0}^{s^{k+1}}r^{l_k+1}}{(n+2s+2)^{b_{k+1}}}.
	\end{equation*}
	Assuming that $c_0\geqslant1$, we can choose $r_0=(n+2s+2)^{4/(s-1)}$ and, by Lemmas~\ref{lemmaxu1} and \ref{lemmaxu2}, the following estimates holds
	\begin{equation}\label{wales}
	\overline{u}_i(r_0)\geqslant c_{0}^{s^{k+1}}\left[(n+2s+2)^\frac{4}{(s-1)^2}\right]^{s^{k+2}+2s^{k+1}+(k+2)s-k-5}.
	\end{equation}
	Taking the limit as $k\rightarrow\infty$ in \eqref{wales}, we find a contradiction since the right-hand side blows-up. Therefore, $\Delta u_i\leqslant0$ for all $i\in I$. When $c_0<1$, choosing $r_0=c_0^{-1}(n+2s+2)^{4/(s-1)}$ the same argument can be applied. 
\end{proof}

As a consequence of the last result, we can prove that solutions to \eqref{oursystem} are weakly positive. 

\begin{corollary}\label{positivity}
	Let $\mathcal{U}$ be a non-singular solution to \eqref{oursystem}. Then, for any $i\in I$ we have that either $u_i\equiv 0$ or $u_i>0$. In other terms, $I=I_0\cup I_+$ is a disjoint union.
\end{corollary}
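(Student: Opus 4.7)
The plan is to combine the superharmonicity established in Proposition~\ref{superharmonicity} with the strong minimum principle for the Laplacian. By Corollary~\ref{regregularity} each component $u_i$ is classically $C^{4,\zeta}(\mathbb{R}^n)$, so in particular $u_i\in C^2(\mathbb{R}^n)$, which is enough regularity to invoke the classical Hopf/strong minimum principle.

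First, I would apply Proposition~\ref{superharmonicity} to obtain $-\Delta u_i\geqslant 0$ on all of $\mathbb{R}^n$ for every $i\in I$. Since by hypothesis $u_i\geqslant 0$, the function $u_i$ is a nonnegative classical supersolution of the (linear) equation $-\Delta v=0$ on the connected open set $\mathbb{R}^n$.

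Next, I would invoke the strong minimum principle: a nonnegative $C^2$ function $u_i$ satisfying $-\Delta u_i\geqslant 0$ on a connected domain attains its infimum in the interior only if it is constant, so either $u_i>0$ everywhere or $u_i$ vanishes at some point and is therefore identically equal to its minimum. In the latter case, $u_i\equiv\min u_i=0$, since $u_i\geqslant 0$ and the vanishing point forces the minimum to be $0$. This dichotomy is precisely $u_i\equiv 0$ or $u_i>0$ on $\mathbb{R}^n$, which gives the disjoint decomposition $I=I_0\cup I_+$.

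There is no serious obstacle here: the essential work has already been carried out in Proposition~\ref{superharmonicity}, where the strong coupling and the lack of a direct maximum principle for $\Delta^2$ were circumvented via the spherical-average iteration of Xu. What is being used now is only the classical strong minimum principle for the second order operator $-\Delta$, applied componentwise. The only minor point to verify is the required smoothness of $u_i$ for the strong minimum principle, which is supplied by Corollary~\ref{regregularity}.
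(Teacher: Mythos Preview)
Your proposal is correct and matches the paper's approach: the corollary is stated immediately after Proposition~\ref{superharmonicity} as a direct consequence, and the analogous result in the singular setting (Proposition~\ref{singweakpos}) is proved in one line by ``Proposition~\ref{supersingular} and the strong maximum principle,'' which is exactly the argument you give. The only superfluous step is your appeal to Corollary~\ref{regregularity} for regularity, since non-singular solutions are already taken to be classical ($C^{4,\zeta}$) by definition; otherwise there is nothing to add.
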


\subsection{Lower bound estimates}
The last subsection's main result asserts that component solutions to \eqref{regularsystem} are superharmonic is useful to provide essential properties required to start the moving spheres method. 
More precisely, we obtain a lower bound estimate for any component solution. 
The idea is to use Proposition~\ref{superharmonicity} and the three spheres theorem for the bi-Laplacian.
Now we prove the Three-spheres type result.
To fix some notation, let us define $m(r):=\min_{\partial B_r}u$. 

\begin{lemma}\label{3S}
	Let $\Omega\subset\mathbb{R}^{n}$ be a region containing two concentric spheres of radii $r_1$ and $r_2$ and the region between them, and $u:\Omega\to\mathbb{R}$ be a superharmonic smooth function in $\Omega$. Assume that $u$ is radially symmetric and monotonically nonincreasing. Then, for every $r>0$ such that $0<r_1<r<r_2$, it follows 
	\begin{equation*}
		m(r)\geqslant\frac{m(r_1) \left(r_{2}^{4-n}-r^{4-n}\right)+m(r_2) \left(r^{4-n}-r_{1}^{4-n}\right)}{r_{2}^{4-n}-r_{1}^{4-n}}.
	\end{equation*}
	Moreover, equality occurs if, and only if, one has $u(|x|)=A+B|x|^{4-n}$, for some $A,B\in\mathbb{R}$.
\end{lemma}

\begin{proof}
	Furthermore, suppose that for some $A,B\in \mathbb{R}$, we have that $\varrho(r)=A+Br^{4-n}$. 
	Now choosing $A,B\in\mathbb{R}$ satisfying $\varrho(r_1)=m(r_1)$ and $\varrho(r_2)=m(r_2)$, we find
	\begin{equation*}
		\varrho(r)=\frac{m(r_1)\left(r_{2}^{4-n}-r^{4-n}\right)+m(r_2)\left(r^{4-n}-r_{1}^{4-n}\right)}{r_{2}^{4-n}-r_{1}^{4-n}}.
	\end{equation*}
	Defining $\psi(x)=u(x)-\varrho(|x|)$, we have that $\Delta u\leqslant0$. Moreover, since $u$ is radially symmetric and monotonically nonincreasing, we get $m(r_2)-m(r_1)<0$, which implies $-\Delta\varrho\leqslant0$. 
	Hence, it holds $\Delta \psi\leqslant0$  in $B_1\setminus B_2$ and $\psi\geqslant0$ on $\partial(B_1\setminus B_2)$, and the strong minimum principle yields $\psi\geqslant0$, equivalently $u(x)\geqslant\varrho(|x|)$ for all $r_1\leqslant |x|\leqslant r_2$. Therefore, $m(r)\geqslant\varrho(r)$, which proves the inequality. 
	Finally, the classification for the case of equality is straightforward to verify.
\end{proof}

\begin{remark}
	It can be trivially seen by the classification in Theorem~\ref{theorem1} that components are radially symmetric and monotonically nonincreasing, there is a way of proving this property without appealing to this full classification, which can be done by mimicking the moving planes technique in \cite[Theorem~1.3]{MR1611691} for the vectorial setting.
\end{remark}

\begin{corollary}\label{lowerestimate}
	Let $\mathcal{U}$ be a nonnegative non-singular solution to \eqref{oursystem}. Then, given $0<r_0<r$, it follows
	\begin{equation*}
		u_i(x)\geqslant\left(\frac{r_0}{|x|}\right)^{n-4}\min_{\partial B_{r_0}}u_i \quad {\rm for \ any} \quad x\in B_r\setminus B_{r_0}.
	\end{equation*}
\end{corollary}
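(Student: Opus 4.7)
The plan is to combine the superharmonicity provided by Proposition~\ref{superharmonicity} with the three-spheres inequality stated in Lemma~\ref{3S}, letting the outer sphere escape to infinity and exploiting the nonnegativity of the components $u_i$.

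First, fix $i\in I$ and, by Proposition~\ref{superharmonicity}, note that $u_i$ is a nonnegative superharmonic function on $\mathbb{R}^n$, so Lemma~\ref{3S} applies on any spherical shell. Given $0<r_0<|x|$, pick $R>|x|$ and apply Lemma~\ref{3S} to $u_i$ with inner radius $r_0$, outer radius $R$, and intermediate radius $\rho=|x|$. Setting $m(s):=\min_{\partial B_s} u_i$, this gives
\[
m(|x|)\;\geqslant\;\frac{m(r_0)\bigl(R^{4-n}-|x|^{4-n}\bigr)+m(R)\bigl(|x|^{4-n}-r_0^{4-n}\bigr)}{R^{4-n}-r_0^{4-n}}.
\]
Since $n\geqslant 5$, the exponent $4-n$ is negative, so $R^{4-n}\to 0$ as $R\to\infty$, and moreover $|x|^{4-n}-r_0^{4-n}<0$, $R^{4-n}-r_0^{4-n}<0$. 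Passing to the liminf as $R\to\infty$ and using the nonnegativity of $u_i$ (which forces $\liminf_{R\to\infty} m(R)\geqslant 0$), the second term in the numerator contributes nonnegatively to the limit, so one obtains
\[
m(|x|)\;\geqslant\;\frac{-m(r_0)\,|x|^{4-n}}{-r_0^{4-n}}\;=\;\left(\frac{r_0}{|x|}\right)^{n-4}m(r_0).
\]
Since $u_i(x)\geqslant m(|x|)$ by definition, this is exactly the desired pointwise inequality on $B_r\setminus B_{r_0}$; the parameter $r$ in the statement merely controls the range of radii for which the bound is invoked.

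The only place where some care is needed is the passage to the limit $R\to\infty$: one must check that the sign structure of $R^{4-n}-r_0^{4-n}$ and $|x|^{4-n}-r_0^{4-n}$ both being negative combines with $m(R)\geqslant 0$ to produce an inequality in the correct direction, which is precisely why the superharmonicity exponent $4-n$ (rather than the usual harmonic $2-n$ fundamental solution of the three-spheres theorem) works cleanly with nonnegativity. No other subtleties appear: the argument is a direct specialization of Lemma~\ref{3S} after the outer boundary is pushed to infinity.
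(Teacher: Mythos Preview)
Your proof is correct and follows essentially the same approach as the paper: apply Lemma~\ref{3S}, drop the contribution from the outer sphere using nonnegativity of $u_i$, and let the outer radius tend to infinity. Your version is in fact more explicit about the sign bookkeeping than the paper's own proof, which records the intermediate inequality in a somewhat confusing form (reusing the symbol $r$ for the outer radius and then letting it go to infinity).
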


\begin{proof}
	Fix $0<r_0<r$, by applying Lemma~\ref{3S}, we get
	\begin{align*}
		\left(r_{0}^{n-4}-r^{n-4}\right)u_i(x)\geqslant\left(|x|^{n-4}-r^{n-4}\right)\min_{\partial B_{r_0}}u_i,
	\end{align*}
	which, by letting $r\rightarrow\infty$, gives us the desired conclusion.
\end{proof}

\subsection{Moving spheres method}
We apply the moving sphere method to show that nonnegative solutions $\mathcal{U}$ to \eqref{regularsystem} are radially symmetric, that is, $u_i$ is radially symmetric for all $i\in I$.
Furthermore, we provide the classification for each $u_i$, and, in particular, for the norm $|\mathcal{U}|$.  
The moving spheres method is an alternative variant of the moving planes method, which can also be used to obtain radial symmetry or more robust Liouville-type results for solutions to more general PDEs \cite{MR1338474,MR1223899,MR2001065,MR1369398,MR1460076}. 
Our inspiration is a moving spheres argument due to O. Druet, E. Hebey and J. V\'etois \cite[Proposition~3.1]{MR2558186}, which is based on the work of Y. Li and L. Zhang \cite{MR2001065}.

Initially, we need to state two classification results that will be used later and whose proofs can be found in \cite[Lemma~11.1 and Lemma~11.2]{MR2001065}. 
We recall the notation $K_{z,\mu(z)}$ and $\mathcal{I}_{z,\mu(z)}$ from Subsection~\ref{subsec:kelvintransfform}.

\begin{propositionletter}[Weak Liouville-type result]\label{li-zhang1}
	Let $u\in C^{1}(\mathbb{R}^n)$ and $\nu>0$. Suppose that for all $z\in\mathbb{R}^n$, there exists $\mu(z)>0$ such that
	\begin{equation}\label{hyp1}
		K_{z,\mu(z)}(x)^{\nu}\left(u\circ\mathcal{I}_{z,\mu(z)}\right)(x)=u(x) \quad {\rm for \ all} \quad x\in\mathbb{R}^{n}\setminus\{z\}.
	\end{equation}
	Then, for some $\mu\geqslant0$, $\mu'>0$ and $x_0\in\mathbb{R}^{n}$, it follows that $u(x)=\pm\left(\frac{\mu'}{\mu+|x-x_0|^2}\right)^{{\nu}/{2}}$.
\end{propositionletter}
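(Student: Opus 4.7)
The plan is to extract rigidity from the Kelvin-type functional equation in three stages: first fix the sign of $u$, then determine the decay rate of $u$ at infinity (which in turn pins down $\mu(z)$), and finally expand the identity around each $z$ to pin down $u$ itself.

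First I would establish that $u$ has constant sign. By continuity, $\{u\neq 0\}$ is open, and the identity $u(x)=K_{z,\mu(z)}(x)^\nu (u\circ\mathcal{I}_{z,\mu(z)})(x)$ immediately gives $\sgn u(x)=\sgn u(\mathcal{I}_{z,\mu(z)}(x))$ for every $x\neq z$. Since $\mathcal{I}_{z,\mu(z)}$ is a bijection of $\mathbb{R}^n\setminus\{z\}$ onto itself and $z$ is arbitrary, a nontrivial $u$ cannot change sign. Replacing $u$ by $-u$ if necessary I may assume $u\geqslant 0$, and a similar propagation argument rules out interior zeros, so that $u>0$ on $\mathbb{R}^n$. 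Next, fixing any $z\in\mathbb{R}^n$ and letting $|x|\to\infty$ in the identity, I observe that $\mathcal{I}_{z,\mu(z)}(x)\to z$ and $|x-z|^\nu K_{z,\mu(z)}(x)^\nu=\mu(z)^\nu$, so
\begin{equation*}
\lim_{|x|\to\infty}|x|^\nu u(x)=\mu(z)^\nu u(z).
\end{equation*}
Since the left-hand side does not depend on $z$, there is a constant $C>0$ with $\mu(z)^\nu u(z)=C$ for every $z$, that is, $\mu(z)=(C/u(z))^{1/\nu}$.

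At this point I would set $g:=u^{-2/\nu}\in C^1(\mathbb{R}^n,(0,\infty))$. Raising the Kelvin identity to the power $-2/\nu$ gives $\mu(z)^2\,g(x)=|x-z|^2\,g(\mathcal{I}_{z,\mu(z)}(x))$, and the calibration from the previous step yields $\mu(z)^2=C^{2/\nu}g(z)$, so that
\begin{equation*}
g(z)\,g(x)=C^{-2/\nu}\,|x-z|^2\,g\bigl(\mathcal{I}_{z,\mu(z)}(x)\bigr)\qquad\text{for all }x\neq z.
\end{equation*}
Fixing $z$, I would Taylor-expand both sides in powers of $x-z$, using $\mathcal{I}_{z,\mu(z)}(x)=z+\mu(z)^2(x-z)/|x-z|^2$ and bootstrapping the regularity of $g$ from the identity itself (away from any given point, $g$ is expressed as a composition with smooth inversions, hence is as smooth as needed). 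Matching the constant, first-order and second-order coefficients of the expansion at $z$ should force $\nabla^2 g(z)=2a\,\mathrm{Id}$ for a single constant $a>0$ independent of $z$. Consequently $g(x)=a|x-x_0|^2+b$ for some $x_0\in\mathbb{R}^n$; global positivity of $g$ gives $a>0$ and $b\geqslant 0$, and writing $u=g^{-\nu/2}$ yields the claimed form with $\mu=b/a$ and $\mu'=1/a$, the sign $+$ having been fixed in the first step.

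The main obstacle will be this last rigidity step: converting the functional equation into the pointwise algebraic condition $\nabla^2 g(z)=2a\,\mathrm{Id}$ requires a careful second-order expansion of both sides around $x=z$ while handling the singularity of $\mathcal{I}_{z,\mu(z)}$ at the center and keeping track of how $\mu(z)$ varies with $z$. The $C^1$ hypothesis is essential to make this expansion rigorous, and the matching of constant, first-order and second-order terms is precisely what forces $g$ to be a quadratic polynomial with constant Hessian.
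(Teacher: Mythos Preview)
The paper does not prove this proposition itself; it simply quotes the statement and refers to Li--Zhang \cite[Lemma~11.1]{MR2001065} for the proof. Your strategy---fix the sign, extract the limit $\lim_{|x|\to\infty}|x|^\nu u(x)=\mu(z)^\nu u(z)=:C$, then pass to $g=u^{-2/\nu}$ and show it is quadratic---is exactly the Li--Zhang approach.

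There is one imprecision worth flagging in your final step. You propose to ``Taylor-expand both sides in powers of $x-z$'', but as $x\to z$ the point $\mathcal{I}_{z,\mu(z)}(x)$ escapes to infinity, so the right-hand side is governed by the asymptotics of $g$ at infinity, not by a local expansion. The clean way to close the argument (and this is what Li--Zhang do) is the reverse: use the identity at center $z$ to read off the large-$|y|$ behaviour of $g$. Writing $y=\mathcal{I}_{z,\mu(z)}(x)$ and using only that $g\in C^{1}$ near $z$, one obtains
\[
g(y)=C^{-2/\nu}|y|^{2}+\bigl(-2C^{-2/\nu}z+\nabla g(z)\bigr)\cdot y+o(|y|)\qquad\text{as }|y|\to\infty.
\]
Since the left-hand side does not depend on the choice of center $z$, the linear coefficient must be a fixed vector $b$, giving $\nabla g(z)=2C^{-2/\nu}z+b$ for every $z$; integrating yields that $g$ is a quadratic polynomial. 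Thus no second-order matching and no regularity bootstrap beyond $C^{1}$ is actually required. With this reformulation your plan is correct and coincides with the cited proof.
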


\begin{propositionletter}[Strong Liouville-type result]\label{li-zhang2}
	Let $u\in C^{1}(\mathbb{R}^n)$ and $\nu>0$. 
	Suppose that for all $z\in\mathbb{R}^n$, there exists $\mu(z)>0$ such that
	\begin{equation}\label{hyp2}
		K_{z,\mu(z)}(x)^{\nu}\left(u\circ\mathcal{I}_{z,\mu(z)}\right)(x)\leqslant u(x) \quad {\rm for \ all} \quad x\in\mathbb{R}^{n}\setminus \bar{B}_{{\mu}(z)}.
	\end{equation}
	Then, $u$ is constant. 
\end{propositionletter}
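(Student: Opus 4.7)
My plan is to combine an asymptotic analysis at infinity, a moving-spheres enhancement of the hypothesis, and the rigidity provided by Proposition~\ref{li-zhang1}. The core geometric fact is that the inequality $K_{z,\mu(z)}^{\nu}(u\circ \mathcal{I}_{z,\mu(z)}) \leqslant u$ on the exterior of $\bar B_{\mu(z)}(z)$ compares the values of $u$ far from $z$ with its values near $z$ via the conformal inversion; this is the kind of comparison that the Li--Zhang calculus framework was designed to exploit.

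First I extract asymptotic information. For each fixed $z$, letting $|x|\to\infty$ in the hypothesis, $\mathcal{I}_{z,\mu(z)}(x)$ tends to $z$ while $K_{z,\mu(z)}(x)^{\nu}|x|^{\nu}\to\mu(z)^{\nu}$. Continuity of $u$ at $z$ then yields
\[
L:=\liminf_{|x|\to\infty}|x|^{\nu}u(x)\geqslant \mu(z)^{\nu}u(z)\quad\text{for every }z\in\mathbb{R}^n,
\]
producing the uniform pointwise upper bound $u(z)\leqslant L\,\mu(z)^{-\nu}$ and the decay $u=O(|x|^{-\nu})$ at infinity.

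Next I run a moving-spheres step. Define
\[
\tilde\mu(z):=\sup\bigl\{\lambda>0 : K_{z,\lambda}(x)^{\nu}(u\circ\mathcal{I}_{z,\lambda})(x)\leqslant u(x)\ \text{for every}\ |x-z|>\lambda\bigr\},
\]
so $\tilde\mu(z)\geqslant \mu(z)>0$. If $\tilde\mu(z_0)=\infty$ for some $z_0$, then substituting $y=\mathcal{I}_{z_0,\lambda}(x)$ rewrites the inequality as $(|y-z_0|/|x-z_0|)^{\nu/2}\,u(y)\leqslant u(x)$ for every $y$ on the open segment from $z_0$ to $x$. For any two distinct points $x,y\in\mathbb{R}^n$ I choose $z_0$ on the line through $x$ and $y$ on the side of $y$ opposite $x$ and let $|z_0|\to\infty$; the prefactor tends to $1$, so $u(y)\leqslant u(x)$. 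Swapping $x\leftrightarrow y$ gives $u(x)=u(y)$, so $u$ is constant. If instead $\tilde\mu(z)<\infty$ for every $z$, continuity of $K_{z,\lambda}^{\nu}(u\circ\mathcal{I}_{z,\lambda})-u$ in $\lambda$ forces equality to be attained at $\lambda=\tilde\mu(z)$ at some $x_z$ with $|x_z-z|>\tilde\mu(z)$. Varying $z$ continuously, the resulting family of equalities activates the rigidity behind Proposition~\ref{li-zhang1}, pinning $u$ down as a bubble $u(x)=\pm\bigl(\mu'/(\mu+|x-x_0|^2)\bigr)^{\nu/2}$. A direct computation, tested against the hypothesis at a center $z$ with $u(z)$ near the maximum, then shows that non-constant bubbles must violate the Strong hypothesis on a portion of the exterior sphere, yielding a contradiction; hence $u$ is constant in this case as well.

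I expect the main obstacle to be transferring the single-$\mu(z)$ hypothesis to the full range $\lambda\in(0,\tilde\mu(z))$ used implicitly in the first case above. This monotonicity is the essential content of the Li--Zhang calculus lemmas, and the asymptotic decay $u=O(|x|^{-\nu})$ from the first step is precisely what controls the sign of $\lambda\mapsto K_{z,\lambda}(x)^{\nu}(u\circ\mathcal{I}_{z,\lambda})(x)-u(x)$ uniformly on the exterior region; this is where I would concentrate the main technical effort.
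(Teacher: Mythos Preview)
The paper does not itself prove Proposition~\ref{li-zhang2}; it is quoted from Li--Zhang \cite[Lemma~11.2]{MR2001065}, so there is no in-paper argument to compare against.

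More to the point, the hypothesis as recorded in the paper is too weak for the conclusion, and this is why your argument cannot close. Any standard bubble $u(x)=(1+|x|^2)^{-\nu/2}$ satisfies it: for each center $z$ the inequality $u_{z,\mu}\leqslant u$ on $\mathbb{R}^n\setminus\bar B_\mu(z)$ holds for every $\mu$ up to the critical radius $\mu^*(z)$ at which $u_{z,\mu^*(z)}\equiv u$, hence in particular for \emph{some} $\mu(z)>0$; yet $u$ is not constant. The genuine Li--Zhang hypothesis is that the inequality holds for \emph{every} $\mu>0$ (equivalently $\mu^*(z)=\infty$ for all $z$), which is exactly the situation in which the paper invokes the result in the proof of Lemma~\ref{lemmaC}. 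Under that corrected hypothesis the proof is short: for $a\neq b$, place the center $x$ on the line through $a,b$ beyond $b$, set $\lambda=\sqrt{|a-x|\,|b-x|}$ so that $\mathcal{I}_{x,\lambda}(a)=b$, and the hypothesis at $(x,\lambda)$ reads $\bigl(|b-x|/|a-x|\bigr)^{\nu/2}u(b)\leqslant u(a)$; sending $|x|\to\infty$ gives $u(b)\leqslant u(a)$, and symmetry yields $u\equiv\text{const}$.

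This explains the gaps in your plan. In Case~1 you fix a single $z_0$ with $\tilde\mu(z_0)=\infty$ and then immediately let $z_0\to\infty$ along the line through $x$ and $y$; that uses $\tilde\mu=\infty$ at infinitely many centers, not one. In Case~2, passing from a single touching point to the identity \eqref{hyp1} of Proposition~\ref{li-zhang1} requires a strong maximum principle, which is unavailable for a bare $C^1$ function satisfying no PDE. Your asymptotic step is correct, but the decay $u=O(|x|^{-\nu})$ does not follow from $u(z)\leqslant L\,\mu(z)^{-\nu}$ without a lower bound on $\mu(z)$. The obstacle you flag in your last paragraph --- upgrading the single $\mu(z)$ to a full range of radii --- is not a technicality to be overcome but a genuine obstruction, as the bubble counterexample shows.
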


\begin{remark}
	In terms of the Kelvin transform, that is, $\nu=n-4$, notice that conditions \eqref{hyp1} and \eqref{hyp2} can be rewritten respectively as $u_{z,\mu(z)}=u$ in $\mathbb{R}^n\setminus\{z\}$ and $u_{z,\mu(z)}\leqslant u$ in $\mathbb{R}^n\setminus B_{\mu(z)}(z)$.
\end{remark}

In what follows, let us divide the moving spheres process into three parts, namely, Lemmas \ref{lemmaA}, \ref{lemmaB}, and \ref{lemmaC}.
First, we show that it is possible to start the process of moving spheres. For this, it will be crucial to use Corollaries~\ref{positivity} and \ref{lowerestimate}.

\begin{lemma}\label{lemmaA}
	Let $\mathcal{U}$ be a nonnegative solution to \eqref{regularsystem}. Then, for any $x_0\in\mathbb{R}^{n}$, there exists $\mu_{0}(x_0)>0$ satisfying that for $\mu\in(0,\mu_{0}(x_0))$,
	$(u_i)_{x_0,\mu}\leqslant u_i$ in $\mathbb{R}^{n}\setminus B_{\mu}(x_0) $ for all $ i\in I.$
\end{lemma}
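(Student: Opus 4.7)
By translation invariance we may assume $x_0=0$ throughout. For $i\in I_0$ (i.e., $u_i\equiv 0$) the Kelvin transform $(u_i)_{0,\mu}$ is also identically zero, so the inequality is trivial; hence fix $i\in I_+$, so that by Corollary~\ref{positivity} we have $u_i>0$ everywhere, and in particular $u_i(0)>0$. The plan is to compare $u_i$ with $(u_i)_{0,\mu}$ separately in an inner annulus around $\partial B_\mu$ (using smoothness at the origin, which follows from Corollary~\ref{regregularity}) and in an outer region $|x|\geqslant \delta$ (using the lower bound in Corollary~\ref{lowerestimate}), then glue the two regimes.

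For the inner analysis, introduce the weighted radial function $\eta_i(r,\theta):=r^{(n-4)/2}u_i(r\theta)$, so that a direct computation gives
\begin{equation*}
u_i(x)=|x|^{-(n-4)/2}\,\eta_i(|x|,\theta),\qquad (u_i)_{0,\mu}(x)=|x|^{-(n-4)/2}\,\eta_i\!\left(\mu^2/|x|,\theta\right),
\end{equation*}
with $\theta=x/|x|$. Since $\mu^2/r\leqslant \mu\leqslant r$ whenever $r\geqslant \mu$, the desired inequality reduces to showing that $\eta_i(\cdot,\theta)$ is non-decreasing on some interval $(0,\delta_i]$ uniformly in $\theta\in\mathbb{S}^{n-1}$. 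Differentiating,
\begin{equation*}
\partial_r\eta_i(r,\theta)=r^{(n-6)/2}\left[\tfrac{n-4}{2}\,u_i(r\theta)+r\,\theta\cdot\nabla u_i(r\theta)\right],
\end{equation*}
and the bracket tends to $\tfrac{n-4}{2}u_i(0)>0$ as $r\to 0$, so by continuity of $u_i$ and $\nabla u_i$ there exists $\delta_i>0$, independent of $\theta$, such that $\partial_r\eta_i\geqslant 0$ on $(0,\delta_i]$. This yields $u_i\geqslant(u_i)_{0,\mu}$ on $\{\mu\leqslant|x|\leqslant\delta_i\}$ whenever $\mu\leqslant\delta_i$.

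For the outer region, applying Corollary~\ref{lowerestimate} with $r_0=\delta_i$ produces
\begin{equation*}
u_i(x)\geqslant m_i\left(\frac{\delta_i}{|x|}\right)^{n-4}\quad\text{for}\quad |x|\geqslant \delta_i,
\end{equation*}
where $m_i:=\min_{\partial B_{\delta_i}}u_i>0$ (positive since $u_i>0$ and continuous). On the other hand, writing $M_i:=\max_{\bar B_{\delta_i}}u_i$, for any $\mu\leqslant\delta_i$ and $|x|\geqslant\delta_i$ we have $|\mu^2 x/|x|^2|\leqslant\mu\leqslant\delta_i$, so
\begin{equation*}
(u_i)_{0,\mu}(x)=\left(\frac{\mu}{|x|}\right)^{n-4}u_i\!\left(\frac{\mu^2 x}{|x|^2}\right)\leqslant M_i\left(\frac{\mu}{|x|}\right)^{n-4}.
\end{equation*}
Subtracting, $u_i(x)-(u_i)_{0,\mu}(x)\geqslant |x|^{-(n-4)}(m_i\delta_i^{n-4}-M_i\mu^{n-4})\geqslant 0$ provided $\mu\leqslant \mu_{1,i}:=\delta_i(m_i/M_i)^{1/(n-4)}$.

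Setting $\mu_0(0):=\min_{i\in I_+}\min\{\delta_i,\mu_{1,i}\}>0$, which is positive because $I_+\subset I$ is finite, the two estimates combine to give $(u_i)_{0,\mu}\leqslant u_i$ on all of $\mathbb{R}^n\setminus B_\mu(0)$ for every $\mu\in(0,\mu_0(0))$ and every $i\in I$. The main obstacle is ensuring the monotonicity threshold $\delta_i$ in the inner analysis is uniform in $\theta$ and does not shrink, but this is guaranteed by the $C^{4,\zeta}$ regularity at $0$ together with $u_i(0)>0$; the outer analysis is then purely a comparison of $|x|^{-(n-4)}$-decay rates, calibrated by the positive lower bound from Corollary~\ref{lowerestimate}.
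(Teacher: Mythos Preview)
Your proof is correct and follows essentially the same approach as the paper: split into an inner annulus $\{\mu\leqslant|x|\leqslant\delta_i\}$ where radial monotonicity of $r^{(n-4)/2}u_i(r\theta)$ (coming from $u_i(0)>0$ and $C^1$ regularity) gives the comparison, and an outer region $\{|x|\geqslant\delta_i\}$ where the lower bound of Corollary~\ref{lowerestimate} controls $u_i$ from below while the trivial bound $(u_i)_{0,\mu}\leqslant M_i(\mu/|x|)^{n-4}$ controls the Kelvin transform from above. The only cosmetic difference is that the paper phrases the inner monotonicity via explicit quantitative bounds on $\inf u_i$ and $\sup|\nabla u_i|$ rather than your limit argument, and your threshold $\mu_{1,i}=\delta_i(m_i/M_i)^{1/(n-4)}$ is stated with the correct exponent.
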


\begin{proof}
	By translation invariance, we may take $x_0=0$. Let us denote $(u_i)_{0,\mu}=(u_i)_{\mu}$ for $i\in I$.
	
	\noindent{\bf Claim 1:} For any $i\in I_{+}$, there exists $r_0>0$ such that for $r\in (0,r_0]$ and $\theta\in\mathbb{S}^{n-1}$, we have 
	\begin{equation*}
		{\partial_r}\left(r^{\frac{n-4}{2}}u_i(r\theta)\right)>0.
	\end{equation*}
	
	\noindent In fact, since $u_i$ is a continuously differentiable function for each $i\in I_{+}$, there exists $\widetilde{r}_i>0$ satisfying $\inf_{0<y\leqslant\widetilde{r}_i}u_i>0$ and $\sup_{0<y\leqslant\widetilde{r}_i}|\nabla u_i|<\infty$. Then, choosing 
	\begin{equation*}
		r_i=\min\left\{\widetilde{r}_i,\frac{(n-4)\displaystyle\inf_{0<y\leqslant\widetilde{r}_i}u_i}{2\displaystyle\sup_{0<y\leqslant\widetilde{r}_i}|\nabla u_i|}\right\},
	\end{equation*}
	for $0<r<r_i$, we have
	\begin{align*}
		{\partial_r}\left(r^{\frac{n-4}{2}}u_i(r\theta)\right)\geqslant r^{\frac{n-6}{2}}\left(\frac{n-4}{2}u_i(r\theta)-r\left|\partial_r(r\theta)\right|\right).
	\end{align*}
	By our choice of $r_i>0$, we obtain that ${\partial_r}\left(r^{\frac{n-4}{2}}u_i(r\theta)\right)\geqslant0$, which, by choosing $r_0=\min_{i\in I_+}r_i$, concludes the proof of the claim.
	
	\noindent {\bf Claim 2:} For $\mu\in(0,r_0]$ and $x\in \bar{B}_{r_0}\setminus B_{\mu}$, it follows that $(u_i)_{\mu}\leqslant u_i$ in $B_{r_0}\setminus B_{\mu}$. 
	
	\noindent Indeed, using Claim 1, we observe that $\rho(r)=r^{(n-4)/2}u_i(r\theta)$ is radially increasing in $(0,r_0]$ for any $\theta\in\mathbb{S}^{n-1}$. Hence, taking $r=1$ and $r'=(\mu/|x|)^{2}$, we have $\rho(r')\leqslant\rho(1)$, which completes the proof.
	
	By Claim 2 and Proposition~\ref{superharmonicity}, the hypothesis in Corollary~\ref{lowerestimate} are satisfied. Consequently, for any $r>r_0$ and $i\in I$, we find
	\begin{equation*}
		u_i(x)\geqslant\left(\frac{r_0}{|x|}\right)^{n-4}\min_{\partial B_{r_0}}u_i \quad {\rm in} \quad  B_r\setminus B_{r_0}.
	\end{equation*}
	Setting $\mu_0=r_0\min_{i\in I_{+}}\left(\frac{\min_{\partial{B_{r_0}}}u_i}{\max_{\bar{B}_{r_0}}u_i}\right)^{{4-n}}$, we find
	\begin{equation*}
		(u_i)_{\mu}(x)\leqslant\left(\frac{\mu_0}{|x|}\right)^{n-4}\max_{\bar{B}_{r_0}}u_i\leqslant\left(\frac{r_0}{|x|}\right)^{n-4}\min_{\partial B_{r_0}}u_i,
	\end{equation*}
	for any $\mu\in(0,\mu_0)$, $x\in\mathbb{R}^n\setminus B_{r_0}$ and $i\in I$, which combined with Claim 1 completes the proof.
\end{proof}

After this lemma, let us introduce a well-defined quantity, namely the supremum for which a $p$-map and its Kelvin transform have the same Euclidean norm.

\begin{definition}
	Let $\mathcal{U}$ be a nonnegative solution to \eqref{regularsystem}. For any $x_0\in\mathbb{R}^n$ and $i\in I$, let us define 
	\begin{equation*}
		\mu_i^*(x_0)=\sup\{\mu>0 : (u_i)_{x_0,\mu}\leqslant u_i \quad {\rm in} \quad \mathbb{R}^{n}\setminus B_{\mu}(x_0) \}.
	\end{equation*}
	and
	\begin{equation}\label{coincidence}
		\mu^*(x_0)=\max_{i\in I}\mu_i^*(x_0).
	\end{equation}
\end{definition}

The second lemma states that if \eqref{coincidence} is finite, the moving spheres process must stop, and the euclidean norm of solution to \eqref{regularsystem} are invariant under Kelvin transform.

\begin{lemma}\label{lemmaB}
	Let $\mathcal{U}$ be a nonnegative solution to \eqref{regularsystem}. If $\mu_i^*(x_0)<\infty$, then $(u_i)_{x_0,\mu(x_0)}\equiv u_i$ in $\mathbb{R}^{n}\setminus\{x_0\}$ for all $i\in I$.
	Moreover, $|\mathcal{U}_{x_0,\mu^*(x_0)}|\equiv|\mathcal{U}|$ in $\mathbb{R}^{n}\setminus\{x_0\}$.
\end{lemma}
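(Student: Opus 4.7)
The strategy is to argue by contradiction: suppose $|\mathcal{U}_{x_0,\mu^*}|\not\equiv|\mathcal{U}|$ on $\mathbb{R}^n\setminus\{x_0\}$. By the definition \eqref{coincidence} and continuity of $\mu\mapsto\mathcal{U}_{x_0,\mu}$, we already have $(u_i)_{x_0,\mu^*}\leqslant u_i$ on $\mathbb{R}^n\setminus B_{\mu^*}(x_0)$ for every $i\in I$. The aim is to upgrade this to strict inequality in the interior together with quantitative Hopf-type positivity across $\partial B_{\mu^*}(x_0)$, which will then allow pushing $\mu$ slightly past $\mu^*$ by a perturbation argument, contradicting the supremum definition.

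To cope with the absence of a strong maximum principle for $\Delta^{2}$ directly, I would factor the system using Proposition \ref{superharmonicity}: set $h_i:=-\Delta u_i\geqslant 0$ and $\tilde h_i:=-\Delta(u_i)_{x_0,\mu^*}\geqslant 0$, and introduce the differences $w_i:=u_i-(u_i)_{x_0,\mu^*}$ and $z_i:=h_i-\tilde h_i$. Combining Proposition \ref{confinva} with the identity \eqref{laplaciankelvin}, these satisfy the cooperative second-order system
\begin{equation*}
-\Delta w_i=z_i,\qquad -\Delta z_i=c(n)\bigl[\,|\mathcal{U}|^{2^{**}-2}u_i-|\mathcal{U}_{x_0,\mu^*}|^{2^{**}-2}(u_i)_{x_0,\mu^*}\bigr]
\end{equation*}
in $\mathbb{R}^n\setminus\overline{B_{\mu^*}(x_0)}$, with $w_i=z_i=0$ on $\partial B_{\mu^*}(x_0)$ (because $K_{x_0,\mu^*}\equiv 1$ and $\mathcal{I}_{x_0,\mu^*}=\mathrm{id}$ there) and $\liminf_{|x|\to\infty}w_i$, $\liminf_{|x|\to\infty}z_i\geqslant 0$ (since $u_i,h_i\geqslant 0$ while the Kelvin-transformed pieces decay algebraically). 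Splitting the right-hand side as $u_i(|\mathcal{U}|^{2^{**}-2}-|\mathcal{U}_{x_0,\mu^*}|^{2^{**}-2})+|\mathcal{U}_{x_0,\mu^*}|^{2^{**}-2}w_i$ makes both summands manifestly nonnegative under the running assumption $w_j\geqslant 0$ for all $j$, so $-\Delta z_i\geqslant 0$. The exterior weak minimum principle then gives $z_i\geqslant 0$; hence $-\Delta w_i\geqslant 0$, and the strong maximum principle produces the componentwise dichotomy: for each $i$, either $w_i\equiv 0$ or $w_i>0$ strictly in $\mathbb{R}^n\setminus\overline{B_{\mu^*}(x_0)}$.

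The step I expect to be most delicate is ruling out the mixed scenario where $w_i\equiv 0$ for some, but not all, $i\in I_+$, which would otherwise obstruct a uniform perturbation past $\mu^*$. To exclude it, assume $w_i\equiv 0$ for some $i\in I_+$; subtracting the equations satisfied by $u_i$ and $(u_i)_{x_0,\mu^*}$ yields $u_i\bigl(|\mathcal{U}|^{2^{**}-2}-|\mathcal{U}_{x_0,\mu^*}|^{2^{**}-2}\bigr)\equiv 0$ on the exterior, and since $u_i>0$ everywhere by Corollary \ref{positivity}, this forces $|\mathcal{U}|\equiv|\mathcal{U}_{x_0,\mu^*}|$ there, contradicting the standing assumption. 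Therefore $w_i>0$ strictly for every $i\in I_+$, and Hopf's lemma supplies quantitative transversal positivity along $\partial B_{\mu^*}(x_0)$. Combined with the decay of $(u_i)_{x_0,\mu}$ at infinity uniformly for $\mu$ in a neighborhood of $\mu^*$ and continuity of $\mu\mapsto\mathcal{U}_{x_0,\mu}$ in $C^{4}_{\mathrm{loc}}(\mathbb{R}^n\setminus\{x_0\})$, a standard compactness-plus-Hopf perturbation produces $\varepsilon>0$ with $(u_i)_{x_0,\mu}\leqslant u_i$ in $\mathbb{R}^n\setminus B_\mu(x_0)$ for every $\mu\in[\mu^*,\mu^*+\varepsilon)$ and every $i\in I$, contradicting the maximality of $\mu^*$ in \eqref{coincidence} and forcing $|\mathcal{U}_{x_0,\mu^*}|\equiv|\mathcal{U}|$.
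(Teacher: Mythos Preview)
Your proof is correct and follows the same core mechanism as the paper's: factor the fourth-order problem into a cooperative second-order system via superharmonicity (Proposition~\ref{superharmonicity}), apply the strong maximum principle and Hopf lemma to the differences, and then perturb $\mu$ past $\mu^*$ to reach a contradiction. The one structural difference is that the paper first isolates a single component $i_0$ for which the inequality $(u_{i_0})_{\mu_k}\leqslant u_{i_0}$ fails along some sequence $\mu_k\searrow\mu^*$, shows $\omega_{\mu^*}:=u_{i_0}-(u_{i_0})_{\mu^*}\equiv 0$ for that component alone, and then reads off $|\mathcal{U}|\equiv|\mathcal{U}_{\mu^*}|$ from the equation (exactly your ``mixed case'' computation, but used as the final step rather than as an intermediate exclusion); you instead assume $|\mathcal{U}|\not\equiv|\mathcal{U}_{\mu^*}|$, deduce $w_i>0$ strictly for every $i\in I_+$, and push all components simultaneously. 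Your ruling-out-the-mixed-case step is thus an extra ingredient that the paper's organization sidesteps, while your version makes the cooperative structure of the system more explicit. One point worth tightening: the paper makes the far-field part of the perturbation quantitative via the three-spheres lower bound of Corollary~\ref{lowerestimate}, giving $w_i^{\mu^*}(x)\geqslant (r_0/|x|)^{n-4}\min_{\partial B_{r_0}}w_i^{\mu^*}$, against which the $\mathcal{O}(|x|^{4-n})$ variation $|(u_i)_{\mu}-(u_i)_{\mu^*}|$ is then compared; you subsume this under ``standard compactness-plus-Hopf'', which is correct in spirit but is the step most worth spelling out.
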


\begin{proof}
	Without loss of generality, we may take $x_0=0$. We denote $\mu^*(0)=\mu^*$. By the definition of $\mu^*$, when $\mu^*<\infty$, we get that for any $\mu\in(0,\mu^*]$ and $i\in I$, it holds
	\begin{equation}\label{eua}
		(u_i)_{\mu}\leqslant u_i \quad {\rm in} \quad \mathbb{R}^{n}\setminus B_{\mu}(0).
	\end{equation}
	By contradiction suppose that there exist $i_0\in I$ and $(\mu_{k})_{k\in\mathbb{N}}$ in $(\mu^*,\infty)$ satisfying $\mu_{k}\rightarrow\mu^*$ and such that \eqref{eua}  does not hold for $i=i_0$ and $\mu=\mu_{k}$. For $\mu>0$, let us define $\omega_{\mu}=(u_{i_0})-(u_{i_0})_{\mu}$. 
	
	\noindent {\bf Claim 1:} $\omega_{\mu^*}$ is superharmonic. 
	
	\noindent Indeed, as a combination of \eqref{regularsystem} and Lemma~\ref{confinva}, we obtain
	\begin{equation*}
		\begin{cases}
			\Delta^{2}\omega_{\mu^*}(x)=c_{\mu^*}(x)\omega_{\mu^*}& {\rm in} \quad \mathbb{R}^n\setminus B_{\mu^*}(0)\\
			\Delta \omega_{\mu^*}(x)=\omega_{\mu^*}(x)=0& {\rm on} \quad \partial B_{\mu^*}(0),
		\end{cases}
	\end{equation*}
	where
	\begin{equation*}
		c_{\mu^*}=\frac{c(n)|{\mathcal{U}}|^{2^{**}-2}u_{i_0}-c(n)|\mathcal{U}_{\mu^*}|^{2^{**}-2}({u}_{i_0})_{\mu^*}}{{u}_{i_0}-({u}_{i_0})_{\mu^*}}>0 \quad {\rm in} \quad \mathbb{R}^n\setminus B_{\mu^*}(0).
	\end{equation*}
	Therefore, by Claim 1 we can use the strong minimum principle in \cite[Theorem~3]{MR312040} to conclude
	\begin{equation*}
		\min_{\mathbb{R}^{n}\setminus B_{\mu^*}(0)}\omega_{\mu^*}=\min_{\partial B_{\mu}(0)}\omega_{\mu^*}.
	\end{equation*}
	
	\noindent {\bf Claim 2:} $\omega_{\mu^*}\equiv0$.
	
	\noindent Supposing that $\omega_{\mu^*}$ is not equivalently zero in $\mathbb{R}^{n}\setminus B_{\mu}(0)$, by Hopf Lemma \cite[Lemma~3.4]{MR1814364}, we have that ${\partial_\nu}\omega_{\mu^*}>0$ in $\partial B_{\mu^*}(0)$.
	Moreover, by the continuity of $\nabla u_{i_0}$, one can find $r_0>\mu^*$ such that for any $\mu^*\in[\mu,r_0)$, we get
	\begin{equation}\label{spo}
		\omega_{\mu^*}>0 \quad {\rm in} \quad \bar{B}_{r_0}(0)\setminus B_{\mu}(0).
	\end{equation}
	Again, applying Proposition~\ref{3S}, we obtain  
	\begin{equation*}
		\omega_{\mu^*}\geqslant\left(\frac{r_0}{|x|}\right)^{n-4}\omega_{\mu^*}.
	\end{equation*}
	On the other hand, by the uniform continuity of the $u_{i_0}$ on $B_{r_0}(0)$, there exists $\varepsilon>0$ such that for any $\mu\in[\mu^*,\mu^*+\varepsilon)$ and $x\in\mathbb{R}^{n}\setminus B_{r_0}(0)$, it follows
	\begin{equation}\label{san}
		|\omega_{\mu^*}(x)-\omega_{\mu}(x)|=|(u_i)_{\mu}(x)-(u_{i_0})_{\mu^*}(x)|\leqslant\frac{1}{2}\left(\frac{r_0}{|x|}\right)^{n-4}\min_{\partial B_{r_0}(0)}\omega_{\mu^*}.
	\end{equation}
	Therefore, a combination of \eqref{spo} and \eqref{san} yields $\omega_{\mu^*}\geqslant0$ in $\mathbb{R}^{n}\setminus B_{\mu}(0)$ for any $\mu\in[\mu^*,\mu^*+\varepsilon)$. This is a contradiction with the definition of $\mu^*$, thus $\omega_{\mu^*}\equiv0$ in $\mathbb{R}^{n}\setminus B_{\mu}(0)$. 
	
	Moreover, let us define
	\begin{equation*}
		\omega_{\mu}(x)=-\left(\frac{\mu^*}{|x|}\right)^{n-4}\omega_{\mu^*}\left(\left(\frac{\mu^*}{|x|}\right)^2x\right).
	\end{equation*}
	Hence, it follows that $\omega_{\mu^*}\equiv0$ in $\mathbb{R}^{n}\setminus\{0\}$. Since $u_{i_0}$ cannot be identically zero without contradicting the definition of $\mu^*$, by Proposition~\ref{positivity} $u_{i_0}$ is nowhere vanishing. Consequently, we obtain that $|\mathcal{U}_{\mu^*}|\equiv|\mathcal{U}|$ in $\mathbb{R}^{n}\setminus\{0\}$.
\end{proof}

In the last lemma, we show that the moving spheres process only stops if $\mathcal{U}$ is the trivial solution.

\begin{lemma}\label{lemmaC}
	Let $\mathcal{U}$ be a nonnegative solution to \eqref{regularsystem}. If $\mu^*(x_0)<\infty$ for some $x_0\in\mathbb{R}^n$, then $\mathcal{U}\equiv0$.
\end{lemma}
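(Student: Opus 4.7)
The plan is to convert the hypothesis into a monotonicity statement along radii, use it to extract a too-slow pointwise lower bound on at least one component, and then close via an integrability contradiction. (Read together with Lemma~\ref{lemmaB}, the relevant content is that the Kelvin comparison $(u_i)_{x_0,\mu}\le u_i$ persisting for arbitrarily large $\mu$ forces $\mathcal{U}\equiv 0$; this is the alternative of the moving-sphere dichotomy that is not settled by the bubble-producing case of Lemma~\ref{lemmaB}.)

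By translation invariance I would take $x_0=0$. Setting $r=|x|>\mu$, $s=\mu^2/r<\mu$, and $\theta=x/|x|$, a direct computation turns the defining inequality $(u_i)_{0,\mu}(x)\le u_i(x)$ into $s^{(n-4)/2}u_i(s\theta)\le r^{(n-4)/2}u_i(r\theta)$. Since $\mu=\sqrt{rs}$ makes the pair $(s,r)$ sweep every $0<s<r$, this forces the radial profile $r\mapsto r^{(n-4)/2}u_i(r\theta)$ to be non-decreasing on $(0,\infty)$ for every $\theta\in\mathbb{S}^{n-1}$ and every $i\in I$.

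Arguing by contradiction I would assume $\mathcal{U}\not\equiv 0$. Corollary~\ref{positivity} then supplies an index $i_0\in I_+$ with $u_{i_0}>0$ everywhere. Fixing $r_0>0$, continuity and compactness of $\partial B_{r_0}(0)$ give $c_0:=r_0^{(n-4)/2}\min_{\theta\in\mathbb{S}^{n-1}}u_{i_0}(r_0\theta)>0$, and the monotonicity of the previous paragraph yields
\[
u_{i_0}(x)\ \ge\ c_0\,|x|^{-(n-4)/2}\qquad\text{for every }|x|\ge r_0.
\]
Because $\frac{n-4}{2}\cdot 2^{**}=n$, this lower bound forces
\[
\int_{\mathbb{R}^n\setminus B_{r_0}(0)}u_{i_0}^{\,2^{**}}\,\ud x\ \ge\ c_0^{\,2^{**}}\,\omega_{n-1}\int_{r_0}^{\infty}\frac{\ud r}{r}\ =\ \infty,
\]
contradicting $u_{i_0}\in L^{2^{**}}(\mathbb{R}^n)$ and hence forcing $\mathcal{U}\equiv 0$.

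The main obstacle is precisely securing the last integrability at this stage of the argument, since the moving-sphere machinery is being applied to classical solutions not yet known to lie in $\mathcal{D}^{2,2}(\mathbb{R}^n,\mathbb{R}^p)$. I would bridge this gap by importing the decay bound $u_i(x)=O(|x|^{(4-n)(1-\varepsilon)})$ from Claim~2 of the proof of Theorem~\ref{hebey-druet}—which for $\varepsilon<1/2$ decays strictly faster than $|x|^{-(n-4)/2}$ and so is flatly incompatible with the lower bound above—or, alternatively, by coupling the superharmonicity from Proposition~\ref{superharmonicity} with a Pohozaev-type identity on $B_R$ to sharpen the decay directly within the classical setting and reach the same contradiction.
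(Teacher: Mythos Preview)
Your monotonicity derivation and the resulting lower bound $u_{i_0}(x)\ge c_0|x|^{-(n-4)/2}$ are correct, and you rightly identify the obstruction: at this point in the paper the moving-sphere machinery is being run on classical solutions, and $u_{i_0}\in L^{2^{**}}(\mathbb{R}^n)$ has not yet been established---indeed, the very purpose of Lemmas~\ref{lemmaA}--\ref{lemmaC} is to feed into the proof of Theorem~\ref{theorem1}, which only \emph{afterwards} concludes that classical solutions are weak. Your first proposed fix is therefore circular: Step~2 of Claim~2 in the proof of Theorem~\ref{hebey-druet}\textcolor{blue}{'} rests on Step~1, which explicitly uses $u_i\in L^{2^{**}}(\mathbb{R}^n)$ to show that the rescaled map $\mathcal{W}_R$ has vanishing $L^{2^{**}}$-mass on the annulus. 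Your second fix (superharmonicity plus a Pohozaev identity on $B_R$) is too vague to evaluate; controlling the boundary terms in such an identity as $R\to\infty$ requires exactly the kind of decay information that is unavailable here.

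The paper avoids integrability altogether. From the hypothesis $\mu^*(x_0)=\infty$ (the lemma as printed contains a sign slip; the proof and its use in Subsection~\ref{proofoftheorem1} make clear the intended hypothesis is $\mu^*(x_0)=\infty$) it deduces, for each $i\in I_+$, that $|x|^{n-4}u_i(x)\to\infty$ as $|x|\to\infty$. If some other center $z$ had $\mu^*(z)<\infty$, Lemma~\ref{lemmaB} would give $|\mathcal{U}_{z,\mu^*(z)}|\equiv|\mathcal{U}|$, forcing the finite limit $|x|^{n-4}|\mathcal{U}(x)|\to\mu^*(z)^{n-4}|\mathcal{U}(z)|$, a contradiction. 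Hence $\mu^*(z)=\infty$ for \emph{every} $z$, and Proposition~\ref{li-zhang2} (a purely pointwise result for $C^1$ functions, with no integrability hypothesis) yields that each $u_i$ is constant, hence zero. The essential difference is that the paper leverages Lemma~\ref{lemmaB} at \emph{other} centers to upgrade the single-point hypothesis to a global one and then invokes a pointwise Liouville-type classification; your route attempts a contradiction at the single center $x_0$ alone, which seems to require decay not available at this stage.
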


\begin{proof}
	By definition of $\mu^*(x_0)$, if $\mu^*(x_0)=\infty$, we get that for any $\mu>0$ and $i\in I$, $(u_i)_{x_0,\mu}\leqslant u_i$ in $\mathbb{R}^n\setminus B_{\mu}(x_0)$. Moreover, assuming that $x_0=0$, by \eqref{eua}, we have 
	\begin{equation*}
		\mu^{n-4}\leqslant \liminf_{|x|\rightarrow\infty}|x|^{n-4}u_i(x),
	\end{equation*}
	which by passing to limit as $\mu\rightarrow\infty$ provides that for $i\in I$, either $u_i(0)=0$ or $|x|^{n-4}u_i(x)\rightarrow0$ as $|x|\rightarrow\infty$. Using that $u_i(0)=0$ for all $i\in I$, by Propositions~\ref{superharmonicity} and \ref{positivity}, we conclude that $u_i\equiv0$. Therefore, we may assume $|x|^{n-4}u_i(x)\rightarrow\infty$ as $|x|\rightarrow\infty$ for all $i\in I_{+}$.
	
	\noindent{\bf Claim 1:} $\mu^*(z)=\infty$ for all $z\in\mathbb{R}^n$.
	
	\noindent Indeed, when $\mu^*(z)<\infty$ for some $y\in\mathbb{R}^n$, using Lemma~\ref{lemmaB}, we obtain
	\begin{equation*}
		|x|^{n-4}|\mathcal{U}(x)|=|x|^{n-4}|\mathcal{U}_{z,\mu^*(z)}(x)|\rightarrow\mu^*(z)^{n-4}|\mathcal{U}(z)| \quad {\rm as} \quad |x|\rightarrow\infty,
	\end{equation*}
	which is a contradiction.
	
	Combining Claim 1 and \cite[Lemma~11.1]{MR2001065}, we have that $\mathcal{U}$ is constant. Since $\mathcal{U}$ satisfies \eqref{regularsystem}, it follows that $\mathcal{U}\equiv0$.
\end{proof}

\subsection{Proof of Theorem~\ref{theorem1}}\label{proofoftheorem1}
Now using Lemmas~\ref{lemmaB} and Proposition~\ref{li-zhang1}, we have enough conditions to give a straightforward classification for $p$-solutions to \eqref{oursystem} in the blow-up limit critical case.  

\begin{proof}
	By Lemma \ref{lemmaB}, we can apply Proposition~\ref{li-zhang1} to find $\mu^{\prime}_i>0$ and $\mu^{\prime\prime}_i\geqslant0$ satisfying 
	\begin{equation*}
		u_i(x)=\left(\frac{\mu^{\prime}_i}{\mu^{\prime\prime}_i+|x-x_0|^2}\right)^{\frac{n-4}{2}} \quad {\rm for \ all} \quad x \in \mathbb{R}^n.
	\end{equation*}
	Then, we can obtain constants $\Lambda_{ij}>0$ depending on $\mu^{\prime}_i$, $\mu^{\prime\prime}_i$, $\mu^{\prime}_j$, and $\mu^{\prime\prime}_j$ such that $u_i\equiv \Lambda_{ij}u_j$ for any $i,j\in I_+$. 
	In particular, for all $i\in I_{+}$, we have the proportionality $u_i=\Lambda_iu_1$ where $\Lambda_i=\Lambda_{1i}$, which provides $\Delta^2u_1=c(n)|\Lambda^{\prime}|^{2^{**}-2}u^{2^{**}-1}_1$  in $ \mathbb{R}^n$, where $\Lambda^{\prime}=(\Lambda_i)_{i\in I_{+}}$. By Theorem~\ref{theoremA}, for some $x_0\in\mathbb{R}^n$ and $\mu>0$, we have that $u_1$ has the following {\it ansatz}
	\begin{equation*}
		u_1(x)=|\Lambda'|^{-1}\left(\frac{2\mu}{1+\mu^{2}|x-x_0|^{2}}\right)^{\frac{n-4}{2}},
	\end{equation*}
	which implies that our classification holds for $\Lambda=(\Lambda_1|\Lambda'|^{-1},\dots,\Lambda_p|\Lambda'|^{-1})$; thus the proof of Theorem~\ref{theorem1} is completed.
\end{proof}

\subsection{Classification for weak solutions}
As an aside, we use the weak formulation of solution to \eqref{oursystem} to prove a version of Theorem~\ref{theorem1}, which is based on the analysis of quotient functions $q_{ij}=u_i/u_j$ \cite{MR2603801}.
The main idea is use the integral representation for solutions to \eqref{oursystem}, which yields a more quantitative estimate for the constants $\Lambda_{ij}=q_{ij}$ (see \eqref{jurubeba}).
Before starting our method, we must be cautious that the quotient is well-defined since we may have solutions having zeros in the domain or even being identically null. 
By Proposition~\ref{positivity}, we know that the latter situation does not occur. Moreover, we can avoid the former situation by assuming that component solutions $u_i$ are strictly positive, that is, $i\in I_+$. 
Notice that Theorem~\ref{theorem1} is now equivalent to proving that all quotient functions are identically constant, {\it i.e.}, component solutions are proportional to each other $u_j=\Lambda_{ij}u_j$ \cite{MR3198645}.

Before, we need an auxiliary result, which is a variant of the classical strong Liouville-type result for biharmonic  functions in \cite[Theorem~1]{MR274792}.

\begin{lemma}\label{lm:classicallinearliouville}
	Let $q\in C^{4}(\mathbb{R}^n)$ be a nonnegative solution to the fourth linear problem 
	\begin{equation*}
		\Delta^2q+c_3(x)\nabla\Delta q+c_2(x)\Delta q+c_1(x)\nabla q=0 \quad {\rm in} \quad \mathbb{R}^n,
	\end{equation*}
	where $c_2\in C^{\infty}(\mathbb{R}^n)$ is a nonpositive smooth function, and $c_1,c_3\in C^{\infty}(\mathbb{R}^n,\mathbb{R}^n)$ are smooth matrices.
	If $q$ is bounded above and below, then $q$ is constant. 
\end{lemma}

\begin{proof}
	Since at critical point $x_0\in\mathbb{R}^n$ of $q$, one has that $\nabla q(x_0)=0$. The proof follows by noticing that $\nabla q$ is a solution a second order uniformly elliptic operator 
	a uniqueness result based on the weak maximum principle from \cite[Theorem~1]{MR312040} and the Harnack inequality from \cite[Theorem~3.6]{MR2240050}.
\end{proof}

We state the main result of this part, which is a fourth order version of \cite[Proposition~3.1]{MR2603801}

\begin{theoremtio}\label{hebey-druet}
	{\it 
		Let $\mathcal{U}$ be a weak nonnegative non-singular solution to \eqref{oursystem}. 
		Then, there exists $\Lambda\in\mathbb{S}^{p-1}_{+}$ such that $\mathcal{U}=\Lambda u_{x_0,\mu}$, where $u_{x_0,\mu}$ is a fourth order spherical solution given by \eqref{sphericalfunctions}.
	}
\end{theoremtio}

\begin{proof}
	For a weak nonnegative non-singular solution $\mathcal{U}$ to \eqref{oursystem} and $i,j\in I_{+}$, let us consider the quotient function $q_{ij}:\mathbb{R}^n\rightarrow(0,\infty)$ given by $q_{ij}:=u_i/u_j$. 
	Besides, by the smoothness result in Corollary~\ref{regregularity}, we get $q_{ij}\in C^{\infty}(\mathbb{R}^n)$ for all $i,j\in I_{+}$.
	Moreover, by \eqref{shape}, there exists $C>0$ satisfying $0\leqslant q_{ij}(x)\leqslant C$ for all $x\in\mathbb{R}^n$.
	
	In what follows, we divide the argument into two claims.
	The first one provides a strong classification for any quotient function.
	
	\noindent{\bf Claim 1:} For all $i,j\in I_+$, there exists a constant $\Lambda_{ij}>0$ such that $q_{ij}\equiv \Lambda_{ij}$.
	
	\noindent As a matter of fact, a straightforward computation yields
	\begin{equation*}
		\Delta^2 q_{ij}=\frac{u_j\Delta^2 u_i-u_i\Delta^2 u_j}{u_j^2}-\frac{4}{u_j}\nabla\Delta q_{ij}\nabla u_j-\frac{6}{u_j}\Delta q_{ij}\Delta u_j-\frac{4}{u_j}\nabla q_{ij}\nabla\Delta u_j.
	\end{equation*}
	Notice that since $\mathcal{U}$ solves \eqref{oursystem}, the first term on the right-hand side of the last equation is zero. 
	Thus, we are left with
	\begin{equation}\label{celular}
		\Delta^{2}q_{ij}+\frac{4}{u_j}\nabla u_j\nabla\Delta q_{ij}+\frac{6}{u_j}\Delta u_j\Delta q_{ij}+\frac{4}{u_j}\nabla\Delta u_j\nabla q_{ij}=0.
	\end{equation}
	The conclusion is a consequence of Lemma~\ref{lm:classicallinearliouville}.
	
	\noindent{\bf Claim 2:} For all $i,j\in I_+$, it follows
	\begin{equation}\label{jurubeba}
		\Lambda_{ij}=\frac{\displaystyle\int_{\mathbb{R}^n}|\mathcal{U}|^{2^{**}-2}u_i\ud x}{\displaystyle\int_{\mathbb{R}^n}|\mathcal{U}|^{2^{**}-2}u_j\ud x}.
	\end{equation}
	
	\noindent In fact, notice this equivalent to prove that 
	$\min_{\partial B_R(0)}q_{ij}\rightarrow\Lambda_{ij}$ and $\max_{\partial B_R(0)}q_{ij}\rightarrow\Lambda_{ij}$ as $R\rightarrow\infty$.
	To this end, we divide the proof into three steps.
	The first one concerns the behavior at infinity of component solutions to \eqref{oursystem}.
	
	\noindent{\bf Step 1:} $|x|^{{(n-4)}/{2}}u_i(x)=o_{R}(1)$ as $R\rightarrow\infty$.
	
	\noindent For $R>0$, let us consider the rescaling of $\mathcal{U}$ given by $\mathcal{W}_R=R^{{(n-4)}/{2}}\mathcal{U}(Rx)$, which in terms of component takes the form $(w_R)_i=R^{(n-4)/2}u_i(Rx)$. 
	Since $u_i\in L^{2^{**}}(\mathbb{R}^n)$, we get
	\begin{equation*}
		\Delta^2 (w_R)_i=c(n)|\mathcal{W}|^{2^{**}-2}(w_R)_i \quad {\rm and} \quad \displaystyle\int_{B_{2}(0)\setminus B_{1/2}(0)}|\mathcal{W}_R|^{2^{**}}\ud x=o_{R}(1) \quad {\rm as} \quad R\rightarrow\infty.
	\end{equation*}
	Thus, $(w_R)_i\rightarrow0$ in $C^{\infty}_{\loc}(B_{3/2}(0)\setminus B_{3/4}(0))$ as $R\rightarrow\infty$.
	
	In the next step, we obtain an upper bound for component solutions to \eqref{oursystem}, which provides an interpolation estimate, showing that $u_i\in L^{p}(\mathbb{R}^n)$ for $2<p<2^{**}$.
	
	\noindent{\bf Step 2:} For any $0<\varepsilon<1/2$, there exists $C_{\varepsilon}>0$ such that $u_i(x)\leqslant C_{\varepsilon}|x|^{(4-n)(1-\varepsilon)}$ for all $x\in\mathbb{R}^n$.
	
	\noindent First, by {Step 1} for a given $0<\varepsilon<1/2$, there exists $R_{\varepsilon}\gg1$ sufficiently large satisfying 
	\begin{equation}\label{luxemborg}
		\displaystyle\sup_{\mathbb{R}^n\setminus B_{R_{\varepsilon}}(0)}|x|^{2}|\mathcal{U}(x)|^{2^{**}-2}<\frac{(n-4)^2}{2}\varepsilon(1-\varepsilon).
	\end{equation}
	For $R\geqslant R_{\varepsilon}$, let us consider $\sigma(R)=\displaystyle\max_{i\in I_{+}}\max_{\partial{B_{R}(x_0)}}u_i$ and the auxiliary function
	\begin{equation*}
		G_{\varepsilon}(x)=\sigma(R_{\varepsilon})\left(\frac{|x|}{R_{\varepsilon}}\right)^{(4-n)(1-\varepsilon)}+\sigma(R)\left(\frac{|x|}{R}\right)^{(4-n)\varepsilon}.
	\end{equation*}
	Notice that, by construction, we clearly have that $u_i\leqslant G_{\varepsilon}$ on $\partial B_{R}(0)\cup\partial B_{R_{\varepsilon}}(0)$. 
	Let us suppose that there exists $x_0\in B_{R}(0)\setminus \bar{B}_{R_{\varepsilon}}(0)$, a maximum point of $u_i/G_{\varepsilon}$, which would imply that $\Delta(u_iG^{-1}_{\varepsilon}(x_0))\leqslant0$, and then 
	\begin{equation}\label{eslovenia}
		\frac{\Delta u_i(x)}{u_i(x)}\geqslant\frac{\Delta G^{-1}_{\varepsilon}(x)}{G^{-1}_{\varepsilon}(x)}.
	\end{equation}
	Furthermore, a direct computation implies
	\begin{equation}\label{crocia}
		\Delta G^{-1}_\varepsilon(x)=G^{-1}_{\varepsilon}(x)\frac{(n-4)^2}{2}\varepsilon(1-\varepsilon)|x|^{-2}.
	\end{equation}
	Therefore, by Proposition~\ref{superharmonicity} we obtain that $\Delta^{2}u_i(x)-\Delta u_i(x)\geqslant0$, which combined with \eqref{eslovenia}-\eqref{crocia} yields 
	\begin{equation*}
		|x|^2|\mathcal{U}(x)|^{2^{**}}=\frac{\Delta^2 u_i(x)}{u_i(x)}\geqslant\frac{\Delta u_i(x)}{u_i(x)}\geqslant\frac{\Delta G^{-1}_{\varepsilon}(x)}{G^{-1}_{\varepsilon}(x)}=\frac{(n-4)^2}{2}\varepsilon(1-\varepsilon).
	\end{equation*}
	This is a contradiction with \eqref{luxemborg} since our choice of $R_{\varepsilon}>0$. Then, applying the strong maximum principle, we have
	\begin{equation}\label{nicaragua}
		u_i(x)\leqslant\sigma(R_{\varepsilon})\left(\frac{|x|}{R_{\varepsilon}}\right)^{(4-n)(1-\varepsilon)}+\sigma(R)\left(\frac{|x|}{R}\right)^{(4-n)\varepsilon} \quad {\rm in} \quad B_{R}(0)\setminus\bar{B}_{R_{\varepsilon}}(0),
	\end{equation}
	for all $R>R_{\varepsilon}$.
	Thus, using \eqref{nicaragua} combined with Step 1, and taking the limit as $R\rightarrow\infty$, we get
	\begin{equation*}
		u_i(x)\leqslant\sigma(R_{\varepsilon})\left(\frac{|x|}{R_{\varepsilon}}\right)^{(4-n)(1-\varepsilon)} \quad {\rm in} \quad \mathbb{R}^n.
	\end{equation*}
	\noindent{\bf Step 3:} $|x|^{4-n}u_i(x)=\int_{\mathbb{R}^n}c(n)|\mathcal{U}(x)|^{2^{**}-2}u_i(x)\ud x+o_{R}(1)$ as $R\rightarrow\infty$.\\
	\noindent First, since $u_i\in L^{2^{**}}(\mathbb{R}^n)$, we have $|\mathcal{U}|^{2^{**}-2}u_i\in L^{2n/(n+4)}(\mathbb{R}^n)$ for all $i\in I$, which implies $|\mathcal{U}|^{2^{**}-2}u_i\in W^{-2,2}(\mathbb{R}^n)$. Hence, we get that \eqref{oursystem} can be reduced to the following integral system,
	\begin{equation*}
		u_i(x)=C_n\displaystyle\int_{\mathbb{R}^n}|x-y|^{4-n}|\mathcal{U}(y)|^{2^{**}-2}u_i(y)\ud y,
	\end{equation*}
	from which follows
	\begin{align*}
		|x|^{n-4}u_i(x)
		=C_n\displaystyle\int_{\mathbb{R}^n}\left(\frac{|x|}{|x-y|}\right)^{n-4}|\mathcal{U}(y)|^{2^{**}-2}u_i(y)\ud y
		=C_n(I_1+I_2),
	\end{align*}
	where 
	\begin{equation*}
		I_1=\int_{B_{R}(0)}\left(\frac{|x|}{|x-y|}\right)^{n-4}|\mathcal{U}(y)|^{2^{**}-2}u_i(y)\ud y, \; 
		I_2= \displaystyle\int_{\mathbb{R}^n\setminus B_{R}(0)}\left(\frac{|x|}{|x-y|}\right)^{n-4}|\mathcal{U}(y)|^{2^{**}-2}u_i(y)\ud y.
	\end{equation*}
	To control $I_1$, we observe that since 
	\begin{equation}\label{mari}
		\displaystyle\int_{B_R(0)}\left[\left(\frac{|x|}{|x-y|}\right)^{n-4}-1\right]|\mathcal{U}(y)|^{2^{**}-2}u_i(y)\ud y=o_{R}(1),
	\end{equation}
	the following asymptotic identity holds
	\begin{align}\label{finland}
		I_1=\displaystyle\int_{B_R(0)}|\mathcal{U}(y)|^{2^{**}-2}u_i(y)\ud y+o_{R}(1) \quad {\rm as} \quad R\rightarrow\infty,
	\end{align}
	where the identity \eqref{mari} holds because the integrand is bounded.
	
	Now it remains to estimate $I_2$. Accordingly, using {Step 2}, we can write
	\begin{align}\label{iceland}
		I_2&=\displaystyle\int_{\mathbb{R}^n\setminus B_{R}(0)}\left(\frac{|x|}{|x-y|}\right)^{n-4}|\mathcal{U}(y)|^{2^{**}-2}u_i(y)\ud y&\\\nonumber
		&\leqslant\displaystyle\int_{B_{|x|/2}(x)}\left(\frac{|x|}{|x-y|}\right)^{n-4}|\mathcal{U}(y)|^{2^{**}-2}u_i(y)\ud y+\displaystyle\int_{\mathbb{R}^n\setminus B_{|x|/2}(x)}\left(\frac{|x|}{|x-y|}\right)^{n-4}|\mathcal{U}(y)|^{2^{**}-2}u_i(y)\ud y&\\\nonumber
		&\leqslant C_{\varepsilon}^{2^{**}-1}\displaystyle\int_{B_{|x|/2}(x)}\left(\frac{|x|}{|x-y|}\right)^{n-4}\left(\frac{|x|}{2}\right)^{-(n+4)(1-\varepsilon)}\ud y+2^{n-4}\displaystyle\int_{\mathbb{R}^n\setminus B_R(0)}|\mathcal{U}(y)|^{2^{**}-2}u_i(y)\ud y&\\\nonumber
		&\leqslant C_{\varepsilon}^{2^{**}-1}2^{(n+4)(1-\varepsilon)-2}\omega_{n-1}|x|^{n-(n+4)(1-\varepsilon)}+2^{n-4}\displaystyle\int_{\mathbb{R}^n\setminus B_R(x)}|\mathcal{U}(y)|^{2^{**}-2}u_i(y)\ud y,&
	\end{align}
	Choosing $\varepsilon=4/(n+4)$ in \eqref{iceland}, we obtain that $n-(n+4)(1-\varepsilon)\leqslant0$, and so
	\begin{equation*}
		I_2=o_{R}(1) \quad {\rm as} \quad R\rightarrow\infty,
	\end{equation*}
	which combined with \eqref{iceland} and \eqref{finland}, concludes the proof of Step 3.
	
	Now using Step 3, we obtain that for all $i,j\in I_{+}$, it holds
	\begin{equation*}
		q_{ij}(x)=\frac{u_i(x)}{u_j(x)}=\frac{|x|^{n-4}u_i(x)}{|x|^{n-4}u_j(x)}=\frac{\int_{\mathbb{R}^n}|\mathcal{U}(x)|^{2^{**}-2}u_i(x)\ud x+o_{R}(1)}{\int_{\mathbb{R}^n}|\mathcal{U}(x)|^{2^{**}-2}u_j(x)\ud x+o_{R}(1)},
	\end{equation*}
	which by taking the limit as $R\rightarrow\infty$ yields \eqref{jurubeba}. 
	
	Finally, combining Claims 1 and 2, we find that $u_i=\Lambda_{ij}u_j$, which concludes the proof using the same argument as in the other proof in the last section.
\end{proof}

Using Lemma~\ref{lemmaB} and the classification in Proposition~\ref{li-zhang1}, we can compute the $\mathcal{D}^{2,2}(\mathbb{R}^n,\mathbb{R}^p)$-norm of any classical solution to \eqref{regularsystem}, which enables us to conclude that classical solutions are weak solutions, then Theorem~\ref{hebey-druet}\textcolor{blue}{'} can be applied to give an alternative proof for Theorem~\ref{theorem1} (See Remark~\ref{classical/weak}). 

\begin{proof}[Alternative proof of Theorem~\ref{theorem1}]
	By Lemma~\ref{lemmaC}, we may assume $\mu^*(y)<\infty$ for any $y\in\mathbb{R}^n$. Moreover, using Proposition~\ref{li-zhang1}, there exist $x_0\in\mathbb{R}^n$ and $\mu^{\prime}_i>0$ and $\mu^{\prime\prime}_i\geqslant0$ such that
	\begin{equation}\label{shape}
		|\mathcal{U}(x)|=\left(\frac{\mu^{\prime}_i}{\mu^{\prime\prime}_i+|x-x_0|^2}\right)^{\frac{n-4}{2}} \quad {\rm for \ all} \quad x \in \mathbb{R}^n.
	\end{equation}
	
	Let us consider a smooth cut-off function satisfying $\eta\equiv1$ in $[0,1]$, $0\leqslant\eta\leqslant1$ in $[1,2)$ and $\eta\equiv0$ in $[2,\infty)$. For $R>0$, setting $\eta_R(x)=\eta(R^{-1}x)$, and multiplying the equation \eqref{oursystem} by $\eta_{R}u_i$, we obtain $\Delta^{2}u_i\eta_{R}u_i=|\mathcal{U}|^{2^{**}-2}\eta_{R}u^2_i$,
	which gives us
	\begin{equation*}
		\displaystyle\sum_{i=1}^{p}\Delta^{2}u_i\eta_{R}u_i=c(n)\displaystyle\sum_{i=1}^{p}|\mathcal{U}|^{2^{**}-2}\eta_{R}u^2_i=c(n)|\mathcal{U}|^{2^{**}}\eta_R.
	\end{equation*}
	Thus, 
	\begin{equation}\label{cansado}
		\displaystyle\int_{\mathbb{R}^n}\displaystyle\sum_{i=1}^{p}\Delta^{2}u_i\eta_{R}u_i \ \ud x=\displaystyle c(n)\int_{\mathbb{R}^n}|\mathcal{U}|^{2^{**}}\eta_R \ \ud x.
	\end{equation}
	Using integration by parts on the left-hand side,
	\begin{equation}\label{chato}
		\displaystyle\int_{\mathbb{R}^n}\displaystyle\sum_{i=1}^{p}\Delta^{2}u_i\eta_{R}u_i\ud x=\displaystyle\sum_{i=1}^{p}\displaystyle\int_{\mathbb{R}^n}u_i\Delta^2(\eta_{R}u_i)\ud x.
	\end{equation}
	Applying the formula for the bi-Laplacian of the product on the right-hand side of \eqref{chato},
	\begin{align*}
		\displaystyle\sum_{i=1}^{p}\displaystyle\int_{\mathbb{R}^n}u_i\Delta^2(\eta_{R}u_i)\ud x = & \displaystyle\sum_{i=1}^{p}\displaystyle\int_{\mathbb{R}^n}\left[u_i\Delta^2(\eta_R)u_i+4u_i\nabla\Delta\eta_{R}\nabla u_i \right]\ud x&\\
		+ &\displaystyle\sum_{i=1}^{p}\displaystyle\int_{\mathbb{R}^n}\left[
		6u_i\Delta\eta_R\Delta u_i+4u_i\nabla\eta_{R}\nabla\Delta u_i+u_i\eta_R\Delta^2 u_i\right]\ud x,&
	\end{align*}
	which combined with \eqref{chato} provides
	\begin{align}\label{demais}
		&\displaystyle\sum_{i=1}^{p}\displaystyle\int_{\mathbb{R}^n}\left[u_i\Delta^2(\eta_R)u_i+4u_i\nabla\Delta\eta_{R}\nabla u_i+6u_i\Delta\eta_R\Delta u_i+4u_i\eta_{R}\nabla\Delta u_i\right]\ud x=0.&
	\end{align}
	Again, we use integration by parts in \eqref{demais} to find
	\begin{align*}
		&\displaystyle\sum_{i=1}^{p}\left[\displaystyle\int_{\mathbb{R}^n}u_i^2\Delta\eta_{R}\ud x-4\left(\displaystyle\int_{\mathbb{R}^n}\Delta\eta_R|\nabla u_i|^2\ud x+\displaystyle\int_{\mathbb{R}^n}u_i\Delta\eta_R\Delta u_i\ud x\right)\right.&\\
		&\left.+6\displaystyle\int_{\mathbb{R}^n}u_i\Delta\eta_R\Delta u_i\ud x-4\left(\displaystyle\int_{\mathbb{R}^n}u_i\eta_R\Delta^2 u_i\ud x+\displaystyle\int_{\mathbb{R}^n}\eta_R\nabla u_i\nabla\Delta u_i\ud x\right)\right]=0,&
	\end{align*}
	which yields
	\begin{align}\label{austria}
		4\displaystyle\sum_{i=1}^{p}\displaystyle\int_{\mathbb{R}^n}\Delta^{2}u_i\eta_Ru_i\ud x
		&=\displaystyle\int_{\mathbb{R}^n}(u_i)^2\Delta^2\eta_R\ud x-4\displaystyle\int_{\mathbb{R}^n}\Delta\eta_R|\nabla u_i|^2\ud x\\\nonumber
		&+2\displaystyle\int_{\mathbb{R}^n}u_i\Delta\eta\Delta u_i\ud x+4\displaystyle\int_{\mathbb{R}^n}\Delta u_i\nabla u_i\nabla\eta_{R}\ud x+4\displaystyle\int_{\mathbb{R}^n}\eta_R|\Delta u_i|^2\ud x.
	\end{align}
	As a result of \eqref{cansado} and \eqref{austria}, we obtain
	\begin{align}\label{rosario}
		\displaystyle\int_{\mathbb{R}^{n}}|\mathcal{U}|^{2^{**}}\eta_{R}\ud x
		&=\frac{1}{4}\displaystyle\int_{\mathbb{R}^{n}}|\mathcal{U}|^2\Delta^2\eta_{R}\ud x-\displaystyle\int_{\mathbb{R}^{n}}|\nabla\mathcal{U}|^2\Delta\eta_{R}\ud x\\\nonumber
		&+\frac{1}{2}\displaystyle\int_{\mathbb{R}^{n}}\langle\mathcal{U},\Delta\mathcal{U}\rangle\Delta\eta_{R}\ud x+\displaystyle\int_{\mathbb{R}^{n}}\langle\Delta\mathcal{U},\nabla \mathcal{U}\rangle\nabla\eta_R\ud x+\displaystyle\int_{\mathbb{R}^{n}}|\Delta\mathcal{U}|^2\eta_{R}\ud x.
	\end{align}
	Moreover, we have
	\begin{equation*}
		\displaystyle\int_{\mathbb{R}^{n}}|\mathcal{U}|^2\Delta^2\eta_{R}\ud x=\mathcal{O}(R^{4-n}) \quad {\rm as} \quad R\rightarrow\infty.
	\end{equation*}
	Indeed, we observe 
	\begin{align*}
		\displaystyle\left|\int_{\mathbb{R}^{n}}|\mathcal{U}|^2\Delta^2\eta_{R}\ud x\right|&\leqslant\displaystyle\int_{\mathbb{R}^{n}}|\mathcal{U}|^2|\Delta^2\eta_{R}|\ud x\\
		&\leqslant\|\Delta^2\eta_R\|_{C^{0}(\mathbb{R}^n)}\displaystyle\int_{B_{2R}(0)\setminus B_{R}(0)}|\mathcal{U}|^2\ud x\\
		&\leqslant\frac{\|\Delta^2\eta\|_{C^{0}(\mathbb{R}^n)}}{R^4}\displaystyle\int_{R}^{2R}|\mathcal{U}(r)|^2r^{n-1}\ud r\\
		&\leqslant\frac{\|\Delta^2 \eta\|_{C^{0}(\mathbb{R}^n)}\|\mathcal{U}\|^2_{L^{\infty}(\mathbb{R}^n)}}{R^4}\displaystyle\int_{R}^{2R}r^{n-1}\ud r\\
		&=C(n)R^{n-4}.
	\end{align*}
	Analogously to the others terms, we get the following estimates
	\begin{gather*}
		\displaystyle\int_{\mathbb{R}^{n}}|\nabla\mathcal{U}|^2\Delta\eta_{R}\ud x=\mathcal{O}(R^{2-n}) \ \mbox{and} \
		\displaystyle\int_{\mathbb{R}^{n}}\langle\mathcal{U},\Delta\mathcal{U}\rangle\Delta\eta_{R}\ud x=\displaystyle\int_{\mathbb{R}^{n}}\langle\Delta\mathcal{U},\nabla\mathcal{U}\rangle\nabla\eta_R\ud x=\mathcal{O}(R^{1-n})
	\end{gather*} 
	as $R\rightarrow\infty$,
	which, by taking $R\rightarrow\infty$ in \eqref{rosario}, we find that $\eta_{R}\rightarrow1$ in the $C^{0}(\mathbb{R}^n)$-topology, and 
	\begin{equation*}
		\displaystyle\int_{\mathbb{R}^{n}}|\Delta\mathcal{U}|^2\ud x=c(n)\displaystyle\int_{\mathbb{R}^{n}}|\mathcal{U}|^{2^{**}}\ud x<\infty.
	\end{equation*}
	Since $|\mathcal{U}|$ has the classification \eqref{shape}, a direct computation yields
	\begin{equation*}
		\int_{\mathbb{R}^{n}}|\mathcal{U}|^{2^{**}}\ud x=S(2,2,n)^{-n},
	\end{equation*}
	where $S(2,2,n)$ is the best constant of Sobolev defined in \eqref{fourthtalaub}. Hence, $\mathcal{U}\in \mathcal{D}^{2,2}(\mathbb{R}^n,\mathbb{R}^p)$ is a weak solution to \eqref{oursystem}, and the proof follows as a direct application of Theorem~\ref{hebey-druet}.
\end{proof}

\begin{remark}
	System \eqref{regularsystem} is equivalent to the following integral system
	\begin{equation}\label{globalintegralsystem}
		u_i(x)=C_n\int_{\mathbb{R}^n}|x-y|^{4-n}f_i(\mathcal{U}(y))\ud y \quad {\rm in} \quad \mathbb{R}^n.
	\end{equation}
	In the sense that every solution to \eqref{regularsystem} is a solution \eqref{globalintegralsystem} plus a constant, and the reciprocal also holds. W. Chen and C. Li \cite[Theorem~3]{MR2510000} used the moving planes method in their integral form to classify solutions to a class of systems like \eqref{globalintegralsystem} involving more general nonlinearities. Let us mention that this approach can also be extended to study higher order systems.
\end{remark}

\subsection{Maximizers for a vectorial Sobolev inequality} 
We show that solutions obtained in Theorem \ref{theorem1} are the extremal $p$-maps for a type of vectorial higher order Sobolev embedding  \cite{MR2852264,MR1992377,MR1911916,MR2221095,MR1967041}. 
As usual, let us denote by  $\mathcal{D}^{k,q}(\mathbb{R}^n,\mathbb{R}^p)$ the Beppo--Levi space defined as the completion of  $C_c^{\infty}(\mathbb{R}^n,\mathbb{R}^p)$ with respect to the norm provided by the highest derivative term. 
Notice that if $q=2$, then $\mathcal{D}^{k,2}(\mathbb{R}^n,\mathbb{R}^p)$ is a Hilbert space furnished with the scalar product given by $\langle\mathcal{U},\mathcal{V}\rangle=\sum_{i=1}^{p}\langle u_i,v_i\rangle_{\mathcal{D}^{k,2}(\mathbb{R}^n)}$. Moreover, for the {\it higher order critical Sobolev exponent} $q_k^{*}=nq/(n-kq)$, we have the continuous embedding, $\mathcal{D}^{k,q}(\mathbb{R}^n,\mathbb{R}^p)\hookrightarrow L^{q_k^{*}}(\mathbb{R}^n,\mathbb{R}^p)$ with
\begin{equation*}
\|\mathcal{U}\|_{L^{q_k^{*}}(\mathbb{R}^n,\mathbb{R}^p)}\leqslant S(k,q,n,p)\|\mathcal{U}\|_{\mathcal{D}^{k,q}(\mathbb{R}^n,\mathbb{R}^p)}.
\end{equation*}
In this fashion, a natural problem to obtain and classify extremal functions and best constants for the inequality above.

For the scalar case, the celebrated papers \cite{MR0448404,MR0463908} contain the sharp Sobolev constant for $k=1$ as follows
\begin{equation*}
	S(1,q,n)=
	\begin{cases}
		\pi^{-\frac{1}{2}}n^{-\frac{1}{q}}\left(\frac{q-1}{n-q}\right)^{1-\frac{1}{q}}\left[\frac{\Gamma\left(1+\frac{n}{2}\right)\Gamma_2(n)}{\Gamma\left(\frac{n}{q}\right)\Gamma\left(n+1-\frac{n}{q}\right)}\right]^{-\frac{1}{n}},& {\rm if} \ 1<q<n\\
		\pi^{-\frac{1}{2}}{n^{-1}}\left[\Gamma\left(1+\frac{n}{2}\right)\right]^{-\frac{1}{n}},& {\rm if} \ q=1,
	\end{cases}
\end{equation*}
with extremals given by the spherical functions, {\it i.e.}, for some $\mu>0$ and $x_0\in\mathbb{R}^n$,
\begin{equation*}
	u(x)=\left(\frac{2\mu}{1+\mu^2|x-x_0|^{q/(q-1)}}\right)^{\frac{n-q}{q}}.
\end{equation*}
In particular, when $q=2$, we get 
\begin{equation*}
	S(1,2,n)=\left(\frac{4}{n(n-2)\omega_n^{2/n}}\right)^{1/2} \quad {\rm and} \quad u_{x_0,\mu}(x)=\left(\frac{2\mu}{1+\mu^2|x-x_0|^2}\right)^{\frac{n-2}{2}}.
\end{equation*}
On the fourth order case, $k=2$ and $q=2$, C. S. Lin \cite{MR1611691} found the best constant and characterized the set of maximizers,
\begin{equation}\label{fourthtalaub}
	S(2,2,n)=\left(\frac{16}{n(n-4)(n^2-4)\omega_n^{{4}/{n}}}\right)^{1/2} \quad {\rm and} \quad u_{x_0,\mu}(x)=\left(\frac{2\mu}{1+\mu^2|x-x_0|^2}\right)^{\frac{n-4}{2}}.
\end{equation}

In the vectorial case, we quote the second order Sobolev inequality
\begin{equation}\label{mimosa}
	\|\mathcal{U}\|_{L^{2^{*}}(\mathbb{R}^n,\mathbb{R}^p)}\leqslant S(1,2,n,p)\|\mathcal{U}\|_{\mathcal{D}^{1,2}(\mathbb{R}^n,\mathbb{R}^p)},
\end{equation}
where the extremal maps are the multiples of the second order spherical functions and $S(1,2,n,p)=S(1,2,n)$ for all $p>1$. Let us also consider the fourth order case of \eqref{mimosa} as
\begin{equation}\label{fourthvectsob}
	\mathcal{D}^{2,2}(\mathbb{R}^n,\mathbb{R}^p)\hookrightarrow L^{2^{**}}(\mathbb{R}^n,\mathbb{R}^p). 
\end{equation}
Ou main result here states that the solutions to \eqref{regularsystem} are the extremal functions for
\begin{equation}\label{fourthsobine}
	\|\mathcal{U}\|_{\mathcal{D}^{2,2}(\mathbb{R}^n,\mathbb{R}^p)}\leqslant S(2,2,n,p)\|\mathcal{U}\|_{L^{2^{**}}(\mathbb{R}^n,\mathbb{R}^p)}.
\end{equation}
Remarkably, the best constant in \eqref{fourthsobine} coincides with the one when $p=1$, that is, it follows that $S(2,2,n,1)=S(2,2,n,p)$ for all $p>1$. 
In other terms, the number of equations of the system has no effects in the best Sobolev constant for product spaces. In what follows, let us fix the notation $S(2,2,n,p)=S(n,p)$. 

\begin{proposition}
	Let ${\mathcal{U}}_{x_0,\mu}$ be a spherical solution to \eqref{regularsystem}. Then, up to constant, ${\mathcal{U}}_{x_0,\mu}$ is the unique extremal family of extremal $p$-maps for the Sobolev inequality \eqref{fourthsobine}, that is, 
	\begin{equation}\label{equalitysobolev}
		\|{\mathcal{U}}_{x_0,\mu}\|_{\mathcal{D}^{2,2}(\mathbb{R}^n,\mathbb{R}^p)}= S(n,p)\|{\mathcal{U}}_{x_0,\mu}\|_{L^{2^{**}}(\mathbb{R}^n,\mathbb{R}^p)}.
	\end{equation}
	Moreover, $S(n,p)=S(n)$ for all $p>1$.
\end{proposition}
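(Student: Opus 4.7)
The plan is to reduce the vectorial extremal problem to its scalar counterpart via Theorem~\ref{theorem1}, and then to establish the equality of the best constants through Minkowski's inequality combined with the scalar Sobolev embedding of \eqref{fourthtalaub}.

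By Theorem~\ref{theorem1}, any spherical solution $\mathcal{U}_{x_0,\mu}$ factors as $\Lambda u_{x_0,\mu}$ for some $\Lambda\in\mathbb{S}^{p-1}_{+}$, so $|\mathcal{U}_{x_0,\mu}(x)| = u_{x_0,\mu}(x)$ pointwise. Consequently,
\[
\|\mathcal{U}_{x_0,\mu}\|_{L^{2^{**}}(\mathbb{R}^n,\mathbb{R}^p)} = \|u_{x_0,\mu}\|_{L^{2^{**}}(\mathbb{R}^n)},
\]
and, by linearity of $\Delta$ together with $\sum_{i=1}^p \Lambda_i^2 = 1$,
\[
\|\mathcal{U}_{x_0,\mu}\|_{\mathcal{D}^{2,2}(\mathbb{R}^n,\mathbb{R}^p)}^2 = \sum_{i=1}^p \Lambda_i^2 \, \|\Delta u_{x_0,\mu}\|_{L^2(\mathbb{R}^n)}^2 = \|u_{x_0,\mu}\|_{\mathcal{D}^{2,2}(\mathbb{R}^n)}^2.
\]
Combining these two pointwise reductions with the scalar identity in \eqref{Sobolev-emb}--\eqref{fourthtalaub} yields the required extremal equality for $\mathcal{U}_{x_0,\mu}$ with the scalar best constant.

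To conclude that $S(n,p) = S(n)$, both directions must be checked. The bound $S(n,p) \geqslant S(n)$ is immediate from the previous paragraph, since $\mathcal{U} = \Lambda u_{x_0,\mu}$ is an admissible $p$-map realizing the scalar ratio. For the reverse inequality, given any $\mathcal{U}\in\mathcal{D}^{2,2}(\mathbb{R}^n,\mathbb{R}^p)$ exploit that $2^{**}/2 = n/(n-4) > 1$ (valid for $n\geqslant 5$) and apply Minkowski's inequality in $L^{2^{**}/2}(\mathbb{R}^n)$ to the nonnegative functions $u_i^2$:
\[
\|\mathcal{U}\|_{L^{2^{**}}}^2 = \left\| \sum_{i=1}^p u_i^2 \right\|_{L^{2^{**}/2}} \leqslant \sum_{i=1}^p \|u_i^2\|_{L^{2^{**}/2}} = \sum_{i=1}^p \|u_i\|_{L^{2^{**}}}^2.
\]
Applying the scalar Sobolev embedding componentwise and summing then delivers $\|\mathcal{U}\|_{L^{2^{**}}} \leqslant S(n)\,\|\mathcal{U}\|_{\mathcal{D}^{2,2}}$, which gives $S(n,p)\leqslant S(n)$.

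The only delicate point is to ensure Minkowski is used in the correct direction and that the exponent $2^{**}/2$ exceeds one, both of which hold for $n\geqslant 5$; the remainder amounts to the pointwise reduction supplied by Theorem~\ref{theorem1} together with Lin's scalar optimization recalled in \eqref{fourthtalaub}.
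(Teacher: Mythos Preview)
Your proof is correct and follows the same two-direction strategy as the paper: the bound $S(n,p)\geqslant S(n)$ by testing with $\Lambda u_{x_0,\mu}$ (equivalently $u\,\mathbf{e}_1$), and the bound $S(n,p)\leqslant S(n)$ by applying the scalar Sobolev inequality componentwise and summing. Your explicit use of Minkowski's inequality in $L^{2^{**}/2}$ to pass from $\|\mathcal{U}\|_{L^{2^{**}}}^{2}=\bigl\|\sum_i u_i^2\bigr\|_{L^{2^{**}/2}}$ to $\sum_i\|u_i\|_{L^{2^{**}}}^{2}$ is in fact a cleaner justification of that step than the paper's displayed chain~\eqref{hebeypot}, which as written starts from $\bigl(\sum_i\int|u_i|^{2^{**}}\bigr)^{2/2^{**}}$ rather than from the genuine vector norm $\bigl(\int|\mathcal{U}|^{2^{**}}\bigr)^{2/2^{**}}$; your Minkowski step is precisely what is needed to bridge that gap.
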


\begin{proof}
	Initially, we observe
	\begin{equation}\label{varsobconst}
		S(n,p)^{-2}=\inf_{\mathcal{H}^p(\mathbb{R}^n)}\sum_{i=1}^{p}\int_{\mathbb{R}^n}|\Delta u_i|^{2}\ud x, \; {\rm where} \; \mathcal{H}^p(\mathbb{R}^n)=\left\{\mathcal{U}\in\mathcal{D}^{2,2}(\mathbb{R}^n,\mathbb{R}^p) : \|\mathcal{U}\|_{L^{2^{**}}(\mathbb{R}^n,\mathbb{R}^p)}=1\right\}.
	\end{equation}
	When $p=1$ our result is a consequence of Theorem~\ref{theoremA} with best constant $S(n)$ given by \eqref{fourthtalaub}.
	
	\noindent{\bf Claim 1:} $S(n,p)=S(n)$ for all $p>1$.
	
	\noindent In fact, by taking $u\in \mathcal{D}^{2,2}(\mathbb{R}^n)$ satisfying $\|u\|_{L^{2^{**}}(\mathbb{R}^n)}=1$, we have that $\mathcal{U}=u{\bf e_1}$ belongs to $\mathcal{H}^p(\mathbb{R}^n)$, where ${\bf e_1}=(1,0,\dots,0)$. Substituting $\mathcal{U}$ in \eqref{varsobconst}, we get that $S(n,p)\leqslant S(n)$. Conversely, we have
	\begin{align}\label{hebeypot}
		\left(\sum_{i=1}^{p}\int_{\mathbb{R}^n}|u_i|^{2^{**}}\ud x\right)^{{2}/{2^{**}}}&\leqslant\left(\sum_{i=1}^{p}\left(S(n,p)^{-1}\int_{\mathbb{R}^n}|\Delta^2 u_i|\ud x\right)^{2^{**}/2}\right)^{{2}/{2^{**}}}&\\\nonumber
		&\leqslant S(n)^{-1}\sum_{i=1}^{p}\int_{\mathbb{R}^n}|\Delta u_i|^{2}\ud x.&
	\end{align}
	Therefore, by \eqref{hebeypot} we find that $S(n,p)^{-1}\leqslant S(n)^{-1}$, which gives us the proof of the claim.	
	Also, using the following computation
	\begin{equation*}
		\frac{\|{\mathcal{U}}_{x_0,\mu}\|_{\mathcal{D}^{2,2}(\mathbb{R}^n,\mathbb{R}^p)}}{\|{\mathcal{U}}_{x_0,\mu}\|_{L^{2^{**}}(\mathbb{R}^n,\mathbb{R}^p)}}=\frac{\|u_{x_0,\mu}\|_{\mathcal{D}^{2,2}(\mathbb{R}^n)}}{{\|u_{x_0,\mu}}\|_{L^{2^{**}}(\mathbb{R}^n)}}=S(n).
	\end{equation*}
	To prove the uniqueness, observe that if $\mathcal{U}$ is such that \eqref{equalitysobolev}, then, up to constant, $\mathcal{U}$ satisfy $\Delta^2\mathcal{U}=|\mathcal{U}|^{2^{**}-2}\mathcal{U}$ in $\mathbb{R}^n$, which, by Theorem~\ref{theorem1} concludes the proof of the proposition. 
\end{proof}

\section{Classification result for singular solutions}\label{section4}
The objective of this section is to present the proof of Theorem~\ref{theorem2}. 
We show that singular solutions to \eqref{oursystem} are radially symmetry about the origin.
Then, we obtain radial symmetry via an asymptotic moving planes technique due to \cite{MR982351} (see also \cite{MR1611691,MR3394387,MR4002167}); this property turns \eqref{oursystem} into a fourth order ODE system. 
Eventually, we define a Pohozaev-type invariant by integrating the Hamiltonian energy of the associated Emden-Fowler system \cite{MR1666838,MR2393072,MR4085120,MR3869387,MR4094467,MR4123335}. 
Moreover, we prove that the Pohozaev invariant sign provides a removable-singularity classification for nonnegative solutions to \eqref{oursystem}, which combined with a delicate ODE analysis as in \cite{MR3869387} completes our argument. 

\subsection{Regularity}
As in Proposition~\ref{regregularity}, an important question is whether weak singular solutions to \eqref{oursystem} are classical solutions as well. This the content of the next result.

\begin{proposition}\label{singregularity}
	Let $\mathcal{U}\in\mathcal{D}^{2,2}(\mathbb{R}^n\setminus\{0\},\mathbb{R}^p)$ be a nonnegative weak singular solution to \eqref{oursystem}. Then, $\mathcal{U}\in C^{4,\zeta}(\mathbb{R}^n\setminus\{0\},\mathbb{R}^p)$ is a  classical singular solution to \eqref{oursystem}.
\end{proposition}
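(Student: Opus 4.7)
The regularity-lifting scheme of Proposition~\ref{regregularity} relied on convolution against the explicit fundamental solution $\Gamma_2$ of $\Delta^2$ on the whole space, which is not available on the punctured space. The natural remedy is to relocate the problem to a domain without singular point: apply the cylindrical transformation $\mathfrak{F}$, so that any weak singular solution $\mathcal{U}\in\mathcal{D}^{2,2}(\mathbb{R}^n\setminus\{0\},\mathbb{R}^p)$ produces a weak solution $\mathcal{V}=\mathfrak{F}(\mathcal{U})\in H^2(\mathcal{C}_\infty,\mathbb{R}^p)$ of \eqref{sphevectfowlersystem}. The origin has been ``unrolled'' to the end $t=+\infty$ of the complete Riemannian manifold $(\mathcal{C}_\infty,g_{\rm cyl})$, which has bounded geometry. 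The task then becomes showing that $\mathcal{V}\in C^{4,\zeta}_{\loc}(\mathcal{C}_\infty,\mathbb{R}^p)$, after which we pull back by the smooth diffeomorphism $\varphi:\mathcal{C}_\infty\to\mathbb{R}^n\setminus\{0\}$ to conclude.

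\textbf{Local bootstrap on the cylinder.} Note that $\Delta^2_{\rm cyl}$ is a uniformly elliptic fourth-order operator with smooth coefficients, as its principal symbol equals $(\partial_t^{(2)}+\Delta_\theta)^2$. Sobolev embedding (uniform by bounded geometry) gives $\mathcal{V}\in L^{2^{**}}_{\loc}(\mathcal{C}_\infty)$, hence the nonlinear potential $V(\cdot):=c(n)|\mathcal{V}(\cdot)|^{2^{**}-2}$ lies in $L^{n/4}_{\loc}(\mathcal{C}_\infty)$. In a coordinate chart of fixed radius, the local parametrix of $\Delta^2_{\rm cyl}$ agrees with convolution against $\Gamma_2$ modulo lower-order smoothing terms, so the scheme of Proposition~\ref{reglift} can be implemented chart by chart exactly as in Proposition~\ref{regregularity}: split $V=V_M+\widetilde V_M$ with $\|V_M\|_{L^{n/4}}$ small for $M\gg 1$, obtain a contraction on $L^q$ for $q>n/(n-4)$ from the truncated integral operator, and iteratively improve the exponent of the tame part via Sobolev embedding. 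This yields $\mathcal{V}\in L^q_{\loc}(\mathcal{C}_\infty,\mathbb{R}^p)$ for every $q<\infty$.

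\textbf{Conclusion and main obstacle.} Once $\mathcal{V}\in L^q_{\loc}$ for $q$ large, Morrey's embedding provides $\mathcal{V}\in C^{0,\zeta}_{\loc}$, and fourth-order Schauder estimates applied chart by chart upgrade this to $\mathcal{V}\in C^{4,\zeta}_{\loc}(\mathcal{C}_\infty,\mathbb{R}^p)$. Since $\varphi$ is a smooth diffeomorphism on any set bounded away from the cylindrical ends, pulling back delivers $\mathcal{U}\in C^{4,\zeta}(\mathbb{R}^n\setminus\{0\},\mathbb{R}^p)$ satisfying \eqref{oursystem} pointwise. The delicate point is to make the truncation-contraction argument of Proposition~\ref{reglift} work in the manifold setting without the explicit Riesz potential $\Gamma_2\ast$: one has to replace this convolution by a local parametrix of $\Delta^2_{\rm cyl}$ and check that the operator norm on the truncated weight $V_M$ is still controlled by $\|V_M\|_{L^{n/4}}$, uniformly across charts. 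This is precisely the regularity framework carried out on complete manifolds of bounded geometry in \cite{MR2569498}, which furnishes exactly the ingredient needed to close the argument.
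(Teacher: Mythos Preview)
Your proposal is correct and follows essentially the same route as the paper: transfer the problem to the cylinder via $\mathfrak{F}$, invoke the good geometry of $(\mathcal{C}_\infty,g_{\rm cyl})$ together with the Sobolev embedding there, adapt the regularity-lifting scheme of Proposition~\ref{regregularity} (the paper simply says ``adapting the proof''), cite \cite{MR2569498} for the manifold framework, and pull back. Your write-up supplies more detail on the chart-by-chart parametrix step than the paper's terse sketch, but the underlying argument is the same.
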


	\begin{proof}
		Since the right hand-side of \eqref{oursystem} belong to same Lebesgue class as in \eqref{scalarsystem}, that is, $f_i(\mathcal{U})=c(n)|\mathcal{U}|^{2^{**}-2}u_i\in L^{\frac{2n}{n+4}}(\mathbb{R}^n\setminus\{0\})$ for all $i\in I$, we can directly apply the regularity bootstrap method to each component $u_i$ as in the proof of \cite[theorem~1.1]{MR1809291} (see also Remark~\ref{classical/weak}). 
	\end{proof}

\subsection{Superharmonicity} 
Now, we use the radial symmetry to prove that any component of a nonnegative singular solution to \eqref{oursystem} is superharmonic.
The next result is a version of Proposition~\ref{superharmonicity} for singular solution to \eqref{oursystem}, which can be found in \cite[Lemma~2.3]{MR4094467} for the scalar case. 
We also remark that \cite[Theorem~1.4]{MR4094467} contains a improved version of this result called a Modica estimate.

\begin{proposition}\label{supersingular} 
	Let $\mathcal{U}$ be a nonnegative singular solution to \eqref{oursystem}. Then, $-\Delta u_i\geqslant0$ in $\mathbb{R}^n\setminus\{0\}$ for all $i\in I$.
\end{proposition}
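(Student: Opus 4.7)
The strategy is to adapt the spherical-mean iteration of Proposition~\ref{superharmonicity} to the punctured domain, using the scaling invariance of \eqref{oursystem} to reduce to a situation where the iteration has enough room to run. Suppose, for contradiction, that $h_i(x_*) := -\Delta u_i(x_*) < 0$ for some $x_* \in \mathbb{R}^n \setminus \{0\}$ and $i\in I$; by Proposition~\ref{singregularity}, $h_i$ is continuous on $\mathbb{R}^n \setminus \{0\}$. Since $\mathcal{U}^\lambda(x) := \lambda^{(n-4)/2}\mathcal{U}(\lambda x)$ is again a nonnegative singular solution of \eqref{oursystem} with $h_i^\lambda(x_*/\lambda) = \lambda^{n/2} h_i(x_*) < 0$, choosing $\lambda$ sufficiently small I may assume $|x_*|$ exceeds any prescribed dimensional threshold. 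In particular, fixing $s = (n+4)/(n-4)$ and $R_\# = (n+2+2s)^{4/(s-1)}$, the constant at which the iteration of Proposition~\ref{superharmonicity} produces a divergent lower bound, I arrange that $|x_*| > 2R_\#$.

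Next, the pointwise bound $|\mathcal{U}|^{2^{**}-2} u_i \geqslant u_i^{2^{**}-1}$ (valid since $u_i \leqslant |\mathcal{U}|$ and $2^{**}>2$) reduces \eqref{oursystem} to the Lane--Emden-type pair
\begin{equation*}
-\Delta u_i = h_i,\qquad -\Delta h_i \geqslant c(n)\, u_i^{\,2^{**}-1} \quad \text{on}\ \mathbb{R}^n\setminus\{0\}.
\end{equation*}
On the balls $\overline{B_r(x_*)}\subset\mathbb{R}^n\setminus\{0\}$ with $0<r<|x_*|$, let $\bar u_i(r)$ and $\bar h_i(r)$ denote the spherical means of $u_i, h_i$ centered at $x_*$. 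Averaging both equations over $\partial B_r(x_*)$ and applying Jensen's inequality for the convex nonlinearity $t\mapsto t^{2^{**}-1}$ yields, exactly as in the derivations of \eqref{portugal} and \eqref{correctinequality},
\begin{equation*}
\Delta\bar u_i+\bar h_i=0\qquad\text{and}\qquad \Delta\bar h_i+c(n)\,\bar u_i^{\,2^{**}-1}\leqslant 0,
\end{equation*}
with initial datum $\bar h_i(0)=h_i(x_*)<0$. Observe that no averaging ball crosses the origin, so all manipulations take place in the regular region where $u_i\in C^{4,\zeta}$.

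At this point I can run the iteration of Proposition~\ref{superharmonicity} verbatim, driven by Lemmas~\ref{lemmaxu1}--\ref{lemmaxu2}: successive double integrations in $r$ combined with Jensen produce the recursive lower bound
\begin{equation*}
\bar u_i(r)\geqslant \frac{c_0^{\,s^k} r^{l_k}}{(n+2+2s)^{b_k}}\qquad\text{for}\ 0<r<|x_*|,
\end{equation*}
with $c_0=-h_i(x_*)/(2n)>0$. Evaluating at $r=R_\#$ (replacing $R_\#$ by $R_\#/c_0$ if $c_0<1$, as in the original argument; a further small rescaling preserves the inequality $R_\#<|x_*|$), the right-hand side diverges as $k\to\infty$. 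This contradicts the boundedness of $u_i$ on the compact set $\overline{B_{R_\#}(x_*)}\subset\mathbb{R}^n\setminus\{0\}$, which is guaranteed by the continuity afforded by Proposition~\ref{singregularity}. Hence $h_i\geqslant 0$ throughout $\mathbb{R}^n\setminus\{0\}$ for each $i\in I$.

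The main technical obstacle is precisely the localization issue just described: the non-singular proof freely enlarges the iteration radius to $+\infty$, whereas here all averaging must stay below $|x_*|$ in order to avoid the singular point at the origin. The conformal scaling invariance of \eqref{oursystem}---the same symmetry exploited repeatedly elsewhere in the paper---is what allows us to trade any concrete failure of superharmonicity for a new failure point arbitrarily far from the origin, and this is the crucial maneuver that ports the spherical-averaging blow-up argument to the singular regime.
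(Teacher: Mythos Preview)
There is a genuine gap in the scaling step. Under $\mathcal{U}^{\lambda}(x)=\lambda^{(n-4)/2}\mathcal{U}(\lambda x)$ the base point moves to $y_{*}=x_{*}/\lambda$ while $c_{0}^{\lambda}:=-\Delta u_{i}^{\lambda}(y_{*})/(2n)=\lambda^{n/2}c_{0}$, so the quantity $|y_{*}|\,(c_{0}^{\lambda})^{2/n}=|x_{*}|\,c_{0}^{2/n}$ is \emph{invariant} under scaling. On the other hand, the smallest radius at which the iteration of Proposition~\ref{superharmonicity} actually diverges is proportional to $c_{0}^{-2/n}$: the bound $\bar u_{i}(r)\geqslant c_{0}^{s^{k}}r^{l_{k}}(n+2+2s)^{-b_{k}}$ tends to infinity precisely when $\log c_{0}+\tfrac{2(s+1)}{s-1}\log r>\tfrac{4s}{(s-1)^{2}}\log(n+2+2s)$, and since $\tfrac{s-1}{2(s+1)}=\tfrac{2}{n}$ this reads $r>c_{0}^{-2/n}D_{n}$ for a purely dimensional constant $D_{n}$. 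Hence the requirement ``the evaluation radius lies below the distance to the singularity'' is equivalent to $|x_{*}|\,c_{0}^{2/n}>D_{n}$, a scale-invariant condition that no choice of $\lambda$ can create if it fails initially. Your parenthetical remark that ``a further small rescaling preserves the inequality $R_{\#}<|x_{*}|$'' is therefore unjustified: when $c_{0}$ is small (equivalently, when $|x_{*}|^{n/2}|\Delta u_{i}(x_{*})|$ is small) the iteration cannot be confined to $B_{|x_{*}|}(x_{*})$, and the contradiction is not reached.

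The paper avoids this obstruction by a completely different route. Having already established radial symmetry in Proposition~\ref{symmetry}, it works directly with the radial ODE: from
\[
\partial_{r}\bigl[r^{n-1}\partial_{r}\Delta u_{i}(r)\bigr]=c(n)\,r^{n-1}|\mathcal{U}|^{2^{**}-2}u_{i}\geqslant 0
\]
one sees that $r^{n-1}\partial_{r}\Delta u_{i}$ is nondecreasing; the cylindrical representation $u_{i}(r)=r^{-\gamma}v_{i}(-\ln r)$ then supplies the endpoint information $r^{n-1}\partial_{r}\Delta u_{i}(r)\to 0$ as $r\to 0^{+}$ and $\Delta u_{i}(r)\to 0$ as $r\to\infty$, from which $\Delta u_{i}\leqslant 0$ follows immediately. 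No spherical-mean iteration or rescaling is used.
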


\begin{proof} 
	Let us recall that $u_i(r)=r^{-\gamma}v_i(-\ln r)$, thus $u_i(r)\geqslant C_1r^{-\gamma}$, which together with \eqref{oursystem} implies 
	\begin{equation*}
		0<\omega_{n-1}r^{n-1}{\partial_r}\Delta u_i(r)=c(n)\int_{B_r}|\mathcal{U}|^{2^{**}-1}u_i\ud x,
	\end{equation*}
	for $0<r\ll1$ sufficiently small. Then, we get
	\begin{equation}\label{supsing2}
		\lim_{r\rightarrow0^{+}}r^{n-1}{\partial_r}\Delta u_i(r)=0.
	\end{equation}
	Moreover, $u_i$ satisfies
	\begin{equation*}
		{\partial_r}\left[r^{n-1}{\partial_r}\Delta u_i(r)\right]=r^{n-1}c(n)|\mathcal{U}|^{2^{**}-2}u_i,
	\end{equation*}
	which combined with \eqref{supsing2} gives us that ${\partial_r}\Delta u_i(r)>0$. Therefore, $\Delta u_i(r)$ is strictly increasing, and by the relation between $u_i$ and $v_i$, we find that $\lim_{r\rightarrow\infty}\Delta u_i(r)=0$, which completes the proof.
\end{proof}

\begin{remark}
	Another way of proving this superharmonicity property for singular solutions in the punctured space is to use the arguments in \cite[Theorem~3.7]{MR1769247} (or \cite[Lemma~3.2]{MR4123335}) for each component $u_i$. 
	In this manner, we would avoid using the rotational invariance of vectorial solutions to \eqref{oursystem}. 
\end{remark}
	
\subsection{Moving planes technique}
In this subsection, using a variant of the {\it moving planes technique}, we prove that singular solutions to \eqref{oursystem} are radially symmetric about the origin. 
The first work proving radial symmetry for solutions to PDEs via this method is due to J. Serrin \cite{MR0333220} (see also \cite{MR634248,MR544879}).
His approach was based on the reflection method developed earlier by A. D. Aleksandrov to study embedded surfaces of constant mean curvature. 
In our case, solutions are singular at the origin, thus, to show that they are rotationally invariant, we need to perform an adaptation of Aleksandrov's method \cite{MR982351}. Furthermore, this tool can be extended to fourth order problems as in \cite{MR3394387} and for second order systems \cite{MR1611691,MR4002167} with isolated singularities. 
To the best of our knowledge, our result are the first to use this method in the context of strongly coupled fourth order systems.

To prove our main result, we require three background lemmas from \cite[Section~2]{MR982351}

\begin{lemmaletter}\label{harmasymp}
	Let $\vartheta$ be a harmonic function and consider $({\vartheta})_{z,1}=|x|^{2-n}\vartheta\left(z+{x}{|x|^{-2}}\right)$ the second order Kelvin transform of $\vartheta$, which for simplicity it is denoted by $({\vartheta})_{z,1}=\widetilde{\vartheta}$. Then, $\widetilde{\vartheta}$ is harmonic in a neighborhood at infinity, and it satisfies the asymptotic expansion
	\begin{equation}\label{harmonicexpansion}
	\begin{cases}
	\widetilde{\vartheta}(x)=\mathfrak{a}_{0}|x|^{2-n}+\mathfrak{a}_{j}x_j|x|^{-n}+\mathcal{O}(|x|^{-n})\\
	\partial_{x_j}\widetilde{\vartheta}(x)=(2-n)\mathfrak{a}_{0}x_j|x|^{-n}+\mathcal{O}(|x|^{-n})\\
	\partial_{x_{k}x_{j}}\widetilde{\vartheta}(x)=\mathcal{O}(|x|^{-n}).
	\end{cases}
	\end{equation}
	where $\{\mathfrak{a}_j\}_{j\in\mathbb{N}}\in\mathbb{R}$ are the coefficients of the Taylor expansion.
\end{lemmaletter}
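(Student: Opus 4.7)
The plan is to split the argument into two parts: the harmonicity of $\widetilde{\vartheta}$ in an exterior domain, and the three asymptotic expansions in \eqref{harmonicexpansion}. For the first part I would invoke the classical Kelvin identity
\[
\Delta\widetilde{\vartheta}(x)=|x|^{-n-2}\,(\Delta\vartheta)\!\left(z+\frac{x}{|x|^{2}}\right),
\]
which is a direct computation using $\Delta(|x|^{2-n}f(x/|x|^2))$. Since $\vartheta$ is harmonic in some neighborhood $U$ of $z$, the right-hand side vanishes whenever $z+x/|x|^2\in U$, and this set contains an exterior region $\{|x|>R\}$ for some $R>0$.

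For the asymptotic expansion, the key observation is that harmonic functions are real-analytic, so $\vartheta$ admits a Taylor expansion about $z$:
\[
\vartheta(z+y)=\vartheta(z)+\sum_{j=1}^{n}\partial_{j}\vartheta(z)\,y_{j}+\mathcal{O}(|y|^{2})\quad\text{as }|y|\to 0,
\]
uniformly in a fixed ball where the higher-order derivatives of $\vartheta$ are bounded. Substituting $y=x/|x|^{2}$, so that $|y|=|x|^{-1}$, and multiplying by $|x|^{2-n}$ yields
\[
\widetilde{\vartheta}(x)=\vartheta(z)\,|x|^{2-n}+\sum_{j=1}^{n}\partial_{j}\vartheta(z)\,x_{j}|x|^{-n}+\mathcal{O}(|x|^{-n}),
\]
which is the first line of \eqref{harmonicexpansion} with $\mathfrak{a}_{0}=\vartheta(z)$ and $\mathfrak{a}_{j}=\partial_{j}\vartheta(z)$.

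The derivative expansions I would obtain by combining two ingredients. First, direct differentiation of the leading terms via $\partial_{x_{j}}|x|^{2-n}=(2-n)x_{j}|x|^{-n}$ produces the dominant contribution in the second line, while $\partial_{x_{k}x_{j}}|x|^{2-n}=\mathcal{O}(|x|^{-n})$ yields the bound in the third line. Second, the derivatives of the remainder $\widetilde{\vartheta}(x)-\mathfrak{a}_{0}|x|^{2-n}-\mathfrak{a}_{j}x_{j}|x|^{-n}$, which is itself a harmonic function in $\{|x|>R\}$ decaying like $|x|^{-n}$, are controlled via standard interior gradient and Hessian estimates applied on balls of radius $|x|/2$, giving bounds of order $|x|^{-n-1}$ and $|x|^{-n-2}$, both absorbed into $\mathcal{O}(|x|^{-n})$. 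The only mildly delicate point is the uniform control of the Taylor remainder together with its derivatives when composed with the inversion $x\mapsto z+x/|x|^{2}$, but since all quantities are evaluated on a fixed small neighborhood of $z$ where $\vartheta$ is smooth, this presents no genuine obstacle.
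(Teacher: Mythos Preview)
The paper does not supply its own proof of this lemma; it is stated as one of three background lemmas quoted from \cite[Section~3]{MR982351} (Caffarelli--Gidas--Spruck) and left unproved. Your sketch is the standard argument and is correct: the Kelvin identity gives harmonicity in an exterior domain, the Taylor expansion of $\vartheta$ at $z$ composed with the inversion yields the first line with $\mathfrak{a}_0=\vartheta(z)$ and $\mathfrak{a}_j=\partial_j\vartheta(z)$, and the derivative bounds follow either by direct differentiation of the explicit terms together with interior estimates for the harmonic remainder, exactly as you outline.
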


\begin{lemmaletter}\label{starting}
	Let $\vartheta$ be a positive function defined in a neighborhood at infinity satisfying the asymptotic expansion \eqref{harmonicexpansion}. 
	Then, there exist $\bar{\lambda}<0$ and $R>0$ such that $\vartheta(x)>\vartheta(x_{\lambda})$ for $\lambda\leqslant\bar{\lambda}$, $|x|\geqslant R$ and $x\in\Sigma_{\lambda}$.  
\end{lemmaletter}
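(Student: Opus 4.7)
My plan is to exploit the strict positivity of the leading coefficient $\mathfrak{a}_0$ in the expansion \eqref{harmonicexpansion}, together with the elementary identity $|x_\lambda|^2 - |x|^2 = 4|\lambda|(x_1 - \lambda)$, valid for every $x \in \Sigma_\lambda$ with $\lambda < 0$; in particular $|x_\lambda| > |x|$ strictly. First, I would record that positivity of $\vartheta$ near infinity forces $\mathfrak{a}_0 > 0$, since otherwise the subleading piece $\mathfrak{a}_j x_j |x|^{-n}$ would dominate and change sign along different rays, contradicting positivity in a full neighborhood of infinity.

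The heart of the argument is a termwise expansion of the reflection difference,
\[
\vartheta(x) - \vartheta(x_\lambda) = \mathfrak{a}_0\bigl(|x|^{2-n} - |x_\lambda|^{2-n}\bigr) + \mathfrak{a}_j\bigl[x_j|x|^{-n} - (x_\lambda)_j|x_\lambda|^{-n}\bigr] + \bigl[E(x) - E(x_\lambda)\bigr],
\]
with $E(y) = \mathcal{O}(|y|^{-n})$, combined with sharp termwise estimates. The mean-value theorem applied to $s \mapsto s^{2-n}$ yields the lower bound $\mathfrak{a}_0(|x|^{2-n} - |x_\lambda|^{2-n}) \geq C_1 \mathfrak{a}_0 |\lambda|(x_1 - \lambda)|x|^{-n}$. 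For $j \geq 2$ the reflection leaves $x_j$ invariant, contributing at most $C|\lambda|(x_1 - \lambda)|x|^{-n-1}$; the $j = 1$ contribution splits as a comparable tame piece plus the sensitive term $2\mathfrak{a}_1(x_1 - \lambda)|x_\lambda|^{-n}$, whose magnitude is at most $C_2|\mathfrak{a}_1|(x_1 - \lambda)|x|^{-n}$. Differentiating the expansion gives $|\nabla E(y)| = \mathcal{O}(|y|^{-n-1})$; a brief case analysis, separating $|x'| \geq |x|/2$ (where the segment from $x_\lambda$ to $x$ stays in $\{|y| \geq |x|/2\}$ and the mean-value bound applies directly) from $|x'| < |x|/2$ (where necessarily $x_1 - \lambda \gtrsim |x|$, so even the crude bound $|E(x) - E(x_\lambda)| \leq 2C|x|^{-n}$ is absorbed), yields the effective estimate $|E(x) - E(x_\lambda)| \leq C_3(x_1 - \lambda)|x|^{-n-1}$.

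Combining these estimates,
\[
\vartheta(x) - \vartheta(x_\lambda) \geq (x_1 - \lambda)|x|^{-n}\bigl[C_1 \mathfrak{a}_0 |\lambda| - C_2|\mathfrak{a}_1| - C_4/|x|\bigr].
\]
It then suffices to choose $\bar{\lambda} < 0$ with $C_1 \mathfrak{a}_0 |\bar\lambda| > 2 C_2 |\mathfrak{a}_1|$ and subsequently $R$ large enough that $C_4/R$ is less than half the resulting surplus; the bracket is then strictly positive for every $\lambda \leq \bar{\lambda}$ and $|x| \geq R$, and since $x_1 - \lambda > 0$ in $\Sigma_\lambda$, the strict inequality $\vartheta(x) > \vartheta(x_\lambda)$ follows.

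The main obstacle is the degeneracy near the hyperplane $\{x_1 = \lambda\}$: as $x_1 \to \lambda^+$, both the leading term and the sensitive linear correction vanish at the same rate $(x_1 - \lambda)$, so a naive pointwise use of the expansion fails. Extracting the common factor $(x_1 - \lambda)|x|^{-n}$ reduces the problem to the $x$-independent inequality $\mathfrak{a}_0|\lambda| \gg |\mathfrak{a}_1|$, which is the quantitative reason the moving planes process must be initiated with $|\lambda|$ sufficiently large.
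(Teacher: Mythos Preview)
The paper does not give its own proof of this lemma; it merely cites it from Caffarelli--Gidas--Spruck \cite[Section~3]{MR982351}. Your overall strategy---expand $\vartheta(x)-\vartheta(x_\lambda)$ termwise, factor out $(x_n-\lambda)$, and compare coefficients---is exactly the standard route and is morally correct.

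There is, however, a genuine error in your key lower bound. You claim the mean-value theorem gives
\[
|x|^{2-n}-|x_\lambda|^{2-n}\ \geq\ C_1\,|\lambda|\,(x_n-\lambda)\,|x|^{-n},
\]
but this is false in general: the mean-value point $\xi$ lies between $|x|$ and $|x_\lambda|$, and since the derivative $(2-n)\xi^{1-n}$ is \emph{largest} in absolute value at $\xi=|x|$, the MVT gives this quantity as an \emph{upper} bound, not a lower one. The correct lower bound carries $|x_\lambda|^{-n}$, not $|x|^{-n}$. Concretely, take $x=(R,0,\dots,0)$ with $x_n=0$ and let $\lambda\to-\infty$: then $|x_\lambda|^2=R^2+4\lambda^2$, so the left side stays below $R^{2-n}$ while your right side equals $C_1\lambda^2 R^{-n}\to\infty$. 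Since the lemma must hold for \emph{all} $\lambda\leq\bar\lambda$, this regime cannot be ignored, and the final displayed inequality with the factored bracket does not follow.

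The fix is a case split on the ratio $|x_\lambda|/|x|$. When $|x_\lambda|\leq 2|x|$, you have $|x_\lambda|^{-n}\geq 2^{-n}|x|^{-n}$ and your argument goes through verbatim (with the corrected lower bound). When $|x_\lambda|>2|x|$, argue crudely: the leading term alone gives $\mathfrak{a}_0(|x|^{2-n}-|x_\lambda|^{2-n})\geq(1-2^{2-n})\mathfrak{a}_0|x|^{2-n}$, while all remaining terms are $\mathcal{O}(|x|^{1-n})$ (using $x_n-\lambda\leq|x_\lambda|$ to bound the sensitive linear piece by $2|\mathfrak{a}_n|\,|x_\lambda|^{1-n}\leq 2|\mathfrak{a}_n|\,|x|^{1-n}$), so positivity holds for $|x|\geq R$ independently of $\lambda$. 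A small related refinement: in your error-term case analysis, the sub-case $|x'|<|x|/2$ with $x_n\leq 0$ does \emph{not} force $x_n-\lambda\gtrsim|x|$, but in that sub-case the whole segment from $x$ to $x_\lambda$ lies in $\{|y|\geq|x|\}$, so the MVT bound on $E$ applies directly there as well.
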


\begin{lemmaletter}\label{boundedness}
	Let $\vartheta$ satisfy the assumptions of Lemma~\ref{starting} with $\vartheta(x)=\vartheta(x_{\lambda})$ for some $x\in\Sigma_{\bar{\lambda}}$. Then, there exist $\varepsilon>0$ and $R>0$ satisfying\\
	\noindent{\rm (i)} $\vartheta_{x_n}(x)>0$ in $|x_n-\lambda_{0}|<\varepsilon$ and $|x|>R$;\\
	\noindent{\rm (ii)} $\vartheta(x)>\vartheta(x_{\lambda})$ in $x_n\geqslant\lambda_{0}+{\varepsilon}/{2}>\lambda$ and $|x|>R$ for all $x\in\Sigma_{\lambda}$, $\lambda\leqslant\lambda_{0}$ with $|\lambda_{0}-\bar{\lambda}|<c_{0}\varepsilon$, where $C_0>0$ is small and depends on $\bar{\lambda}$ and $v$.
\end{lemmaletter}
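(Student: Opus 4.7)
My plan is to establish (i) via a Hopf boundary point argument at $\bar\lambda$ promoted to a strip using the quantitative asymptotics of \eqref{harmonicexpansion}, and then to deduce (ii) by integrating (i) in the $x_n$-direction together with a $\lambda$-Lipschitz perturbation argument. To begin, I would introduce the auxiliary function $w_\lambda(x) := \vartheta(x) - \vartheta(x_\lambda)$ on $\Sigma_\lambda$, which is harmonic wherever $\vartheta$ is and vanishes identically on $\partial\Sigma_\lambda=\{x_n=\lambda\}$. From Lemma~\ref{starting} we have $w_{\bar\lambda}\geqslant 0$ on $\Sigma_{\bar\lambda}\cap\{|x|>R\}$. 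If the contact point $x^{*}$ prescribed by the hypothesis were interior, the strong maximum principle would force $w_{\bar\lambda}\equiv 0$ on its connected component, which contradicts the strict positivity of $w_{\bar\lambda}$ at infinity built into \eqref{harmonicexpansion}. Hence $x^{*}$ must lie on the plane $\{x_n=\bar\lambda\}$, and Hopf's boundary point lemma gives $\partial_{x_n}w_{\bar\lambda}(x^{*})=2\vartheta_{x_n}(x^{*})>0$.

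To upgrade this pointwise positivity to the uniform strip estimate in (i), I would differentiate \eqref{harmonicexpansion} to obtain
\[
\vartheta_{x_n}(x) = -(n-2)\mathfrak{a}_0 x_n|x|^{-n} + \mathcal{O}(|x|^{-n}).
\]
Positivity of the underlying function forces $\mathfrak{a}_0 > 0$, while $x_n\approx\lambda_0<0$ throughout the strip, so the leading term has strictly positive size of order $|\lambda_0|\,|x|^{-n}$. For $|x|>R$ sufficiently large, this dominates the $\mathcal{O}(|x|^{-n})$ error uniformly in $\{|x_n-\lambda_0|<\varepsilon\}$, producing (i). The strip width $\varepsilon$ is then constrained only by the requirement that $x_n$ stays strictly negative, so it may be selected independently of the small perturbation $|\lambda_0-\bar\lambda|$.

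For item (ii), I would integrate (i) along the $x_n$-direction: for $x$ with $x_n\geqslant\lambda_0+\varepsilon/2$ and $|x|>R$, the segment joining $x_\lambda$ to $x$ crosses the strip where $\vartheta_{x_n}>0$, contributing a strictly positive amount to $w_\lambda(x)$; outside the strip the positivity of $w_{\bar\lambda}$ coming from Lemma~\ref{starting} is available, and the passage from $\bar\lambda$ to $\lambda$ is controlled by the $\lambda$-Lipschitz estimate $|w_\lambda-w_{\bar\lambda}|\leqslant C|\lambda-\bar\lambda|$, itself a consequence of the second-derivative bound $\partial_{x_kx_j}\widetilde{\vartheta}=\mathcal{O}(|x|^{-n})$ in \eqref{harmonicexpansion}. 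Choosing the constant $c_0>0$ small enough so that this first-order perturbation is absorbed by the strict gain from the strip yields the conclusion.

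\textbf{Main obstacle.} The delicate point will be the bookkeeping of the two small parameters: the gain produced by integrating (i) across a strip of width $\varepsilon/2$ must strictly dominate the first-order $\lambda$-perturbation $C|\lambda_0-\bar\lambda|$ of $w_\lambda$. The leading-order expansion \eqref{harmonicexpansion} provides the needed quantitative estimates, but the sign and size of $\mathfrak{a}_0$ (inherited from the Kelvin transform of the positive singular function) and the implicit constants depending on $\bar\lambda$ must be tracked carefully, so that $c_0$ can be fixed universally in terms of $\bar\lambda$ and $\vartheta$ before letting $\lambda_0$ vary in the range $|\lambda_0-\bar\lambda|<c_0\varepsilon$.
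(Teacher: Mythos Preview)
The paper does not prove this lemma; it is one of three background results quoted from \cite[Section~3]{MR982351} and stated without proof. So there is no in-paper argument to compare against, and your outline should be judged against the Caffarelli--Gidas--Spruck original. Your overall strategy --- deduce (i) from the asymptotic expansion \eqref{harmonicexpansion} and then obtain (ii) by integrating across the strip and absorbing a Lipschitz-in-$\lambda$ perturbation --- is indeed the CGS approach. However, two points in your execution need correction.

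First, the opening paragraph invoking the strong maximum principle and Hopf lemma on $\Sigma_{\bar\lambda}$ is not justified: by hypothesis $\vartheta$ is only a positive function defined in a neighbourhood of infinity satisfying \eqref{harmonicexpansion}; it is not assumed harmonic on all of $\Sigma_{\bar\lambda}$, and the contact point need not lie in the exterior region $\{|x|>R\}$ where you have any information. Fortunately you do not actually use this paragraph in what follows, so it can simply be deleted.

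Second, and more seriously, your proof of (i) contains an order-of-magnitude error. You write
\[
\vartheta_{x_n}(x)=-(n-2)\mathfrak{a}_0\,x_n|x|^{-n}+\mathcal{O}(|x|^{-n})
\]
and claim that for $|x|$ large the first term dominates the second. But in the strip $|x_n-\lambda_0|<\varepsilon$ the coordinate $x_n$ is \emph{bounded}, so the leading term is itself of size $|\lambda_0|\cdot|x|^{-n}$, the same order as the error; taking $|x|\to\infty$ does not change their ratio. The remedy is to push the expansion one step further: differentiating the second term $\mathfrak{a}_j x_j|x|^{-n}$ in \eqref{harmonicexpansion} contributes $\mathfrak{a}_n|x|^{-n}$ at top order, while the remaining $\mathcal{O}(|x|^{-n})$ piece of $\vartheta$, being harmonic near infinity, has gradient $\mathcal{O}(|x|^{-n-1})$ by interior estimates. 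This gives
\[
\vartheta_{x_n}(x)=\bigl[(2-n)\mathfrak{a}_0\,x_n+\mathfrak{a}_n\bigr]|x|^{-n}+\mathcal{O}(|x|^{-n-1}),
\]
and now positivity in the strip reduces to the \emph{finite} inequality $(n-2)\mathfrak{a}_0|\lambda_0|>-\mathfrak{a}_n$, which is inherited from the choice of $\bar\lambda$ in Lemma~\ref{starting} (that threshold is selected precisely so that the analogous coefficient is positive). Once (i) is repaired in this way, your argument for (ii) goes through essentially as you describe.
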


We also require a maximum principle for singular domains, which is the content of \cite[Lemma~2.1]{MR1333503}

\begin{propositionletter}\label{mp}
	Let $\Omega$ be a smooth bounded domain in $\mathbb{R}^n$ and $\mathcal{Z}$ be a compact set in $\mathbb{R}^n$ with $\capt(\mathcal{Z})=0$. Assume that $\vartheta(x),h(x)$ are nonnegative continuous functions in ${\Omega}\setminus \mathcal{Z}$ satisfying
	\begin{equation*}
	-\Delta \vartheta(x)+h(x)\leqslant0 \quad {\rm in} \quad \Omega\setminus \mathcal{Z}
	\end{equation*}
	in the distributional sense. Then,
	\begin{equation*}
	\vartheta(x)\geqslant\int_{E}G(x,y)h(y)\ud y+\int_{\partial E}{\partial_\nu}G(x,y)\vartheta(y)ds_y \quad {\rm in} \quad \Omega\setminus \mathcal{Z},
	\end{equation*}
	where $G(x,y)$ is the Green function of $-\Delta$ in $\Omega$ with Dirichlet boundary condition. In particular,
	\begin{equation*}
	\vartheta(x)\geqslant\inf_{\partial(\Omega\setminus\mathcal{Z})}\vartheta.
	\end{equation*}
\end{propositionletter}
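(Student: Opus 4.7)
My plan is to reduce to the classical Green representation formula by cutting off the singular set $\mathcal{Z}$ and controlling the resulting errors via the capacity hypothesis $\capt(\mathcal{Z})=0$. First, if $\vartheta$ were smooth up to $\mathcal{Z}$, then at any fixed $x_0 \in \Omega\setminus\mathcal{Z}$ Green's second identity applied to $\vartheta$ and $G(x_0,\cdot)$, combined with the distributional differential inequality (read as a supersolution statement) and the sign of the normal derivative of $G$ on $\partial\Omega$, would yield the desired inequality directly. The whole substance of the lemma is therefore to show that the singularities of $\vartheta$ on $\mathcal{Z}$ contribute no extra term when the representation is pushed through rigorously.

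To handle this I would exploit the zero-capacity hypothesis to produce a family of cutoffs $\eta_k \in C_c^\infty(\Omega)$ with $0\leqslant \eta_k \leqslant 1$, $\eta_k \equiv 1$ on a shrinking open neighborhood $U_k \supset \mathcal{Z}$, and $\int_\Omega |\nabla \eta_k|^2 \ud y \to 0$. Using $(1-\eta_k)\,G(x_0,\cdot)$ as a test function against the distributional inequality (regularized near the pole $x_0$ so that the delta-function contribution is preserved), an integration by parts on $\Omega\setminus U_k$ recovers Green's representation on this pierced domain, up to remainder terms of the form
\[
\int_\Omega \vartheta(y)\,\nabla \eta_k(y)\cdot \nabla G(x_0,y)\,\ud y \quad\text{and}\quad \int_\Omega \vartheta(y)\, G(x_0,y)\,\Delta \eta_k(y)\,\ud y.
\]
A preliminary truncation $\vartheta_N := \min\{\vartheta,N\}$ makes every quantity finite; then Cauchy--Schwarz together with $\|\nabla \eta_k\|_{L^2}\to 0$ and the local boundedness of $\nabla G(x_0,\cdot)$ away from $x_0$ and $\mathcal{Z}$ sends both remainders to zero as $k\to\infty$, and a monotone convergence step in $N\to\infty$ removes the truncation. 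This delivers the asserted representation inequality, with the integrals over $\Omega$ understood as integrals over $\Omega\setminus\mathcal{Z}$ (which amounts to the same thing since $\mathcal{Z}$ has zero measure).

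The final "in particular" bound then follows at once: since $h\geqslant 0$ and $G(x,\cdot)\geqslant 0$, the first integral on the right is nonnegative, while the boundary integral is precisely the Poisson-type harmonic extension of $\vartheta|_{\partial\Omega}$, hence bounded below by $\inf_{\partial\Omega}\vartheta$; the zero-capacity condition ensures $\mathcal{Z}$ adds nothing to $\partial(\Omega\setminus\mathcal{Z})$ that is not already felt on $\partial\Omega$. The principal obstacle throughout is the possible blowup of $\vartheta$ on $\mathcal{Z}$, and the crucial reason the argument works in exactly this borderline generality is that $\capt(\mathcal{Z})=0$ is precisely the hypothesis needed to construct $H^1$-small cutoffs of $\mathcal{Z}$, which is what kills the stray flux terms produced by the integration by parts. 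Without this (and with only, say, $\mathcal{H}^{n-1}(\mathcal{Z})=0$) the representation can fail.
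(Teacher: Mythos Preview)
The paper does not supply its own proof of this proposition: as the sentence introducing the statement makes explicit, it is quoted from C.~C.~Chen and C.~S.~Lin \cite[Lemma~2.1]{MR1333503}. There is therefore no in-paper argument to compare against; the lettered Propositions in this paper are all cited background results.

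Your outline is the standard capacity argument and is in the spirit of the original proof. Two remarks. First, the truncation step $\vartheta_N=\min\{\vartheta,N\}$ is a bit delicate, because $\vartheta_N$ no longer satisfies the same differential inequality with source $h$; since the (intended) hypothesis makes $\vartheta$ superharmonic, $\vartheta_N$ is still superharmonic, so the ``in particular'' conclusion survives, but recovering the full Green inequality with the $h$-integral is cleaner if one avoids truncation altogether---using instead that a nonnegative superharmonic function is automatically in $L^1_{\loc}$, which already makes the error integrals finite away from the pole---or if one arranges the test function so that the dangerous cross term carries a favorable sign thanks to $\vartheta\geqslant0$. Second, as you implicitly noticed by reading the hypothesis ``as a supersolution statement,'' the displayed inequality in the proposition carries a sign slip: for the asserted lower bound one needs $-\Delta\vartheta\geqslant h$ (equivalently $\Delta\vartheta+h\leqslant0$), not $-\Delta\vartheta+h\leqslant0$ as written; this is consistent with how the proposition is applied later to $\vartheta_i=-\Delta\widetilde{u}_i$, which is superharmonic.
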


\begin{proposition}\label{symmetry}
	Let $\mathcal{U}$ be a nonnegative singular solution to equation \eqref{oursystem}. Then, $|\mathcal{U}|$ is radially symmetric about the origin and monotonically decreasing.
\end{proposition}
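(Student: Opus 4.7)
The strategy is the asymptotic moving planes technique of Caffarelli--Gidas--Spruck \cite{MR982351} adapted to our coupled fourth-order setting. First I would rewrite \eqref{oursystem} as a cooperative system of second order equations by introducing $h_i:=-\Delta u_i$, so that it becomes
\[
-\Delta u_i = h_i, \qquad -\Delta h_i = c(n)|\mathcal{U}|^{2^{**}-2}u_i \quad \text{in } \mathbb{R}^n\setminus\{0\}.
\]
A preliminary step is to verify $h_i\geqslant 0$ on $\mathbb{R}^n\setminus\{0\}$ for every $i\in I_+$; this is the singular analog of Proposition~\ref{superharmonicity}, obtained by integrating the second equation against the Green function of $-\Delta$ on punctured balls via Proposition~\ref{mp} and using the decay/integrability of $|\mathcal{U}|^{2^{**}-2}u_i$ furnished by Proposition~\ref{singregularity}.

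Fix an arbitrary direction, WLOG $e_n$. For a generic point $z\in\mathbb{R}^n\setminus\{0\}$ I would consider the second order Kelvin transforms $\widetilde u_i=(u_i)_{z,1}$ and $\widetilde h_i=(h_i)_{z,1}$ centered at $z$; these are defined in a neighborhood of infinity (the pole $z^*$ being the image of the singularity $0$) and, being $\Delta$-harmonic up to positive lower-order source terms that decay, they admit the asymptotic expansion \eqref{harmonicexpansion} of Lemma~\ref{harmasymp}. Setting $\Sigma_\lambda=\{x_n<\lambda\}$, $x^\lambda$ the reflection of $x$ across $T_\lambda=\{x_n=\lambda\}$, and
\[
w_i^\lambda(x)=\widetilde u_i(x^\lambda)-\widetilde u_i(x),\qquad \omega_i^\lambda(x)=\widetilde h_i(x^\lambda)-\widetilde h_i(x),
\]
Lemma~\ref{starting} produces $\bar\lambda\ll 0$ such that $w_i^\lambda,\omega_i^\lambda>0$ in $\Sigma_\lambda$ outside a large ball for all $\lambda\leqslant\bar\lambda$ and every $i\in I_+$. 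Combined with Proposition~\ref{mp} applied in the punctured region $\Sigma_\lambda\setminus\{z^*\}$, this yields the starting condition $w_i^\lambda,\omega_i^\lambda\geqslant 0$ there.

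The heart of the argument is to show that the critical value
\[
\lambda_0=\sup\{\lambda\leqslant \lambda_*: w_i^\mu\geqslant 0 \text{ and } \omega_i^\mu\geqslant 0 \text{ in } \Sigma_\mu\setminus\{z^*\},\ \forall i\in I_+,\ \forall \mu\leqslant\lambda\}
\]
coincides with the level $\lambda_*$ where $T_{\lambda_*}$ contains $z^*$. The pair $(w_i^{\lambda_0},\omega_i^{\lambda_0})$ solves a coupled linear system $-\Delta w_i^{\lambda_0}=\omega_i^{\lambda_0}$ and $-\Delta\omega_i^{\lambda_0}=c(n)\bigl[V(x^{\lambda_0})\widetilde u_i(x^{\lambda_0})-V(x)\widetilde u_i(x)\bigr]$ with $V=|\widetilde{\mathcal{U}}|^{2^{**}-2}$. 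The strong maximum principle together with Hopf's lemma at $T_{\lambda_0}$ forces either strict positivity of each $w_i^{\lambda_0}$ in $\Sigma_{\lambda_0}$ (in which case Lemma~\ref{boundedness} lets one push the planes past $\lambda_0$, contradicting maximality) or simultaneous vanishing $w_i^{\lambda_0}\equiv 0$ for all $i\in I_+$, which forces symmetry of each $\widetilde u_i$ across $T_{\lambda_0}$; varying $z$ over a rich family of centers, the only hyperplane of symmetry compatible with the singular set $\{0\}$ must pass through the origin. Since the direction $e_n$ was arbitrary, $|\mathcal{U}|$ is radially symmetric about $0$, and the same moving planes argument gives $\partial_r|\mathcal{U}|\leqslant 0$, i.e. monotone decrease.

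The main obstacle I foresee is handling the coupled nonlinearity in the strong maximum principle step: one must control
\[
|\mathcal{U}(x^\lambda)|^{2^{**}-2}u_i(x^\lambda)-|\mathcal{U}(x)|^{2^{**}-2}u_i(x)
\]
by a linear combination of $w_j^\lambda$'s with the correct signs. A careful linearization exploiting the Gross--Pitaevskii structure, together with $u_i\geqslant 0$, should yield a cooperative weakly coupled elliptic inequality to which a vector-valued maximum principle applies; this is what makes the second layer $\omega_i^\lambda$ propagate nonnegativity between components. The second delicate point is the singular set, since moving planes cannot be carried across the blow-up locus: Proposition~\ref{mp} is precisely the tool that replaces the classical maximum principle in the punctured domain, allowing the argument to proceed despite the $\limsup_{|x|\to 0}|\mathcal{U}|=\infty$ behavior.
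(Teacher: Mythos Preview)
Your overall plan---Kelvin transform followed by asymptotic moving planes via Lemmas~\ref{harmasymp}--\ref{boundedness} and the singular maximum principle Proposition~\ref{mp}---is precisely the route the paper takes. However, there is a genuine gap in your choice of Kelvin transform that breaks the comparison step.

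You first rewrite \eqref{oursystem} as the second-order pair $-\Delta u_i=h_i$, $-\Delta h_i=c(n)|\mathcal U|^{2^{**}-2}u_i$, and then apply the \emph{second-order} Kelvin transform to $u_i$ and $h_i$ separately. This does not preserve the system: from \eqref{laplaciankelvin} one obtains
\[
-\Delta\widetilde u_i(x)=|x|^{-4}\,\widetilde h_i(x),\qquad
-\Delta\widetilde h_i(x)=c(n)\,|x|^{4n/(n-4)}\,|\widetilde{\mathcal U}(x)|^{2^{**}-2}\widetilde u_i(x),
\]
so both equations acquire radial weights that are \emph{not} invariant under reflection in $T_\lambda=\{x_n=\lambda\}$ for $\lambda\neq 0$. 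When you form $-\Delta w_i^\lambda$ a cross term $\bigl(|x^\lambda|^{-4}-|x|^{-4}\bigr)\widetilde h_i(x)$ appears; on the half-space where you want $w_i^\lambda\geqslant 0$ one has $|x|<|x^\lambda|$, so this term carries the wrong sign and the cooperative differential inequality you need for the vector maximum principle simply does not hold. (You also record only one singular point $z^*$ for the transformed map; in fact there are two, $0$ and $z_0=-z/|z|^2$.)

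The paper avoids this by applying the \emph{fourth-order} Kelvin transform $(u_i)_{z,1}(x)=|x|^{4-n}u_i\bigl(z+x|x|^{-2}\bigr)$ directly to $u_i$; by Proposition~\ref{confinva} this reproduces the same equation $\Delta^2\widetilde u_i=c(n)|\widetilde{\mathcal U}|^{2^{**}-2}\widetilde u_i$ on $\mathbb R^n\setminus\{0,z_0\}$ with no weights. Only \emph{after} this conformal step does the paper set $\vartheta_i:=-\Delta\widetilde u_i$, obtaining the weight-free system $-\Delta\widetilde u_i=\vartheta_i$, $-\Delta\vartheta_i=c(n)|\widetilde{\mathcal U}|^{2^{**}-2}\widetilde u_i$. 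The moving planes then runs on $(w_i)_\lambda=\widetilde u_i-(\widetilde u_i)_\lambda$, which satisfies a clean fourth-order Navier problem with positive zeroth-order coefficient (their \eqref{df}), and the three standard claims---start for $\lambda\ll 0$, $(w_i)_{\lambda^*}\equiv 0$, and $\lambda^*=0$---follow from the strong maximum principle and Hopf lemma exactly as you sketch. In short: decompose into a second-order system \emph{after} the conformal Kelvin transform, not before. A minor further difference is that the paper does not establish $-\Delta u_i\geqslant 0$ as a preliminary step; that is Proposition~\ref{supersingular}, proved afterwards using the radial ODE that symmetry has just made available.
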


\begin{proof}
	Since $\mathcal{U}$ is a singular solution, we may suppose without loss of generality that the origin is a non-removable singularity of $u_1$. Fixing $z\neq0$ a non-singular point of $\mathcal{U}$, that is, $\lim_{|x|\rightarrow z}|\mathcal{U}(x)|<\infty$, we perform the fourth order Kelvin transform with center at the $z$ and unitary radius,
	\begin{equation*}
	(u_i)_{z,1}(x)=|x|^{4-n}u_i\left(z+\frac{x}{|x|^2}\right) \quad \mbox{for} \quad i\in I.
	\end{equation*}
	Denoting $\widetilde{u}_{i}=(u_i)_{z,1}$, we observe that $\widetilde{u}_{1}$ is singular at zero and $z_{0}=-z/|z|^2$, whereas the others components are singular only at zero. Furthermore, using the conformal invariance of \eqref{oursystem}, we get
	\begin{equation*}
	\Delta^{2}\widetilde{u}_{i}=c(n)|\widetilde{\mathcal{U}}|^{2^{**}-2}\widetilde{u}_i \quad {\rm in} \quad \mathbb{R}^{n}\setminus\{0,z_{0}\}.
	\end{equation*}
	Let us set $\vartheta_i(x)=-\Delta \widetilde{u}_i(x)$, thus $\vartheta_i(x)=\mathcal{O}(|x|^{2-n})$ as $|x|\rightarrow\infty$. Using Lemma~\ref{harmasymp}, we have that $\vartheta_i$ has the following harmonic asymptotic expansion at infinity,
	\begin{equation*}
	\begin{cases}
	\vartheta_i(x)=a_{i0}|x|^{2-n}+a_{ij}x_j|x|^{-n}+\mathcal{O}(|x|^{-n})\\
	\partial_{x_j}\vartheta_i(x)=(2-n)a_{i0}x_j|x|^{-n}+\mathcal{O}(|x|^{-n})\\
	\partial_{x_{k}x_{j}}\vartheta_i(x)=\mathcal{O}(|x|^{-n}),
	\end{cases}
	\end{equation*}
	where $a_{i0}=\vartheta_{i}(z)$ and $a_{ij}=\partial_{y_j}\vartheta_{i}(z)$.
	
	Considering the axis defined by $0$ and $z$ as the reflection direction, we can suppose that this axis is orthogonal to the positive $x_n$ direction, that is, given the unit vector ${\bf e}_n=(0,0,\dots,1)$. 
	For $\lambda>0$, we consider the sets
	\begin{equation*}
	\Sigma_{\lambda}:=\{x\in\mathbb{R}^n : x_n>\lambda\} \quad \mbox{and} \quad T_{\lambda}:=\partial \Sigma_{\lambda},
	\end{equation*}
	and we define the reflection about the plane $T_{\lambda}$ by
	\begin{equation*}
	x=(x_1,\dots,x_{n-1},x_{n}) \mapsto x_{\lambda}=(x_1,\dots,x_{n-1},2\lambda-x_{n}).
	\end{equation*}
	Let us also introduce the notation $(w_{i})_{\lambda}(x)=\widetilde{u}_{i}(x)-(\widetilde{u}_{i})_{\lambda}(x)$, where $(\widetilde{u}_{i})_{\lambda}(x)=\widetilde{u}_{i}(x_{\lambda})$. Then, showing radial symmetry about the origin for singular solutions to \eqref{oursystem} is equivalent to prove the following
	\begin{equation}\label{cg}
	(w_{i})_{\lambda}\equiv0 \quad {\rm for} \quad \lambda=0.
	\end{equation}
	Subsequently, we divide the proof of \eqref{cg} into three claims.
	
	\noindent{\bf Claim 1:} There exists $\bar{\lambda}_0<0$ such that $(w_{i})_{\lambda}>0$ in $\Sigma_{\lambda}$ for all $\lambda<\bar{\lambda}_0$ and $i\in I$.
	
	\noindent In fact, notice that $(w_{i})_{\lambda}$ satisfies the following Navier problem
	\begin{equation}\label{df}
	\begin{cases}
	\Delta^{2}(w_{i})_{\lambda}=(b_{i})_{\lambda}(w_{i})_{\lambda}& \quad {\rm in} \quad \Sigma_{\lambda}\\
	\Delta (w_{i})_{\lambda}=(w_{i})_{\lambda}=0& \quad {\rm on} \quad T_{\lambda},
	\end{cases}
	\end{equation}
	where 
	\begin{equation*}
	(b_{i})_{\lambda}=\frac{c(n)|\widetilde{\mathcal{U}}_{\lambda}|^{2^{**}-2}(\widetilde{u}_i)_\lambda-c(n)|{\widetilde{\mathcal{U}}}|^{2^{**}-2}\widetilde{u}_i}{\widetilde{u}_{i}-(\widetilde{u}_{i})_{\lambda}}>0 \quad {\rm in} \quad \bar{\Sigma}_{\lambda}.
	\end{equation*}
	Then, as a consequence of Lemma \ref{starting}, there exist $\bar{\lambda}<0$ and $R>|z_0|+10$ such that 
	\begin{equation}\label{pb}
	\Delta (w_{i})_{\lambda}(x)=(\vartheta_{i})_{\lambda}(x)-\vartheta_{i}(x)<0 \quad \hbox{for} \quad x\in\Sigma_{\lambda}, \quad\lambda\leqslant\bar{\lambda} \quad \hbox{and} \quad |x|>R.
	\end{equation}
	In addition, by Proposition~\ref{mp} we can find $C>0$ satisfying
	\begin{equation}\label{pe}
	\vartheta_i(x)\geqslant C \quad \hbox{for} \quad x\in \bar{B}_{R}\setminus\{0,z_0\}.
	\end{equation}
	Since $v_i\rightarrow 0$ as $|x|\rightarrow\infty$, combining \eqref{pb} and \eqref{pe}, there exists $\bar{\lambda}_{0}<\bar{\lambda}$ such that 
	\begin{equation}\label{rn}
	\Delta (w_{i})_{\lambda}(x)=(\vartheta_{i})_{\lambda}(x)-\vartheta_{i}(x)<0 \quad \hbox{for} \quad x\in\Sigma_{\lambda} \quad \mbox{and} \quad \lambda\leqslant\bar{\lambda}_0.
	\end{equation}
	Using that $\lim_{|x|\rightarrow\infty}(w_{i})_{\lambda}(x)=0$, we can apply the strong maximum principle to conclude that $(w_{i})_{\lambda}(x)>0$ for all $\lambda\leqslant\bar{\lambda}_0$ and $i\in I$, which implies the proof of the claim.
	
	Now thanks to Claim 1, we can define the {\it critical sliding parameter} given by
	\begin{equation*}
	\lambda^{*}=\sup\{\bar{\lambda}>0 : \eqref{rn} \ \hbox{holds for} \ \lambda\geqslant\bar{\lambda}\}.
	\end{equation*}
	
	\noindent{\bf Claim 2:} $(w_{i})_{\lambda^{*}}\equiv0$ for all $i\in I$.
	
	\noindent Fix $i\in I$ and suppose by contradiction that $(w_{i})_{\lambda^{*}}(x_0)\neq0$ for some $x_0\in\Sigma_{\lambda^{*}}$. 
	By continuity, we have that $\Delta (w_{i})_{\lambda^{*}}\leqslant0$ in $\Sigma_{\lambda^{*}}$. Since $\lim_{|x|\rightarrow\infty}(w_{i})_{\lambda}(x)=0$, a strong maximum principles yields that $(w_{i})_{\lambda^{*}}>0$ in $\Sigma_{\lambda^{*}}$.
	Also, by \eqref{oursystem}, we get $\Delta^2 (w_{i})_{\lambda^{*}}=|\widetilde{\mathcal{U}}|^{2^{**}-2}\widetilde{u}_{i}-|\mathcal{U}_\lambda|^{2^{**}-2}(\widetilde{u}_{i})_{\lambda}(x)>0$. Hence, $\Delta (w_{i})_{\lambda^{*}}$ is subharmonic. 
	By employing again the strong maximum principle, we obtain that $\Delta (w_{i})_{\lambda}<0$. 
	In addition, by the definition of $\lambda^{*}$, there exists a sequence $\{\lambda_k\}_{k\in\mathbb{N}}$ such that, $\lambda_k\nearrow\lambda^{*}$ and $ \sup_{\Sigma_{\lambda_k}}\Delta (w_{i})_{\lambda_k}(x)>0$. 
	Observing that $\lim_{|x|\rightarrow\infty}\Delta (w_{i})_{\lambda_k}(x)=0$, we can find $x_k\in\Sigma_{\lambda_k}$ satisfying
	\begin{equation}\label{peru}
	\Delta (w_{i})_{\lambda_k}(x_k)=\sup_{\Sigma_{\lambda_k}}\Delta (w_{i})_{\lambda_k}(x).
	\end{equation}
	By Lemma~\ref{boundedness}, we observe that $\{x_k\}_{k\in\mathbb{N}}$ is bounded. Thus, up to subsequence, we may assume that $x_k\rightarrow x_0$. If $x_0\in\Sigma_{\lambda^{*}}$, passing to the limit in \eqref{peru}, we obtain $\Delta (w_{i})_{\lambda^{*}}(x_0)=0$, which is a contradiction with $\Delta (w_{i})_{\lambda^{*}}(x_0)\leqslant0$. If $x_0\in T_{\lambda^{*}}$ we have that $\nabla(\Delta (w_{i})_{\lambda^{*}}(x_0))=0$. This contradicts the Hopf boundary Lemma, because $\Delta (w_{i})_{\lambda^{*}}$ is negative and subharmonic in $\Sigma_{\lambda^{*}}$.
	
	\noindent{\bf Claim 3:} $\lambda^{*}=0.$
	
	\noindent Let us assume that the claim is not valid, that is, $\lambda^{*}<0$. 
	Then, for $\lambda=\lambda^{*}$, it holds $\Delta w_{\lambda^{*}}(x)<0$. Since $\lim_{|x|\rightarrow z_0}u_1(x)=\infty$, we observe that $\widetilde{u}_1$ cannot be invariant under the reflection $x_{\lambda^{*}}$. 
	Thus, using a strong maximum principle for \eqref{df}, we conclude 
	\begin{equation}\label{rj}
	\widetilde{u}_i(x)<u_i(x_{\lambda}) \quad {\rm for} \quad x\in\Sigma_{\lambda^{*}} \quad {\rm and} \quad x_{\lambda^{*}}\notin\{0,z_0\}.
	\end{equation}
	Notice that as a consequence of $\lambda^{*}<0$, we have that $\{0,z_0\}\notin T_{\lambda^{*}}$. Whence, applying the Hopf boundary Lemma, we get
	\begin{equation}\label{rs}
	\partial_{x_k}(\widetilde{u}_i(x_{\lambda^{*}})-\widetilde{u}_i(x))=-2\partial_{x_k}\widetilde{u}_i(x)>0.
	\end{equation}
	Now choose $\{\lambda_k\}_{k\in\mathbb{N}}$ such that $\lambda_k\nearrow\lambda^{*}$ as $k\rightarrow\infty$ and $x_k\in\Sigma_{\lambda_k}$ such that $\widetilde{u}_{1}({x_k}_{\lambda_k})<\widetilde{u}_1(x_k)$. Then, by Lemma~\ref{starting}, we obtain that $\{x_k\}_{j\in\mathbb{N}}$ is bounded. Whence, $x_k\rightarrow\bar{x}\in\bar{\Sigma}_{\lambda^{*}}$ with $\widetilde{u}_{1}(\bar{x}_{\lambda^{*}})\leqslant\widetilde{u}_1(\bar{x})$. By \eqref{rj} we know that $\bar{x}\in\partial\Sigma_{\lambda^{*}}$ and then $\partial_{x_k}\widetilde{u}_1(\bar{x})\geqslant0$, a contradiction with \eqref{rs}, which proves \eqref{cg}.
\end{proof}

As a direct consequence of Proposition~\ref{supersingular}, we show that singular solutions to \eqref{oursystem} are weakly positive. Again, this property is fundamental to define the quotient function $q_{ij}=u_i/u_j$.

\begin{proposition}\label{singweakpos}
	Let $\mathcal{U}$ be a nonnegative singular solution to \eqref{oursystem}. Then, $\mathcal{U}$ is weakly positive.
\end{proposition}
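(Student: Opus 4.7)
The plan is to deduce the claim from the superharmonicity already established in Proposition~\ref{supersingular}, combined with the strong maximum principle, following the same scheme used to derive Corollary~\ref{positivity} in the non-singular case. The key observation is that the punctured space $\mathbb{R}^n\setminus\{0\}$ is connected for $n\geqslant 5$, so even though we have excised a single point, a zero propagates globally.

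More precisely, I would fix $i\in I$ and argue by contradiction, assuming that $u_i\not\equiv 0$ on $\mathbb{R}^n\setminus\{0\}$ while $u_i(x_0)=0$ for some $x_0\neq 0$. By Proposition~\ref{supersingular}, $-\Delta u_i\geqslant 0$ on the open connected set $\mathbb{R}^n\setminus\{0\}$, so $u_i$ is a nonnegative superharmonic function attaining its infimum value zero at the interior point $x_0$. The strong maximum principle (for instance, \cite[Theorem~3.5]{MR1814364}) then forces $u_i$ to be identically zero on the connected component containing $x_0$, which is all of $\mathbb{R}^n\setminus\{0\}$, contradicting our assumption. Hence, for each $i\in I$, either $u_i\equiv 0$ or $u_i>0$ on $\mathbb{R}^n\setminus\{0\}$, which yields the disjoint decomposition $I=I_0\cup I_+$.

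Since $\mathcal{U}$ is assumed singular, the blow-up set $I_\infty$ is nonempty; any index $i\in I_\infty$ satisfies $u_i\not\equiv 0$, and so by the above trichotomy $u_i>0$ everywhere in the domain. In particular $I_+\neq\emptyset$, which will be needed subsequently in order for the quotient functions $q_{ij}=u_i/u_j$ to be well defined on $\mathbb{R}^n\setminus\{0\}$. I do not foresee any substantive obstacle: once Proposition~\ref{supersingular} is in hand, the argument is a one-line application of the strong maximum principle, and the only mild subtlety is that the domain is punctured rather than full Euclidean space, which is harmless because connectedness of $\mathbb{R}^n\setminus\{0\}$ in dimension $n\geqslant 5$ lets the zero set spread across the excluded origin.
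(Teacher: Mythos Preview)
Your proposal is correct and matches the paper's own proof, which is the single line ``It follows direct by Proposition~\ref{supersingular} and the strong maximum principle.'' You have simply spelled out the details of that one-line argument, including the relevant observation that $\mathbb{R}^n\setminus\{0\}$ is connected so the strong maximum principle propagates a zero globally.
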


\begin{proof}
	It follows directly by Proposition~\ref{supersingular} and the strong maximum principle.
\end{proof} 

Later, we will prove that singular solutions are more than weakly positive; indeed, they are strongly positive (see Corollary~\ref{stronglypositive}). In this case, either $I_0=\emptyset$ or $I_+=\emptyset$.

\subsection{Fourth order Emden--Fowler system}
Since we already know that solutions are rotationally invariant, the cylindrical transformation converts \eqref{oursystem} into a fourth order ODE system with constant coefficients. More specifically, using Proposition~\ref{symmetry}, we eliminate the angular components in expression \eqref{Pdespherical}, arriving at
\begin{align}\label{vectfowlersystem}
\begin{cases}
v_i^{(4)}-K_2v_i^{(2)}+K_0v_i=c(n)|\mathcal{V}|^{2^{**}-2}v_i \quad {\rm in} \quad \mathbb{R} \quad {\rm for} \quad i\in I,\\
v_i(0)=a_i, \quad v_i^{(1)}(0)=0, \quad v_i^{(2)}(0)=b_i, \quad v_i^{(3)}(0)=0.
\end{cases}
\end{align}
where $a_i,b_i\in\mathbb{R}$ for all $i\in I$.

\subsection{Pohozaev invariant}
The Pohozaev invariant is a homological constant related to the existence and classification of solutions to a large class of PDEs. Its first appearance dates back to the classical paper of S. Pohozaev \cite{MR0192184}. 
After that, N. Korevaar et al. \cite{MR1666838} used this tool together with rescaling analysis to prove removable-singularity theorems for solutions to the singular Yamabe equation for flat background metrics setting. 
See also the related works \cite{MR2393072,MR4002167,MR3394387,MR4085120}. 
Let us also emphasize that the existence of the Pohozaev-type invariant is closely related to conservation laws for the Hamiltonian energy of the ODE system \eqref{vectfowlersystem}. 
In our fourth order vectorial setting, let us define an energy which is conserved in time for all $p$-map solutions $\mathcal{V}$ to \eqref{vectfowlersystem} \cite{MR3869387,MR4094467,MR4123335}. 

\begin{definition}
	For any $\mathcal{V}$ nonnegative solution to \eqref{vectfowlersystem}, let us consider its {\it Hamiltonian Energy} given by
	\begin{equation}\label{vectenergy}
	\mathcal{H}(t,\mathcal{V})=-\langle \mathcal{V}^{(3)}(t),\mathcal{V}^{(1)}(t)\rangle+\frac{1}{2}|\mathcal{V}^{(2)}(t)|^{2}+\frac{K_2}{2}|\mathcal{V}^{(1)}(t)|^{2}-\frac{K_0}{2}|\mathcal{V}(t)|^2+\hat{c}(n)|\mathcal{V}(t)|^{2^{**}},
	\end{equation}
	or more explicitly in components,
	\begin{align*}
	\mathcal{H}(t,\mathcal{V})&=-\left(\sum_{i=1}^{p}v_{i}^{(3)}(t)v_{i}^{(1)}(t)\right)+\frac{1}{2}\left(\sum_{i=1}^{p}\left((v_{i}^{(2)}(t)\right)^2\right)+\frac{K_2}{2}\left(\sum_{i=1}^{p}\left(v_{i}^{(1)}(t)\right)^2\right)&\\\nonumber
	&\quad -\frac{K_0}{2}\left(\sum_{i=1}^{p}v_{i}(t)^2\right)^2+\hat{c}(n)\left(\sum_{i=1}^{p}v_{i}(t)^2\right)^{2^{**}/2},&
	\end{align*}
	where $\hat{c}(n)=(2^{**})^{-1}c(n)$
\end{definition}
Let us remark that this quantity satisfies
\begin{equation}\label{conservation}
{\partial_t}\mathcal{H}(t,\mathcal{V})=0.
\end{equation}
In other words, the Hamiltonian energy is invariant on the variable $t$. In addition, we can integrate \eqref{vectenergy} over $\mathbb{S}_t^{n-1}$ to define another conserved quantity.

\begin{definition} 
	For any $\mathcal{V}$ nonnegative solution to \eqref{vectfowlersystem}, let us define its {\it cylindrical Pohozaev functional} by 
	\begin{align*}
	\mathcal{P}_{\rm cyl}(t,\mathcal{V})&=\displaystyle\int_{\mathbb{S}_t^{n-1}}\mathcal{H}(t,\mathcal{V})\ud\theta.&
	\end{align*}
	Here $\mathbb{S}_t^{n-1}=\{t\}\times\mathbb{S}^{n-1}$ is the cylindrical ball with volume element given by $\ud\theta=e^{-2t}\ud\sigma$, where $\ud\sigma_r$ is the volume element of the euclidean ball of radius $r>0$. 
\end{definition}

By definition, $\mathcal{P}_{\rm cyl}$ also does not depend on $t\in\mathbb{R}$. 
Then, let us consider the {\it cylindrical Pohozaev invariant} $\mathcal{P}_{\rm cyl}(\mathcal{V}):=\mathcal{P}_{\rm cyl}(t,\mathcal{V})$. 
Thus, by applying the inverse of cylindrical transformation, we can recover the classical {\it spherical Pohozaev functional} defined by $\mathcal{P}_{\rm sph}(r,\mathcal{U}):=\mathcal{P}_{\rm cyl}\circ\mathfrak{F}^{-1}\left(\mathcal{V}\right)$. 

\begin{remark}
	We are not providing the formula explicitly for the spherical Pohozaev, because it is too lengthy and is not required in the rest of this manuscript. 
	The cylindrical Pohozaev-invariant is enough to perform our methods. Indeed, fixing $\mathcal{H}(t,\mathcal{V})\equiv H$ and $\mathcal{P}_{\rm sph}(\mathcal{U})=P$, we have that $\omega_{n-1}H=P$. 
	In other words, the Hamiltonian energy $H$ and spherical Pohozaev invariant $ P $ have the same sign.
	For an expression of the Pohozaev invariant in the spherical case, we refer the reader to \cite[Proposition~3.3]{arXiv:1503.06412}.
\end{remark}

\begin{remark}
	There exists a natural relation between the derivatives of $\mathcal{P}_{\rm sph}$ and $\mathcal{H}$ respectively,
	\begin{equation*}
	{\partial_r}\mathcal{P}_{\rm sph}(r,\mathcal{U})=r{\partial_t}\mathcal{H}(t,\mathcal{V}).
	\end{equation*}
	Thus, for any solution $\mathcal{U}$, the value $\mathcal{P}_{\rm sph}(r,\mathcal{U})$ is also radially invariant. 
\end{remark}

Now it is convenient to introduce an important ingredient of our next results. For more details, see \cite[Proposition~4.1]{MR1991145}.

\begin{definition}
	For any $\mathcal{U}$ nonnegative solution to \eqref{oursystem}, let us define its {\it spherical Pohozaev invariant} given by $\mathcal{P}_{\rm sph}(r,\mathcal{U}):=\mathcal{P}_{\rm sph}(\mathcal{U})$, which is defined by
	\begin{equation*}
		\mathcal{P}_{\rm sph}(\mathcal{U})=\int_{\partial\mathcal{B}_r}B\left(r,x,\mathcal{U},\nabla \mathcal{U},\nabla^{2}\mathcal{U},\nabla^{3} \mathcal{U}\right) \ud\sigma_r,
	\end{equation*}
	where the integrand is given in vectorial notation by
	\begin{align*}
		B\left(r,x,\mathcal{U},\nabla \mathcal{U},\nabla^{2} \mathcal{U}, \nabla^{3} \mathcal{U}\right)&=\frac{2-n}{2}\left\langle\Delta \mathcal{U},\partial_{\nu} \mathcal{U}\right\rangle-\frac{r}{2}|\Delta \mathcal{U}|^{2}+\frac{n-4}{2} \left\langle\mathcal{U},\partial_{\nu} \Delta\mathcal{U} \right\rangle+\langle x,\nabla \mathcal{U}\rangle\partial_{\nu} \Delta\mathcal{U}\\
		&-\Delta \mathcal{U} \sum_{j=1}^n x_{j}\partial_{\nu} \mathcal{U}_{j}.
	\end{align*}
\end{definition}

\begin{remark} For easy reference, let us summarize the following facts:\\
	\noindent{\rm (i)} There exists a type of equivalence between the cylindrical and spherical Pohozaev invariants, $\mathcal{P}_{\rm sph}(\mathcal{U})=\omega_{n-1}\mathcal{P}_{\rm cyl}(\mathcal{V})$, where $\omega_{n-1}$ is the Lebesgue measure of the unit sphere in $\mathbb{R}^{n-1}$.\\ 
	\noindent{\rm (ii)} The Pohozaev invariant of the vectorial solutions are equal to the Pohozaev invariant in the scalar case, which can be defined in a similar way using the Hamiltonian energy associated to \eqref{scalarsystem}. 
	More precisely, we define $\mathcal{P}_{\rm sph}(u)=\mathcal{P}_{\rm cyl}(r^{\gamma}u)$, where
	\begin{equation*}
	\mathcal{P}_{\rm cyl}(v)=\displaystyle\int_{\mathbb{S}^{n-1}_t}\left[-v^{(3)}v^{(1)}+\frac{1}{2}|v^{(2)}|^{2}+\frac{K_2}{2}|v^{(1)}|^{2}-\frac{K_0}{2}|v|^{2}+\hat{c}(n)|v|^{2^{**}}\right]\ud\theta.
	\end{equation*}
	Hence, if the non-singular solution is ${\mathcal{U}}_{x_0,\mu}=\Lambda u_{x_0,\mu}$ for some $\Lambda\in\mathbb{S}^{p-1}_+$ and $u_{x_0,\mu}$ a spherical solution from Theorem~\ref{theoremA}, we obtain that $\mathcal{P}_{\rm sph}({\mathcal{U}}_{x_0,\mu})=\mathcal{P}_{\rm sph}(u_{x_0,\mu})=0$. Analogously, if the singular solution has the form $\mathcal{U}_{a,T}=\Lambda u_{a,T}$ for some $\Lambda\in\mathbb{S}^{p-1}_{+,*}$ and $u_{a,T}$ a Emden--Fowler solution from Theorem~\ref{theoremB}, we get that $\mathcal{P}_{\rm sph}({\mathcal{U}}_{a,T})=\mathcal{P}_{\rm sph}(u_{a,T})<0$.
\end{remark}

\subsection{ODE system analysis}
In this subsection, we perform an asymptotic analysis program due to Z. Chen and C. S. Lin \cite[Section~3]{MR3869387}. 
This analysis is based on the Pohozaev invariant sign, which combined with some results from \cite{MR4094467,MR3394387} determines whether a solution to \eqref{oursystem} has a removable or a non-removable singularity at the origin. Before studying how this invariant classifies solutions to \eqref{oursystem}, we need to set some background results concerning the asymptotic behavior for solutions to \eqref{vectfowlersystem} and their derivatives.

\begin{definition}
	For any $\mathcal{V}$ nonnegative solution to \eqref{vectfowlersystem}, let us define its {\it asymptotic set} by
	\begin{equation*}
	\mathcal{A}(\mathcal{V}):=\displaystyle\bigcup_{i=1}^{p}\mathcal{A}(v_i)  \subset[0,\infty],
	\quad {\rm where} \quad \mathcal{A}(v_i):=\left\{l\in [0,\infty] : \lim_{t\rightarrow\pm\infty}v_i(t)=l\right\}.
	\end{equation*}
	In other words, $\mathcal{A}(\mathcal{V})$ is the set of all possible limits at infinity of the component solutions $v_i$.
\end{definition}

The first of our lemmas states that the asymptotic set of $\mathcal{V}$ is quite simple, in the sense that it does not depend on $i\in I$, and coincides with the one in the scalar case. 

\begin{lemma}\label{asymptotics}
	Let $\mathcal{V}$ be a nonnegative solution to \eqref{vectfowlersystem}. 
	Suppose that for all $i\in I$ there exists $l_i\in[0,\infty]$ such that $\lim_{t\rightarrow\pm\infty}v_i(t)=l_i$. 
	Thus, $l_i\in\{0,l^{*}\}$, where $l^{*}=p^{-1}{K_0}^{\frac{n-4}{8}}$; in other terms, $\mathcal{A}(\mathcal{V})=\{0,l^{*}\}$. 
	Moreover, if $\mathcal{P}_{\rm cyl}(\mathcal{V})\geqslant0$,  then $l^{*}=0$.
\end{lemma}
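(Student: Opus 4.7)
The proof splits into two parts, matching the two assertions of the lemma.

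\textit{Part 1: identifying the possible limits.} First I would argue that each $l_i$ is finite. Indeed, if $l_i=\infty$ for some $i$, then $|\mathcal{V}(t)|\to\infty$ and the right-hand side of \eqref{vectfowlersystem} grows like $v_i^{2^{**}-1}$, which dominates the linear term $K_0 v_i$; the equation then requires $v_i^{(4)}$ or $v_i^{(2)}$ to be unbounded, inconsistent with $v_i$ converging at $\pm\infty$. Hence each $v_i$ is bounded, so by \eqref{vectfowlersystem} $v_i^{(4)}$ is bounded; a Landau--Kolmogorov type interpolation then yields boundedness of $v_i^{(k)}$ for $k=1,2,3$. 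The elementary ODE lemma "if $f$ has a finite limit at $\pm\infty$ and $f^{(m)}$ is bounded, then $f^{(k)}\to 0$ for $1\leqslant k<m$" delivers $v_i^{(k)}(t)\to 0$ as $t\to\pm\infty$ for $k=1,2,3$.

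Passing to the limit in \eqref{vectfowlersystem} yields the algebraic identity
\begin{equation*}
K_0\, l_i = c(n)|l|^{2^{**}-2}l_i \quad\text{for each } i\in I,
\end{equation*}
so that, for every $i$, either $l_i=0$ or the Euclidean norm $|l|$ is pinned by $|l|^{2^{**}-2}=K_0/c(n)$. Combining this with the structural observation of Remark~\ref{equalityofinitiadata} (which reduces the vectorial system to a scalar equation via a proportionality vector in $\mathbb{S}^{p-1}_+$), the nonzero $l_i$'s must all coincide with a common value $l^*$ uniquely determined by $n$ and $p$, giving $\mathcal{A}(\mathcal{V})=\{0,l^*\}$.

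\textit{Part 2: the Pohozaev sign condition.} By \eqref{conservation}, $\mathcal{H}(t,\mathcal{V})$ is constant in $t$, and $\mathcal{P}_{\rm cyl}(\mathcal{V})$ has the same sign as $\mathcal{H}$ (they differ by a positive factor from integration over the sphere). Evaluating $\mathcal{H}$ as $t\to\infty$, using that all derivatives vanish and $\mathcal{V}(t)\to (l_i)_i$, gives
\begin{equation*}
\mathcal{H}(\mathcal{V}) = -\frac{K_0}{2}|l|^2 + \hat{c}(n)|l|^{2^{**}}.
\end{equation*}
If $|l|\neq 0$, substituting the algebraic identity $c(n)|l|^{2^{**}-2}=K_0$ yields
\begin{equation*}
\mathcal{H}(\mathcal{V}) = K_0|l|^2\left(\frac{1}{2^{**}}-\frac{1}{2}\right)<0,
\end{equation*}
since $2^{**}>2$. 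Therefore the hypothesis $\mathcal{P}_{\rm cyl}(\mathcal{V})\geqslant 0$ forces $|l|=0$, equivalently $l^*=0$.

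The technically delicate point is the rigorous verification of the decay of intermediate derivatives at $\pm\infty$, and in particular ruling out the possibility $l_i=\infty$; this is standard but must be carried out with some care, possibly by appealing to the ODE analysis for the scalar setting in \cite{MR3869387,MR4094467}. The algebraic and Hamiltonian computations in Parts~1 and~2 are then essentially mechanical.
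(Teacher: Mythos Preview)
Your Part~2 is essentially the paper's argument and is correct: evaluate the conserved Hamiltonian at infinity and use the algebraic relation $c(n)|l|^{2^{**}-2}=K_0$ to force $|l|=0$ when $\mathcal{P}_{\rm cyl}(\mathcal{V})\geqslant 0$.

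In Part~1 there are two genuine gaps.

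\emph{(a) Ruling out $l_i=\infty$.} The sentence ``the equation then requires $v_i^{(4)}$ or $v_i^{(2)}$ to be unbounded, inconsistent with $v_i$ converging at $\pm\infty$'' is circular: if $l_i=\infty$ then $v_i$ is \emph{not} converging to a finite limit, and unbounded derivatives are perfectly compatible with $v_i\to\infty$. Something substantive is needed here. The paper does not treat this as ``standard'': it runs a Mitidieri--Pokhozhaev test-function argument. One multiplies $v_i^{(4)}-K_2 v_i^{(2)}\geqslant \tfrac{c(n)}{2}v_i^{(n+4)/(n-4)}$ (valid once $v_i$ is large) by a cutoff $\phi(\tau)=\phi_0(\tau/T)$, integrates by parts, and uses Young's inequality to absorb the cross terms; the resulting inequality has a left side decaying like a negative power of $T$ while the right side contains $\int_0^T v_i^{(n+4)/(n-4)}$, which blows up. You should either reproduce this or give an alternative quantitative blow-up argument; a bare citation to the scalar case is not enough at this level.

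\emph{(b) Boundedness of $v_i^{(4)}$.} You write ``each $v_i$ is bounded, so by \eqref{vectfowlersystem} $v_i^{(4)}$ is bounded''. The equation only gives that $v_i^{(4)}-K_2 v_i^{(2)}$ is bounded, since the second-order term is still present. You cannot invoke Landau--Kolmogorov without already knowing a highest derivative is bounded. One can repair this (e.g.\ write $v_i^{(2)}$ via the bounded Green kernel of $D^2-K_2$ plus exponentials, and use $v_i$ bounded to kill the exponential part), but it is an extra step that the paper avoids entirely: for finite $l_i\notin\{0,l^*\}$ the paper simply integrates $v_i^{(4)}-K_2 v_i^{(2)}\to\kappa\neq 0$ four times and reads off $v_i(t)\sim \kappa t^4/24$, contradicting boundedness. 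That direct route sidesteps the derivative-decay machinery (which the paper proves separately as the \emph{next} lemma). Your approach---prove derivative decay first, then take the algebraic limit---is conceptually cleaner but needs the fix above, and it reverses the logical order of this lemma and Lemma~\ref{blow-up}.

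A minor point: invoking Remark~\ref{equalityofinitiadata} to conclude that the nonzero $l_i$'s share a common value is loose; that remark is heuristic and the exact value of $l^*$ (including its dependence on $p$) really comes from the algebraic relation $K_0=c(n)|l|^{2^{**}-2}$ together with the equality of components, which is established later.
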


\begin{proof}
	Here it is only necessary to consider the case $t\rightarrow\infty$ since when $t\rightarrow-\infty$, taking $\tau=-t$, and observing that $\widetilde{\mathcal{V}}(\tau):=\mathcal{V}(t)$ also satisfies \eqref{vectfowlersystem}, the result follows equally.
	
	Suppose by contradiction that the lemma does not hold. 
	Thus, for some fixed $i\in I$, one of the following two possibilities shall happen: either the asymptotic limit of $v_{i}$ is a finite constant $l_{i}>0$, which does not belong to the asymptotic set $\mathcal{A}$, or the limit blows-up, that is, $l_i=+\infty$. Subsequently, we consider these two cases separately:
	
	\noindent{\bf Case 1:} $l_{i}\in [0,\infty)\setminus\{0,l^{*}\}$. 
	
	\noindent By assumption, we have 
	\begin{equation}\label{bound1}
	\displaystyle\lim_{t\rightarrow\infty}\left(c(n)|\mathcal{V}|^{\frac{8}{n-4}}v_{i}(t)-K_0v_{i}(t)\right)=\kappa, \quad \hbox{where} \quad \kappa:=c(n)pl_i^{\frac{n+4}{n-4}}-K_0l_i\neq0,
	\end{equation}
	which implies
	\begin{equation}\label{bound2}
	c(n)|\mathcal{V}|^{\frac{8}{n-4}}v_{i}(t)-K_0v_{i}(t)=v_i^{(4)}(t)-K_2v_i^{(2)}(t).
	\end{equation}
	A combination of \eqref{bound1} and \eqref{bound2} implies that for any $\varepsilon>0$ there exists $T_i\gg1$ sufficiently large satisfying
	\begin{equation}\label{bound3}
	\kappa-\varepsilon<v_i^{(4)}(t)-K_2v_i^{(2)}(t)<\kappa+\varepsilon.
	\end{equation}
	Now, integrating \eqref{bound3}, we obtain
	\begin{equation*}
	\int_{T_i}^{t}(\kappa-\varepsilon)\ud\tau<\int_{T_i}^{t}\left[v_i^{(4)}(\tau)-K_2v_i^{(2)}(\tau)\right]\ud\tau<\int_{T_i}^{t}(\kappa+\varepsilon)\ud\tau,
	\end{equation*}
	which provides
	\begin{equation}\label{bound5}
	(\kappa-\varepsilon)(t-T_i)+C_1(T_i)<v_i^{(3)}(t)-K_2v_i^{(1)}(t)<(\kappa+\varepsilon)(t-T_i)+C_1(T_i),
	\end{equation}
	where $C_1(T_i)>0$ is a constant. Defining $\delta:=\sup_{t\geqslant T_i}|v_i(t)-v_i(T_i)|<\infty$, we obtain
	\begin{equation*}
	\left|\int_{T_i}^{t}K_2v_i^{(1)}(\tau)\ud\tau\right|\leqslant|K_2|\delta.
	\end{equation*}
	Hence, integrating $\eqref{bound5}$ provides
	\begin{equation}\label{bound10}
	\frac{(\kappa-\varepsilon)}{2}(t-T_i)^2+L(t)<v_i^{(2)}(t)<\frac{(\kappa+\varepsilon)}{2}(t-T_i)^2+R(t),
	\end{equation}
	where $L(t),R(t)\in \mathcal{O}(t^2)$, namely
	\begin{equation*}
	L(t)=C_1(T_i)(T_i-t)-|K_2|\delta+C_2(T_i) \quad {\rm and} \quad R(t)=C_1(T_i)(T_i-t)+|K_2|\delta+C_2(T_i).
	\end{equation*}
	Then, repeating the same integration procedure in \eqref{bound10}, we find
	\begin{equation}\label{bound11}
	\frac{(\kappa-\varepsilon)}{24}(t-T_i)^4+\mathcal{O}(t^4)<v_i(t)<\frac{(\kappa+\varepsilon)}{2}(t-T_i)^4+\mathcal{O}(t^4) \quad \mbox{as} \quad t\rightarrow\infty.
	\end{equation}
	Therefore, since $\kappa\neq0$ we can choose $0<\varepsilon\ll1$ sufficiently small such that $\kappa-\varepsilon$ and $\kappa+\varepsilon$ have the same sign. Finally, by passing to the limit as $t\rightarrow\infty$ on inequality \eqref{bound11}, we obtain that $v_i$ blows-up and $l_i=\infty$, which is contradiction.
	This concludes the proof of the claim.
	
	\noindent{\bf Case 2:} $l_{i}=\infty$. 
	
	\noindent This case is more delicate, and it requires a suitable choice of test functions from \cite{MR1879326}. 
	More precisely, let $\phi_0\in C^{\infty}({[0,\infty]})$ be a nonnegative function satisfying $\phi_0>0$ in $[0,2)$, 
	\begin{equation*}
	\phi_0(z)=
	\begin{cases}
	1,&\ {\rm for} \ 0\leqslant z\leqslant1,\\
	0,&\ {\rm for} \ z\geqslant2,
	\end{cases}
	\end{equation*}
	and for $j\in \{1,2,3,4\}$, let us fix the positive constants
	\begin{equation}\label{miti-pokh}
	M_j:=\int_{0}^{2}\frac{|\phi_0^{(j)}(z)|}{|\phi_0(z)|}\, \ud z.
	\end{equation}
	Using the contradiction assumption, we may assume that there exists $T_i>0$ such that for $t>T_i$, it follows
	\begin{equation}\label{blow1}
	v_i^{(4)}(t)-K_2v_i^{(2)}(t)=\hat{c}(n)|\mathcal{V}(t)|^{\frac{8}{n-4}}v_{i}(t)-K_0v_{i}(t)\geqslant v_i(t)^{\frac{n+4}{n-4}}-K_0v_{i}(t)\geqslant \frac{c(n)}{2}v_{i}(t)^{\frac{n+4}{n-4}}
	\end{equation}
	and
	\begin{equation}\label{blow2}
	v_i^{(3)}(t)-K_2v^{(1)}(t)=\frac{1}{2}\int_{T_i}^{t}v_{i}(\tau)^{\frac{n+4}{n-4}}\ud\tau+C_1(T_i).
	\end{equation}
	Besides, as a consequence of \eqref{blow2}, we can find $T_i^{*}>T_i$ satisfying $v_i^{(3)}(T_i^{*})-K_2v^{(1)}(T_i^{*}):=\upsilon>0$.
	Furthermore, since \eqref{vectfowlersystem} is autonomous, we may suppose without loss of generality that $T_i^{*}=0$.
	Then, multiplying inequality \eqref{blow1} by  $\phi(t)=\phi_0(\tau/t)$, and by integrating, we find
	\begin{equation*}
	\int_{0}^{T'}v_i^{(4)}(\tau)\phi(\tau)\ud\tau-K_2\int_{0}^{T'}v_i^{(2)}(\tau)\phi(\tau)\ud\tau\geqslant\frac{1}{2}\int_{0}^{T'}v_{i}(\tau)\ud\tau,
	\end{equation*}
	where $T'=2T$. Moreover, integration by parts combined with $\phi^{(j)}(T')=0$ for $j=0,1,2,3$ implies 
	\begin{equation}\label{blow4}
	\int_{0}^{T'}v_i(\tau)\phi^{(4)}(\tau)v_i(\tau)\ud\tau-K_2\int_{0}^{T'}v_i(\tau)\phi^{(2)}(\tau)\ud\tau\geqslant\frac{c(n)}{2}\int_{0}^{T'}v_{i}(\tau)^{\frac{n+4}{n-4}}\ud\tau+\upsilon.
	\end{equation}
	On the other hand, applying the Young inequality on the right-hand side of \eqref{blow4}, it follows
	\begin{equation}\label{blow5}
	v_i(\tau)|\phi^{(j)}(\tau)|=\varepsilon v_i^{\frac{n+4}{n-4}}(\tau)\phi(\tau)+C_{\varepsilon}\frac{|\phi^{(j)}(\tau)|^{\frac{n+4}{8}}}{\phi(\tau)^{\frac{n-4}{8}}}.
	\end{equation}
	Hence, combining \eqref{blow5} and \eqref{blow4}, we have that for $0<\varepsilon\ll1$ sufficiently small, it follows that there exists $\widetilde{C}_1>0$ satisfying
	\begin{equation*}
	\widetilde{C}_1\int_{0}^{T'}\left[\frac{|\phi^{(4)}(\tau)|^{\frac{n+4}{8}}}{\phi(\tau)^{\frac{n-4}{8}}}+\frac{|\phi^{(2)}(\tau)|^{\frac{n+4}{8}}}{\phi(\tau)^{\frac{n-4}{8}}}\right]\ud\tau\geqslant\frac{c(n)}{4}\int_{0}^{T'}v_{i}(\tau)^{\frac{n+4}{n-4}}\ud\tau+\upsilon.
	\end{equation*}
	Now by \eqref{miti-pokh}, one can find $\widetilde{C}_2>0$ such that
	\begin{equation}\label{blow7}
	\widetilde{C}_2\left(M_4T^{-\frac{n+2}{2}}-M_2T^{-\frac{n}{4}}\right)\geqslant\frac{c(n)}{4}\int_{0}^{T}v_{i}(\tau)^{\frac{n+4}{n-4}}\ud\tau.
	\end{equation}
	Therefore, passing to the limit in \eqref{blow7} the left-hand side converges, whereas the right-hand side blows-up; this is a contradiction. 
	
	For proving of the second part, let us notice that 
	\begin{equation*}
	\lim_{t\rightarrow\infty}\mathcal{P}_{\rm cyl}(t,\mathcal{V})=\omega_{n-1}\left(\frac{K_0}{2}|l^{*}|^{2}-\hat{c}(n)|l^{*}|^{\frac{2n}{n-4}}\right)\geqslant0,
	\end{equation*}
	which implies $l^{*}=0$ and $\mathcal{P}_{\rm cyl}(\mathcal{V})=0$.
\end{proof}

The next lemma shows that if a component solution to \eqref{oursystem} blows-up, then it shall be in finite time. In this fashion, we provide an accurate higher order asymptotic behavior for singular solutions $\mathcal{V}$ of \eqref{vectfowlersystem}, namely, $\bigcup_{j=1}^{\infty}\mathcal{A}\left(\mathcal{V}^{(j)}\right)=\{0\}$.

\begin{lemma}\label{blow-up}
	Let $\mathcal{V}$ be a nonnegative solution to \eqref{vectfowlersystem} such that $\lim_{t\rightarrow\pm\infty}v_i(t)\in \mathcal{A}$ for all $i\in I$. Then, for any $j\geqslant1$, we have that $\displaystyle\lim_{t\rightarrow\pm\infty}v_{i}^{(j)}(t)=0$.
\end{lemma}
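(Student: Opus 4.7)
\emph{Proof plan.} By the symmetry $t \mapsto -t$ of \eqref{vectfowlersystem}, it suffices to prove $v_i^{(j)}(t) \to 0$ as $t \to +\infty$ for each $i\in I$ and every $j \geq 1$; the case $t\to -\infty$ is analogous. The hypothesis forces each $v_i$ (and hence $|\mathcal{V}|$) to be bounded on $\mathbb{R}$, so the right-hand side of \eqref{sphevectfowlersystem} is uniformly bounded on the cylinder $\mathcal{C}_{\infty}$.

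The strategy has two ingredients. The first is a uniform regularity statement: $v_i^{(j)}\in L^{\infty}(\mathbb{R})$ for every $j\geq 1$. Rather than tackling the ODE \eqref{vectfowlersystem} head-on---where a Landau--Kolmogorov interpolation leads to an awkward bootstrap with non-small constants---I exploit the equivalent elliptic PDE $\Delta^{2}_{\mathrm{cyl}} v_i = c(n)|\mathcal{V}|^{2^{**}-2} v_i$ on the homogeneous Riemannian manifold $\mathcal{C}_{\infty}$. Since both $v_i$ and the right-hand side are uniformly bounded, $W^{4,p}$ Calder\'on--Zygmund estimates followed by Schauder regularity, applied on unit cylindrical balls and combined with the translation invariance $(t,\theta)\mapsto (t+s,\theta)$, yield uniform $C^{k,\alpha}$-bounds on $v_i$ for every $k\in\mathbb{N}$; restriction to the radial setting then gives $\|v_i^{(j)}\|_{L^{\infty}(\mathbb{R})}<\infty$ for all $j\geq 1$.

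The second ingredient is an iterated application of Barbalat's lemma. Since $\int_{0}^{t} v_i^{(1)}(s)\,\ud s = v_i(t)-v_i(0)$ converges to $l_i-v_i(0)$ and $v_i^{(1)}$ is uniformly continuous on $[0,\infty)$ (as $v_i^{(2)}$ is bounded), Barbalat delivers $v_i^{(1)}(t)\to 0$. Repeating the argument with $v_i^{(1)}\to 0$ and bounded $v_i^{(3)}$ produces $v_i^{(2)}(t)\to 0$, and once more $v_i^{(3)}(t)\to 0$. For $j\geq 4$, I pass to the limit in \eqref{vectfowlersystem}: its right-hand side tends to $\bigl(c(n)L^{8/(n-4)}-K_0\bigr)\,l_i$ with $L=\bigl(\sum_k l_k^2\bigr)^{1/2}$, which vanishes in each scenario consistent with $\mathcal{A}(\mathcal{V})\subseteq\{0,l^{*}\}$: either $l_i=0$, or every $l_k=l^{*}$, and then the constant profile $\mathbf{l}=(l^{*},\dots,l^{*})$ is itself a solution of \eqref{vectfowlersystem}, forcing $c(n)L^{8/(n-4)}=K_0$ (the mixed case would produce a nonzero limit of $v_i^{(4)}$ and hence linear growth of $v_i^{(3)}$, contradicting what we just proved). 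Thus $v_i^{(4)}(t)\to 0$, and an induction on $j$, differentiating \eqref{vectfowlersystem} so that every summand contains a lower-order derivative already known to vanish, handles all $j\geq 5$.

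The main obstacle is the uniform regularity step: the key observation is that the radial ODE \eqref{vectfowlersystem} is the restriction of a fourth-order elliptic equation on the homogeneous manifold $\mathcal{C}_{\infty}$, so translation-invariant Schauder theory delivers bounds of all orders without any bootstrap, after which the vanishing follows from soft ODE arguments.
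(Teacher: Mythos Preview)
Your argument is correct and takes a genuinely different route from the paper's. The paper proceeds by a case analysis on whether $l_i=0$ or $l_i=l^*$; in each case it builds an auxiliary function (essentially $B_i=v_i^{(2)}-K_2 v_i$), shows it is eventually concave or convex, deduces that $v_i^{(2)}$ has a limit, rules out a nonzero limit by a growth contradiction, and then recovers $v_i^{(1)}\to 0$ from a Taylor expansion and $v_i^{(3)},v_i^{(4)}\to 0$ from the structure of the equation. Your approach instead front-loads a uniform regularity step (interior $W^{4,p}$/Schauder estimates on unit cylindrical balls, exploiting the translation invariance of $\mathcal{C}_\infty$) to obtain $v_i^{(j)}\in L^\infty(\mathbb{R})$ for every $j$, and then runs Barbalat's lemma in the natural order $v_i^{(1)}\to 0$, $v_i^{(2)}\to 0$, $v_i^{(3)}\to 0$, closing with the equation for $j\ge 4$. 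This avoids the case split entirely and treats all $j$ uniformly; the price is that you invoke elliptic regularity on the cylinder, whereas the paper's proof of this particular lemma stays within elementary ODE arguments. One small remark on your $j=4$ step: rather than checking separately that the mixed configuration forces a nonzero limit of $v_i^{(4)}$, you can simply observe that the equation shows $\lim_{t\to\infty} v_i^{(4)}$ exists, and any nonzero value would make $v_i^{(3)}$ grow linearly, contradicting what you have already proved; this closes the argument without appealing to the value of $l^*$ at all.
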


\begin{proof}
	As before, we only consider the case $t\rightarrow\infty$. 
	Since $\mathcal{A}=\{0,l^{*}\}$ we must divide our approach into two cases:
	
	\noindent{\bf Case 1:} $\lim_{t\rightarrow\pm\infty}v_i(t)=0$.
	
	\noindent For each ordinary derivative case $j=1,2,3,4$, we construct one step. 
	When $j\geqslant5$, the proof follows directly from the previous cases, and it is omitted. 
	We start by $j=2$,
	
	\noindent{\bf Step 1:} $\mathcal{A}(v_i^{(2)})=0$.\\
	By assumption $v_i(t)<l^{*}$ for $t\gg1$ large, one has
	\begin{equation*}
	v_i^{(4)}-K_2v_i^{(2)}=\left(c(n)|\mathcal{V}|^{\frac{8}{n-4}}v_i-K_0v_i\right)<0.
	\end{equation*}
	Defining $B_i(t)=v_i^{(2)}(t)+K_0v_i(t)$, it holds that $B_i^{(2)}(t)<0$ for all $t\in\mathbb{R}$, and thus, $B_i$ is concave near infinity, which implies $\mathcal{A}(B_i)\neq\emptyset$. Hence, there exists $b_0^{*}\in [0,\infty]$ such that $b_0^{*}:=\lim_{t\rightarrow\infty}B_i(t)$ and $b_1^{*}:=\lim_{t\rightarrow\infty}v_i^{(2)}(t)$. 
	Supposing that $b_1^{*}\neq0$, there exist three possibilities: First, if we assume $b_1^{*}=\infty$, then we have that $\lim_{t\rightarrow\infty}v^{(1)}_i(t)=\infty$, which is contradiction with $\lim_{t\rightarrow\infty}v_i(t)=0$. Second, assuming $0<b_1^{*}<\infty$, it follows that $v_i^{(2)}(t)>{b_1^{*}t}/2$ for $t\gg1$ sufficiently large; thus $v_i^{(1)}(t)>{b_1^{*}t}/{4}$, which is also a contradiction with the hypothesis. Third, $b^{*}<0$, then using the same argument as before, we obtain that $v_i^{(1)}(t)\leqslant{b_1^{*}t}/{4}$, leading to the same contradiction.
	Therefore $b_1^{*}=0$, which concludes the proof.
	
	\noindent{\bf Step 2:} $\mathcal{A}(v_i^{(1)})=0$.
	
	\noindent Indeed, for $t\gg1$ large, there exists $\tau\in[t,t+1]$ satisfying $v_i(t+1)-v_i(t)=v_i^{(1)}(t)+\frac{1}{2}v_i^{(2)}(\tau)$, which, by
	taking the limit, and since  $\tau\rightarrow\infty$ if $t\rightarrow\infty$, one gets that $v_i(t+1)\rightarrow0$ and $v_i(t)\rightarrow0$, which provides $\lim_{\tau\rightarrow\infty}v_i^{(2)}(\xi)\rightarrow0$. 
	Consequently, one has that $v_i^{(1)}(t)\rightarrow0$.
	
	\noindent{\bf Step 3:} $\mathcal{A}(v_i^{(3)})=0$.
	
	\noindent Since $H_{i}$ is concave for large $t\gg1$ and $B_i(t)\rightarrow\infty$ as $t\rightarrow\infty$, we find $\lim_{t\rightarrow\infty}B_i^{(1)}(t)=0$. Consequently, $v^{(3)}_i(t)\rightarrow\infty$ as $t\rightarrow\infty$.
	
	\noindent{\bf Step 4:} $\mathcal{A}(v_i^{(4)})=0$.
	
	\noindent By equation \eqref{vectfowlersystem} and by Step 1, we observe that $v^{(4)}_i(t)\rightarrow\infty$ as $t\rightarrow\infty$. 
	
	As a combination of Step 1--4, we finish the proof of Case 1. 
	
	The second case has an additional difficulty. 
	Precisely, since $v_i(t)\rightarrow l^{*}$ as $t\rightarrow\infty$ for sufficiently large $t\gg1$, there exist two possibilities: either $v_i$ is eventually decreasing or $v_i$ is eventually increasing. 
	In both situations, the proofs are similar; thus, we only present the first one.
	
	\noindent{\bf Case 2:} $\lim_{t\rightarrow\infty}v_i(t)=l^{*}$.
	
	Here we proceed as before.
	
	\noindent{\bf Step 1:} $\mathcal{A}(v_i^{(2)})=0$.
	
	\noindent Since we are considering $v_i$ is eventually decreasing, there exists a large $T_i\gg1$ such that $v_i(t)>l^{*}$ for $t>T_i$ and we get that $v_i^{(4)}-K_2v_i^{(2)}=\left(c(n)|\mathcal{V}|^{\frac{8}{n-4}}v_i-K_0v_i\right)\geqslant0$. In this case, $B_i$ is convex for sufficiently large $t\gg1$. Hence, $\mathcal{A}(B_i)\neq\emptyset$ and there exists $b_0^{*}=\lim_{t\rightarrow\infty}B_i(t)$. Since $v_i(t)\rightarrow l^{*}$ as $t\rightarrow\infty$, we get that $\lim_{t\rightarrow\infty}v_i^{(2)}(t)=b_1^{*}$, where $b_1^{*}=b_0^{*}-K_2l^{*}$. 
	Now repeating the same procedure as before, we obtain that $b_1^{*}=0$ and thus
	$\lim_{t\rightarrow\infty}B_i(t)=K_2l^{*}$, which yields $\mathcal{A}(v_i^{(2)})=0$. 
	
	The remaining steps of the proof follow similarly to Claim 1, and so the proof of the lemma is finished.
\end{proof}

Before we continue our analysis, it is essential to show that any solution to \eqref{vectfowlersystem} is bounded, which is the content of the following lemma. 

\begin{lemma}\label{fbounded}
	Let $\mathcal{V}$ be a nonnegative solution to \eqref{vectfowlersystem}. 
	Then, $v_i(t)<l^{*}$ for all $i\in I$. In particular, $|\mathcal{V}|$ is bounded.
\end{lemma}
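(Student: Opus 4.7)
The plan is to argue by contradiction. Suppose there exist $i_0\in I$ and $t_0\in\mathbb{R}$ such that $v_{i_0}(t_0)\geqslant l^*$. The argument rests on three observations: the autonomy of \eqref{vectfowlersystem}, the fact that $l^*$ is the threshold at which the nonlinear forcing balances the linear damping, and the blow-up exclusion carried out in Case 2 of Lemma \ref{asymptotics}. First I would note that whenever $v_{i_0}(t)\geqslant l^*$, we also have $|\mathcal{V}(t)|\geqslant v_{i_0}(t)\geqslant l^*$, so that $c(n)|\mathcal{V}(t)|^{2^{**}-2}\geqslant K_0$ holds by the defining property of $l^*$. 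Rewriting \eqref{vectfowlersystem} then yields
\begin{equation*}
v_{i_0}^{(4)}(t)-K_2v_{i_0}^{(2)}(t)=\bigl(c(n)|\mathcal{V}(t)|^{2^{**}-2}-K_0\bigr)v_{i_0}(t)\geqslant 0,
\end{equation*}
with strict inequality and a power-like lower bound $\geqslant\delta\,v_{i_0}(t)^{(n+4)/(n-4)}$ once $v_{i_0}(t)\geqslant l^*+\varepsilon$, for some $\delta=\delta(\varepsilon)>0$.

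Next I would use Lemmas \ref{asymptotics} and \ref{blow-up} to show that the super-level set $\{t\in\mathbb{R}:v_{i_0}(t)\geqslant l^*+\varepsilon\}$ contains a half-line. By autonomy we may reset $t_0=0$, so it suffices to rule out the oscillatory scenario in which $v_{i_0}$ crosses below $l^*$ and returns. If $\mathcal{V}$ is bounded, then along any sequence $t_k\to\infty$ subsequential limits of $v_{i_0}$ are constrained by Lemma \ref{asymptotics} to $\{0,l^*\}$, and by Lemma \ref{blow-up} all higher derivatives tend to zero; combining this with the conservation $\partial_t\mathcal{H}(t,\mathcal{V})=0$ and the strict superharmonicity $v_{i_0}^{(4)}-K_2v_{i_0}^{(2)}\geqslant 0$ derived above precludes repeated crossings of the $l^*$ level. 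If $\mathcal{V}$ is unbounded, Case 2 of Lemma \ref{asymptotics} already yields a direct contradiction.

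Finally I would apply the Mitidieri--Pohozhaev test-function argument from Case 2 of Lemma \ref{asymptotics}: on the half-line $[0,\infty)$ where $v_{i_0}\geqslant l^*+\varepsilon$, multiplying the inequality $v_{i_0}^{(4)}-K_2v_{i_0}^{(2)}\geqslant\tfrac{c(n)}{2}v_{i_0}^{(n+4)/(n-4)}$ by $\phi(t)=\phi_0(t/T)$, integrating by parts four times, and using Young's inequality with the constants $M_j$ defined in \eqref{miti-pokh}, leads to
\begin{equation*}
\widetilde{C}\bigl(M_4\,T^{-(n+2)/2}+M_2\,T^{-n/4}\bigr)\geqslant\tfrac{c(n)}{4}\int_0^{T}v_{i_0}(\tau)^{(n+4)/(n-4)}\,\ud\tau+\upsilon,
\end{equation*}
for some fixed $\upsilon>0$. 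As $T\to\infty$ the left-hand side vanishes while the right-hand side is bounded below by $\tfrac{c(n)}{4}(l^*+\varepsilon)^{(n+4)/(n-4)}T+\upsilon\to\infty$; this contradiction forces $v_i(t)<l^*$ for every $t\in\mathbb{R}$ and every $i\in I$. The boundedness of $|\mathcal{V}|$ then follows from $|\mathcal{V}|^2=\sum_{i=1}^p v_i^2<p(l^*)^2$.

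The main obstacle is the intermediate step of propagating the single-point hypothesis $v_{i_0}(t_0)\geqslant l^*$ to a half-line of above-threshold behavior, since a priori $v_{i_0}$ could cross $l^*$ and descend again. Resolving this requires combining the conservation of $\mathcal{H}$ with the rigidity supplied by Lemmas \ref{asymptotics} and \ref{blow-up}; every other step is either algebraic or a direct adaptation of the Mitidieri--Pohozhaev estimate already carried out inside the proof of Lemma \ref{asymptotics}.
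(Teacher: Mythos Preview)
Your proposal has a genuine gap precisely at the place you yourself flag as the main obstacle: the passage from the single-point assumption $v_{i_0}(t_0)\geqslant l^*$ to a half-line on which $v_{i_0}\geqslant l^*+\varepsilon$. The tools you invoke to bridge this gap do not deliver what you need. Lemma~\ref{asymptotics} has as an explicit \emph{hypothesis} that $\lim_{t\to\pm\infty}v_i(t)$ exists; it does not constrain subsequential limits of a bounded oscillating solution, so the sentence ``subsequential limits of $v_{i_0}$ are constrained by Lemma~\ref{asymptotics} to $\{0,l^*\}$'' is unjustified. Lemma~\ref{blow-up} likewise presupposes the existence of limits. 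The appeal to ``strict superharmonicity $v_{i_0}^{(4)}-K_2v_{i_0}^{(2)}\geqslant 0$'' cannot preclude crossings either, since that inequality only holds on the super-level set $\{v_{i_0}\geqslant l^*\}$ itself and says nothing once $v_{i_0}$ dips below. Finally, the clause ``If $\mathcal{V}$ is unbounded, Case~2 of Lemma~\ref{asymptotics} already yields a direct contradiction'' is also not right as stated: Case~2 treats the situation $\lim v_i=+\infty$, not mere unboundedness.

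The paper sidesteps this propagation problem entirely by a different dichotomy. Instead of trying to manufacture a half-line above threshold, it splits on whether the zero set $Z_i=\{t\geqslant 0: v_i^{(1)}(t)=0\}$ is bounded. If $Z_i$ is bounded then $v_i$ is eventually monotone, the limit exists, and Lemma~\ref{asymptotics} applies legitimately. If $Z_i$ is unbounded, then beyond any time at which $v_i$ first exceeds a large threshold $R_i$ there is another critical point $T_i^*$ with $v_i^{(1)}(T_i^*)=0$ and $v_i(T_i^*)\geqslant R_i$; evaluating the conserved Hamiltonian $\mathcal{H}$ at $T_i^*$ and using that $F(\tau)=\hat c(n)\tau^{2^{**}}-\tfrac{K_0}{2}\tau^2\to\infty$ forces $\mathcal{H}(T_i^*,\mathcal{V})>H$, contradicting conservation. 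This critical-point argument replaces your attempted half-line extension and the Mitidieri--Pohozhaev step altogether; the test-function machinery is not needed here.
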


\begin{proof} For $i\in I$, let us define the set $Z_i=\left\{t\geqslant0 : v_i^{(1)}(t)=0\right\}$. We divide the proof of the lemma into two cases:
	
	\noindent{\bf Case 1:} $Z_i$ is bounded.
	
	\noindent In this case we have that $v_i$ is monotone for large $t\gg1$ and $\mathcal{A}(v_i)\neq\emptyset$. Therefore, using Lemma~\ref{asymptotics} we obtain that $v_i$ bounded by $l^{*}$ for $t\gg1$ sufficiently large.
	
	\noindent{\bf Case 2:} $Z_i$ is unbounded.
	
	\noindent Fixing $H>0$, we define $F(\tau)=\hat{c}(n)|\tau|^{2^{**}}-\frac{1}{2}|\tau|^2$,
	which satisfies $\lim_{\tau\rightarrow\infty}F(\tau)=\infty$. 
	Therefore, there exists $R_i>|v_i(0)|$ such that $F(\tau)>H$ for $\tau>R_i$.
	
	\noindent{\bf Claim 1:} $|v_i|<R_i$ on $[0,\infty)$.
	
	\noindent Supposing by contradiction that $M_{R_i}=\{t\geqslant0 : |v_i(t)|\geqslant R_i\}$ is non-empty, we can define $t_i^{*}=\inf_{M_{R_i}}v_i$, which is strictly positive by the choice of $R_i$. 
	Thus, we obtain that $v_i(t_i^{*})=R_i$ and also $v_i^{(1)}(t_i^{*})\geqslant0$. In addition, since $Z_i$ is unbounded, we have that $Z_{i}\cap[t_i^{*},\infty)\neq\emptyset$. 
	Therefore, considering $T_i^{*}=\inf_{Z_{i}\cap[t_i^{*},\infty)}v_i$. 
	Hence, a combination of $v^{(1)}(T_i^{*})=0$ and Proposition~\ref{singregularity} implies that $v_i^{(1)}(t)\geqslant0$ for all $t\in[t_i^{*},T_i^{*}]$. 
	Eventually, we conclude that $v_i(T^{*})>R_i$ and $\mathcal{H}(T_i^{*},\mathcal{V})=\frac{1}{2}|\mathcal{V}^{(2)}(T_i^{*})|^{2}+F(|\mathcal{V}(T_i^{*})|)>H$, which is a contradiction with \eqref{conservation}. 
	To complete the proof lemma, one can check that $R_i=l^{*}$ for all $i\in I$.
\end{proof}

\begin{lemma}\label{signal}
	Let $\mathcal{V}$ be a nonnegative solution to \eqref{vectfowlersystem}. 
	Then, it follows that $v^{(1)}_i(t)<\gamma v_i(t)$
	for all $i\in I$ and $t\in\mathbb{R}$, where we recall that $\gamma=\frac{n-4}{2}$ is the Fowler rescaling exponent. 
\end{lemma}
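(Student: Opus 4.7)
The plan is to translate the claimed inequality into a strict radial monotonicity statement for the Kelvin transform of $u_i$ and then to derive that monotonicity by combining the conformal invariance of the system with the superharmonicity already available from Proposition~\ref{supersingular}.

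First I would carry out the reformulation. From $u_i(r)=r^{-\gamma}v_i(-\ln r)$ and the radial Kelvin transform $u_i^{*}(r):=r^{4-n}u_i(1/r)$, a direct computation yields
\[
u_i^{*}(r)=r^{-\gamma}\,v_i(\ln r),\qquad u_i^{*\prime}(r)=r^{-\gamma-1}\bigl[v_i^{(1)}(\ln r)-\gamma\,v_i(\ln r)\bigr],
\]
so that, after setting $t=\ln r$, the lemma becomes equivalent to the strict monotonicity $u_i^{*\prime}(r)<0$ for every $r>0$.

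Next I would invoke Proposition~\ref{confinva} to see that $\mathcal{U}^{*}:=\mathcal{U}_{0,1}$ is again a nonnegative singular solution to \eqref{oursystem}, while the radial symmetry of $\mathcal{U}$ from Proposition~\ref{symmetry} passes to $\mathcal{U}^{*}$. Applying Proposition~\ref{supersingular} to $\mathcal{U}^{*}$ then yields $-\Delta u_i^{*}\geq 0$, which in the radial variable reads $(r^{n-1}u_i^{*\prime}(r))'\leq 0$; thus $r\mapsto r^{n-1}u_i^{*\prime}(r)$ is non-increasing on $(0,\infty)$. To fix the sign I would argue by contradiction: if $u_i^{*\prime}(r_0)>0$ at some $r_0>0$, the non-increasing property forces $u_i^{*\prime}>0$ throughout $(0,r_0]$, so $u_i^{*}$ would be strictly increasing and thus bounded above by $u_i^{*}(r_0)$ on that interval. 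The identity $u_i^{*}(r)=r^{-\gamma}v_i(\ln r)$, combined with a lower bound $v_i(t_k)\geq c>0$ along a sequence $t_k\to-\infty$ (extracted from the boundedness in Lemma~\ref{fbounded}, the weak positivity of Proposition~\ref{singweakpos}, and the asymptotic analysis of Lemmas~\ref{asymptotics}--\ref{blow-up}), would give $u_i^{*}(e^{t_k})\geq c\,e^{-\gamma t_k}\to\infty$, a contradiction. Hence $u_i^{*\prime}\leq 0$, and if $u_i^{*\prime}(r_1)=0$ at some $r_1>0$ the monotonicity of $r^{n-1}u_i^{*\prime}$ forces it to vanish identically on $(0,r_1]$, so $u_i^{*}$ would be constant there, again contradicting the singular blow-up at the origin. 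Therefore $u_i^{*\prime}(r)<0$ strictly, which after the translation of the first paragraph gives $v_i^{(1)}(t)<\gamma v_i(t)$ for every $i\in I$ and $t\in\mathbb{R}$.

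The hard part will be securing the singular blow-up $u_i^{*}(r)\to\infty$ as $r\to 0^{+}$ that powers the contradiction. For periodic or bounded-oscillatory $v_i$ arising from genuine singular PDE solutions (which will turn out to carry strictly negative cylindrical Pohozaev invariant) this is immediate from a uniform lower bound $\inf_{t}v_i(t)>0$; in marginal regimes where $v_i$ might approach zero along a sequence $t_k\to-\infty$, one must combine Lemmas~\ref{asymptotics} and \ref{blow-up} with a characteristic-mode analysis of the linearised fourth-order ODE, whose eigenvalues $\pm\gamma,\pm n/2$ prescribe the only admissible decay rates at infinity, in order to exclude an asymptotic decay fast enough to cancel the $r^{-\gamma}$ envelope.
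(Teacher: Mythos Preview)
Your Kelvin-transform strategy is sound and genuinely different from the paper's, but the contradiction step has a real gap that your proposed rescue does not fill. You want to contradict the boundedness of $u_i^*$ on $(0,r_0]$ by exhibiting $u_i^*(e^{t_k})\to\infty$, and for this you need $v_i(t_k)\geq c>0$ along some $t_k\to-\infty$. No such lower bound is available when $v_i(t)\to 0$ as $t\to-\infty$, and the characteristic-mode analysis you invoke actually goes the wrong way: the admissible decaying modes at $-\infty$ are precisely $e^{\gamma t}$ and $e^{(n/2)t}$, so in that regime $u_i^*(r)=r^{-\gamma}v_i(\ln r)$ stays bounded (or even tends to $0$) near the origin, and no blow-up occurs. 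The gap is, however, entirely avoidable. From $(r^{n-1}u_i^{*\prime})'\leq 0$ and $u_i^{*\prime}(r_0)>0$ you get $u_i^{*\prime}(r)\geq r_0^{n-1}u_i^{*\prime}(r_0)\,r^{1-n}$ on $(0,r_0]$; integrating from $r$ to $r_0$ yields
\[
u_i^*(r)\;\leq\; u_i^*(r_0)-\frac{r_0^{n-1}u_i^{*\prime}(r_0)}{n-2}\bigl(r^{2-n}-r_0^{2-n}\bigr)\;\longrightarrow\;-\infty,
\]
contradicting $u_i^*\geq 0$ directly. The equality case is similarly handled without blow-up: if $u_i^*$ is constant on a punctured ball, the equation forces $|\mathcal{U}^*|^{2^{**}-2}u_i^*=0$ there, hence $u_i^*\equiv 0$. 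One further wrinkle: Proposition~\ref{supersingular} is stated for \emph{singular} $\mathcal{U}$, and $\mathcal{U}^*$ need not be singular at the origin; you should note that its proof only uses radiality and the bound from Lemma~\ref{fbounded} (or alternatively invoke Proposition~\ref{superharmonicity} when $\mathcal{U}^*$ extends).

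For comparison, the paper argues entirely at the ODE level. It factors $D^4-K_2D^2+K_0=(D^2-\lambda_1)(D^2-\lambda_2)$ with $\lambda_1=\gamma^2$ and $\lambda_2=(n/2)^2$, shows via a second-order maximum principle that $\phi_i:=v_i''-\lambda_1 v_i<0$, and then runs a Riccati comparison for $w_i=v_i'/v_i$: since $w_i'=-w_i^2+\lambda_1+\phi_i/v_i<-w_i^2+\lambda_1$, the quotient $w_i$ can never reach $\sqrt{\lambda_1}=\gamma$ once it starts below from a critical point $w_i(t_0)=0$. Your route has the virtue of recycling the already-established superharmonicity and avoids tracking any critical point of $v_i$; the paper's route is self-contained at the ODE level and sidesteps the question of how the Kelvin image behaves at the origin.
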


\begin{proof}
	Let us define
	\begin{equation*}
	\widetilde{\gamma}=\sqrt{\frac{K_2}{2}-\sqrt{\frac{K_2^2}{4}-K_0}}.
	\end{equation*}
	Then, by a direct computation, we get that $\widetilde{\gamma}=\gamma$. 
	Setting
	\begin{equation*}
	\lambda_1=\frac{K_2}{2}-\sqrt{\frac{K_2^2}{4}-K_0} \quad {\rm and} \quad \lambda_2=\frac{K_2}{2}+\sqrt{\frac{K_2^2}{4}-K_0},
	\end{equation*}
	we have that $   \lambda_1+\lambda_2=K_2$ and $\lambda_1\lambda_2=K_0$.
	Defining the auxiliary function $\phi_i(t)=v^{(2)}_i-\lambda_2v_i(t)$, we observe that $\phi_i^{(2)}-\lambda_2\phi_i=|\mathcal{V}|^{\frac{8}{n-4}}v_i$ and $-\phi_i^{(2)}+\lambda_2\phi_i\leqslant0$. 
	Hence, since $\mathcal{V}$ is a nonnegative solution to \eqref{fowler4order} by the strong maximum principle, we get that $\phi_i<0$, which implies that $w_i=v_i^{(1)}/v_i$ satisfies 
	\begin{equation}\label{sig1}
	w_i^{(1)}=-w_i+\lambda_1+\frac{\phi_i}{v_i} \quad {\rm and} \quad \frac{v_i^{(2)}}{v_i}=\lambda_1+\frac{\phi_i}{v_i}.
	\end{equation}
	Moreover, by Lemma~\ref{asymptotics}, there exists $t_0\in\mathbb{R}$ such that $v^{(1)}_i(t_0)=0$, which provides $w_i(t_0)=0$. 
	Setting $M:=\left\{t>t_0 : w_i(t)\geqslant\sqrt{\lambda_1}\right\}$, the proof of the lemma is reduced to the next claim.
	
	\noindent{\bf Claim 1:} $M=\emptyset$.
	
	\noindent Indeed, supposing the claim is not true, we set $t_1=\inf M$. 
	Notice that $t_1>t_0$, 
	$w_i^{(1)}(t_1)\geqslant0$ and $w_i(t_1)=\sqrt{\lambda_1}$. 
	On the other hand, by \eqref{sig1}, we obtain that $w_i^{(1)}(t_1)=\frac{\phi_i(t_1)}{v_i(t_1)}<0$, a contradiction. This finishes the proof of the claim.
\end{proof}

As an application of Lemma~\ref{signal}, we complete the proof of Proposition \ref{symmetry}, which states that any component of $\mathcal{U}$ is radially monotonically decreasing.

\begin{corollary}
	Let $\mathcal{U}$ be a nonnegative singular solution to \eqref{oursystem}. 
	Then, ${\partial_r u_i}(r)<0$ for all $r>0$ and $i\in I_{+}$.
\end{corollary}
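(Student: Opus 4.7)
The plan is to translate the inequality from Lemma~\ref{signal} back to the original variables via the Emden--Fowler change of coordinates. Recall that $\mathcal{V}(t)=r^{\gamma}\mathcal{U}(r)$ with $t=-\ln r$, equivalently $u_i(r)=r^{-\gamma}v_i(-\ln r)$, so a direct chain-rule computation gives
\begin{equation*}
r^{\gamma+1}\,\partial_r u_i(r)=-\bigl[\gamma v_i(t)+v_i^{(1)}(t)\bigr]\big|_{t=-\ln r}.
\end{equation*}
Since $r^{\gamma+1}>0$, the claim $\partial_r u_i(r)<0$ is therefore equivalent to the pointwise lower bound
\begin{equation*}
\gamma v_i(t)+v_i^{(1)}(t)>0 \quad \text{for every } t\in\mathbb{R}.
\end{equation*}

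To obtain this lower bound I would sharpen Lemma~\ref{signal} into a two-sided estimate by exploiting the time-reversal symmetry of the Emden--Fowler system. The ODE \eqref{vectfowlersystem} is autonomous and involves only the even-order derivatives $v_i^{(4)}$, $v_i^{(2)}$, $v_i$, while the nonlinearity $c(n)|\mathcal{V}|^{2^{**}-2}v_i$ is pointwise in $\mathcal{V}$. Consequently, if $\mathcal{V}$ is a nonnegative solution, then so is $\widetilde{\mathcal{V}}(t):=\mathcal{V}(-t)$. Applying Lemma~\ref{signal} to $\widetilde{\mathcal{V}}$ at the point $t$ yields $\widetilde{v}_i^{(1)}(t)<\gamma\widetilde{v}_i(t)$, i.e.\ $-v_i^{(1)}(-t)<\gamma v_i(-t)$, which upon relabeling $-t\mapsto s$ becomes $v_i^{(1)}(s)>-\gamma v_i(s)$. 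Combining this with the original Lemma~\ref{signal} inequality gives the sharp Bernstein-type bound
\begin{equation*}
|v_i^{(1)}(t)|<\gamma v_i(t) \quad \text{for all } t\in\mathbb{R}.
\end{equation*}

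Since $i\in I_{+}$, Proposition~\ref{singweakpos} ensures $u_i>0$ in $\mathbb{R}^n\setminus\{0\}$, hence $v_i(t)>0$ on $\mathbb{R}$, and the strict lower bound $v_i^{(1)}(t)>-\gamma v_i(t)$ rearranges to $\gamma v_i(t)+v_i^{(1)}(t)>0$. Substituting into the chain-rule identity above yields $\partial_r u_i(r)<0$ for every $r>0$, which is the desired conclusion. The only delicate step is the invocation of time-reversal invariance to upgrade Lemma~\ref{signal} to a two-sided bound; once that symmetry is recorded, the rest is a one-line change of variables. I do not foresee any serious obstacle beyond checking that the reflected $p$-map $\widetilde{\mathcal{V}}$ indeed solves the same system, which is immediate from the structure of \eqref{vectfowlersystem}.
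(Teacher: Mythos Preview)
Your proof is correct and follows the same route as the paper: differentiate the Emden--Fowler relation and invoke Lemma~\ref{signal}. The only difference is bookkeeping. The paper's one-line identity $\partial_r u_i(r)=-r^{\gamma-1}[v_i^{(1)}(t)-\gamma v_i(t)]$ (with a sign/exponent typo) tacitly uses the convention $t=\ln r$, under which Lemma~\ref{signal} gives $v_i^{(1)}-\gamma v_i<0$ and hence $\partial_r u_i<0$ immediately. You instead keep the convention $t=-\ln r$ from Section~\ref{sec:cylindricaltransform}, which forces you to need the lower bound $v_i^{(1)}>-\gamma v_i$; your time-reversal argument supplies exactly this, and is precisely the observation that justifies the paper's implicit sign flip. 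The two presentations are equivalent, and your explicit invocation of the reflection $\widetilde{\mathcal V}(t)=\mathcal V(-t)$ is a cleaner way to record why the choice of sign in $t$ is immaterial.
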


\begin{proof}
	By a direct computation, we have that ${\partial_r u_i}(r)=-r^{\gamma-1}\left[v_i^{(1)}(t)-\gamma v_i(t)\right]$. Then, the proof of the corollary is a consequence of Lemma~\ref{signal}.
\end{proof}
\subsection{Removable singularity classification}
After establishing the previous lemmas concerning the asymptotic behavior of global solutions to the ODE system \eqref{vectfowlersystem}, we can prove the main results of the section, namely, the removable-singularity classification and the non-existence of semi-singular solutions to \eqref{oursystem}. 
These results will be employed in the proof of Theorem~\ref{theorem2}. More precisely, we show that the Pohozaev invariant of any solution is always nonpositive, and it is zero, if, and only if, the origin is a non-removable, otherwise, for singular solutions to \eqref{oursystem} this invariant is always negative.

To show the removable singularity theorem, we need to define some auxiliary functions. For $i\in I$, let us set $\varphi_i:\mathbb{R}\rightarrow\mathbb{R}$ given by
\begin{equation*}
\varphi_i(t)=v_{i}^{(3)}(t)v_{i}^{(1)}(t)-\frac{1}{2}|v_{i}^{(2)}(t)|^2-\frac{K_2}{2}|v_{i}^{(1)}(t)|^2+\frac{K_0}{2}|v_{i}(t)|^2-\hat{c}(n)|v_{i}(t)|^{2^{**}}.
\end{equation*}

\begin{remark}\label{auxfunction}
	By Lemma~\ref{blow-up}, we observe that
	\begin{equation*}
	\varphi^{(1)}_i(t)=c(n)\left(|\mathcal{V}(t)|^{2^{**}-2}-|v_i(t)|^{2^{**}-2}\right)v_i(t)v^{(1)}_i(t).
	\end{equation*}
	Since $|\mathcal{V}|\geqslant|v_i|$, we have that $\sgn (\varphi^{(1)}_i)=\sgn(v^{(1)}_i)$. 
	In other terms, the monotonicity of $\varphi_i$ is the same of component function $v_i$. 
	Moreover, it holds that
	$\sum_{i=1}^{p}\varphi_i(t)=-H$.
\end{remark}

\begin{proposition}\label{Pohozaev}
	Let $\mathcal{U}$ be a nonnegative singular solution to \eqref{oursystem}. Then, $\mathcal{P}_{\rm sph}(\mathcal{U})\leqslant0$ and $\mathcal{P}_{\rm sph}(\mathcal{U})=0$, if, and only if, $\mathcal{U}\in C^{4,\zeta}(\mathbb{R}^{n},\mathbb{R}^p)$, for some $\zeta\in(0,1)$.
\end{proposition}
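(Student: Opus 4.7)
The strategy is to rewrite $\mathcal{P}_{\rm sph}(\mathcal{U})=\omega_{n-1}\mathcal{H}(t,\mathcal{V})$ (constant in $t$ by \eqref{conservation}) and determine the sign of $\mathcal{H}$ in the two regimes, the non-singular and the singular, exploiting Theorem~\ref{theorem1} and Proposition~\ref{symmetry} respectively.

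If $\mathcal{U}\in C^{4,\zeta}(\mathbb{R}^n,\mathbb{R}^p)$, Theorem~\ref{theorem1} gives $\mathcal{U}=\Lambda u_{x_0,\mu}$, so $\mathcal{U}$ and all its derivatives are uniformly bounded on $\overline{B}_1$. Writing $\mathcal{V}(t,\theta)=e^{-\gamma t}\mathcal{U}(e^{-t}\theta)$, each component $v_i$ and each $t$-derivative decays like $O(e^{-\gamma t})$ as $t\to+\infty$ (equivalently, as $r\to 0^+$). Hence $\mathcal{H}(t,\mathcal{V})\to 0$ in this limit, and by conservation $\mathcal{P}_{\rm sph}(\mathcal{U})=\omega_{n-1}\lim_{t\to+\infty}\mathcal{H}(t,\mathcal{V})=0$.

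Now let $\mathcal{U}$ be singular. By Proposition~\ref{symmetry} it is radial about the origin and $\mathcal{V}=\mathfrak{F}(\mathcal{U})$ solves the ODE system \eqref{vectfowlersystem} on $\mathbb{R}$; by Lemma~\ref{fbounded}, $|\mathcal{V}|$ is globally bounded. Set $H:=\mathcal{H}(t,\mathcal{V})$ (constant), so $\mathcal{P}_{\rm sph}(\mathcal{U})=\omega_{n-1}H$. The main step is proving $H\leqslant 0$, with equality forcing $\mathcal{U}\in C^{4,\zeta}$. Assume first $H\geqslant 0$. Since $\mathcal{V}$ and, by \eqref{vectfowlersystem}, all its higher derivatives are globally bounded, along any $t_k\to+\infty$ one extracts a translation limit $\mathcal{V}_\infty(\cdot)=\lim_k\mathcal{V}(\,\cdot+t_k)$ in $C^4_{\rm loc}(\mathbb{R})$, which is a bounded global solution of \eqref{vectfowlersystem} with the same Hamiltonian $H$. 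Using the monotonicity $\sgn\varphi_i^{(1)}=\sgn v_i^{(1)}$ from Remark~\ref{auxfunction} together with $v_i^{(1)}<\gamma v_i$ from Lemma~\ref{signal}, I would iterate this extraction until each component $\lim_{t\to+\infty}(v_\infty)_i(t)$ exists in $[0,\infty)$, so that Lemma~\ref{asymptotics} applies. Its ``moreover'' clause combined with $H\geqslant 0$ forces each limit to be $0$, Lemma~\ref{blow-up} propagates vanishing to all derivatives, and substituting into the definition of $\mathcal{H}$ yields $H=0$. This gives $H\leqslant 0$, with equality only if $\mathcal{V}$ itself decays at infinity along with all its derivatives.

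Finally, if $H=0$ the analysis above shows $v_i(t)\to 0$ and $v_i^{(j)}(t)\to 0$ as $t\to+\infty$ for every $j\geqslant 1$. The linearisation of \eqref{vectfowlersystem} at $\mathcal{V}=0$ has characteristic roots $\pm\gamma$ and $\pm n/2$, so the stable manifold theorem gives $v_i(t)=O(e^{-\gamma t})$, whence $u_i(r)=r^{-\gamma}v_i(-\ln r)$ is bounded near the origin. Proposition~\ref{singregularity} (together with standard elliptic regularity across a removable singularity) then upgrades $\mathcal{U}$ to a $C^{4,\zeta}$ map across $0$. The main obstacle is the iterated-compactness step: the fourth-order autonomous system is not obviously gradient-like, so $\omega$-limit sets could in principle contain periodic orbits on which the individual components never converge; verifying that the monotonicity tools of Remark~\ref{auxfunction} and Lemma~\ref{signal} are sharp enough to exclude this possibility and to ultimately produce orbits whose components admit one-sided limits is the technical core of the proof.
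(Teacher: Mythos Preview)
Your treatment of the non-singular direction is fine and matches the paper. The gap you flag in the singular case is real, and the compactness/translation-limit machinery is the wrong tool here: periodic orbits of \eqref{vectfowlersystem} genuinely exist (the Emden--Fowler solutions of Theorem~\ref{theoremB}), so passing to $\omega$-limits cannot help unless you already know the Hamiltonian level $H\geqslant 0$ excludes them---which is precisely what you are trying to prove.

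The paper avoids compactness entirely and argues directly on the original trajectory $\mathcal{V}$. For the implication $H\geqslant 0\Rightarrow H=0$, the point is that each component $v_i$ itself has a limit as $t\to+\infty$. This comes from the dichotomy hidden in the proof of Lemma~\ref{fbounded}: either the zero set $Z_i=\{t:v_i^{(1)}(t)=0\}$ is bounded, so $v_i$ is eventually monotone and the limit exists automatically, or $Z_i$ is unbounded and one argues further using the energy. Once each $\lim_{t\to\infty}v_i$ exists, Lemma~\ref{asymptotics} pins it to $\{0,l^*\}$, its ``moreover'' clause forces the value $0$ under $H\geqslant 0$, Lemma~\ref{blow-up} propagates vanishing to all derivatives, and evaluating $\mathcal{H}$ at $t\to\infty$ yields $H=0$. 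No iterated extraction is needed.

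For $H=0\Rightarrow$ removable singularity, your stable-manifold idea is essentially correct, but the paper makes it explicit via variation of constants applied to the \emph{sum} $v_\Sigma=\sum_i v_i$, which satisfies the scalar equation $v_\Sigma^{(4)}-K_2v_\Sigma^{(2)}+K_0v_\Sigma=c(n)|\mathcal{V}|^{8/(n-4)}v_\Sigma$. With characteristic roots $\pm\gamma,\pm n/2$ and the forcing term tending to $0$, the representation formula kills the growing modes and gives $v_\Sigma(t)=O(e^{-\gamma t})$, i.e.\ $u_\Sigma(r)=O(1)$ as $r\to 0$. Boundedness of the sum bounds each nonnegative component, and elliptic regularity upgrades $\mathcal{U}$ to $C^{4,\zeta}$ across the origin.
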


\begin{proof} Let us divide the proof into two claims as follows. The first one is concerned with the sign of the Pohozaev invariant. Namely, we show it is always nonpositive.
	
	\noindent{\bf Claim 1:} If $\mathcal{P}_{\rm sph}(\mathcal{U})\geqslant0$, then $\mathcal{P}_{\rm sph}(\mathcal{U})=0$.
	
	\noindent Indeed, let us define the sum function $v_{\Sigma}:\mathbb{R}\rightarrow\mathbb{R}$ given by $v_{\Sigma}(t)=\sum_{i=1}^pv_i(t)$. 
	Hence, by Lemma~\ref{asymptotics}, for any $v_i$ there exists a sufficient large $\hat{t}_i\gg1$ such that $v^{(1)}_i(\hat{t}_i)=0$. Furthermore, by Lemma~\ref{blow-up} for any $i\in I$, we can find a sufficiently large $t_i\geqslant \hat{t}_i\gg1$ such that $v^{(1)}_i(t_i)<0$ for all $t>t_i$. 
	Then, choosing $t_*>\max_{i\in I}\{t_i\}$, we have that $v^{(1)}_{\Sigma}(t)<0$ for $t>t_*$, which implies $\lim_{t\rightarrow\infty}v_{i}(t)=0$. Consequently, by Lemma~\ref{signal}, we conclude that $\mathcal{P}_{\rm sph}(\mathcal{U})=0$.
	
	In the next claim, we use some arguments from \cite[Lemma~2.4]{MR4094467} to show that solutions with zero Pohozaev invariant have a removable singularity at the origin.
	
	\noindent{\bf Claim 2:} If $\mathcal{P}_{\rm sph}(\mathcal{U})=0$, then $\mathcal{U}\in C^{4,\zeta}(\mathbb{R}^{n},\mathbb{R}^p)$, for some $\zeta\in(0,1)$.
	
	\noindent In fact, note that $v_{\Sigma}$ satisfies
	\begin{equation}\label{hurry}
	v^{(4)}_{\Sigma}-K_2v^{(2)}_{\Sigma}+K_0v_{\Sigma}=c(n)|\mathcal{V}|^{2^{**}-2}v_{\Sigma}.
	\end{equation}
	Setting $\widetilde{f}(\mathcal{V})=c(n)|\mathcal{V}|^{2^{**}-2}v_{\Sigma}$, since $v_i(t)\rightarrow0$ as $t\rightarrow\pm\infty$, it follows that $\lim_{t\rightarrow\infty}\widetilde{f}(\mathcal{V}(t))=0$. Then, we define $\tau=-t$ and $\widetilde{v}_{\Sigma}(\tau)=v_{\Sigma}(t)$, which implies that $\widetilde{v}_{\Sigma}$ also satisfies \eqref{hurry}. Moreover, $\lim_{t\rightarrow-\infty}v_{\Sigma}(t)=\lim_{\tau\rightarrow\infty}\widetilde{v}_{\Sigma}(\tau)=0$
	and also
	\begin{equation}\label{pokho1}
	\lim_{\tau\rightarrow\infty}\widetilde{f}(\widetilde{\mathcal{V}}(\tau))=0.
	\end{equation}
	Consequently, by ODE theory (see for instance \cite{MR0171038,MR3158844}), we can find sufficiently large $T\gg1$ satisfying
	\begin{align*}
	\widetilde{v}_{\Sigma}(\tau)&=A_1e^{\lambda_1t}+A_2e^{\lambda_2t}+A_3e^{\lambda_3t}+A_4e^{\lambda_4t}&\\
	&\quad+B_1\int_{T}^{t}e^{\lambda_1(\tau-t)}\widetilde{f}(\widetilde{\mathcal{V}}(t))\ud t+B_2\int_{T}^{t}e^{\lambda_2(\tau-t)}\widetilde{f}(\widetilde{\mathcal{V}}(t))\ud t&\\
	&\quad-B_3\int_{\tau}^{\infty}e^{\lambda_3(\tau-t)}\widetilde{f}(\widetilde{\mathcal{V}}(t))\ud t-B_4\int_{\tau}^{\infty}e^{\lambda_4(\tau-t)}\widetilde{f}(\widetilde{\mathcal{V}}(t))\ud t,&
	\end{align*}
	where $A_1, A_2, A_3, A_4$ are constants depending on $T$, $B_1, B_2, B_3,B_4$ are constants not depending on $T$, and 
	\begin{equation*}
	\lambda_1=-\frac{n}{2}, \ \lambda_2=-\frac{n-4}{2}, \ \lambda_3=\frac{n}{2} \ {\rm and} \ \lambda_4=\frac{n-4}{2}
	\end{equation*} 
	are the solutions to the characteristic equation $\lambda^{4}-K_2\lambda^2+K_0\lambda=0$. In addition, by \eqref{pokho1} we obtain that $A_3=A_4=0$. Hence, we use the same ideas in \cite[Theorem~3.1]{MR3315584} to arrive at
	\begin{equation*}
	\widetilde{v}_{\Sigma}(\tau)=\mathcal{O}(e^{-\frac{n-4}{2}\tau}) \quad {\rm as} \quad \tau\rightarrow\infty
	\quad {\rm or} \quad {v}_{\Sigma}(t)=\mathcal{O}(e^{\frac{n-4}{2}t}) \quad {\rm as} \quad t\rightarrow-\infty.
	\end{equation*}
	Eventually, undoing the cylindrical transformation, we have that ${u}_{\Sigma}(r)=\mathcal{O}(1)$ as $r\rightarrow0$, which finishes the proof of the claim. 
	
	Therefore, using the last claim, we get $u_{\Sigma}$ is uniformly bounded, which implies $u_i\in C^{0}(\mathbb{R}^n)$ for all $i\in I$. 
	Finally, standard elliptic regularity theory provides that $\mathcal{U}\in C^{4,\zeta}(\mathbb{R}^n,\mathbb{R}^p)$ for some $\zeta\in(0,1)$ and for all $i\in I$; this concludes the proof of the proposition.
\end{proof}

\begin{proposition}\label{fully-singular}
Let $\mathcal{U}$ be a nonnegative singular  solution to \eqref{oursystem}. If $\mathcal{P}_{\rm sph}(\mathcal{U})<0$, then $\mathcal{U}$ is fully-singular.
\end{proposition}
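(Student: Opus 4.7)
I argue by contradiction. Assume $\mathcal{U}$ is semi-singular, so there exists $j_0 \in I \setminus I_\infty$. The subcase $u_{j_0} \equiv 0$ is handled by restricting the system to $I\setminus\{j_0\}$ and proceeding by induction on the number of components, so I focus on the case $u_{j_0} > 0$. By the radial monotonicity established in the corollary after Lemma~\ref{signal}, the map $r \mapsto u_{j_0}(r)$ is strictly decreasing; together with $\liminf_{r \to 0^+} u_{j_0}(r) < \infty$, this produces a finite limit $L_{j_0} := \lim_{r \to 0^+} u_{j_0}(r) \in [0,\infty)$. Passing to cylindrical coordinates, $v_{j_0}(t) = r^{\gamma}u_{j_0}(r) \to 0$ as $t \to +\infty$. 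Since the right-hand side $c(n)|\mathcal{V}|^{2^{**}-2}v_{j_0}$ of the equation for $v_{j_0}$ tends to zero (the factor $|\mathcal{V}|$ being bounded by Lemma~\ref{fbounded}), a scalar adaptation of the argument of Lemma~\ref{blow-up} yields $v_{j_0}^{(k)}(t) \to 0$ for every $k \geq 1$. The analogous asymptotic at $t \to -\infty$ follows from the monotone decay of $u_{j_0}$ at infinity, which is forced by the boundedness of $v_{j_0}$.

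The central step is to derive that $\mathcal{P}_{\rm sph}(\mathcal{U}) = 0$, contradicting the hypothesis $\mathcal{P}_{\rm sph}(\mathcal{U}) < 0$. I use the conserved Hamiltonian $\mathcal{H}(t,\mathcal{V}) \equiv H$ and pass to the limit as $t \to +\infty$; all terms involving $v_{j_0}$ and its derivatives vanish by the previous step. For $i \in I_\infty$ I claim $v_i(t) \to 0$ as $t \to +\infty$ as well. Indeed, if some $v_i$ had a positive limit $\ell$, Lemma~\ref{asymptotics} would force $\ell = l^*$; but the equilibrium identity $K_0 = c(n)|\ell(\mathcal{V})|^{2^{**}-2}$, where $\ell(\mathcal{V})$ is the limiting vector, cannot be consistent with $v_{j_0} \to 0$ occurring simultaneously with $v_i \to l^*$, because $l^*$ is calibrated precisely to the situation where all components equilibrate together. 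Hence every $v_i$ tends to $0$ at $+\infty$, and Lemma~\ref{blow-up} extends this to all derivatives. Passing to the limit in $\mathcal{H}(t,\mathcal{V}) = H$ yields $H = 0$, contradicting $\mathcal{P}_{\rm sph}(\mathcal{U}) < 0$.

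The chief obstacle is rigorously establishing the existence of $\lim_{t \to +\infty} v_i(t)$ for $i \in I_\infty$, since Lemma~\ref{asymptotics} presumes the limit exists. I would exploit the monotonicity of $t \mapsto e^{\gamma t} v_i(t) = u_i(e^{-t})$, which converges in $(0,\infty]$ by the corollary after Lemma~\ref{signal}, together with the boundedness from Lemma~\ref{fbounded} and a careful ODE/dynamical systems analysis of the autonomous system \eqref{vectfowlersystem} to extract the limit of $v_i$ itself. A more robust alternative, to be used if the limit-existence question proves stubborn, is to adapt the sum-function technique of Claim~1 in the proof of Proposition~\ref{Pohozaev} to the restricted system on the active indices $I_\infty$, using that $v_{j_0}$ decouples in the asymptotic regime, so as to recover the conclusion $H=0$ without explicitly invoking limits of individual components.
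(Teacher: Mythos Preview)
Your approach has a genuine gap at the step where you claim every $v_i\to 0$ as $t\to+\infty$. First, for $i\in I_\infty$ there is no reason the limit $\lim_{t\to+\infty}v_i(t)$ should exist at all: the model singular profiles are the Emden--Fowler solutions, for which $v_a$ is \emph{periodic}, so one should expect oscillation rather than convergence. You flag this as ``the chief obstacle,'' but it is more serious than a technicality; the monotonicity of $r\mapsto u_i(r)$ does not translate into monotonicity of $v_i(t)=e^{-\gamma t}u_i(e^{-t})$. Second, even granting that a limit $\ell_i$ exists for each $i$, your exclusion of a positive limit is incorrect: the equilibrium identity you cite, $K_0=c(n)|\ell(\mathcal V)|^{2^{**}-2}$, constrains only the norm $|\ell(\mathcal V)|$, so a limiting vector with $\ell_{j_0}=0$ and the remaining coordinates positive with the correct total norm is perfectly consistent. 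Third, if such a mixed equilibrium did occur, a direct computation gives
\[
H=-\tfrac{K_0}{2}|\ell(\mathcal V)|^2+\hat c(n)|\ell(\mathcal V)|^{2^{**}}
=K_0|\ell(\mathcal V)|^2\Bigl(\tfrac{1}{2^{**}}-\tfrac12\Bigr)<0,
\]
so one would not obtain $H=0$ and hence no contradiction with $\mathcal P_{\rm sph}(\mathcal U)<0$.

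The paper's argument proceeds along an entirely different line and avoids any discussion of limits of the singular components. It introduces the auxiliary quantities $\varphi_i$ (which satisfy $\sum_i\varphi_i=-H$) and uses the decay of $v_{j_0}$ together with Lemma~\ref{signal} to show $\varphi_{j_0}(t)\to 0$, so the remaining $\varphi_i$'s carry the full (negative) energy. This yields a uniform lower bound $u_i(r)\geqslant C_ir^{-\gamma}$ for every $i\neq j_0$ on $(0,1]$. Feeding this lower bound for $|\mathcal U|^{2^{**}-2}$ back into the radial ODE for $u_{j_0}$ and integrating repeatedly produces a lower barrier $\varrho(r)$ with $\varrho(r)\to\infty$ as $r\to 0$, forcing $u_{j_0}$ itself to blow up and contradicting the semi-singular assumption. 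This bootstrap via the differential equation is the missing idea in your proposal.
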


\begin{proof} Suppose by contradiction $\mathcal{U}$ is semi-singular, that is, there exists some $i_0\in I\setminus I_{\infty}$. We may suppose without loss of generality $\{i_0\}=I\setminus I_{\infty}$, which yields
\begin{equation}\label{semi-singular}
\displaystyle\lim_{\substack{r\rightarrow 0\\ i\neq i_0}}u_i(r)=\infty \quad {\rm and} \quad \displaystyle\liminf_{r\rightarrow 0}u_{i_0}(r)=C_{i_0}<\infty.
\end{equation}

\noindent{\bf Claim 1:} $\lim_{t\rightarrow\infty}v_{i_0}(t)=\infty$.

\noindent Indeed, using Lemma~\ref{signal}, we have that $\gamma^{-1}|v_{i_0}^{(1)}(t)|\leqslant v_{i_0}(t)\leqslant C_{i}e^{-\gamma t}$ for all $i\in I\setminus\{i_0\}$, which provides $\varphi_{i_0}(t)\rightarrow0$ as $t\rightarrow\infty$. 
Hence, since $P<0$, we get that $H<0$, which combined with Remark~\ref{auxfunction} yields $\sum_{\substack{{i=1}\\ i\neq i_0}}^{p}\varphi_i(t)=-H$. 
Let us divide the rest of the proof into two steps:

\noindent{\bf Step 1:} For each $i\in I\setminus\{i_0\}$, there exists $C_i>0$ such that $u_i(r)\geqslant C_ir^{-\gamma}$ for all $r\in(0,1]$.

\noindent First, it is equivalent to $\inf_{t\geqslant0}v_i(t)\geqslant C_i$ in cylindrical coordinates. 
Assume by contradiction that it does not hold. 
Then, there exists $\{t_k\}_{k\in\mathbb{N}}\subset(0,\infty)$ such that $t_k\rightarrow\infty$ and $v_i(t_k)\rightarrow0$ as $k\rightarrow\infty$. 
Moreover, using Lemma~\ref{signal} for all $i\in I$ one obtains $0\leqslant\gamma^{-1}|v_{i}^{(1)}(t_k)|\leqslant v_{i}(t_k)\rightarrow0$, which yields that $\varphi_i(t_k)\rightarrow0$. 
This is is a contradiction, and the proof of Step 1 is finished.

\noindent{\bf Step 2:} There exists $\varrho\in C^{\infty}(\mathbb{R}\setminus\{0\})$ such that $\lim_{r\rightarrow0}\varrho(r)=\infty$ and 
\begin{equation*}
u_{i_0}(r)\geqslant \varrho(r) \quad {\rm for all} \quad r\in(0,1].
\end{equation*}

\noindent First, it is easy to check that there exists $C_0>0$ such that $u_{i_0}(r)\geqslant C_0$ for all $r\in(0,1]$. 
Second, writing the Laplacian in spherical coordinates, we have
\begin{equation*}
r^{1-n}{\partial_r}\left[r^{n-1}{\partial_r \Delta u_{i_0}}(r)\right]=c(n)|\mathcal{U}|^{2^{**}-2}u_{i_0}.
\end{equation*}
Now use the estimates in Step 1 to obtain,
\begin{equation*}
{\partial_r}\left[r^{n-1}\partial_r \Delta u_{i_0}(r)\right]\geqslant c_0r^{n-5},
\end{equation*}
which, by integrating, implies
\begin{equation*}
r^{n-1}\partial_r \Delta u_{i_0}(r)\geqslant c_1r^{n-4}+c_2.
\end{equation*}
By proceeding as before, we get
\begin{equation*}
\Delta u_{i_0}(r)\geqslant c_1r^{-3}+c_2r^{1-n}.
\end{equation*}
Therefore, by repeating the same procedure, we can find $c_1,c_2,c_3,c_4\in\mathbb{R}$ satisfying
\begin{equation*}
u_{i_0}(r)\geqslant c_1r^{-1}+c_2r^{1-n}+c_3r^{-n}+c_4,
\end{equation*}
which concludes the proof of Step 2.

Eventually, passing to the limit as $r\rightarrow0$ in Step 2, we obtain that $u_{i_0}$ blows-up at the origin.
Hence, Claim 1 holds, which is a contradiction with \eqref{semi-singular}. 
Therefore, semi-singular solutions cannot exist, and the proposition is proved.
\end{proof}

\begin{remark}
We highlight that Proposition~\ref{fully-singular} is a surprising result since, for the type of singular system considered in \cite{MR3394387}, it is only possible to obtain the same conclusion with some restriction on the dimension. This better behavior is due to the symmetries that the Gross--Pitaevskii nonlinearity enjoys.
\end{remark}

\begin{corollary}\label{stronglypositive}
Let $\mathcal{U}$ be a nonnegative singular solution to \eqref{oursystem}. Then, $\mathcal{U}$ is strongly positive.
\end{corollary}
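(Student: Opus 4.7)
The proof plan is to chain together the two preceding results on the sign of the Pohozaev invariant (Proposition~\ref{Pohozaev}) and the non-existence of semi-singular solutions (Proposition~\ref{fully-singular}) with the weak positivity property (Proposition~\ref{singweakpos}). Since $\mathcal{U}$ is assumed to be a singular solution, by definition the origin is a non-removable singularity for at least one component $u_i$, so $\mathcal{U}$ cannot lie in $C^{4,\zeta}(\mathbb{R}^n,\mathbb{R}^p)$. Proposition~\ref{Pohozaev} then forces $\mathcal{P}_{\rm sph}(\mathcal{U})\neq 0$, and combined with the same proposition's inequality $\mathcal{P}_{\rm sph}(\mathcal{U})\leqslant 0$, we conclude that $\mathcal{P}_{\rm sph}(\mathcal{U})<0$ strictly.

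With a strictly negative Pohozaev invariant in hand, Proposition~\ref{fully-singular} applies directly and yields that $\mathcal{U}$ is fully-singular. In the notation of Definition~\ref{singset}, this means $I_{\infty}=I$, that is, every component blows up at the origin:
\begin{equation*}
\liminf_{r\rightarrow 0} u_i(r)=\infty \quad \text{for every } i\in I.
\end{equation*}
In particular, no component can be identically zero, so $I_0=\emptyset$.

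To finish, we invoke weak positivity: by Proposition~\ref{singweakpos} (which itself rests on the superharmonicity Proposition~\ref{supersingular} together with the strong maximum principle applied componentwise to $-\Delta u_i\geqslant 0$), the index set admits the dichotomy $I=I_0\cup I_+$, where each $u_i$ is either identically zero or strictly positive on $\mathbb{R}^n\setminus\{0\}$. Since we have just shown $I_0=\emptyset$, this dichotomy forces $I_+=I$, which is precisely the definition of $\mathcal{U}$ being strongly positive. No step here is a serious obstacle; the work has been done in Propositions~\ref{supersingular}, \ref{Pohozaev}, and \ref{fully-singular}, and this corollary is essentially a clean repackaging: singular $\Rightarrow$ $\mathcal{P}_{\rm sph}<0$ $\Rightarrow$ fully-singular $\Rightarrow$ no trivial component $\Rightarrow$ strongly positive.
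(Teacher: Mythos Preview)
Your proof is correct and uses the same ingredients as the paper: Proposition~\ref{Pohozaev}, Proposition~\ref{fully-singular}, and Proposition~\ref{singweakpos}. The only cosmetic difference is that the paper argues by contradiction (assume some $u_{i_0}\equiv 0$, hence $\mathcal{U}$ not fully-singular, hence by the contrapositive of Proposition~\ref{fully-singular} together with Proposition~\ref{Pohozaev} the solution would be non-singular, contradicting $I_\infty\neq\emptyset$), whereas you proceed directly; your version is arguably cleaner since it makes the appeal to Proposition~\ref{Pohozaev} explicit.
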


\begin{proof} We already know by Proposition~\ref{singweakpos} that $\mathcal{U}$ is weakly positive. Suppose by contradiction that $\mathcal{U}$ is not strongly positive. Then, there exists some $i_0\in I_0$, that is, $u_{i_0}\equiv0$ and so non-singular at the origin. Thus, by Proposition~\ref{fully-singular} all the other components must also be non-singular at the origin. Therefore, $I_{\infty}=\emptyset$, which is contradiction since $\mathcal{U}$ is a singular solution to \eqref{oursystem}.
\end{proof}

\subsection{Proof of Theorem~\ref{theorem2}}
Finally, we have conditions to connect the information we have obtained to prove our classification result. 
Our idea is to apply the analysis of the Pohozaev invariant and ODE methods together with Theorem~\ref{theorem1}, and Propositions~\ref{Pohozaev} and \ref{fully-singular}, which can be summarized as follows
\begin{theoremtio}\label{theorem2'}
{\it 
	Let $\mathcal{U}$ be a nonnegative solution to \eqref{oursystem}. 
	There exist only two possibilities for the sign of the Pohozaev invariant:\\
	\noindent {\rm (i)} If $\mathcal{P}_{\rm sph}(\mathcal{U})=0$, then $\mathcal{U}=\Lambda u_{x_0,\mu}$, where $u_{x_0,\mu}$ is given by \eqref{sphericalfunctions} $($spherical solution$)$;\\
	\noindent {\rm (ii)} If $\mathcal{P}_{\rm sph}(\mathcal{U})<0$, then $\mathcal{U}=\Lambda^* u_{a,T}$, where $u_{a,T}$ is given by \eqref{emden-folwersolution} $($Emden--Fowler solution$)$.
}
\end{theoremtio}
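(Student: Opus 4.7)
My plan is to bifurcate on the sign of $\mathcal{P}_{\rm sph}(\mathcal{U})$. By Proposition~\ref{Pohozaev} the Pohozaev invariant is always nonpositive and vanishes precisely when $\mathcal{U}$ extends classically across the origin, so exactly the two alternatives listed in the statement can occur.

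For case (i), when $\mathcal{P}_{\rm sph}(\mathcal{U})=0$, the argument is immediate: Proposition~\ref{Pohozaev} forces $\mathcal{U}\in C^{4,\zeta}(\mathbb{R}^n,\mathbb{R}^p)$, so $\mathcal{U}$ is a genuine non-singular solution to \eqref{oursystem}, and Theorem~\ref{theorem1} supplies $\mathcal{U}=\Lambda u_{x_0,\mu}$ with $\Lambda\in\mathbb{S}^{p-1}_{+}$ and $u_{x_0,\mu}$ a fourth order spherical solution.

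For case (ii), assume $\mathcal{P}_{\rm sph}(\mathcal{U})<0$. I would first assemble the following structural facts. Proposition~\ref{fully-singular} forces $I_\infty=I$, so the solution is fully-singular; Corollary~\ref{stronglypositive} promotes this to strong positivity $u_i>0$ on $\mathbb{R}^n\setminus\{0\}$ for every $i\in I$; and Proposition~\ref{symmetry}, whose moving-planes argument is applied component-wise to the Kelvin-transformed functions $\widetilde{u}_i$, yields that each $u_i$ is radially symmetric about the origin. Introducing the cylindrical rescaling $v_i(t)=r^{\gamma}u_i(r)$ with $t=-\ln r$ then converts \eqref{oursystem} into the autonomous fourth order ODE system \eqref{vectfowlersystem}, with every $v_i$ strictly positive and bounded by Lemma~\ref{fbounded}, and with $\mathcal{P}_{\rm cyl}(\mathcal{V})<0$.

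The crux is to show proportionality: there exist $\Lambda^*\in\mathbb{S}^{p-1}_{+,*}$ and a positive scalar $u$ on $\mathbb{R}$ with $v_i=\lambda_i^* u$ for every $i$. By strong positivity, each quotient $q_{ij}=v_i/v_j$ is smooth and positive on $\mathbb{R}$. Since $v_i$ and $v_j$ both satisfy the \emph{same} linear fourth order ODE $w^{(4)}-K_2 w^{(2)}+K_0 w=c(n)|\mathcal{V}|^{2^{**}-2}w$ with a common coefficient, substituting $v_i=q_{ij}v_j$ into the equation for $v_i$ produces a fourth order linear ODE for $q_{ij}$ with no zeroth-order term, so that constants are automatic solutions; the task is to exclude non-constant bounded positive ones. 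I plan to mimic the quotient analysis of Theorem~\ref{hebey-druet}\textcolor{blue}{'}: first reduce the equation for $q_{ij}$ to a second order inequality $-q_{ij}^{(2)}+\widetilde{c}(t)q_{ij}\leqslant 0$ via the superharmonicity information of Proposition~\ref{supersingular}, and then apply a weak maximum principle on bounded $t$-intervals together with the a priori bounds of Lemma~\ref{fbounded} and the oscillation estimate $v_i^{(1)}<\gamma v_i$ of Lemma~\ref{signal} to conclude that $q_{ij}$ is constant. An equivalent route goes through Wronskian-type quantities $v_j v_i^{(k)}-v_i v_j^{(k)}$ for $k=1,2,3$, which satisfy closed conservation identities derived from the common linear structure and can be forced to vanish by evaluating at a configuration supplied by Lemmas~\ref{asymptotics}--\ref{blow-up}.

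Once proportionality is in hand, writing $\mathcal{U}=\Lambda^* u$ with $|\Lambda^*|=1$ reduces \eqref{oursystem} to the scalar equation $\Delta^2 u=c(n)u^{2^{**}-1}$ on $\mathbb{R}^n\setminus\{0\}$, and the identity $\mathcal{P}_{\rm sph}(u)=\mathcal{P}_{\rm sph}(\mathcal{U})<0$ guarantees that $u$ is singular at the origin. Theorem~\ref{theoremB} then classifies $u=u_{a,T}$ as an Emden--Fowler solution for some $a\in(0,a_0]$ and $T\in(0,T_a]$, yielding $\mathcal{U}=\Lambda^* u_{a,T}$ as desired. The principal obstacle is the proportionality step: unlike in the non-singular case, the singular cylindrical setting offers no asymptotic decay at infinity (the solutions are periodic and bounded away from $0$ and $\infty$), so the moment identity \eqref{jurubeba} is unavailable and constancy of $q_{ij}$ must be extracted from the internal ODE structure together with the sign information accumulated in Section~\ref{section4}.
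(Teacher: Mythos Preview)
Your treatment of case (i) matches the paper exactly: Proposition~\ref{Pohozaev} plus Theorem~\ref{theorem1}.

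For case (ii), your overall architecture is right --- strong positivity, radial symmetry, pass to the cylindrical ODE system, prove the quotients $q_{ij}=v_i/v_j$ are constant, then invoke Theorem~\ref{theoremB} --- and you correctly isolate proportionality as the crux. But neither of your two proposed mechanisms for constancy of $q_{ij}$ will go through as described. The maximum-principle route that you want to import from Theorem~\ref{hebey-druet}\textcolor{blue}{'} relied on decay at infinity (Step~3 there) to pin down the boundary behaviour of $q_{ij}$; you yourself note that in the singular regime the $v_i$ are bounded away from $0$ and $\infty$ and do not decay, so there is nothing to feed into a maximum principle on expanding intervals. Your Wronskian alternative appeals to ``a configuration supplied by Lemmas~\ref{asymptotics}--\ref{blow-up}'', but those lemmas are conditional on the existence of $\lim_{t\to\pm\infty}v_i(t)$, which is exactly what fails when $\mathcal{P}_{\rm sph}(\mathcal{U})<0$; there is no asymptotic point at which to evaluate the Wronskians and force them to vanish.

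The paper closes this gap by a different and much shorter device that you do not mention: Remark~\ref{equalityofinitiadata}. That remark argues that a solution of \eqref{vectfowlersystem}--\eqref{initialconditions} must have identical initial data across components, $a_i=a$ and $b_i=b$ for all $i$. Combined with the identity $v_i^{(4)}v_j-v_j^{(4)}v_i=-K_2(v_i^{(2)}v_j-v_iv_j^{(2)})$ (your observation that the zeroth-order term drops out), this yields a fourth order \emph{homogeneous Cauchy problem} for $q_{ij}$ with initial conditions $q_{ij}^{(1)}(0)=q_{ij}^{(2)}(0)=q_{ij}^{(3)}(0)=0$. Picard--Lindel\"of then forces $q_{ij}$ to be the constant solution. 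So the missing idea is not an asymptotic or maximum-principle argument but an initial-value one: equality of the Cauchy data at a single point, supplied by Remark~\ref{equalityofinitiadata}, is what makes the quotient ODE uniquely solvable by a constant.
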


\begin{proof}
	(i) It follows directly by Proposition~\ref{Pohozaev} and Theorem~\ref{theorem1}.
	
	\noindent (ii) First, by Corollary~\ref{stronglypositive}, it follows that $I_{+}=I_{\infty}=I$, which makes the quotient functions $q_{ij}=v_i/v_j$ well-defined for all $i,j\in I$.
	Moreover, we show that they are constants. 
	Notice that $v_i$ and $v_j$ satisfy,
	\begin{equation*}
		\begin{cases}
			v_i^{(4)}-K_2v_i^{(2)}+K_0v_i=c(n)|\mathcal{V}|^{2^{**}-2}v_i\\
			v_j^{(4)}-K_2v_j^{(2)}+K_0v_j=c(n)|\mathcal{V}|^{2^{**}-2}v_j,
		\end{cases}
	\end{equation*}
	which provides
	\begin{equation}\label{argentina}
		\left(v_i^{(4)}v_j-v_j^{(4)}v_i\right)=-K_2\left(v_i^{(2)}v_j-v_iv_j^{(2)}\right).
	\end{equation}
	Furthermore, a standard computation yields
	\begin{equation*}
		q_{ij}^{(4)}=\frac{v_i^{(4)}v_j-v_iv_j^{(4)}}{v_j^2}-4{v_j^{(1)}}{v_j^{-1}}q^{(3)}_{ij}-6{v_j^{(2)}}{v_j^{-1}}q^{(2)}_{ij}-4{v_j^{(3)}}{v_j^{-1}}q^{(1)}_{ij},
	\end{equation*}
	which, combined with \eqref{argentina}, implies that the quotient satisfy the following fourth order homogeneous Cauchy problem,
	\begin{align*}
		\begin{cases}
			q_{ij}^{(4)}+4{v_j^{(1)}}{v_j^{-1}}q^{(3)}_{ij}+6{v_j^{(2)}}{v_j^{-1}}q^{(2)}_{ij}+(4{v_j^{(3)}}{v_j^{-1}}+K_2)q^{(1)}_{ij}=0 \quad {\rm in} \quad \mathbb{R}\\
			q_{ij}(0)=a_i/a_j, \quad q_{ij}^{(1)}(0)=q_{ij}^{(2)}(0)=q_{ij}^{(3)}(0)=0.
		\end{cases}
	\end{align*}
	Hence, using Lemma~\ref{blow-up} the Picard--Lindel\"{o}f uniqueness theorem, it follows that $q_{ij}\equiv a_i/a_j$.
	Thus, by the same argument at the end of the proof of Theorem~\ref{hebey-druet}, one can find $\Lambda^*\in\mathbb{S}^{p-1}_{+,*}$ such that $\mathcal{V}(t)=\Lambda^* v_{a,T}(t)$, where $v_{a,T}$ is given by \eqref{emden-folwersolution}.
	By undoing the cylindrical transformation, the theorem is proved. 
\end{proof}

As an application of Theorem~\ref{theorem2'}\textcolor{blue}{'} and Lemma~\ref{fbounded}, we provide a sharp global estimate for the blow-up rate near the origin for singular solutions to \eqref{oursystem}.

\begin{corollary}
	Let $\mathcal{U}$ be a nonnegative singular solution to \eqref{oursystem}. Then, there exist $C_1,C_2>0$ such that 
	\begin{equation*}
		C_1|x|^{-\gamma}\leqslant|\mathcal{U}(x)|\leqslant C_2|x|^{-\gamma} \quad {\rm for \ all} \quad  x\in\mathbb{R}^{n}\setminus\{0\}.
	\end{equation*}
	In other terms, $|\mathcal{U}(x)|=\mathcal{O}(|x|^{-\gamma})$ as $x\rightarrow0$,
	where $\gamma=\frac{n-4}{2}$.
\end{corollary}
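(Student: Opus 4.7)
The plan is to deduce the two-sided blow-up rate directly from the classification in Theorem~\ref{theorem2'}' combined with the boundedness and positivity of the Emden--Fowler profile $v_a$. First, I would invoke Theorem~\ref{theorem2'}' to write $\mathcal{U}=\Lambda^* u_{a,T}$ for some $\Lambda^*\in\mathbb{S}^{p-1}_{+,*}$ and some Emden--Fowler solution $u_{a,T}$ as in \eqref{emden-folwersolution}. Since $|\Lambda^*|=1$, the Euclidean norm reduces to
\begin{equation*}
|\mathcal{U}(x)|=|x|^{-\gamma}\,v_a\bigl(\ln|x|+T\bigr), \qquad x\in\mathbb{R}^n\setminus\{0\},
\end{equation*}
so the estimate follows once we control $v_a$ from above and below by positive constants on $\mathbb{R}$.

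For the upper bound, Lemma~\ref{fbounded} (applied to the scalar Emden--Fowler solution $v_a$, which is the case $p=1$ of \eqref{vectfowlersystem}) gives $v_a(s)\leqslant l^{*}$ for every $s\in\mathbb{R}$. Choosing $C_2=l^{*}$ immediately yields $|\mathcal{U}(x)|\leqslant C_2|x|^{-\gamma}$.

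For the lower bound, I would use that $v_a$ is a nonnegative, nonconstant, $T$-periodic solution of \eqref{fowler4order}; in particular it is continuous on $\mathbb{R}$ and determined on one period by the compact interval $[0,T]$. Since $\mathcal{U}$ is strongly positive and the ansatz $u_i(x)=|x|^{-\gamma}(\Lambda^*)_i v_a(\ln|x|+T)$ must hold for every $i$, the profile $v_a$ cannot vanish at any point (otherwise all components of $\mathcal{U}$ would vanish on the sphere $|x|=e^{-s}$, contradicting the strong positivity and uniqueness for the Cauchy problem in the ODE analysis). Hence $v_a$ attains a strictly positive minimum on $[0,T]$, call it $m>0$, and by $T$-periodicity $v_a(s)\geqslant m$ for every $s\in\mathbb{R}$, so $|\mathcal{U}(x)|\geqslant m|x|^{-\gamma}$.

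The only delicate point, and therefore the main obstacle, is justifying the strict positivity of the whole orbit of $v_a$ rather than only of its initial value $a>0$; this is where Corollary~\ref{stronglypositive} and the uniqueness/structure of solutions to \eqref{fowler4order} (coming from the ODE analysis of Section~\ref{section4}) enter, ruling out that the periodic profile touches zero. Once this is settled, the combination $C_1=m$, $C_2=l^{*}$ gives the claimed sharp pointwise bounds and, in particular, $|\mathcal{U}(x)|=\mathcal{O}(|x|^{-\gamma})$ as $x\to 0$.
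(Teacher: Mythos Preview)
Your proposal is correct and matches the paper's intended argument: the corollary is stated there without proof, merely as ``an application of Theorem~\ref{theorem2'}\textcolor{blue}{'} and Lemma~\ref{fbounded}'', and your elaboration---classify $\mathcal{U}=\Lambda^* u_{a,T}$, use Lemma~\ref{fbounded} for the upper bound, and use periodicity plus strict positivity of $v_a$ for the lower bound---is exactly what that implicit proof amounts to. The positivity of $v_a$ that you flag as the delicate point is already built into Theorem~\ref{theoremB} (the Emden--Fowler profiles are positive periodic solutions), so you may invoke it directly rather than routing through strong positivity of $\mathcal{U}$.
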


\appendix

\section{Some basic proofs}\label{appendixA}
In this appendix, we present the proofs of some elementary results that we have used in our text.
 
First, we show that the Laplacian of a function is invariant under spherical averaging.

\begin{lemma}
If $u\in C^2(\mathbb{R}^n)$, then $\overline{\Delta u}=\Delta\overline{u}$, where $\overline{u}(r):=|\partial B_r|^{-1}\int_{\partial B_r}u(y)\ud\sigma_r(y)$.
\end{lemma}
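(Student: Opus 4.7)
The plan is to reduce the statement to a one-dimensional calculation using the radial form of the Laplacian. Since $\overline{u}$ depends only on $r$, we have
\begin{equation*}
\Delta \overline{u}(r) = r^{1-n}\partial_r\bigl(r^{n-1}\overline{u}'(r)\bigr),
\end{equation*}
so it suffices to verify that $\partial_r(r^{n-1}\overline{u}'(r)) = r^{n-1}\overline{\Delta u}(r)$.

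First, I would parametrize $\partial B_r$ via $y = r\theta$, $\theta\in\mathbb{S}^{n-1}$, to rewrite the spherical average as
\begin{equation*}
\overline{u}(r) = \frac{1}{\omega_{n-1}}\int_{\mathbb{S}^{n-1}}u(r\theta)\, \ud\sigma(\theta).
\end{equation*}
Since $u\in C^2(\mathbb{R}^n)$, differentiation under the integral sign is justified and yields
\begin{equation*}
\overline{u}'(r) = \frac{1}{\omega_{n-1}}\int_{\mathbb{S}^{n-1}}\nabla u(r\theta)\cdot\theta\, \ud\sigma(\theta) = \frac{1}{\omega_{n-1}r^{n-1}}\int_{\partial B_r}\partial_{\nu}u\, \ud\sigma_r,
\end{equation*}
where $\nu$ is the outward unit normal to $\partial B_r$.

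Next, I would apply the divergence theorem to obtain
\begin{equation*}
r^{n-1}\overline{u}'(r) = \frac{1}{\omega_{n-1}}\int_{B_r}\Delta u\, \ud x.
\end{equation*}
Differentiating in $r$ via the coarea formula gives
\begin{equation*}
\partial_r\bigl(r^{n-1}\overline{u}'(r)\bigr) = \frac{1}{\omega_{n-1}}\int_{\partial B_r}\Delta u\, \ud\sigma_r = r^{n-1}\overline{\Delta u}(r).
\end{equation*}
Substituting back into the radial Laplacian formula yields $\Delta \overline{u}(r) = \overline{\Delta u}(r)$. There is no serious obstacle here: the only mild subtlety is justifying differentiation under the integral sign, which is immediate from the $C^2$ hypothesis and uniform bounds on compact annuli.
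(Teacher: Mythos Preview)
Your proof is correct and shares the same core computation as the paper's argument: both apply the divergence theorem to $\nabla u$ on $B_r$ to obtain $r^{n-1}\overline{u}'(r)=\omega_{n-1}^{-1}\int_{B_r}\Delta u\,\ud x$, and then differentiate in $r$. Your presentation is slightly more direct, since you invoke the radial Laplacian formula for $\overline{u}$ at the outset, whereas the paper first decomposes $\Delta u$ in spherical coordinates and checks that averaging commutes with each piece before arriving at the same identity.
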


\begin{proof}
	Using spherical coordinates $r=|x|$ and $\sigma=x|x|^{-1}$, we have 
	\begin{equation}\label{Jm3}
	\Delta u=\partial^{(2)}_ru+{(n-1)}{r^{-1}}\partial^{(1)}_ru+{r^{-2}}\Delta_{\sigma}u,
	\end{equation}
	which, from taking the spherical average, implies 
	\begin{align}\label{Jm1}
	\overline{\Delta u}
	&=\overline{\partial^{(2)}_ru+{(n-1)}{r^{-1}}\partial^{(1)}_ru+{r^{-2}}\Delta_{\sigma}u}=\overline{\partial^{(2)}_ru}+{(n-1)}{r^{-1}}\overline{\partial^{(1)}_ru}+\overline{\Delta_{\sigma}u}.&
	\end{align}
	Now using a change of variables, we can rewrite 
	\begin{equation}\label{Jm5}
	\overline{u}(r):=|\partial B_1|^{-1}\int_{\partial B_1}u(x+ry)\ud\sigma_r(y),
	\end{equation}
	which, by a differentiation under the integration sign, implies $\overline{\partial^{(1)}_ru}=\partial^{(1)}_r\overline{u}$ and $\overline{\partial^{(2)}_ru}=\partial^{(2)}_r\overline{u}$. Moreover, since the angular part is radially invariant, we get that $\overline{\Delta_{\sigma}u}=\Delta_{\sigma}\overline{u}$, which together with \eqref{Jm1}, provides 
	\begin{align}\label{Jm2}
	\overline{\Delta u}
	&=\partial^{(2)}_r\overline{u}+{(n-1)}{r^{-1}}\partial^{(1)}_r\overline{u}+{r^{-2}}\Delta_{\sigma}\overline{u}&
	\end{align}
	
	On the other hand, one needs to prove that $\eqref{Jm3}$ is invariant under the action of the spherical average, that is,
	\begin{equation}\label{Jm4}
	\Delta \overline{u}=\partial^{(2)}_r\overline{u}+{(n-1)}{r^{-1}}\partial^{(1)}_r\overline{u}.
	\end{equation}
	In fact, using the divergence theorem, let us compute
	\begin{align*}
	{\partial^{(1)}_r}\overline{u}(r)&=|\partial B_1|^{-1}\int_{\partial B_1} \frac{1}{r}\sum_{i=1}^{n}{u}_{y_i}(x+ry)y_i\ud\sigma_r(y)\\
	&=|\partial B_1|^{-1}\int_{B_1} \frac{1}{r} \sum_{i=1}^{n}{u}_{y_iy_i}(x+ry)\ud\sigma_r(y)&\\
	&=|\partial B_1|^{-1}\int_{B_1} \frac{1}{r}\sum_{i=1}^{n}{u}_{x_ix_i}(x+ry)\ud\sigma_r(y)&\\
	&=|\partial B_1|^{-1}\int_{B_1} \frac{1}{r}\Delta{u}(x+ry)\ud\sigma_r(y).&
	\end{align*}
	Now using the layer cake integration and change of variables, we get 
	\begin{align*}
	{\partial^{(1)}_r}\overline{u}(r)
	&=|\partial B_1|^{-1}r\Delta\left(\frac{1}{r^{n}}\int_{\partial B_r}{u}(x+y)\ud\sigma_r(y)\right)&\\
	&=|\partial B_r|^{-1}\Delta\left(\int_{0}^{r}\left(\int_{\partial B_\rho(x)} {u}(z)\ud\sigma_r(y)\right)\ud\rho\right),&
	\end{align*}
	which, by comparing with \eqref{Jm5}, implies
	\begin{align*}
	{\partial^{(1)}_r}u(r)
	&=\frac{1}{r^{n-1}}\Delta\left(\int_{0}^{r}\rho^{n-1}\left(|\partial B_1|^{-1}\int_{\partial B_\rho(x)} {u}(z)\ud\sigma_r(y)\right)\ud\rho\right)&\\
	&=\frac{1}{r^{n-1}}\Delta\left(\int_{0}^{r}\rho^{n-1}\overline{u}(\rho)\ud\rho\right).&
	\end{align*}
	Hence, we conclude
	\begin{equation}
	{\partial^{(1)}_r}\left(r^{n-1}{\partial^{(1)}_r}\overline{u}(r)\right)=\Delta(r^{n-1}\overline{u}(r)),
	\end{equation}
	which, by taking the radial derivative again, implies that for all $r>0$,
	\begin{equation*}
	r^{n-1}{\partial_r^{(2)}}\overline{u}+(n-1)r^{n}{\partial_r^{(1)}}\overline{u}=r^{n-1}\Delta\overline{u};
	\end{equation*}
	this finishes the proof of \eqref{Jm4}.
	
	Finally, the proof of the lemma follows combining \eqref{Jm2} and \eqref{Jm4}.
\end{proof}

Next, we need to prove the formula that relates the Laplacian of a function with the Laplacian of its Kelvin transform. 
To simplify our computations, we use curvilinear coordinates based on the inversion about a sphere.

\begin{proof}[Proof of \eqref{laplaciankelvin}]
	Let us define a new coordinate system given by $\xi=\mathcal{I}_{x_0,\mu}(x)$ where $\mathcal{I}_{x_0,\mu}$ is the inversion about $\partial B_\mu(x_0)$ defined previously. Hence, $\xi=(\xi_1,\dots,\xi_n)$ is a system of orthogonal curvilinear coordinates about $x_0\in\mathbb{R}^n$, whose metric tensor is given by  $g_{ik}={|\xi|^{-4}}\delta_{jk}$. This is a consequence of the following identities
	\begin{equation}\label{metrictensor}
	{\partial_{x_k} \xi_j}=|x|^{-2}\left(\delta_{jk}-2\frac{x_jx_k}{|x|^2}\right)
	\quad {\rm and} \quad
	\sum_{l=1}^{n}{\partial_{x_j} \xi_l}{\partial_{x_k} \xi_l}=|x|^{-4}\delta_{jk}.
	\end{equation}
	Now we compute the expression for the Laplacian in this new coordinate system.
	For this, we use the Lam\'{e} coefficients associated to metric tensor \eqref{metrictensor} given by
	\begin{equation}\label{lame}
	h_j=\sqrt{g_{jj}(\xi_j,\xi_j)}={|\xi|^{-2}}.
	\end{equation}
	Using these coefficients, remember the following formula for the Laplacian of a function,
	\begin{equation*}
	\Delta u_{x_0,\mu}(x)=\frac{1}{\prod_{k=1}^{n}h_k}\displaystyle\sum_{j=1}^{n}{\partial_{x_j} \xi_l}\left(\frac{\prod_{k=1}^{n}h_k}{h^2_j}{\partial_{\xi_j} u}\right),
	\end{equation*}
	which combined with \eqref{lame} provides
	\begin{equation}\label{georgia}
	\Delta u_{x_0,\mu}(x)=|\xi|^{2n}\displaystyle\sum_{j=1}^{n}{\partial_{\xi_j}}\left({|\xi|^{4-2n}}{\partial_{\xi_j} u}\right).
	\end{equation}
	On the other hand, notice that
	\begin{align}\label{macedonia}
	|\xi|^{n-2}\displaystyle\sum_{j=1}^{n}{\partial_{\xi_j}}\left({|\xi|^{4-2n}}\partial_{\xi_j}u\right)&=2\displaystyle\sum_{j=1}^{n}{\partial_{\xi_j}}\left({|\xi|^{2-n}}u\right){\partial_{\xi_j} u}+{|\xi|^{2-n}}\displaystyle\sum_{j=1}^{n}{\partial_{\xi_j}^{(2)}u}&\\\nonumber&=\displaystyle\sum_{j=1}^{n}{\partial_{\xi_j}^{(2)}}\left({|\xi|^{2-n}u}\right)-u\displaystyle\sum_{j=1}^{n}{\partial_{\xi_j}^{(2)}}\left({|\xi|^{2-n}}\right)&\\\nonumber
	&=\Delta(|\xi|^{2-n}u)-u\Delta(|\xi|^{2-n}).&
	\end{align}
	Finally, observing that $\Delta(|\xi|^{2-n})=0$, and substituting \eqref{macedonia} into  \eqref{georgia}, we get 
	\begin{align*}
	\Delta u_{x_0,\mu}(x)=|\xi|^{n+2}\Delta(|\xi|^{2-n}u)=K_{x_0,\mu}(x)^{n+2}\Delta (u\circ \mathcal{I}_{x_0,\mu}),
	\end{align*}
	which concludes the proof.
\end{proof}

There is a more conceptual way of proving this fact. 
To this end, we use the transformation law for two conformal metrics (see \cite{MR1333601}).

\begin{proof}[Proof of \eqref{laplaciankelvin}]
	First, by the invariance of the Laplacian under the conformal euclidean group, we may assume without loss of generality $x_0=0$ and $\mu=1$.
	Let us fix the notation $I:=I_{0,1}$.
	Explicitly, we consider the conformal diffeomorphism $\mathcal{I}:\mathbb{R}^{n} \setminus\{0\} \rightarrow \mathbb{R}^{n} \setminus\{0\}$ given by the standard the inversion $\mathcal{I}(x)=x|x|^{-2}$ about the unit sphere.
	
	We define a new metric using the pullback of the standard Euclidean metric, denoted by $\delta_0$, under the diffeomorphism $I$, that is, $\hat{g}=\mathcal{I}^*\delta_0$. 
	Using spherical coordinates $(r,\sigma)\in(0,\infty)\times\mathbb{S}^{n-1}$, where $r=|x|$ and $\sigma=x|x|^{-1}$, a direct computation shows 
	\begin{equation*}
	I^{*}\delta_0=\left(\ud \frac{1}{r}\right)^{2}+\left(\frac{1}{r}\right)^{2}(\ud \sigma)^{2}=\frac{(\ud r)^{2}+r^{2}(\ud \sigma)^{2}}{r^{4}},
	\end{equation*}
	which implies $\mathcal{I}^*\delta_0=|x|^{-4}\delta_0$.
	Using the transformation law for the conformal Laplacian of the metric $\hat{g}$, we find
	\begin{equation*}
	L_{\hat{g}}(u)=\phi^{-\frac{n+2}{n-2}} L_{g}(\phi u),
	\end{equation*}
	where $\phi=r^{2-n}$. However, $\mathcal{I}:\left(\mathbb{R}^{n} \backslash\{0\},\hat{g}\right) \rightarrow\left(\mathbb{R}^{n} \backslash\{0\},\delta_0\right)$ is, by construction, an isometry, then the scalar curvature of the Riemannian metric $\hat{g}$
	is zero.
	Therefore,
	\begin{equation*}
	L_{\hat{g}}(u \circ \mathcal{I})=(\Delta u) \circ \mathcal{I}.
	\end{equation*}
	Eventually, we get
	\begin{equation*}
	(\Delta u) \circ \mathcal{I}=r^{n+2} L_{g}(\phi u\circ \mathcal{I})=r^{n+2} \Delta\left(r^{2-n} u\circ \mathcal{I}\right),
	\end{equation*}
	which finishes the proof.
\end{proof}

The next formula provides an expression relating the bi-Laplacian of a function with the bi-Laplacian of its Kelvin transform. 
The strategy of the proof is to iterate \eqref{laplaciankelvin}.

\begin{proof}[Proof of \eqref{biharmonickelvin}]
	Since the bi-Laplacian is invariant under translations and dilations, we may again suppose without loss of generality that $x_0=0$ and $\mu=1$, thus as before we set $\mathcal{I}(x)=\mathcal{I}_{0,1}(x)$. Then, denoting $\mathcal{I}_{0,1}(x)=x^{*}$ and $u_{x_0,\mu}=\widetilde{u}$, we can write $\widetilde{u}(x)=|x|^{2}|x|^{2-n}u(x^{*})$, which provides
	\begin{align*}
	\Delta\left(|x|^{2}|x|^{2-n}u(x^{*})\right)
	&=\Delta(|x|^{2})|x|^{2-n}u(x^{*}+2\nabla\left(|x|^{2}\right)\nabla\left(|x|^{2-n}u(x^{*})\right)+|x|^{2}\Delta\left(|x|^{2-n}u(x^{*})\right).&
	\end{align*}
	In addition, we get $\Delta(|x|^{2})=2n$, which combined with the last identity, gives us
	\begin{align*}
	\Delta \widetilde{u}(x)=2n|x|^{2-n}u(x^{*})+ 2\nabla(|x|^{2})\nabla\left(|x|^{2-n}u(x^{*})\right)+\Delta\left(|x|^{-n}u(x^{*})\right).
	\end{align*}
	Again applying the Laplacian on the last equality, we find
	\begin{align}\label{ontario}
	\Delta^{2}\widetilde{u}(x)&=\Delta\left(2n|x|^{2-n}u(x^{*})\right)+\Delta\left(2\nabla\left(|x|^2\right)\nabla\left(|x|^{2-n}u(x^{*})\right)\right)+\Delta\Delta\left(|x|^{-n}u(x^{*})\right)&\\\nonumber
	&:=L_1+L_2+L_3&.
	\end{align}
	
	In the sequel, let us estimate each term in \eqref{ontario}.
	
	\noindent (i) For the $L_1$, using \eqref{laplaciankelvin}, we have
	\begin{equation}\label{l1}
	\Delta\left(2n|x|^{2-n}u(x^{*})\right)=
	2n|x|^{-(n+2)}\Delta u(x^{*}).
	\end{equation}
	\noindent (ii) For $L_3$, a direct computation using \eqref{laplaciankelvin}, implies
	\begin{align}\label{l3}
	&\Delta\Delta\left(|x|^{-n}u(x^{*})\right)&\\\nonumber
	&=\Delta\left(|x|^{-2}\right)\Delta\left(|x|^{2-n}u(x^{*})\right)+2\nabla\left(|x|^{-2}\right)\nabla\left(\Delta\left(|x|^{2-n}u(x^{*})\right)\right)+|x|^{-2}\Delta\left(\Delta\left(|x|^{2-n}u(x^{*})\right)\right)&\\\nonumber
	&=8|x|^{-(n+2)}\Delta\left(|x|^{2-n}u(x^{*})\right)-2n|x|^{-(n+2)}\Delta\left(|x|^{2-n}u(x^{*})\right)+4|x|^{-4}x\nabla\left(\Delta\left(|x|^{2-n}u(x^{*})\right)\right)&\\\nonumber
	&+|x|^{-(n+4)}\Delta\left(\Delta\left(|x|^{2-n}u(x^{*})\right)\right),&
	\end{align}
	where we have used $\nabla(|x|^{-2})=-2x|x|^{-4}$ and $\Delta(|x|^{-2})=(8-2n)|x|^{-4}$.
	
	\noindent (iii) For $L_2$, we have 
	\begin{align}\label{paris}
	\Delta\left(2\nabla\left(|x|^2\right)\nabla\left(|x|^{2-n}u(x^{*})\right)\right)
	&=8\nabla x\nabla\left(|x|^2\right)\nabla\nabla \left(|x|^{2-n}u(x^{*})\right)+4x\nabla\left(|x|^{-(n+2)}\Delta u(x^{*})\right).&
	\end{align}
	On the other hand, again by \eqref{laplaciankelvin}, it holds
	\begin{align}\label{lisbon}
	\nabla x\nabla\left(|x|^2\right)\nabla\left(\nabla \left(|x|^{2-n}u(x^{*})\right)\right)=\Delta\left(|x|^{2-n}u(x^{*})\right)=|x|^{-(n+2)}\Delta u(x^{*})
	\end{align}    
	and
	\begin{align}\label{london}
	\nabla\left(|x|^{-(n+2)}\Delta u(x^{*})\right)
	&=4x|x|^{-(n+4)}\Delta u(x^{*})+|x|^{4}\nabla\left(|x|^{2-n}\Delta u(x^{*})\right).&
	\end{align}
	Thus, substituting \eqref{lisbon}, \eqref{london} into \eqref{paris}, it follows
	\begin{align}\label{l2}
	\Delta\left(2\nabla\left(|x|^2\right)\nabla\left(|x|^{2-n}u(x^{*})\right)\right)
	&=-8x|x|^{-(n+4)}\Delta u(x^{*})+4|x|^{4}\nabla\left(|x|^{2-n}\Delta u(x^{*})\right).
	\end{align}
	Eventually, by substituting \eqref{l1}, \eqref{l3}, and \eqref{l2} into \eqref{ontario}, we conclude the proof.
\end{proof}


\begin{thebibliography}{10}
	
	\bibitem{MR2040621}
	M.~J. Ablowitz, B.~Prinari and A.~D. Trubatch, \emph{Discrete and continuous
		nonlinear {S}chr\"{o}dinger systems}, \emph{London Mathematical Society
		Lecture Note Series}, vol. 302, Cambridge University Press, Cambridge (2004).
	
	\bibitem{MR1911916}
	P.~Amster, P.~De~N\'{a}poli and M.~C. Mariani, Existence of solutions for
	elliptic systems with critical {S}obolev exponent, \emph{Electron. J.
		Differential Equations}  (2002) No. 49, 13.
	
	\bibitem{MR0448404}
	T.~Aubin, Probl\`emes isop\'{e}rim\'{e}triques et espaces de {S}obolev,
	\emph{J. Differential Geometry} {\bf 11} (1976) 573--598.
	
	\bibitem{MR2852264}
	E.~R. Barbosa and M.~Montenegro, Extremal maps in best constants vector theory.
	{P}art {I}: {D}uality and compactness, \emph{J. Funct. Anal.} {\bf 262}
	(2012) 331--399.
	
	\bibitem{MR1992377}
	R.~J. Biezuner and M.~Montenegro, Best constants in second-order {S}obolev
	inequalities on {R}iemannian manifolds and applications, \emph{J. Math. Pures
		Appl. (9)} {\bf 82} (2003) 457--502.
	
	\bibitem{MR982351}
	L.~A. Caffarelli, B.~Gidas and J.~Spruck, Asymptotic symmetry and local
	behavior of semilinear elliptic equations with critical {S}obolev growth,
	\emph{Comm. Pure Appl. Math.} {\bf 42} (1989) 271--297.
	
	\bibitem{MR4002167}
	R.~Caju, J.~M. do~\'O and A.~Santos, Qualitative properties of positive
	singular solutions to nonlinear elliptic systems with critical growth,
	\emph{Ann. Inst. H. Poincar\'{e} Anal. Non Lin\'{e}aire} {\bf 36} (2019)
	1575–1601.
	
	\bibitem{MR2240050}
	G.~Caristi and E.~Mitidieri, Harnack inequality and applications to solutions
	of biharmonic equations, \emph{Partial differential equations and functional
		analysis}, \emph{Oper. Theory Adv. Appl.}, vol. 168, Birkh\"{a}user, Basel
	(2006) 1--26.
	
	\bibitem{MR1794994}
	F.~Catrina and Z.-Q. Wang, On the {C}affarelli-{K}ohn-{N}irenberg inequalities:
	sharp constants, existence (and nonexistence), and symmetry of extremal
	functions, \emph{Comm. Pure Appl. Math.} {\bf 54} (2001) 229--258.
	
	\bibitem{MR0092663}
	S.~Chandrasekhar, \emph{An introduction to the study of stellar structure},
	Dover Publications, Inc., New York, N. Y. (1957).
	
	\bibitem{MR1333503}
	C.~C. Chen and C.~S. Lin, Local behavior of singular positive solutions of
	semilinear elliptic equations with {S}obolev exponent, \emph{Duke Math. J.}
	{\bf 78} (1995) 315--334.
	
	\bibitem{MR1338474}
	W.~Chen and C.~Li, A necessary and sufficient condition for the {N}irenberg
	problem, \emph{Comm. Pure Appl. Math.} {\bf 48} (1995) 657--667.
	
	\bibitem{MR2510000}
	W.~Chen and C.~Li, Classification of positive solutions for nonlinear
	differential and integral systems with critical exponents, \emph{Acta Math.
		Sci. Ser. B (Engl. Ed.)} {\bf 29} (2009) 949--960.
	
	\bibitem{MR3394387}
	Z.~Chen and C.-S. Lin, Removable singularity of positive solutions for a
	critical elliptic system with isolated singularity, \emph{Math. Ann.} {\bf
		363} (2015) 501--523.
	
	\bibitem{MR1223899}
	K.~S. Chou and C.~W. Chu, On the best constant for a weighted {S}obolev-{H}ardy
	inequality, \emph{J. London Math. Soc. (2)} {\bf 48} (1993) 137--151.
	
	\bibitem{MR1991145}
	Z.~Djadli, A.~Malchiodi and M.~O. Ahmedou, Prescribing a fourth order conformal
	invariant on the standard sphere. {II}. {B}low up analysis and applications,
	\emph{Ann. Sc. Norm. Super. Pisa Cl. Sci. (5)} {\bf 1} (2002) 387--434.
	
	\bibitem{MR2603801}
	O.~Druet and E.~Hebey, Stability for strongly coupled critical elliptic systems
	in a fully inhomogeneous medium, \emph{Anal. PDE} {\bf 2} (2009) 305--359.
	
	\bibitem{MR2558186}
	O.~Druet, E.~Hebey and J.~V\'{e}tois, Bounded stability for strongly coupled
	critical elliptic systems below the geometric threshold of the conformal
	{L}aplacian, \emph{J. Funct. Anal.} {\bf 258} (2010) 999--1059.
	
	\bibitem{MR312040}
	D.~R. Dunninger, Maximum principles for solutions of some fourth-order elliptic
	equations, \emph{J. Math. Anal. Appl.} {\bf 37} (1972) 655--658.
	
	\bibitem{MR1076074}
	D.~E. Edmunds, D.~Fortunato and E.~Jannelli, Critical exponents, critical
	dimensions and the biharmonic operator, \emph{Arch. Rational Mech. Anal.}
	{\bf 112} (1990) 269--289.
	
	\bibitem{fowler}
	R.~H. Fowler, Further studies on Emden's and similar differential equations,
	\emph{Quart. J. Math.} {\bf 2} (1931) 259--288.
	
	\bibitem{MR3869387}
	R.~L. Frank and T.~K\"{o}nig, Classification of positive singular solutions to
	a nonlinear biharmonic equation with critical exponent, \emph{Anal. PDE} {\bf
		12} (2019) 1101--1113.
	
	\bibitem{MR4085120}
	M.~Ghergu, S.~Kim and H.~Shahgholian, Isolated singularities for semilinear
	elliptic systems with power-law nonlinearity, \emph{Anal. PDE} {\bf 13}
	(2020) 701--739.
	
	\bibitem{MR544879}
	B.~Gidas, W.~M. Ni and L.~Nirenberg, Symmetry and related properties via the
	maximum principle, \emph{Comm. Math. Phys.} {\bf 68} (1979) 209--243.
	
	\bibitem{MR634248}
	B.~Gidas, W.~M. Ni and L.~Nirenberg, Symmetry of positive solutions of
	nonlinear elliptic equations in {${\bf R}^{n}$}, \emph{Mathematical analysis
		and applications, {P}art {A}}, \emph{Adv. in Math. Suppl. Stud.}, vol.~7,
	Academic Press, New York-London (1981) 369--402.
	
	\bibitem{MR1814364}
	D.~Gilbarg and N.~S. Trudinger, \emph{Elliptic partial differential equations
		of second order}, Classics in Mathematics, Springer-Verlag, Berlin (2001),
	reprint of the 1998 edition.
	
	\bibitem{arXiv:1503.06412}
	Y.~Guo, S.~Peng and S.~Yan, Existence and local uniqueness of bubbling
	solutions for poly-harmonic equations with critical growth,
	\emph{arXiv:1503.06412 [math.AP]}  (2015).
	
	\bibitem{MR3158844}
	Z.~Guo, Further study of entire radial solutions of a biharmonic equation with
	exponential nonlinearity, \emph{Ann. Mat. Pura Appl. (4)} {\bf 193} (2014)
	187--201.
	
	\bibitem{MR4094467}
	Z.~Guo, X.~Huang, L.~Wang and J.~Wei, On {D}elaunay solutions of a biharmonic
	elliptic equation with critical exponent, \emph{J. Anal. Math.} {\bf 140}
	(2020) 371--394.
	
	\bibitem{MR3315584}
	Z.~Guo, X.~Huang and F.~Zhou, Radial symmetry of entire solutions of a
	bi-harmonic equation with exponential nonlinearity, \emph{J. Funct. Anal.}
	{\bf 268} (2015) 1972--2004.
	
	\bibitem{MR3618119}
	F.~Hang and P.~C. Yang, Lectures on the fourth-order $Q$- curvature equation,
	\emph{Geometric analysis around scalar curvatures}, \emph{Lect. Notes Ser.
		Inst. Math. Sci. Natl. Univ. Singap.}, vol.~31, World Sci. Publ., Hackensack,
	NJ (2016) 1--33.
	
	\bibitem{MR0171038}
	P.~Hartman, \emph{Ordinary differential equations}, John Wiley \& Sons, Inc.,
	New York-London-Sydney (1964).
	
	\bibitem{MR1967041}
	E.~Hebey, Sharp {S}obolev inequalities of second order, \emph{J. Geom. Anal.}
	{\bf 13} (2003) 145--162.
	
	\bibitem{MR2221095}
	E.~Hebey, Sharp {S}obolev inequalities for vector valued maps, \emph{Math. Z.}
	{\bf 253} (2006) 681--708.
	
	\bibitem{MR274792}
	R.~R. Huilgol, On {L}iouville's theorem for biharmonic functions, \emph{SIAM J.
		Appl. Math.} {\bf 20} (1971) 37--39.
	
	\bibitem{MR1666838}
	N.~Korevaar, R.~Mazzeo, F.~Pacard and R.~Schoen, Refined asymptotics for
	constant scalar curvature metrics with isolated singularities, \emph{Invent.
		Math.} {\bf 135} (1999) 233--272.
	
	\bibitem{MR2001065}
	Y.~Li and L.~Zhang, Liouville-type theorems and {H}arnack-type inequalities for
	semilinear elliptic equations, \emph{J. Anal. Math.} {\bf 90} (2003) 27--87.
	
	\bibitem{MR1369398}
	Y.~Li and M.~Zhu, Uniqueness theorems through the method of moving spheres,
	\emph{Duke Math. J.} {\bf 80} (1995) 383--417.
	
	\bibitem{MR717827}
	E.~H. Lieb, Sharp constants in the {H}ardy-{L}ittlewood-{S}obolev and related
	inequalities, \emph{Ann. of Math. (2)} {\bf 118} (1983) 349--374.
	
	\bibitem{MR1611691}
	C.-S. Lin, A classification of solutions of a conformally invariant fourth
	order equation in {${\bf R}^n$}, \emph{Comment. Math. Helv.} {\bf 73} (1998)
	206--231.
	
	\bibitem{MR778970}
	P.-L. Lions, The concentration-compactness principle in the calculus of
	variations. {T}he locally compact case. {I}, \emph{Ann. Inst. H. Poincar\'{e}
		Anal. Non Lin\'{e}aire} {\bf 1} (1984) 109--145.
	
	\bibitem{lo-mei}
	E.~Lo and C.~Mei, A numerical study of water-wave modulation based on a
	higher-order nonlinear Schrodinger equation, \emph{Journal of Fluid
		Mechanics} {\bf 150} (1985) 395--416.
	
	\bibitem{MR2393072}
	F.~C. Marques, Isolated singularities of solutions to the {Y}amabe equation,
	\emph{Calc. Var. Partial Differential Equations} {\bf 32} (2008) 349--371.
	
	\bibitem{MR1879326}
	E.~Mitidieri and S.~I. Pohozhaev, A priori estimates and the absence of
	solutions of nonlinear partial differential equations and inequalities,
	\emph{Tr. Mat. Inst. Steklova} {\bf 234} (2001) 1--384.
	
	\bibitem{MR3198645}
	A.~Montaru, B.~Sirakov and P.~Souplet, Proportionality of components,
	{L}iouville theorems and a priori estimates for noncooperative elliptic
	systems, \emph{Arch. Ration. Mech. Anal.} {\bf 213} (2014) 129--169.
	
	\bibitem{MR1460076}
	P.~Padilla, Symmetry properties of positive solutions of elliptic equations on
	symmetric domains, \emph{Appl. Anal.} {\bf 64} (1997) 153--169.
	
	\bibitem{MR0192184}
	S.~I. Pohozhaev, On the eigenfunctions of the equation {$\Delta u+\lambda
		f(u)=0$}, \emph{Dokl. Akad. Nauk SSSR} {\bf 165} (1965) 36--39.
	
	\bibitem{MR929283}
	R.~Schoen, The existence of weak solutions with prescribed singular behavior
	for a conformally invariant scalar equation, \emph{Comm. Pure Appl. Math.}
	{\bf 41} (1988) 317--392.
	
	\bibitem{MR931204}
	R.~Schoen and S.-T. Yau, Conformally flat manifolds, {K}leinian groups and
	scalar curvature, \emph{Invent. Math.} {\bf 92} (1988) 47--71.
	
	\bibitem{MR1333601}
	R.~Schoen and S.-T. Yau, \emph{Lectures on differential geometry}, Conference
	Proceedings and Lecture Notes in Geometry and Topology, I, International
	Press, Cambridge, MA (1994), lecture notes prepared by Wei Yue Ding, Kung
	Ching Chang [Gong Qing Zhang], Jia Qing Zhong and Yi Chao Xu, Translated from
	the Chinese by Ding and S. Y. Cheng, With a preface translated from the
	Chinese by Kaising Tso.
	
	\bibitem{MR0333220}
	J.~Serrin, A symmetry problem in potential theory, \emph{Arch. Rational Mech.
		Anal.} {\bf 43} (1971) 304--318.
	
	\bibitem{MR1436822}
	R.~Soranzo, Isolated singularities of positive solutions of a superlinear
	biharmonic equation, \emph{Potential Anal.} {\bf 6} (1997) 57--85.
	
	\bibitem{MR0463908}
	G.~Talenti, Best constant in {S}obolev inequality, \emph{Ann. Mat. Pura Appl.
		(4)} {\bf 110} (1976) 353--372.
	
	\bibitem{MR1809291}
	K.~K. Uhlenbeck and J.~A. Viaclovsky, Regularity of weak solutions to critical
	exponent variational equations, \emph{Math. Res. Lett.} {\bf 7} (2000)
	651--656.
	
	\bibitem{MR1769247}
	X.~Xu, Uniqueness theorem for the entire positive solutions of biharmonic
	equations in {$\mathbb{R}^n$}, \emph{Proc. Roy. Soc. Edinburgh Sect. A} {\bf
		130} (2000) 651--670.
	
	\bibitem{MR4123335}
	H.~Yang, Asymptotic behavior of positive solutions to a nonlinear biharmonic
	equation near isolated singularities, \emph{Calc. Var. Partial Differential
		Equations} {\bf 59} (2020) 130.
	
\end{thebibliography}

\end{document}